\definecolor{myorange}{RGB}{255, 176, 1}
\definecolor{mygreen}{RGB}{55, 184, 78}
\definecolor{mycyan}{RGB}{51, 255, 255}
\newcommand{\RR}{{\mathbb{R}}}
\newcommand{\CC}{{\mathbb C}}
\newcommand{\om}{\Omega}
\newcommand{\oms}{\Omega^{\sharp}}
\newcommand{\dive}{\text{\normalfont div}}
\newtheorem{theorem}{Theorem}[section]
\newtheorem{lemma}[theorem]{Lemma}
\newtheorem{proposition}[theorem]{Proposition}
\newtheorem{remark}[theorem]{Remark}
\newtheorem{definition}[theorem]{Definition}
\numberwithin{equation}{section}
\begin{document}

\title{Lipschitz Stability for Polyhedral Elastic Inclusions \\
	{}from Partial Data}
\author[1]{Aspri Andrea} 
\author[2]{Beretta Elena}
\author[3]{Francini Elisa}
\author[4]{Morassi Antonino}
\author[5]{Rosset Edi}
\author[5]{     Sincich Eva}
\author[3]{Vessella Sergio}

\affil[1]{Department of Mathematics, Universit\`a degli Studi di Milano}
\affil[2]{Department of Mathematics, NYU Abu Dhabi}
\affil[3]{Department of Mathematics and Computer Science, Universit\`a degli Studi di Firenze}
\affil[4]{Dipartimento Politecnico di Ingegneria e Architettura, Universit\`a degli Studi di Udine}
\affil[5]{Dipartimento di Matematica, Informatica e Geoscienze, Universit\`a degli Studi di Trieste}
\date{August 8, 2025}
\maketitle

\begin{abstract}

The paper deals with the inverse problem of determining a polyhedral inclusion compactly contained
in an elastic body {}from boundary measurements of traction and displacement taken on an open portion of the boundary. Both the inclusion and the body are made of homogeneous isotropic material. Under suitable assumptions on the geometry of the unknown inclusion, we prove a constructive Lipschitz stability estimate {}from the local Dirichlet-to-Neumann map. 
\end{abstract}

\medskip

\medskip

\noindent \textbf{Mathematical Subject Classifications (2020)}: Primary 35R30; Secondary 35R25, 74B05.

\medskip

\medskip

\noindent \textbf{Key words}: Lamé system, Polyhedral inclusion, Inverse boundary problem, Lipschitz stability.

\section{Introduction}
\label{Intro}

An important class of inverse boundary-value problems in linear elasticity aims to identify hidden or unknown inclusions inside an elastic body by using measurements taken at its boundary. Such problems arise in several contexts, including seismic exploration (where surface data reveal subsurface structures), see for example \cite{symes2009seismic}, \cite{brown2005variational} and \cite{shi2020numerical}, geophysics \cite{segall2010earthquake},
non-destructive material testing (for locating cracks, defects, or voids) see for example \cite{ammari2015mathematical}, and medical imaging techniques such
as elastography, see for example \cite{barbone2004elastic}. For
a topical review of inverse problems in elasticity and their applications, we refer to \cite{bonnet2005inverse}.

In this paper we address the inverse problem of determining a polyhedral elastic inclusion $D$ hidden in a three-dimensional body $\Omega$, $D \subset \subset \Omega$, by means of displacement and traction boundary measurements performed on an open portion $\Sigma$ of the boundary $\partial \Omega$. The linearly elastic material inside and outside the inclusion is assumed to be homogeneous and isotropic, with given elastic tensors $\CC^i$ and $\CC^e$ respectively, and volume forces are absent. Within the framework of linear elasticity, the displacement field $u \in H^1 (\Omega)$ for prescribed Dirichlet data $\psi \in H^{1/2}_{co}(\Sigma)$ satisfies the boundary value problem

\begin{equation}
	\label{eq:Introd-bvpbm}
	\begin{cases}
		\textup{div}
		(
		(\CC^e + (\CC^i - \CC^e) \chi_D	)
		\nabla u
		)=0, & \text{in}\ \Omega,
		\\ 
		u = \psi, & \text{on}\ \partial \Omega. 
	\end{cases}
\end{equation}
Our goal is to derive a stability estimate for the determination of the polyhedron $D$ {}from the local Dirichlet-Neumann map $\Lambda_D^\Sigma :  H^{1/2}_{co}(\Sigma) \rightarrow H^{-1/2}_{co}(\Sigma)$ defined by $\Lambda_D^\Sigma(\psi) = (  \CC^e \nabla u  )n$, where $n$ is the unit outer normal to $\Omega$.

In particular, assuming that $D$ belongs to an admissible class of polyhedra $\mathcal P$  and requiring monotonicity of the elastic tensor jump across $\partial D$ 
our main result is a constructive Lipschitz stability estimate; see Theorem \ref{th:main} for a precise statement.

Research on this class of geometric inverse problems began with Isakov \cite{v.isakov1988}, who showed how to identify a smooth inclusion inside an isotropic electrostatic conductor {}from full boundary data—namely, all possible pairs of the electric potential and current flux measured on the boundary. His uniqueness proof combined the Runge approximation theorem with special solutions that possess Green’s-function–type singularities.
The stability counterpart of Isakov’s result was later established by Alessandrini and Di Cristo in \cite{g.alessandrinim.dicristo2005}, who derived a conditional logarithmic stability estimate for inclusions $D$ of $C^{1,\alpha}$ class. Their argument still exploited singular solutions, but replaced the Runge‐approximation method with quantitative unique-continuation estimates. These estimates have since become a standard ingredient in stability analyses for more intricate equations and systems (see, for instance, \cite{FS2023}, \cite{ADCMR}, \cite{MR2016}).

In particular,  a log-type stability estimate based on a local
Dirichlet-to-Neumann map for general elastic inclusions $D$ of $C^{1,\alpha}$ class, was derived in \cite{ADCMR}. An extension to inhomogeneous background materials appeared in \cite{MR2016}.
Because of well-known results established in the context of conductivity (\cite{DiCristoRondi}, and more recently \cite{koch2025instability}), which involves a single elliptic equation rather than an elliptic system, the log-type estimate can be considered the optimal rate of convergence. 
Logarithmic stability estimates confirm that the inverse problem is extremely ill-conditioned: even modest data errors are strongly amplified, making practical reconstructions unreliable. 
When the physics of the problem permits, this difficulty can be mitigated by imposing geometric constraints on the admissible inclusions, reducing the inverse problem to the determination of finitely many unknown parameters. A pioneering step in this direction was the determination of piecewise-constant coefficients under a known finite partition of the domain, first for the conductivity case (\cite{druskin1982uniqueness} for the uniqueness, \cite{AlVes} for the Lipschitz stability) and later for the elastic case \cite{BFV-IPI-2014}; see also \cite{BerFraMorRosVes14} for extensions to non-flat interfaces. In the framework of linear elasticity, we want to highlight that other authors have derived Lipschitz stability  using localized potentials instead of singular solutions\cite{eberle2021lipschitz}.

Building on the seminal work of Bacchelli and Vessella \cite{BaV2006}, we note that quantitative Lipschitz stability for broad classes of nonlinear inverse problems rests the following conditions:
the unknown quantity can be described by a finite set of parameters; these parameters are confined to a compact admissible set $K$; the forward map $\mathcal{F}$ taking the unknown into the measurements is injective on this compact set and the inverse map of $\mathcal{F}_{|K}$ is uniformly continuous with a given modulus of continuity; finally, the forward map is continuously Fr\'echet  differentiable and the Fr\'echet derivative of the forward map is bounded {}from below.

Adopting the same overall strategy though with significant modifications, in the recent paper \cite{AspBerFraVes22}  a Lipschitz stability estimate in terms of the local Dirichlet-to-Neumann map, for the electrical analogue of problem \eqref{eq:Introd-bvpbm}, has been derived: recovering a hidden polyhedral inclusion in a conductor {}from boundary measurements of voltage and current.
We draw on their framework, but our Lipschitz stability result for linear elasticity is not at all a straightforward extension of the conductivity case. 

Elliptic systems such as the Lam\'e equations introduce new difficulties that call for additional, genuinely original tools. In particular, we develop techniques tailored to isotropic elasticity that have no counterpart in the scalar setting. Key innovations and their roles in the proof are outlined in the remarks that follow.

\begin{enumerate}
\item {\em Construction of singular solutions}\\
As already pointed out in \cite{BFV-IPI-2014} and \cite{BF}, a relevant difference with respect to the scalar
case treated in \cite{AspBerFraVes22} is the issue of the existence of singular solutions. Indeed, in the case of strongly elliptic systems with $L^{\infty}$ coefficients in dimension three, the existence of
the fundamental solution and Green's function cannot be inferred without
additional assumptions. In \cite{hofmann2007green} the authors prove their existence under the
additional information that weak solutions of the system satisfy De Giorgi-Nash
type local H\"older estimates. It is clear that, in the case of a polyhedral elastic inclusion,
solutions might not enjoy H\"older regularity at edges and vertices. On the other hand, to obtain our result, it is enough to construct singular solutions and analyze their
behavior at a given positive distance {}from edges. 
Therefore, following the ideas in \cite{BFV-IPI-2014}, we construct Green's functions having a pole that does not belong to the edges of the polyhedron.

\item{\em Derivation of a first logarithmic stability estimate}\\
A first application of the singular solutions described above is the rough stability estimate with logarithmic rate of convergence stated in Theorem \ref{Teo:stima_log}. 
This estimate, which is new for this class of inclusions that are not regular enough to apply the results in \cite{ADCMR},  ensures, as a byproduct, that polyhedra belonging to our admissible class $\mathcal{P}$, with sufficiently close Dirichlet-to-Neumann maps, have the same number of vertices, faces, and edges.

\item{\em Existence and regularity of Gateaux derivative of the Dirichlet-to-Neumann map}\\
The above property, in turn, allows us to define a suitable homotopy between any two polyhedra $D_0, D_1 \in \mathcal P$; see Proposition \ref{vectorfield} and Proposition \ref{prop:property_Phi} for precise statements, and Appendix \ref{Appendix} for an exhaustive and detailed construction of the homotopy.
Next, a crucial and challenging step is to prove the existence and regularity of the Gateaux derivative of the Dirichlet-to-Neumann map at $D_0 \in \mathcal{P}$. Precisely, we derive a distributed representation formula for the derivative by adapting tools and ideas taken {}from shape optimization \cite{henrot2018shape}, \cite{laurain2020distributed}, \cite{ammari2010reconstruction}, theory of composite materials and homogenization \cite{FraMur86}, and theory of small volume fraction asymptotic expansions \cite{beretta2012small}, \cite{beretta2006asymptotic}. In this respect, the Lipschitz continuity of the Gateaux derivative stated in Proposition \ref{prop:Continuità-derivata-di-F} improves the analogous estimate in \cite{AspBerFraVes22} and is based on more elementary arguments.

\item {\em Boundary formula of Gateaux derivative of the Dirichlet-to-Neumann map}\\
Another key point is the passage {}from the distributed derivative to the boundary derivative. This step is particularly challenging since it requires to express the integrand function appearing in the distributed derivative in terms of the divergence of a suitable vector field $b$, a property that was not at all predictable in the context of elasticity; see Lemma \ref{lem:LBD10.1}.
Starting {}from the field $b$, a further important step consists in writing the jump of $b\cdot n$ across the boundary of the polyhedron $D$ in terms of a boundary representation formula -- away {}from vertices and edges -- which involves only the value of the solutions outside $D$ and is expressed in terms of the so-called {\it elastic moment tensor} $\mathbb{M}$. This fourth-order tensor encodes the jump in strains across the interface and exhibits both minor and major symmetry, as well as positivity; see Lemma \ref{lemma:elast_mom_tens} and Proposition \ref{prop:LBD15.1}.

\item {\em Lower bound of the Gateaux derivative }\\
Finally, to derive the lower bound of the Dirichlet-to-Neumann map's derivative, we use, among other tools, a constructive estimate {}from below of the gradient of the biphase fundamental solution for the homogeneous Lam\'e system near the singularity, see Lemma \ref{lem:LBD77.1}. This estimate requires an accurate elaboration of its explicit expression, which was derived by Rongved in  \cite{rongved1955force}. 
\end{enumerate}

Since our elastic inclusion depends on finitely many parameters, it is natural to ask whether it can be recovered {}from finitely many measurements. In this context, we would like to mention that the results obtained recently in \cite{alberti2022inverse} in an abstract setting, and where Lipschitz continuity {}from finitely many measurements has been proved if the unknown belongs to a suitable finite-dimensional nonlinear manifold, seem not to include the case of general polygonal and polyhedral conductivity or elastic inclusions. 
To our knowledge, uniqueness and stability for general polyhedral elastic inclusions {}from finitely many measurements are an open issue. {\color{green}}We point out that a Lipschitz stability estimate {}from the Dirichlet-to-Neumann map could be a first step in this direction (see \cite{AS2021}).

We want to conclude by observing that quantitative Lipschitz stability estimates are crucial for reconstruction. Indeed, they allow to prove convergence of iterative methods for the effective reconstruction of the approximated solution to the inverse problem, \cite{deHoopetal}, \cite{deHQS}.
In \cite{BerMicPerSan18}, a reconstruction method based on a shape-optimization algorithm has been successfully implemented  enabling the reconstruction of piecewise-constant conductivity that jumps across a polygonal inclusion {}from current-to-voltage boundary measurements. In particular, by employing the shape derivative of a misfit cost functional and adjusting at each step the number of vertices of the iterates to impose some regularization, the reconstruction algorithm converges rapidly to the solution even when starting far {}from it. 
A similar approach has been used in \cite{dambrine2024robust} for smooth elastic inclusions.

The organization of the paper is as follows. In Section
\ref{Notation-a-priori} we introduce some notation and the a-priori
information needed for our stability result. The formulation of the inverse problem and the main result, Theorem \ref{th:main}, are stated in Section \ref{Main-result}. Some useful geometrical results are collected in Section \ref{geometry}. In Section \ref{sec:rough_log_stab_est} we construct ad hoc Green's functions and derive the rough logarithmic stability estimate, see \eqref{eq:stima-grezza-dH}--\eqref{eq:stima grezza 2 log}. Section \ref{sec:domain derivative} is devoted to the domain derivative of the Dirichlet-to-Neumann map. Precisely, in subsection \ref{omotopia} we introduce a homotopy map between the polyhedra $D_0$ and $D_1$; in subsection \ref{Gateaux} we compute both the distributed and boundary Gateaux derivative; the lower bound of the Dirichlet-to-Neumann map derivative is derived in subsection \ref{lower-bound}.  The proof of the main Theorem \ref{th:main} is presented in Section \ref{sec:proof_main_th}. Finally, in Section \ref{Appendix} we give a detailed construction of the homotopy map between $D_0$ and $D_1$.

\section{Notation and a priori information}
\label{Notation-a-priori}

\subsection{Notation}
We begin by introducing the notation and recalling some essential definitions that will be used throughout.

Let us denote by $\RR^3_+ = \{ x=(x_1,x_2,x_3) \in \RR^3 \ | \ x_3 >0  \}$, $\RR^3_- = \{ x=(x_1,x_2,x_3) \in \RR^3 \ | \ x_3 <0  \}$. 

Given $x \in \RR^3$, we shall denote $x=(x',x_3)$, where $x'=(x_1,x_2) \in \RR^2$, $x_3 \in \RR$.

For a point $P\in\mathbb{R}^3$ and $R>0$, we define the ball centered at $P$ with radius $R$ as
\begin{equation*}
B_{R}(P):=\{x\in\mathbb{R}^3\ |\ |x-P|<R\}
\end{equation*}
and, similarly, we denote by $B'_{R}(P)$ a disc centered at $P$ with radius $R$ lying in a plane that will be specified in the context. When the center $P$ is at the origin, we omit the reference to $P$ for simplicity.

For given $r_0 >0$, ${M_0 \geq 1}$, we denote 

\begin{equation*}
	R_{r_0, M_0} =\{x=(x',x_3)\ |\ x'\in[-r_0, r_0]^2, x_3\in [-M_0r_0, M_0r_0]\}.
\end{equation*}

\noindent When representing locally a boundary as a graph, we use
the following definition.

\begin{definition}
	\label{def:reg_bordo} (Lipschitz regularity)
	Let $U$ be a bounded domain in ${\RR}^{3}$. We say that a portion $S$ of
	$\partial U$ is of Lipschitz class with
		constants $r_{0}$, $M_{0}$, $M_0\geq 1$, if, for any $P \in S$, there
	exists a rigid transformation of coordinates under which we have
	$P=0$ and
	\begin{equation*}
		U \cap R_{r_0,2M_0}=\{x \in R_{r_0,2M_0} \ | \
		x_{3}>g(x_1,x_2)
		\},
	\end{equation*}
	where $g$ is a Lipschitz continuous function on
	$[-r_0,r_0]^2$
	satisfying
	\begin{equation*}
		g(0)=0,
	\end{equation*}
	\begin{equation*}
		\|g\|_{{C}^{0,1}([-r_0,r_0]^2)} \leq M_0r_0.
	\end{equation*}

\end{definition}

We use the convention to normalize all norms in such a way that
their terms are dimensionally homogeneous with the $L^\infty$-norm. For
instance, the norm appearing above is meant as follows
\begin{equation*}
	\|g\|_{{C}^{0,1}([-r_0,r_0]^2)} = \| g\|_{L^\infty([-r_0,r_0]^2)}+r_0
	\sup_ {\overset{\scriptstyle x', \, y'\in [-r_0,r_0]^2}{\scriptstyle
			x'\neq y'}}\frac{|g(x') -  g(y')|}{|x'-y'|}.
\end{equation*}
Similarly, for a vector function $w \in H^1(U)$, $w :U \subset \RR^3\rightarrow \RR^3$, we set
\begin{equation*}
	\|w\|_{H^1(U)}=r_0^{-3/2} \left ( \int_U |w|^2
	+r_0^2\int_U|\nabla w|^2 \right )^{1/2},
\end{equation*}
and so on for boundary and trace norms. 

We shall denote $w_{i,j}= \frac{\partial w_i}{\partial x_j}$ and similarly for higher derivatives.

The Hausdorff distance between two bounded and closed sets  $C$ and $D$ in $\mathbb{R}^3$ is defined as
\begin{equation*}
    d_H(C,D)=\max\left\{\max_{x\in C}\hbox{dist}(x,D),\max_{x\in D}\hbox{dist}(x,C)\right\}.
\end{equation*}
We denote the interior of a set $ C $ by $ \text{Int}(C) $.

Now, let $ F_1 $ and $ F_2 $ be two closed, simply connected, and bounded flat surfaces in $ \mathbb{R}^3 $. If $ F_1 \cap F_2 =: \sigma $, where $ \sigma $ is a non-empty segment, we denote the interior of $ F_1 $ relative to the plane containing it by $ \text{Int}_{\mathbb{R}^2}(F_1) $, and the interior of $ \sigma $ relative to the line containing it by $ \text{Int}_{\mathbb{R}}(\sigma) $.\ \\

We denote by $ \mathbb{M}^{m \times n} $ the space of $ m \times n $ real-valued matrices, and by $ \mathcal{L}(X, Y) $ the space of bounded linear operators between Banach spaces $ X $ and $ Y $. When $ m = n $, we will also denote $ \mathbb{M}^n = \mathbb{M}^{n \times n} $.
For any pair of real $ n $-vectors $ a $ and $ b $, we denote by $ a \otimes b $ the $ n \times n $ matrix with entries
\begin{equation*}
    (a \otimes b)_{ij} = a_i b_j, \quad i, j = 1, \dots, n.
\end{equation*}
We denote by  $a\cdot b=\sum_{i=1}^3 a_i b_i$ and by $a \times b = \sum_{i,j,k=1}^3 \epsilon_{ijk} a_ib_j e_k$ the scalar and vectorial product between two vectors $a=\sum_{k=1}^3a_k e_k, b=\sum_{k=1}^3b_k e_k \in \RR^3$, respectively.
We recall that the Levi-Civita symbol $\epsilon_{ijk}$ is defined as follows
\begin{equation*}
	\epsilon_{ijk} = \left\{ \begin{array}{ll}
		+1,
		& \hbox{if}\ 	(i,j,k) \hbox{ is an even permutation of  } (1,2,3),\\
		-1,
		& \hbox{if}\ 	(i,j,k) \hbox{ is an odd permutation of  } (1,2,3),\\
		0,
		& \hbox{if at least two indexes are equal}.
	\end{array}\right.
\end{equation*}
For any $ 3 \times 3 $ matrices $ A $ and $ B $, and for every $ \mathbb{C} \in \mathcal{L}(\mathbb{M}^3, \mathbb{M}^3) $, we adopt the following notation:
\begin{equation*}
    (\mathbb{C}A)_{ij} = \sum_{k,l=1}^{3} \mathbb{C}_{ijkl} A_{kl},
\end{equation*}
\begin{equation*}
    A \cdot B = \sum_{i,j=1}^{3} A_{ij} B_{ij},
\end{equation*}
\begin{equation*}
    |A| = \left( A \cdot A \right)^{1/2},
\end{equation*}
where $ \mathbb{C}_{ijkl} $, $ A_{ij} $, and $ B_{ij} $ are the entries of $ \mathbb{C} $, $ A $, and $ B $, respectively.

Finally, we denote $\widehat{A}= \dfrac{1}{2}(A + A^T)$ and 
 $A^{-T}=(A^{-1})^T$ for every $3 \times 3$ invertible matrix $A$.

\subsection{A priori information}
Let us begin by defining the polyhedron, introducing the notation for its faces and vertices, and outlining the necessary a priori assumptions required to derive our main result.
\begin{definition}\label{def1}
A closed subset $ D \subset \mathbb{R}^3 $ is called a polyhedron if:
\begin{equation}
    D \text{ is homeomorphic to a closed ball in } \mathbb{R}^3;
\end{equation}
the boundary $ \partial D $ is given by
\begin{equation}
\partial D = \bigcup_{j=1}^H F^D_j,
\end{equation}
where each $ F_j^D $ is a closed, simply connected polygonal region in a plane (referred to as a face of $ D $), and
\begin{equation}
    \text{Int}_{\mathbb{R}^2}(F_i^D) \cap \text{Int}_{\mathbb{R}^2}(F_j^D) = \emptyset \quad \text{for } i \neq j.
\end{equation}
For $ i \neq j $, the intersection $ \sigma^D_{ij} = F_i^D \cap F_j^D $ is called an edge of $ D $ if $ \text{Int}_{\mathbb{R}}(\sigma^D_{ij}) \neq \emptyset $.
The non-empty intersection of two edges is referred to as a vertex $ V^D $ of $ D $.
\end{definition}

We consider a class of non-degenerate polyhedra. Let
\begin{equation*}
r_0, \quad M_1, \quad \theta_0, \quad M_0
\end{equation*}
be given positive constants, where $ \theta_0 \in (0, \pi/2) $, $M_0\geq 1$ and $M_1 >1$.

\begin{enumerate}[label=\textbf{(\textit{H\arabic*})}, ref=(H-\arabic*)]
\item\label{ass:domain}\textit{Domain.} Let $ \Omega \subset \mathbb{R}^3 $ be a bounded domain such that
\begin{equation}
	\label{diam}
    \hbox{diam}(\Omega) \leq M_1 r_0,
\end{equation}
where $ \mathrm{diam}(\Omega) $ denotes the diameter of $ \Omega $. 
Moreover, let $\Sigma$ be an open portion of  $\partial\om$ with size at least $r_0$, i.e.  we assume there exists at least one point $P_{\Sigma}\in \Sigma$ such that
\begin{equation}\label{ass:sigma}
    \hbox{dist}(P_{\Sigma},\partial\om \setminus\Sigma)\geq r_0.
\end{equation}

\item\textit{Polyhedral inclusion.}
We say that a polyhedron $ D \subset \Omega $ belongs to the class $ \mathcal{P} = \mathcal{P}(r_0, M_1, \theta_0, M_0) $ if the following conditions hold:
\begin{enumerate}[label=\textbf{(\textit{H2-\arabic*})}, ref=(H2-\arabic*)]
\item \textit{Strict inclusion}:
\begin{equation}
	\label{distfron}
    \mathrm{dist}(D, \partial \Omega) \geq r_0.
\end{equation}

\item\label{ass:dihedral_angle} \textit{Dihedral angle non-degeneracy}: The dihedral angle between intersecting faces at each edge of $ D $ is $ \alpha $, satisfying
\begin{equation}\label{angolifacce}
    \alpha \in (\theta_0, \pi - \theta_0) \cup (\pi + \theta_0, 2\pi - \theta_0).
\end{equation}

\item\label{ass:edge} \textit{Edge non-degeneracy}: Each edge $ \sigma^D $ of $D$ satisfies
\begin{equation}\label{lunghlati}
    \mathrm{length}(\sigma^D) \geq r_0.
\end{equation}

\item\label{ass:face_angle} \textit{Face angle non-degeneracy}: Every internal angle $ \beta $ of each face $ F^D $ satisfies
\begin{equation}\label{angolinterni}
    \beta \in (\theta_0, \pi - \theta_0) \cup (\pi + \theta_0, 2\pi - \theta_0).
\end{equation}

\item\label{ass:Lipschitz_regularity} \textit{Lipschitz regularity}:  $ \Omega \setminus D $ has Lipschitz boundary with constants $ r_0 $ and $ M_0 $ and each face $F$ of $D$ has Lipschitz boundary with constants $ r_0 $ and $ M_0 $ in the plane containing $F$.

\end{enumerate}
\begin{remark}\label{rem1}
As noted in \cite{AspBerFraVes22}, the number of vertices $ V^D $, edges $ \sigma_{ij}^D $, and faces $ F_j^D $ of a polyhedron $D \in \mathcal{P}$ is bounded above by a constant $N_0$, which only depends on $M_1$, $M_0$ and $\theta_0$.
\end{remark}

\item\label{ass:elastic_material}\textit{Elastic material} 

We shall consider isotropic materials with \textit{constant} elastic tensor $\CC$ of Cartesian components
\begin{equation}
	\label{Notaz-compon-cart-C}
	C_{ijkl} = \lambda \delta_{ij} \delta_{kl} + \mu ( \delta_{ki}\delta_{lj}+\delta_{li}\delta_{kj}  ),
\end{equation}
where the Lamé moduli $\lambda, \mu$ satisfy the strong convexity conditions 
\begin{equation}
	\label{Notaz-Lamé-strong-convex}
\mu \geq \alpha_0>0, \quad 2\mu +3\lambda \geq \gamma_0 >0.
\end{equation}
\end{enumerate}

\noindent Let us observe that \eqref{Notaz-Lamé-strong-convex} implies
\begin{equation*}
	\label{Notaz-mu+lambda-pos}
	\mu + \lambda  \geq\frac{\alpha_0 + \gamma_0}{3}.
\end{equation*}
Under these assumptions we have
\begin{equation}
	\label{Notaz-Lamé}
	\CC A = 2 \mu \widehat{A} + \lambda (\hbox{tr}A) I_3, \quad \hbox{for every } 3 \times 3 \ \hbox{matrix } A,
	\end{equation}
\begin{equation}
	\label{Notaz-simmetrie-C}
	C_{ijkl} = C_{klij}=C_{lkij}, \quad i,j,k,l=1,2,3,
\end{equation}
and the strong convexity condition \eqref{Notaz-Lamé-strong-convex} is equivalent to
\begin{equation}
	\label{Notaz-forte-convex-C}
	\CC A \cdot A \geq \xi_0 |A|^2, \quad \hbox{for every } 3\times3 \hbox{ symmetric matrix } A,
\end{equation}
where $\xi_0= \min \{ 2\alpha_0, \gamma_0  \}$.

The Poisson's coefficient $\nu$ is given by
\begin{equation}
	\label{Notaz-Poisson}
	\nu = \dfrac{\lambda}{2(\lambda + \mu)}
\end{equation}
and, as for most of the current materials, we assume
\begin{equation}
	\label{Notaz-Poisson-bounds}
	0\leq \nu < \dfrac{1}{2}.
\end{equation}
Combining \eqref{Notaz-mu+lambda-pos}, \eqref{Notaz-Poisson} and \eqref{Notaz-Poisson-bounds}, we have
\begin{equation}
	\label{Notaz-lambda-pos}
	\lambda \geq 0.
\end{equation}
We shall consider an elastic body $\Omega$ containing a polyhedral inclusion $D$ made by different homogeneous isotropic material. We denote by $\mathbb{C}^i$, $\mathbb{C}^e$ the elastic tensor inside and outside the inclusion $D$, respectively, and by 
\begin{equation*}\label{eq:elastic_tensor_D}
\mathbb{C}^D=\mathbb{C}^i\chi_{D}+\mathbb{C}^e\chi_{\Omega\setminus D}
\end{equation*} 
the elastic tensor in the whole domain $\Omega$. We assume that $\mathbb{C}^i$, $\mathbb{C}^e$ are of the form \eqref{Notaz-compon-cart-C} and satisfy \eqref{Notaz-Lamé-strong-convex}, with the Poisson's coefficient $\nu^i$, $\nu^e$ satisfying \eqref{Notaz-Poisson-bounds}. Obviously, we assume the so-called \textit{visibility} condition 
\begin{equation}\label{eq:visibility_cond}
(\lambda^i-\lambda^e)^2+(\mu^i-\mu^e)^2\geq \eta_0,
\end{equation}
for some $\eta_0>0$.
We point out that, in order to obtain the Lipschitz stability estimate given in Theorem \ref{th:main}, a stronger assumption will be required, namely:
\begin{subequations}
	\label{eq:main}
	\begin{align}
		\label{eq:monotony-Lamè-moduli-Ce-Ci} 
		&\hbox{either }&\mu^e-\mu^i\geq \alpha_1,\ \ 2(\mu^e-\mu^i)+3(\lambda^e-\lambda^i)\geq \gamma_1, 
		\Longleftrightarrow 
		\mathbb{C}^e - \mathbb{C}^i \  \hbox{strongly convex}, \\
			\label{eq:monotony-Lamè-moduli-Ci-Ce}
		&\hbox{or }&\mu^i-\mu^e\geq \alpha_1,\ \ 2(\mu^i-\mu^e)+3(\lambda^i-\lambda^e)\geq \gamma_1,
		\Longleftrightarrow 
		\mathbb{C}^i - \mathbb{C}^e \  \hbox{strongly convex},
	\end{align}
\end{subequations}
for positive constants $\alpha_1, \gamma_1$.

In the sequel, we will refer to the set of parameters
\begin{equation*} 
\quad M_1,\quad M_0, \quad \theta_0,\quad \lambda^i, \quad \mu^i, \quad \lambda^e, \quad \mu^e.
\end{equation*}
as the {\it a priori data}, whereas the dimensional parameter $r_0$ shall appear explicitly in all our estimates. In the sequel we shall denote by $C$ a positive constant which may change {}from line to line.

\section{Inverse problem formulation and main result}
\label{Main-result}

We consider boundary measurements supported on the set $\Sigma \subset \partial \Omega$ satisfying  \eqref{ass:sigma} and, consequently, we will employ a local Dirichlet-to-Neumann map.
\begin{definition}[The Local Dirichlet-to-Neumann map.] 
Let $ \Omega $ be a bounded domain of Lipschitz class, and let $ \Sigma $ be an open subset of $ \partial \Omega $. We define $ H^{1/2}_{co}(\Sigma) $ as the function space
\begin{equation*}
H^{1/2}_{co}(\Sigma) := \left\{ \varphi \in H^{1/2}(\partial \Omega) \mid \mathrm{supp}(\varphi) \subset \Sigma \right\},
\end{equation*}
and $ H^{-1/2}_{co}(\Sigma) $ as its topological dual. The dual pairing between $ H^{1/2}_{co}(\Sigma) $ and $ H^{-1/2}_{co}(\Sigma) $ is denoted by $ \langle \cdot, \cdot \rangle $, and is based on the $ L^2(\Sigma) $ scalar product:
\begin{equation*}
\langle f, g \rangle = \int_{\partial \Omega} fg, \quad \text{for all } f, g \in L^2(\partial \Omega).
\end{equation*}
Then, given $ \psi \in H^{1/2}_{co}(\Sigma) $, there exists a unique weak solution $ u \in H^1(\Omega) $ to the Dirichlet problem
\begin{equation}\label{eq:elastic_pb}
\begin{cases}
\textup{div}(\mathbb{C}^D \widehat{\nabla} u) = 0,& \text{in}\ \Omega,\\ u = \psi, & \text{on}\ \partial \Omega. 
\end{cases}
\end{equation}
The local Dirichlet-to-Neumann linear map $ \Lambda^\Sigma_{D} $ is defined as follows:
\begin{equation*}
\Lambda^\Sigma_{D} : \psi \in H^{1/2}_{co}(\Sigma) \mapsto (\mathbb{C}^D \nabla u) n \big|_\Sigma \in H^{-1/2}_{co}(\Sigma),
\end{equation*}
where $ n $ is the outer unit normal to $ \Omega $.
\end{definition}

The map $ \Lambda^\Sigma_{D} $ can be identified with the bilinear form on $ H^{1/2}_{co}(\Sigma) \times H^{1/2}_{co}(\Sigma) $ by
\begin{equation*}
\Lambda^\Sigma_{D}(\psi, \varphi) := \langle \Lambda^\Sigma_{D} \psi, \varphi \rangle = \int_\Omega \mathbb{C}^D \nabla u \cdot  \nabla v,
\end{equation*}
for all $ \psi, \varphi \in H^{1/2}_{co}(\Sigma) $, where $ u $ solves \eqref{eq:elastic_pb} and $ v $ is any function in $ H^1(\Omega) $ such that $ v = \varphi $ on $ \partial \Omega $.
The norm of the local Dirichlet-to-Neumann map in the space of linear operators $\mathcal{L}\left(H^{1/2}_{co}(\Sigma),H^{-{1/2}}_{co}(\Sigma)\right)$ is defined by 
\begin{equation*}
    \|\Lambda^{\Sigma}_{D}\|_{\star}:=\sup_
    {\overset{\scriptstyle \psi \in H^{1/2}_{co}(\Sigma)  }{\scriptstyle
    		\psi \neq 0}} 
    \dfrac{\|\Lambda^{\Sigma}_{D}\psi\|_{H^{-{1/2}}_{co}(\Sigma)}}{\|\psi\|
    	_{H^{{1/2}}_{co}(\Sigma)}}.
\end{equation*}
Below, we present the main theorem of this work. The proof is deferred to Section \ref{sec:proof_main_th}, as it relies on several intermediate results that will be introduced in the upcoming sections.
\begin{theorem}\label{th:main}
Let $\Omega$ be a bounded domain satisfying \ref{ass:domain} and let $D_0$, $D_1\in\mathcal{P}$. Let the elastic tensors $\mathbb{C}^{D_0}$, $\mathbb{C}^{D_1}$ satisfy assumptions \ref{ass:elastic_material}. Let $\Sigma$ be an open portion of $\partial\Omega$ satisfying \eqref{ass:sigma}. Then, there exists $C>0$ only depending on the a priori data such that 
\begin{equation}
    d_H(\partial D_0, \partial D_1)\leq Cr_0^2\|\Lambda^\Sigma_{D_0}- \Lambda^\Sigma_{D_1}\|_*.
\end{equation}
\end{theorem}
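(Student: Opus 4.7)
The plan is to follow the Bacchelli--Vessella paradigm recalled in the introduction, combining the rough logarithmic estimate (Theorem \ref{Teo:stima_log}) with a constructive linearization argument based on the Gateaux derivative of the forward map $D\mapsto\Lambda_D^\Sigma$. A preliminary dichotomy reduces matters to the ``close'' regime: if $r_0\|\Lambda_{D_0}^\Sigma-\Lambda_{D_1}^\Sigma\|_\star\ge\eta_*$ for some threshold $\eta_*>0$ depending only on the a priori data, then $d_H(\partial D_0,\partial D_1)\le \mathrm{diam}(\Omega)\le M_1 r_0$ yields the claim trivially with $C=M_1/\eta_*$. I therefore assume $r_0\|\Lambda_{D_0}^\Sigma-\Lambda_{D_1}^\Sigma\|_\star<\eta_*$. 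By Theorem \ref{Teo:stima_log}, the Hausdorff distance $d_H(\partial D_0,\partial D_1)$ is then controlled by a logarithmic modulus and, in particular, $D_0$ and $D_1$ have the same number of vertices, edges, and faces, which can be put in combinatorial correspondence.

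In the second step I would exploit this combinatorial matching to build the homotopy $t\in[0,1]\mapsto D_t:=\Phi_t(D_0)\subset\mathcal{P}$ connecting $D_0$ to $D_1$ supplied by Propositions \ref{vectorfield} and \ref{prop:property_Phi} (detailed construction in Appendix \ref{Appendix}). Denoting by $v=\tfrac{d}{dt}\Phi_t\big|_{t=0}$ the associated infinitesimal displacement and by $F(D):=\Lambda_D^\Sigma$ the forward map with Gateaux derivative $F'(D)$, the fundamental theorem of calculus together with the distributed and boundary representation formulas of Subsection \ref{Gateaux} gives
\begin{equation*}
\Lambda_{D_1}^\Sigma-\Lambda_{D_0}^\Sigma = F'(D_0)[v] + \int_0^1 \bigl(F'(D_t)-F'(D_0)\bigr)[v]\,dt.
\end{equation*}
The Lipschitz continuity of $F'$ (Proposition \ref{prop:Continuità-derivata-di-F}) bounds the remainder by $C|v|^2/r_0^3$, while the lower bound established in Subsection \ref{lower-bound} yields $\|F'(D_0)[v]\|_\star\ge c|v|/r_0^2$. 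Since the logarithmic estimate forces $|v|$ to be as small as desired once $\eta_*$ is chosen small enough, the quadratic remainder is absorbed by the linear term and one obtains $\|\Lambda_{D_0}^\Sigma-\Lambda_{D_1}^\Sigma\|_\star\ge \tfrac{c}{2r_0^2}|v|$. The proof is completed by observing that, under the combinatorial matching of Step 1, the parameter-space norm $|v|$ is comparable, with constants depending only on the a priori data, to $d_H(\partial D_0,\partial D_1)$.

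The main obstacle is to make the two regimes compatible: the control on $|v|$ provided by Theorem \ref{Teo:stima_log} is only logarithmic in $\|\Lambda_{D_0}^\Sigma-\Lambda_{D_1}^\Sigma\|_\star$, so the dichotomy threshold $\eta_*$ must be chosen absolutely small in order for the second-order remainder to be dominated by the linear lower bound. A further delicate point is that the lower bound on the Gateaux derivative has to hold uniformly over the whole class $\mathcal{P}$ (and along the entire homotopy path $D_t$), not merely at a fixed $D_0$; this uniformity ultimately hinges on the constructive near-singularity estimate for the biphase fundamental solution (Lemma \ref{lem:LBD77.1}) and on the boundary elastic-moment-tensor representation (Proposition \ref{prop:LBD15.1}), which express the jump of the relevant vector field across $\partial D$ in terms of exterior quantities only and with constants depending only on the a priori data.
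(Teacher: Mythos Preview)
Your proposal is correct and follows essentially the same strategy as the paper: a dichotomy on the size of $r_0\|\Lambda_{D_0}^\Sigma-\Lambda_{D_1}^\Sigma\|_\star$, the logarithmic estimate to enter the regime where the homotopy and derivative machinery apply, the first-order Taylor expansion of $t\mapsto\Lambda_{D_t}^\Sigma$, the Lipschitz bound on $F'$ (Proposition \ref{prop:Continuità-derivata-di-F}) for the remainder, and the lower bound (Proposition \ref{prop:LBD1.1}) for the linear term. One small clarification: the lower bound on the Gateaux derivative is only invoked at $t=0$, not along the whole path $D_t$; uniformity along the path is handled entirely by the Lipschitz continuity of $F'$, so your parenthetical concern there is an overcaution rather than an actual requirement.
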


\section{Some useful geometrical results}
\label{geometry}

In this section, we gather some geometric results on polyhedra in the class $ \mathcal{P} $, deriving essentially {}from \cite{ADCMR,AspBerFraVes22,R08}.

We first establish the relation between the Hausdorff distance of two polyhedra in $ \mathcal{P} $ and the Hausdorff distance of their boundaries.
\begin{proposition}[Proposition 2.4 in \cite{R08}]
	\label{prop1}
Let $ D_0 $ and $ D_1 \in \mathcal{P} $. Then there exists a positive constant $ C_1 > 1 $, depending only on $M_0$, $M_1$ and $\theta_0$, such that 
\begin{equation}
	\label{12}
    C_1^{-1} d_H(\partial D_0, \partial D_1) \leq d_H(D_0, D_1) \leq C_1 d_H(\partial D_0, \partial D_1).
\end{equation}
\end{proposition}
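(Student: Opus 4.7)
The proof rests on two ingredients: the uniform two-sided Lipschitz cone condition at every boundary point of $D_0$ and $D_1$, which is furnished by assumption \ref{ass:Lipschitz_regularity}, and the non-degeneracy of face and dihedral angles (\ref{ass:dihedral_angle}, \ref{ass:face_angle}), which prevents sliver-like components in the symmetric difference $D_0 \triangle D_1$. One first reduces to the regime where $\eta := \max\{d_H(D_0, D_1), d_H(\partial D_0, \partial D_1)\} < c\, r_0$ for a small $c = c(M_0, M_1, \theta_0)$; outside this regime both distances are comparable to $r_0$ by the diameter bound \eqref{diam}, so the inequality is trivial with $C_1$ absorbing this ratio.

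For the inequality $d_H(\partial D_0, \partial D_1) \leq C_1\, d_H(D_0, D_1)$, fix $p \in \partial D_0$ and set $\eta = d_H(D_0, D_1)$. Write $N_\eta(A) := \{x \in \mathbb{R}^3 : \mathrm{dist}(x, A) \leq \eta\}$. The exterior Lipschitz cone at $p$ supplies a unit direction $v$ and aperture $\theta = \theta(M_0) \in (0, \pi/2)$ such that $\mathrm{dist}(p + tv, D_0) \geq t \sin\theta$ for all $0 < t < c\, r_0$. Choose $t_\star = 2\eta/\sin\theta$; then $p + t_\star v \notin N_\eta(D_0) \supseteq D_1$. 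If $p \in D_1$, the segment $[p, p + t_\star v]$ connects $D_1$ to $D_1^c$ and must cross $\partial D_1$ within distance $t_\star$ of $p$. If $p \notin D_1$, then from $p \in D_0 \subseteq N_\eta(D_1)$ we pick $q \in D_1$ with $|p - q| \leq \eta$, and $[p, q]$ crosses $\partial D_1$ within distance $\eta$ of $p$. Swapping the roles of $D_0$ and $D_1$ closes this direction with a constant of order $2/\sin\theta$.

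The converse inequality $d_H(D_0, D_1) \leq C_1\, d_H(\partial D_0, \partial D_1)$ is subtler. Set $\eta = d_H(\partial D_0, \partial D_1)$ and pick $p \in D_0 \setminus D_1$; we need $\mathrm{dist}(p, D_1) \leq C_1 \eta$. Let $K$ be the connected component of $D_0 \setminus \mathrm{Int}(D_1)$ containing $p$. A short verification shows $\mathrm{Int}(K) = K \cap \mathrm{Int}(D_0) \cap D_1^c$, so that $\partial K \subseteq \partial D_0 \cup \partial D_1$, and by hypothesis both of these sets lie in each other's $\eta$-tubular neighborhood. The central claim is a \emph{thinness property}: $\mathrm{dist}(p, \partial K) \leq C\eta$ with $C = C(M_0, M_1, \theta_0)$. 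Granted this, since $\partial K \subseteq \partial D_0 \cup \partial D_1 \subseteq N_\eta(D_1)$, one obtains $\mathrm{dist}(p, D_1) \leq (C + 1)\eta$, which is the desired bound.

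The main obstacle is the thinness property, and this is exactly where assumptions \ref{ass:dihedral_angle} and \ref{ass:face_angle} are indispensable. Arguing by contradiction, suppose $B(p, C\eta) \subseteq K$ for a large $C$. Applying the interior Lipschitz cone condition at a nearest point of $\partial K$ to $p$, together with the angular lower bound $\theta_0$ on dihedral and face angles of both polyhedra, rules out the existence of a shallow-but-wide or narrow-but-deep region carved out between $\partial D_0$ and $\partial D_1$. Quantitatively, the angular non-degeneracy forces the ratio width/depth of any cell of $D_0 \triangle D_1$ to be bounded by a constant depending only on $\theta_0$; combined with $d_H(\partial D_0, \partial D_1) = \eta$, this yields $\mathrm{diam}(K) \leq C\eta$. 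Without the angle bounds, this estimate would fail (consider a near-flat dent or a near-degenerate sharp face), which explains the essential role of the admissible class $\mathcal{P}$ in the comparability of the two Hausdorff distances.
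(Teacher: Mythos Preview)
The paper does not prove this proposition; it simply cites it as Proposition~2.4 in \cite{R08}, where the result is established for general domains satisfying a uniform Lipschitz condition. So there is no ``paper's proof'' to compare against, and your argument must stand on its own.

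Your first direction, $d_H(\partial D_0,\partial D_1)\le C_1\, d_H(D_0,D_1)$, is essentially correct and follows the standard exterior-cone argument.

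The second direction has a genuine gap. You correctly reduce to the ``thinness property'' $\mathrm{dist}(p,\partial K)\le C\eta$ for $K$ a component of $D_0\setminus\mathrm{Int}(D_1)$, but your justification of this claim is not a proof. The heuristic that ``angular non-degeneracy forces the ratio width/depth of any cell of $D_0\triangle D_1$ to be bounded by a constant depending only on $\theta_0$'' does not hold: a cell of $D_0\triangle D_1$ is bounded by pieces of $\partial D_0$ and pieces of $\partial D_1$, and where these meet the dihedral angle can be arbitrarily small even though each polyhedron separately satisfies \ref{ass:dihedral_angle}--\ref{ass:face_angle}. There is no a priori control on the geometry of $K$ coming from the angle hypotheses on the individual polyhedra.

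The argument that actually works (and is the one in \cite{R08}) uses only the uniform Lipschitz regularity \ref{ass:Lipschitz_regularity}, not the polyhedral angle bounds. The key step is a connectedness argument: for $\eta$ small compared to $r_0$, the set $\{x\in D_0:\mathrm{dist}(x,\partial D_0)>2\eta\}$ is connected (a quantitative consequence of the uniform interior cone condition; compare the analogous exterior statement quoted later in the paper from \cite[Proposition~5.5]{ARRV}). Since this ``core'' of $D_0$ avoids $\partial D_1$ (every point of $\partial D_1$ lies within $\eta$ of $\partial D_0$), it lies entirely in $\mathrm{Int}(D_1)$ or entirely in $D_1^c$; the second alternative is excluded because both cores contain balls of radius comparable to $r_0$ while $\eta\ll r_0$. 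Hence every $p\in D_0$ is either in $D_1$ or within $2\eta$ of $\partial D_0$, and thus within $3\eta$ of $D_1$. Your attribution of the essential role to \ref{ass:dihedral_angle} and \ref{ass:face_angle} is therefore misplaced; those hypotheses matter elsewhere in the paper, but here \ref{ass:Lipschitz_regularity} alone carries the argument.
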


For $ D_0 $ and $ D_1 \in \mathcal{P} $, let $ \mathcal{G} $ be the connected component of $ \Omega \setminus (D_0 \cup D_1) $ that contains $ \partial \Omega $, and let us define
\begin{equation*}
    \Omega_{\mathcal{G}} = \Omega \setminus \mathcal{G}.
\end{equation*}
Since the value of $ d_H(\partial D_0, \partial D_1) $ may be attained at a point on $ \partial D_0 \cup \partial D_1 $ that is not necessarily on $ \partial \Omega_{\mathcal{G}} $, and hence cannot be reached {}from $ \partial \Omega $ without crossing $ \partial D_0 \cup \partial D_1 $, we introduce a modified distance as defined in \cite{ADCMR}.

\begin{definition}\label{distmod}
The modified distance $ d_\mu(D_0, D_1) $ is defined as
\begin{equation*}
d_\mu(D_0, D_1) = \max \left\{ \max_{x \in \partial D_0 \cap \partial \Omega_{\mathcal{G}}} \mathrm{dist}(x, D_1), \max_{x \in \partial D_1 \cap \partial \Omega_{\mathcal{G}}} \mathrm{dist}(x, D_0) \right\}.
\end{equation*}
\end{definition}

It is important to note that this is not a metric. It is straightforward to show (see \cite{ADCMR}) that
\begin{equation*}
    d_\mu(D_0, D_1) \leq d_H(\partial D_0, \partial D_1).
\end{equation*}

In general, $ d_\mu $ does not provide an upper bound for the Hausdorff distance, but for polyhedra in the class $ \mathcal{P} $ the following result holds.

\begin{proposition}[Proposition 3.4 in \cite{AspBerFraVes22}]
	\label{prop2}
There exists a constant $ C_2 > 1 $, depending only on $M_0$, $M_1$ and $\theta_0$, such that for $ D_0, D_1 \in \mathcal{P} $,
\begin{equation}
d_H(\partial D_0, \partial D_1) \leq C_2 d_\mu(D_0, D_1).
\end{equation}
\end{proposition}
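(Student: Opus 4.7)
My plan is to prove $d_H(\partial D_0, \partial D_1) \leq C_2\, d_\mu(D_0, D_1)$ by a case analysis at the extremal point, preceded by a reduction to the regime $d := d_\mu(D_0, D_1) \ll r_0$. When $d \geq c_0 r_0$ for a small constant $c_0 > 0$ depending only on the a priori data, the bound is trivial since
\begin{equation*}
d_H(\partial D_0, \partial D_1) \leq \mathrm{diam}(\Omega) \leq M_1 r_0 \leq (M_1/c_0)\, d,
\end{equation*}
and one may take $C_2 \geq M_1/c_0$. I therefore focus on the small-$d$ regime, in which the two polyhedra are geometrically close.

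By symmetry take $x^* \in \partial D_0$ realizing $D := \mathrm{dist}(x^*, \partial D_1) = d_H(\partial D_0, \partial D_1)$. If $x^* \in \partial \Omega_{\mathcal{G}}$, then $\mathrm{dist}(x^*, D_1) \leq d$ by the definition of $d_\mu$, and since $\mathrm{int}(D_1) \subset \mathrm{int}(\Omega_{\mathcal{G}})$ forces $x^* \notin \mathrm{int}(D_1)$, the closest point of $D_1$ to $x^*$ actually lies on $\partial D_1$, giving $D \leq d$ at once. In the hard case $x^* \in \partial D_0 \setminus \partial \Omega_{\mathcal{G}}$, the outward side of $\partial D_0$ at $x^*$ lies either in $\mathrm{int}(D_1)$ or inside a bounded pocket of $\Omega \setminus (D_0 \cup D_1)$ not touching $\partial \Omega$. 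I would rule out pockets in the small-$d$ regime by a topological argument: since $D_0$ and $D_1$ are each homeomorphic to a ball and lie within Hausdorff distance much smaller than $r_0$ of each other, their union $D_0 \cup D_1$ is homotopy-equivalent to a ball and encloses no additional bounded regions of $\Omega$. Hence $x^* \in \mathrm{int}(D_1) \cap \partial D_0$, and $B_D(x^*) \subset D_1$.

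It remains to bound $D$. I would first isolate the degenerate topological configuration $D_0 \subset \mathrm{int}(D_1)$ (and symmetrically $D_1 \subset \mathrm{int}(D_0)$): here $\partial D_0 \cap \partial \Omega_{\mathcal{G}} = \emptyset$ and $\partial D_1 \subset \partial \Omega_{\mathcal{G}}$, so $d_\mu = \max_{y \in \partial D_1} \mathrm{dist}(y, D_0)$, and a direct check shows this equals $d_H(\partial D_0, \partial D_1)$ because every $y \in \partial D_1$ lies outside $D_0$. In the remaining substantive case, $D_0 \not\subset \mathrm{int}(D_1)$, so the connected component $H \subset \partial D_0 \cap \mathrm{int}(D_1)$ containing $x^*$ has a nonempty relative boundary $\partial H \subset \partial D_0 \cap \partial D_1$. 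For any $y \in \partial H$ we have $y \in \partial D_1$, so $D \leq |x^* - y|$, and the problem reduces to bounding $\mathrm{diam}(H) \leq Cd$. This step is the main technical obstacle: it rests on the uniform bound $N_0$ on the number of faces from Remark~\ref{rem1}, the Lipschitz regularity \ref{ass:Lipschitz_regularity}, and the non-degenerate face and dihedral angles \ref{ass:face_angle}, \ref{ass:dihedral_angle}. The heuristic is that each hidden face of $D_0$ is shadowed across a gap by an adjacent face of $D_1$ with gap width controlled by $d$ through the visible boundary, and an inductive argument over at most $N_0$ adjacent faces converts this into the desired bound. Applying the symmetric argument with the roles of $D_0$ and $D_1$ exchanged then closes the proof.
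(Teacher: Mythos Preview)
The paper does not give its own proof of this proposition; it is quoted verbatim as Proposition~3.4 of \cite{AspBerFraVes22} and no argument is supplied here. So there is nothing in this paper to compare your outline against directly.

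That said, your sketch has a structural gap that you should be aware of. After reducing to $d_\mu(D_0,D_1)<c_0r_0$, you invoke that ``$D_0$ and $D_1$ \dots\ lie within Hausdorff distance much smaller than $r_0$ of each other'' in order to rule out pockets and to run the face-shadowing heuristic. But small $d_\mu$ is exactly the hypothesis, and small $d_H$ is the conclusion you are trying to establish; you cannot assume $d_H\ll r_0$ at this stage without circularity. The no-pockets step and the claim that hidden faces are separated by gaps of width $\lesssim d$ both lean on this unproven closeness. Separately, the homotopy assertion that the union of two nearby topological balls is homotopy-equivalent to a ball is false in general (two solid balls that barely overlap can have a dumbbell-shaped union with nontrivial complement components), so even granting closeness that line of reasoning would need to use the Lipschitz/angle assumptions on $\partial D_0$ and $\partial D_1$ rather than bare topology.

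Your overall architecture---handle the case $x^*\in\partial\Omega_{\mathcal G}$ directly, then for $x^*\notin\partial\Omega_{\mathcal G}$ show the hidden patch $H$ has diameter controlled by $d_\mu$---is a reasonable plan, and the degenerate inclusion case $D_0\subset\mathrm{int}(D_1)$ is handled cleanly. To make the core step rigorous one typically exploits the uniform interior/exterior cone condition coming from the Lipschitz bound $(r_0,M_0)$ to propagate a distance bound from a point of $\partial D_0\cap\partial\Omega_{\mathcal G}$ (where $d_\mu$ controls things) across edges of $D_0$ into the hidden region, iterating over at most $N_0$ faces; but this has to be done without presupposing $d_H$ small. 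If you want to reconstruct the full argument, consult the cited reference \cite{AspBerFraVes22}.
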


Next, let us introduce an augmented domain $\oms$. As in \cite[Section $6$]{ARRV}, we can construct an open set $\Omega_0$ exterior to $\Omega$ such that:
\begin{enumerate}
	\item $\Sigma_0=\partial \om_0 \cap \partial \om\subset\Sigma$;
	\item the size of $\Sigma_0$ is at least $ \frac{r_0}{2}$ (see definition given in \eqref{ass:sigma});
	\item $\oms=\om\cup\Sigma_0\cup\om_0$ is an open and connected set having Lipschitz boundary with constants $r_\sharp$, $M_\sharp$,  $M_\sharp \geq M_0$,
	\begin{equation}
		\label{eq:def-r-sharp}
		r_\sharp=\zeta r_0, \quad 0<\zeta \leq 1,
	\end{equation}
	with $\zeta$ and $M_\sharp$ only depending on $M_0$;
	\item there exists $P_0\in\om_0$ such that
	\begin{equation}\label{eq:ball}
		B_{2r_\sharp}(P_0)\subset\om_0
	\end{equation}
	and, denoting by $S$ the segment joining $P_0$ and $P_\Sigma$, 
	\begin{equation}\label{eq:tubo-P0-Psigma}
		\{ x \in \RR^3 \ | \ \hbox{dist}(x, S) \leq 2r_\sharp  \} \subset \Omega^\sharp.
	\end{equation}
\end{enumerate}
We extend the elasticity tensor {}from $\Omega$ to $\oms$ defining it as $\CC^e$ in $\Omega_0\cup \Sigma_0$.

\begin{proposition}
	\label{th:geometric_lemma}
Let $ D_0, D_1 \in \mathcal{P} $. There exist positive constants $ C_3 $, $ d<1 $, $ \chi $, depending only on $M_0$, $M_1$ and $\theta_0$, and a point $ P \in \partial D_0 \cap \partial \Omega_{\mathcal{G}} $ such that
\begin{equation}\label{1-19}
    C_3 d_\mu^3(D_0, D_1) \leq r_0^2\mathrm{dist}(P, D_1),
\end{equation}
\begin{equation}
	\label{infaccia}
  \mathrm{dist}(P, \cup_{i \neq j}\sigma^{D_0}_{ij} ) \geq \chi r_0,
\end{equation}
and for any point $ \overline{P} \in B_{r_\sharp}(P_0) $, there exists a curve $ \mathfrak{c} $ joining $ \overline{P} $ to $Q= P + dr_0 n $, where $ n $ is the unit outer normal to $ \partial D_0 $, such that, denoting by $C_{PQ}^{dr_0}$ the closed cylinder with axis the segment joining $P$ and $Q$ and radius $dr_0$, the set
\begin{equation}
	\label{2-19}
   \mathcal{V}= \left\{x\in \RR^3\ | \ \mathrm{dist}(x, \mathfrak{c} )\leq d r_0 \right\}\cup C_{PQ}^{dr_0}
\end{equation}
is contained in $\oms\setminus \Omega_{\mathcal{G}}$.
\end{proposition}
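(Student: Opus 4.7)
The plan is to produce $P$ and the tube $\mathcal{V}$ in three stages: (i) locate a candidate point realizing $d_\mu$; (ii) move it, if necessary, into the relative interior of a face of $D_0$ at distance $\geq \chi r_0$ from the edge skeleton while keeping $\mathrm{dist}(P,D_1)$ polynomially large; and (iii) exhibit the curve $\mathfrak{c}$ and check that $\mathcal{V}\subset \oms\setminus \Omega_\mathcal{G}$.

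For stages (i)--(ii), by the definition of $d_\mu$ and the symmetric roles of $D_0,D_1$, after possibly relabeling I fix $P^*\in \partial D_0 \cap \partial \Omega_\mathcal{G}$ with $\mathrm{dist}(P^*, D_1)=d_\mu(D_0,D_1)$. If $\mathrm{dist}(P^*, \bigcup_{i\neq j}\sigma^{D_0}_{ij})\geq \chi r_0$, I set $P=P^*$ and (\ref{1-19}) follows from $d_\mu\leq \mathrm{diam}(\Omega)\leq M_1 r_0$. Otherwise I consider a face $F$ of $D_0$ adjacent to $P^*$. Using \ref{ass:edge}--\ref{ass:face_angle}, the reduced face $F_\chi:=\{x\in F : \mathrm{dist}(x, \partial F)\geq \chi r_0\}$ is non-empty for $\chi$ depending only on $\theta_0$ and $M_0$, and I pick $P\in F_\chi$ maximizing $x\mapsto \mathrm{dist}(x,D_1)$, which makes (\ref{infaccia}) automatic.

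The cubic estimate (\ref{1-19}) is the technical core. The $1$-Lipschitz bound only gives $\max_{F_\chi}\mathrm{dist}(\cdot,D_1)\geq d_\mu - C\chi r_0$, which is useless when $d_\mu$ is small, so a structural argument is needed. Since $D_1$ is a polyhedron, the nearest-point projection onto $D_1$ partitions $\mathbb{R}^3$ into finitely many polyhedral regions on which $\mathrm{dist}(\cdot, D_1)$ is affine. I plan a case analysis on which stratum of $D_1$ (face, edge, or vertex) realizes $\mathrm{dist}(P^*,D_1)$ and on whether $P^*$ is close to an edge or a vertex of $D_0$. In each case the non-degeneracy of dihedral angles \ref{ass:dihedral_angle} and face angles \ref{ass:face_angle} of \emph{both} polyhedra provides quantitative control, and up to two factors of $d_\mu/r_0$ may be lost in the two tangential directions of $F$ when angles open unfavorably, producing the cubic exponent.

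For stage (iii), since $P\in F_\chi$ is at distance $\geq \chi r_0$ from the edge skeleton, the outward normal $n$ to $\partial D_0$ at $P$ is well-defined on a disk of radius $\chi r_0$ in $F$. Choosing $d\leq \chi/2$ forces the cylinder $C_{PQ}^{dr_0}$ into the outward half-space of that tangent plane, which locally coincides with $\mathcal{G}$ because $P\in\partial \mathcal{G}$; in particular, the cylinder avoids both $D_0$ and $D_1$ (the latter being contained in $\Omega_\mathcal{G}$). The curve $\mathfrak{c}$ is then built by concatenating a path inside $\mathcal{G}$ from $Q$ to a point of $\Sigma_0$, using connectedness of $\mathcal{G}$ and the inclusion $\Sigma_0\subset \Sigma$, and a path inside $\Omega_0$ from $\Sigma_0$ to $\overline{P}\in B_{r_\sharp}(P_0)$, exploiting (\ref{eq:ball})--(\ref{eq:tubo-P0-Psigma}). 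The Lipschitz regularity of $\partial \oms$ with constants $(r_\sharp, M_\sharp)$ allows me to thicken the path so that its $dr_0$-tubular neighborhood stays inside $\oms\setminus \Omega_\mathcal{G}$, possibly after a further reduction of $d$. The main obstacle is the cubic bound in (\ref{1-19}); the remainder is a careful elastic counterpart of the conductivity construction of \cite{AspBerFraVes22}, with the additional subtlety of simultaneously tracking the projection onto $D_1$ and the normal to $D_0$ when $P^*$ sits near an edge or a vertex.
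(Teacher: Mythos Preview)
The paper does not prove this proposition in detail: it defers entirely to Proposition~3.8 of \cite{AspBerFraVes22}, noting only that the present statement is a refinement obtained by ``slight modifications'' and that without loss of generality $P\in\partial D_0$. Your outline has the right architecture for that argument, but two steps do not go through as written.

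In stage~(iii) you assert that the outward half-space at $P$ ``locally coincides with $\mathcal{G}$ because $P\in\partial\mathcal{G}$'' and conclude that the cylinder avoids $D_1$. This is circular: membership in $\partial\mathcal{G}$ only guarantees that arbitrarily small neighborhoods of $P$ meet $\mathcal{G}$, not that a cylinder of \emph{fixed} radius $dr_0$ lies inside it. Nothing in your construction prevents $D_1$ from protruding into the outward half-space of $F$ within tangential distance $dr_0$ of $P$, in which case $C_{PQ}^{dr_0}\cap D_1\neq\emptyset$ and $\mathcal V\not\subset\oms\setminus\Omega_{\mathcal G}$. Securing this inclusion requires coordinating the choice of $P$, $d$ and $\chi$ with the position of $D_1$, which is exactly the polyhedral geometry carried out in \cite{AspBerFraVes22}. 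A related issue occurs in stage~(ii): after replacing $P^*$ by a maximizer of $\mathrm{dist}(\cdot,D_1)$ over $F_\chi$, you have not checked that the new point still lies on $\partial\Omega_{\mathcal G}$; part of $F_\chi$ may sit inside $D_1$ or in a pocket of $\Omega_{\mathcal G}$ not reachable from $\partial\Omega$. Finally, your heuristic for the cubic exponent in \eqref{1-19} (``up to two factors of $d_\mu/r_0$ may be lost'') is plausible but is not an argument; you correctly identify this as the technical core, and it is precisely what the cited reference supplies. Note also that this lemma is purely geometric, with no elastic structure involved, so there is no separate ``elastic counterpart'' to construct.
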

\begin{proof}
The statement above is a refinement of Proposition 3.8 in \cite{AspBerFraVes22} and the proof follows by slight modifications of the arguments in \cite{AspBerFraVes22}. Let us notice that, without loss of generality, we have assumed that the point $P$ belongs to the boundary of the domain $D_0$.
\end{proof}

Finally, we recall that if the two polyhedra are sufficiently close, then they have the same number of faces, edges and vertices. 
\begin{proposition}[Proposition 3.9 in \cite{AspBerFraVes22}]
	\label{distvert}
There exist two constants $\delta_0 \in (0,1)$ and $C_4>1$, depending only on $M_0$, $M_1$ and $\theta_0$, such that, if for some $D_0$ and $D_1$ in $\mathcal{P}$,
\begin{equation}
	\label{eq:dH piccola}
    d_H\left(\partial D_0, \partial D_1\right)\leq \delta_0 r_0,
\end{equation}
then $D_0$ and $D_1$ have the same number $N$ of vertices $\left\{ V^{D_0}_i\right\}_{i=1}^N$ and $\left\{V^{D_1}_i\right\}_{i=1}^N$, respectively, which can be ordered in such a way that
\begin{equation}
	\label{eq:distvert}
    C_4^{-1}d_H\left(\partial D_0, \partial D_1\right) \leq |V^{D_0}_i - V^{D_1}_i|\leq C_4 d_H\left(\partial D_0, \partial D_1\right).
\end{equation}
Moreover, for each edge or face in $D_0$ there is an edge or a face in $D_1$ with corresponding vertices.
\end{proposition}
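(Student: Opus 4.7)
The plan is to exploit the uniform non-degeneracy built into the class $\mathcal{P}$ (bounded dihedral angles \ref{ass:dihedral_angle}, bounded face angles \ref{ass:face_angle}, minimum edge length \ref{ass:edge}, and the Lipschitz graph structure \ref{ass:Lipschitz_regularity}) to promote quantitative boundary closeness into a combinatorial and metric bijection of the vertex sets. The core intuition is that each polyhedral vertex $V$ carries a local ``solid-angle signature'' that cannot be reproduced by the flatter pieces (relative interior of a face, or relative interior of an edge) of any other polyhedron in $\mathcal{P}$; so if $\partial D_1$ stays within $\delta_0 r_0$ of a vertex $V^{D_0}_i$ for a sufficiently small $\delta_0$, then $\partial D_1$ itself must carry a vertex nearby.

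First, I would fix $\delta_0$ small enough to run the following local argument at each vertex $V^{D_0}_i$. In a ball $B_{\chi r_0}(V^{D_0}_i)$ with $\chi$ depending only on $M_0,\theta_0$, the set $\partial D_0$ is a genuinely three-dimensional polyhedral cone: in spherical coordinates centered at $V^{D_0}_i$, it determines a configuration of at least three spherical-polygonal pieces whose pairwise dihedral contributions are bounded away from $0,\pi,2\pi$ by $\theta_0$, so the spherical ``angular defect'' between this cone and any affine plane, or any union of two half-planes meeting along a line, is bounded below by a positive constant $c(\theta_0,M_0)$. If $\partial D_1$ had no vertex inside $B_{\chi r_0}(V^{D_0}_i)$, its intersection with that ball would consist of either a single face piece or two face pieces joined along a straight segment, and the computed angular defect would force
\[
d_H\bigl(\partial D_0\cap B_{\chi r_0}(V^{D_0}_i),\ \partial D_1\cap B_{\chi r_0}(V^{D_0}_i)\bigr)\ \geq\ c(\theta_0,M_0)\,\chi r_0,
\]
contradicting $d_H\leq \delta_0 r_0$. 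Hence each $V^{D_0}_i$ has a vertex of $D_1$ in its $\chi r_0$-neighborhood; symmetrizing in $D_0,D_1$ and invoking the uniform vertex bound of Remark \ref{rem1} yields a bijection on vertex sets, and the edge/face correspondence then follows by the same mechanism (a would-be unmatched edge of length $\geq r_0$ would again produce an $r_0$-scale boundary discrepancy).

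For the quantitative upper bound $|V^{D_0}_i-V^{D_1}_i|\leq C_4\,d_H$, I would refine the local argument: once the cones at the two paired vertices are known to be combinatorially identical, an elementary planar computation (using the non-degeneracy of face angles at the vertex) shows that a translation by $t$ of one cone relative to the other induces a local one-sided Hausdorff discrepancy comparable to $t$, with comparability constant depending only on $\theta_0$. Since the actual discrepancy is at most $d_H$, the vertex misalignment is at most $C_4\,d_H$. For the matching lower bound on the maximal vertex displacement, pick a point $P\in\partial D_0$ attaining $d_H$; by the Lipschitz graph structure \ref{ass:Lipschitz_regularity} and \eqref{lunghlati}, $P$ lies in a face, edge, or vertex whose image under the rigid-motion-plus-perturbation induced by the vertex map is at distance exactly controlling $d_H$, and tracing this back to the incident vertices gives $d_H\leq C_4\max_i|V^{D_0}_i-V^{D_1}_i|$.

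The main obstacle is the quantitative spherical-cap estimate in the second paragraph: one must show that removing a single vertex from a polyhedral cone in $\mathcal{P}$ always costs a definite amount in local Hausdorff distance, with the lower-bound constant depending only on $\theta_0$ and $M_0$. This is the step where \ref{ass:dihedral_angle} and \ref{ass:face_angle} are used together in a non-trivial way; once it is in hand, all the remaining steps are essentially combinatorial and linear-algebraic, and the constants $\delta_0, C_4$ come out depending only on $M_0, M_1, \theta_0$, as claimed.
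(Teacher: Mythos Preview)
The paper does not provide its own proof of this proposition: it is simply quoted as Proposition~3.9 in \cite{AspBerFraVes22}, so there is no argument in the present paper to compare your attempt against. Your sketch outlines a plausible strategy consistent with the a~priori assumptions, but since the proof lives in the cited reference you would need to consult \cite{AspBerFraVes22} directly to see how the vertex bijection and the two-sided estimate \eqref{eq:distvert} are actually established there.
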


\section{A first step: logarithmic stability}
\label{sec:rough_log_stab_est}

\subsection{Construction of ad hoc Green's functions}

Let us introduce the Kelvin and Rongved fundamental solutions, which will be useful in the sequel.

Given $y \in \RR^3$ and a \textit{concentrated force} $l \delta(\cdot - y)$ applied at $y$,
with $l \in \RR^3$, $|l|=1$, the \textit{Kelvin normalized
	fundamental solution} $u^K \in L^1_{loc}(\RR^3, \RR^3)$ is defined by
\begin{equation}
	\label{eq:pbm-Kelvin}
	\left\{ \begin{array}{ll}
		\dive_x \left ( \CC^e\nabla_x u^K(x,y;l) \right ) =-l\delta(x-y),
		& \hbox{in}\ \RR^3,\\
		&  \\
		\lim_{|x| \rightarrow \infty} u^K(x,y;l)=0,\\
	\end{array}\right.
\end{equation}
where $\delta(\cdot - y)$ is the Dirac distribution supported at
$y$\textcolor{magenta}{.}

Notice that
\begin{equation*}
	u^K(x,y;l) = \Gamma^K(x,y)l
\end{equation*}
where $\Gamma^K$ is  the \textit{Kelvin fundamental matrix}

\begin{equation}
\label{eq:Kelvin-Gamma}
\Gamma^K(x,y)= \frac{1}{16\pi\mu^e(1-\nu^e)}
\cdot
\frac{1}{|x-y|}
\left (
\frac{(x-y)\otimes (x-y)   }{|x-y|^2} + (3-4\nu^e)Id
\right )
\end{equation}
that satisfies
\begin{equation}
	\label{eq:simm-Kelvin}
	\Gamma^K(x,y) = (\Gamma^K(y,x))^T, \quad \hbox{for every } x\in
	\RR^3, \ x \neq y,
\end{equation}
\begin{equation}
	\label{eq:stima-Kelvin}
	|\Gamma^K(x,y)| \leq C |x-y|^{-1}, \quad \hbox{for every } x\in
	\RR^3, \ x \neq y,
\end{equation}
\begin{equation}
	\label{eq:stima-gradiente-Kelvin}
	|\nabla_x \Gamma^K(x,y)| \leq C |x-y|^{-2}, \quad \hbox{for every } x\in
	\RR^3, \ x \neq y,
\end{equation}
where the constant $C>0$ only depends on $\mu^e$, $\lambda^e$.

Similarly,  given $y \in \RR^3$ and a \textit{concentrated force} $l \delta(\cdot - y)$ applied at $y$,
with $l \in \RR^3$, $|l|=1$, the \textit{Rongved normalized
	fundamental solution} $u^R \in L^1_{loc}(\RR^3, \RR^3)$ is defined by 
\begin{equation}
	\label{eq:pbm-Rongved}
	\left\{ \begin{array}{ll}
		\dive_x \left ( (\CC^e+(\CC^i-\CC^e)\chi_{\RR^3_-})\nabla_x u^R(x,y;l) \right ) =-l\delta(x-y),
		& \hbox{in}\ \RR^3,\\
		&  \\
		u^R((x_1, x_2, 0^+),y;l)=u^R((x_1, x_2, 0^-),y;l), \\
		&  \\
		(\CC^e \nabla_x u^R((x_1, x_2, 0^+),y;l))e_3=(\CC^i \nabla_x u^R((x_1, x_2, 0^-),y;l))e_3,\\
		&  \\
		\lim_{|x| \rightarrow \infty} u^R(x,y;l)=0,\\
	\end{array}\right.
\end{equation}
where
\begin{equation*}
	u^R(x,y;l) = \Gamma^R(x,y)l.
\end{equation*}
The explicit expression of the \textit{Rongved fundamental matrix} $\Gamma^R$ is reported in \cite[Section $10$]{ADCMR}. Estimates analogous to \eqref{eq:simm-Kelvin}--\eqref{eq:stima-gradiente-Kelvin} hold, precisely
\begin{equation}
	\label{eq:simm-Rongved}
	\Gamma^R(x,y) = (\Gamma^R(y,x))^T, \quad \hbox{for every } x\in
	\RR^3, \ x \neq y,
\end{equation}
\begin{equation}
	\label{eq:stima-Rongved}
	|\Gamma^R(x,y)| \leq C |x-y|^{-1}, \quad \hbox{for every } x\in
	\RR^3, \ x \neq y,
\end{equation}
\begin{equation}
	\label{eq:stima-gradiente-Rongved}
	|\nabla_x \Gamma^R(x,y)| \leq C |x-y|^{-2}, \quad \hbox{for every } x\in
	\RR^3, \ x \neq y,
\end{equation}
where the constant $C>0$ only depends on $\mu^e$, $\mu^i$, $\lambda^e$ and $\lambda^i$.
 
\medskip
Given $D_0\in\mathcal{P}$, let us define by
\begin{equation}
	\label{eq:PG4A.1}
\mathcal{D}=\bigcup_{\overset{\scriptstyle i,j}{\scriptstyle
			i\neq j}}\sigma_{ij}^{D_0}		
\end{equation}
the union of the edges of the polyhedron $D_0$. 

Following the lines of \cite{BFV-IPI-2014}, let us introduce suitable Green's functions. Precisely, we shall construct Green's functions $G^\sharp_0$ and $G^\sharp_1$ such that $G^\sharp_1$ is comparable to the Kelvin fundamental solution $\Gamma^K$, whereas $G^\sharp_0$ is comparable to the Rongved fundamental solution $\Gamma^R$ when its pole is close to the point $P\in \partial D_0$ appearing in Proposition \ref{th:geometric_lemma}. 

Let $y \in \oms \setminus \mathcal{D}$ and let
\begin{equation}
	\label{eq:PG4A.2}
	\overline{r} = \overline{r} (y)= \min 
	\left \{
	\dfrac{r_\sharp}{4}, \ \sin \left ( \dfrac{\theta_0 }{2}  \right ) \hbox{dist} ( y, \mathcal{D} \cup \partial \oms)  
	\right \}.
\end{equation}

Let us notice that, with this choice of the radius $\overline{r}$, by \textit{{(H2-2)}} and {\textit{(H2-5)}}, the ball $B_{\overline{r}}(y)$ cannot intersect more than one face of the boundary of $D_0$. Therefore, 
if $B_{\overline{r}}(y) \cap \partial D_0 \neq \emptyset$, then there exists a Cartesian coordinate system in which 
\begin{equation*}
	B_{\overline{r} }(y) \cap  D_0 =  B_{\overline{r}   }(y)  \cap \{   x_3 > a\},
\end{equation*}
for some $a$ such that $|a| < \overline{r}$. Let us define
\begin{equation}
	\label{eq:PG4A.6}
	\CC_y = \left\{ \begin{array}{ll}
		\CC^i,
		& \hbox{if}\ 	B_{\overline{r} }(y) \subset D_0,\\
		&  \\
		\CC^e,
		& \hbox{if}\ 	B_{\overline{r} }(y) \subset \oms \setminus D_0,\\
		&  \\
		\CC^e + ( \CC^i - \CC^e  )\chi_{ \{ x_3 >a  \} },
		& \hbox{if}\    B_{\overline{r} }(y) \cap \partial D_0 \neq \emptyset.
	\end{array}\right.
\end{equation}
 Let us introduce the fundamental matrix $\Gamma^0(x,y)$ solution to
\begin{equation}
	\label{eq:PG4B.1}
	\left\{ \begin{array}{ll}
		\dive_x \left ( \CC_y\nabla_x \Gamma ^0 (x,y) l \right ) =-l\delta(x-y),
		& \hbox{in}\ \RR^3\setminus \{y\},\\
		&  \\
		\lim_{|x| \rightarrow \infty} \Gamma^0(x,y)l=0,\\
	\end{array}\right.
\end{equation}
for every $l \in \RR^3$, $|l|=1$.

Let us observe that, in view of the definition \eqref{eq:PG4A.6}, the matrix $\Gamma^0$ in \eqref{eq:PG4B.1} coincides with the Kelvin fundamental solution $\Gamma^K$ in the first two cases, namely when $B_{\overline{r} }(y) \subset D_0$ or $B_{\overline{r} }(y) \subset \oms \setminus D_0$, being the inherent tensor constant, while it coincides with the Rongved fundamental solution $\Gamma^R$ in the last case since the inherent tensor has a jump accross the interface $\{ x_3 =a  \}$.

\begin{proposition}
	\label{prop:PGDzero}
For every $y \in \oms \setminus \mathcal{D}$ there exists a unique matrix $G^\sharp_0(x,y)$ continuous in $\oms \setminus \{y\}$ such that, for every $l \in \RR^3$ with $|l|=1$, 
\begin{equation}
	\label{eq:PG4B.2}
	\left\{ \begin{array}{ll}
		\dive_x \left ( \CC^{D_0} \nabla_x G^\sharp_0(x,y) l \right ) =-l\delta(x-y),
		&  x \in \oms,\\
		&  \\
		G^\sharp_0(x,y) l=0,      
		&    x \in \partial \oms,\\
	\end{array}\right.
\end{equation}
\begin{equation}
	\label{eq:PG4B.3}
	\begin{array}{ll}
		G^\sharp_0(x,y) = (G^\sharp_0(y,x)   )^T, \quad \hbox{for every } \ x,y \in \oms  \setminus \mathcal{D}.
	\end{array}
\end{equation}
Moreover, if  ${\rm{dist}}(y, \mathcal{D} \cup \partial \oms  ) \geq \gamma r_\sharp$ for some $\gamma >0$, then
\begin{equation}
	\label{eq:PG4B.4}
	\begin{array}{ll}
		\| G^\sharp_0(\cdot ,y) - \Gamma^0 (\cdot ,y)\|_{H^1(\oms)} \leq \dfrac{C}{r_0},
	\end{array}
\end{equation}
where the constant $C>0$ only depends on the a priori data and on $\gamma$,
and, for every $r$, $0<r \leq r_0$,
\begin{equation}
	\label{eq:PG4B.5}
	\begin{array}{ll}
		\| G^\sharp_0(\cdot ,y)\|_{H^1(\oms  \setminus B_{{r}}(y)  )    }  \leq \dfrac{C}{(r r_0)^{1/2}},
	\end{array}
\end{equation}
where the constant $C>0$ only depends on the a priori data.
\end{proposition}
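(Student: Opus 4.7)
The plan is to construct $G^\sharp_0(\cdot,y)l$ as a perturbation of the explicit fundamental matrix $\Gamma^0$ from \eqref{eq:PG4B.1}, following the scheme used in \cite{BFV-IPI-2014}. Fix $y\in\oms\setminus\mathcal{D}$ and $l\in\RR^3$, $|l|=1$. Pick a cutoff $\eta\in C^\infty_c(B_{\overline r}(y))$ with $\eta\equiv 1$ on $B_{\overline r/2}(y)$ and $|\nabla\eta|\le C/\overline r$. The key point is that by the very choice of $\overline r$ in \eqref{eq:PG4A.2}, the ball $B_{\overline r}(y)$ meets at most one face of $D_0$, so by \eqref{eq:PG4A.6} the tensor $\CC^{D_0}$ coincides with $\CC_y$ on $\overline{B_{\overline r}(y)}$, and $B_{\overline r}(y)\subset\oms$. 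Writing
\[
G^\sharp_0(x,y)l=\eta(x)\Gamma^0(x,y)l+w(x),
\]
a direct computation using \eqref{eq:PG4B.1} shows that $w$ must solve
\[
\dive(\CC^{D_0}\nabla w)=F\quad\text{in }\oms,\qquad w=0\quad\text{on }\partial\oms,
\]
where $F\in L^2(\oms)$ is supported in the annulus $B_{\overline r}(y)\setminus B_{\overline r/2}(y)$ and depends linearly on $\nabla\eta$, $\Delta\eta$ and on $\Gamma^0,\nabla\Gamma^0$ evaluated at distance comparable to $\overline r$ from the pole.

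By strong convexity \eqref{Notaz-forte-convex-C} of $\CC^{D_0}$ and Korn's inequality on the Lipschitz domain $\oms$, Lax--Milgram produces a unique $w\in H^1_0(\oms)$ with $\|w\|_{H^1(\oms)}\le C\|F\|_{L^2(\oms)}$. This yields existence; uniqueness of $G^\sharp_0$ follows by a standard energy argument applied to the difference of two candidate Green matrices. For the symmetry \eqref{eq:PG4B.3} I would pick $y_1,y_2\in\oms\setminus\mathcal{D}$ and unit vectors $l_1,l_2$, test the equation for $G^\sharp_0(\cdot,y_1)l_1$ against $G^\sharp_0(\cdot,y_2)l_2$ on $\oms\setminus(B_\epsilon(y_1)\cup B_\epsilon(y_2))$, and let $\epsilon\to 0$: the contributions on $\partial\oms$ vanish because of the Dirichlet condition, the bulk integrals match by the major symmetry of $\CC^{D_0}$, and the shrinking-sphere contributions isolate the identity $l_1\cdot G^\sharp_0(y_1,y_2)l_2=l_2\cdot G^\sharp_0(y_2,y_1)l_1$.

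For the quantitative estimate \eqref{eq:PG4B.4} assume $\mathrm{dist}(y,\mathcal{D}\cup\partial\oms)\ge\gamma r_\sharp$, so $\overline r\sim r_0$. Then $|F|\lesssim \overline r^{-3}\sim r_0^{-3}$ on a set of volume $\lesssim r_0^3$, giving $\|F\|_{L^2(\oms)}\lesssim r_0^{-3/2}$; combined with the $r_0^{-3/2}$ factor built into the normalized $H^1$ norm, the Lax--Milgram bound on $w$ together with the control of $(1-\eta)\Gamma^0$ away from $y$ via \eqref{eq:stima-Rongved}--\eqref{eq:stima-gradiente-Rongved} yields \eqref{eq:PG4B.4}. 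For \eqref{eq:PG4B.5}, split $\oms\setminus B_r(y)$ into the near annulus $B_{\overline r/2}(y)\setminus B_r(y)$, where $G^\sharp_0$ is controlled by $\Gamma^0$, and the far region, where \eqref{eq:PG4B.4} already gives a uniform bound. In polar coordinates centered at $y$ the decay $|\Gamma^0|\lesssim|x-y|^{-1}$ and $|\nabla\Gamma^0|\lesssim|x-y|^{-2}$ produces $\|\Gamma^0\|^2_{L^2}\lesssim r_0$ and $\|\nabla\Gamma^0\|^2_{L^2}\lesssim 1/r$; inserting these in the normalized norm produces exactly the $1/(r r_0)^{1/2}$ order.

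The main obstacle is the interface case of \eqref{eq:PG4A.6}, where $\Gamma^0$ coincides with the Rongved matrix $\Gamma^R$. Although the pointwise bounds \eqref{eq:stima-Rongved}--\eqref{eq:stima-gradiente-Rongved} are granted, the source $F$ has to be estimated across the plane $\{x_3=a\}$ inside the cutoff annulus, and one must verify that the transmission conditions encoded in $\Gamma^R$ are precisely those imposed by $\CC^{D_0}$ along the single face of $D_0$ crossing $B_{\overline r}(y)$. Ensuring this compatibility, together with the uniform Rongved scaling away from the pole, is the most delicate step and is what forces the restriction $y\notin\mathcal{D}$ and the quantitative choice of $\overline r$ in \eqref{eq:PG4A.2}.
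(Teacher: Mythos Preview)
Your strategy is correct and close to the paper's, but the paper makes a slightly different and cleaner splitting: instead of introducing a cutoff, it sets $w=G^\sharp_0(\cdot,y)l-\Gamma^0(\cdot,y)l$ directly and observes that $w$ solves
\[
\dive_x\big(\CC^{D_0}\nabla_x w\big)=\dive_x\big((\CC_y-\CC^{D_0})\nabla_x\Gamma^0(\cdot,y)l\big)\ \text{in }\oms,\qquad w=-\Gamma^0(\cdot,y)l\ \text{on }\partial\oms.
\]
Since $\CC_y=\CC^{D_0}$ on $B_{\overline r}(y)$, the vector field $(\CC_y-\CC^{D_0})\nabla\Gamma^0$ vanishes near the pole and lies in $L^2(\oms)$, so Lax--Milgram applies with a divergence-form right-hand side and $H^{1/2}$ boundary data; then \eqref{eq:PG4B.4} is literally the energy bound for $w$, and \eqref{eq:PG4B.5} follows from $\|G^\sharp_0\|\le\|w\|+\|\Gamma^0\|$ on $\oms\setminus B_r(y)$. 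Your cutoff version works too, but it is slightly less clean in two respects. First, you have to estimate both $w$ and $(1-\eta)\Gamma^0$ to reach \eqref{eq:PG4B.4}. Second, and more to the point of your last paragraph, in the interface case your source $F$ is not quite in $L^2$: the term $\CC_y\big((\Gamma^0 l)\otimes\nabla\eta\big)$ jumps across the plane $\{x_3=a\}$ inside the annulus (because $\CC_y$ jumps while $\Gamma^0$ and $\nabla\eta$ are continuous), so $F$ picks up a surface-measure contribution. This is harmless for Lax--Milgram since the resulting functional is still bounded on $H^1_0(\oms)$ via the trace inequality, but your statement ``$F\in L^2(\oms)$'' needs to be relaxed to ``$F\in H^{-1}(\oms)$''. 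The paper's divergence-form source sidesteps this entirely. Finally, the ``main obstacle'' you flag at the end is not really an obstacle: the transmission conditions encoded in $\Gamma^R$ match those of $\CC^{D_0}$ on $B_{\overline r}(y)$ \emph{by construction} of $\CC_y$ in \eqref{eq:PG4A.6}, which is exactly what the choice of $\overline r$ in \eqref{eq:PG4A.2} is designed to guarantee.
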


\begin{proof}
	Let us consider the following problem 

	\begin{equation}
		\label{eq:PG4C.2}
		\left\{ \begin{array}{ll}
			\dive_x \left ( \CC^{D_0} \nabla_x w(x,y;l) \right ) =\dive_x\left((\CC_y-\CC^{D_0})\nabla_x\Gamma^0(x,y)l\right)
			&  x \in \oms,\\
			&  \\
			w(x,y;l)=-\Gamma^0(x,y)l,      
			&    x \in \partial \oms.\\
		\end{array}\right.
	\end{equation}

Since $\CC_y=\CC^{D_0}$ in $B_{\overline r}(y)$ and $\nabla \Gamma^0(\cdot,y)$ is smooth and bounded in $\oms\setminus B_{\overline r}(y)$, we have that  $f(\cdot,y;l)=\left((\CC_y-\CC^{D_0})\nabla_x\Gamma^0(\cdot,y)l\right)\in L^2(\oms)$ and $\Gamma^0(\cdot,y)l_{|\partial \oms}\in H^{1/2}(\partial \oms)$.
Then there exists a unique solution $w(x,y;l)\in H^1(\oms)$ to \eqref{eq:PG4C.2}.

Let 
\begin{equation*}
	G_0^\sharp(x,y)l=\Gamma^0(x,y)l+w(x,y;l).
\end{equation*}

By \eqref{eq:PG4B.1} and \eqref{eq:PG4C.2}, $G_0^\sharp$ satisfies \eqref{eq:PG4B.2}. 
The symmetry \eqref{eq:PG4B.3} follows by standard arguments, see for instance \cite{Evans}.

Moreover
\begin{equation*}
	\|w(\cdot,y;l)\|_{H^1(\oms)}\leq C\left(r_0\|f(\cdot,y;l)\|_{L^2(\oms)} + 
	\|\Gamma^0(\cdot,y)\|_{H^{1/2}(\partial\oms)} \right),
\end{equation*}
where $C>0$ only depends on $\mu^e$, $\mu^i$, $\lambda^e$, $\lambda^i$, $M_0$.

 Let us assume now that ${\rm{dist}}(y, \mathcal{D} \cup \partial \oms  ) \geq \gamma r_\sharp$. 
 Noticing that $f\equiv 0$ in $B_{\overline r}(y)$, $\overline r
 {\geq} r_\sharp\min \left \{ \frac{1}{4}, \gamma \sin \left ( \frac{\theta_0}{2} \right )  \right \}$, recalling the definition of $\Gamma^0$ and the estimates \eqref{eq:stima-gradiente-Kelvin},  \eqref{eq:stima-gradiente-Rongved}, we can easily compute
 
\begin{equation}
	\label{eq:PG4D.3-4}
	\|f(\cdot,y;l)\|_{L^2(\oms)}=\|f(\cdot,y;l)\|_{L^2(\oms\setminus B_{\overline r}(y))}\leq
	C\|\nabla \Gamma^0(\cdot,y)\|_{L^2(\oms\setminus B_{\overline r}(y))}\leq \frac{C}{r_0^2},
\end{equation}
where $C>0$ only depends on the a priori data and on $\gamma$.

 Next, by trace inequalities, estimates \eqref{eq:stima-Kelvin},  \eqref{eq:stima-Rongved} and 
 by \eqref{eq:PG4D.3-4}, we have
 \begin{equation*}
 	\|\Gamma^0(\cdot,y)\|_{H^{1/2}(\partial\oms)}\leq \frac{C}{r_0},
 \end{equation*}
 where $C>0$ only depends on the a priori data and on $\gamma$, so that \eqref{eq:PG4B.4} holds.
 Given $r$, $0<r\leq r_0$, arguing similarly, we have
  \begin{equation*}
 	\|G_0^\sharp(\cdot,y)l\|_{H^{1}(\oms\setminus B_r(y))}\leq
 	\|w(\cdot,y;l)\|_{H^{1}(\oms)}+ 	\|\Gamma^0(\cdot,y)\|_{H^{1}(\oms\setminus B_r(y))}\leq
 	 \frac{C}{(r_0 r)^{1/2}},
 \end{equation*}
that is we have \eqref{eq:PG4B.5}.
\end{proof}
Next, let $y \in \oms \setminus \partial D_1$ and let
\begin{equation*}
	\widetilde{r} = \widetilde{r}(y)=  \min 
	\left \{
	\dfrac{r_\sharp}{4}, \ \hbox{dist} ( y, \partial D_1 \cup \partial \oms)  
	\right \}.
\end{equation*}
With this choice of the radius $\widetilde{r}$, $B_{\widetilde{r} }(y)$ does not intersect $\partial D_1$.
Let us define
\begin{equation}
	\label{eq:PG4G.2}
	\CC_y = \left\{ \begin{array}{ll}
		\CC^i,
		& \hbox{if}\ 	B_{\widetilde{r} }(y) \subset \hbox{Int}(D_1),\\
		&  \\
		\CC^e,
		& \hbox{if}\ 	B_{\widetilde{r} }(y) \subset \oms \setminus D_1\\
	\end{array}\right.
\end{equation}
and let us introduce the fundamental matrix $\Gamma^1(x,y)$ solution, for every $l \in \RR^3$, $|l|=1$, to
\begin{equation}
	\label{eq:PG4G.3}
	\left\{ \begin{array}{ll}
		\dive_x \left ( \CC_y\nabla_x \Gamma^1 (x,y) l \right ) =-l\delta(x-y),
		& \hbox{in}\ \RR^3,\\
		&  \\
		\lim_{|x| \rightarrow \infty} \Gamma^1(x,y)l=0.\\
	\end{array}\right.
\end{equation}
By the same arguments used in the proof of Proposition \ref{prop:PGDzero}, the following proposition holds.

\begin{proposition}
	\label{prop:PGDuno}
	For every $y \in \oms \setminus \partial D_1$ there exists a unique matrix $G^\sharp_1(x,y)$ continuous in $\oms \setminus \{y\}$ such that, for every $l \in \RR^3$ with $|l|=1$, 
	\begin{equation}
		\label{eq:PG4G.4}
		\left\{ \begin{array}{ll}
			\dive_x \left ( \CC^{D_1} \nabla_x G^\sharp_1(x,y) l \right ) =-l\delta(x-y),
			&  x \in \oms,\\
			&  \\
			G^\sharp_1(x,y) l=0,      
			&    x \in \partial \oms,\\
		\end{array}\right.
	\end{equation}
	\begin{equation}
		\label{eq:PG4G.5}
		\begin{array}{ll}
			G^\sharp_1(x,y) = (G^\sharp_1(y,x)   )^T, \quad \hbox{for every } \ x,y \in \oms  \setminus \partial D_1.
		\end{array}
	\end{equation}
	Moreover, if for some $\gamma >0$ we have ${\rm{dist}}(y, \partial D_1 \cup \partial \oms  ) \geq \gamma r_\sharp$, then
	\begin{equation}
		\label{eq:PG4G.6}
		\begin{array}{ll}
			\| G^\sharp_1(\cdot ,y) - \Gamma^1 (\cdot ,y)\|_{H^1(\oms)} \leq \dfrac{C}{r_0},
		\end{array}
	\end{equation}
	where the constant $C>0$ only depends on the a priori data and on $\gamma$,
	and, for every $r$, $0<r \leq r_0$,
	\begin{equation}
		\label{eq:PG4H.1}
		\begin{array}{ll}
			\| G^\sharp_1(\cdot ,y)\|_{H^1(\oms  \setminus B_{{r}}(y)  )    }  \leq \dfrac{C}{(r r_0)^{1/2}},
		\end{array}
	\end{equation}
	where the constant $C>0$ only depends on the a priori data.
\end{proposition}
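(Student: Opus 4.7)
The plan is to mirror exactly the proof of Proposition \ref{prop:PGDzero}, since the statement for $G^\sharp_1$ is structurally identical to the one for $G^\sharp_0$: in both cases the auxiliary fundamental solution $\Gamma^j$ is built so that the coefficient $\CC_y$ used in its definition agrees with the true coefficient $\CC^{D_j}$ on a ball around the pole $y$, which is the property that makes the correction equation non-singular. Since $\Gamma^1$ is defined via \eqref{eq:PG4G.2}--\eqref{eq:PG4G.3} with $\CC_y$ constant (either $\CC^i$ or $\CC^e$) on $B_{\widetilde r}(y)$, $\Gamma^1$ is just a Kelvin fundamental solution (for the appropriate Lamé parameters), and hence satisfies the pointwise bounds \eqref{eq:stima-Kelvin}--\eqref{eq:stima-gradiente-Kelvin}. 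Note that here the case $B_{\widetilde r}(y)\cap \partial D_1 \ne \emptyset$ is simply excluded by the choice of $\widetilde r$, so unlike in Proposition \ref{prop:PGDzero} we never need the Rongved solution; this actually simplifies the construction.

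The concrete steps I would carry out are the following. First, define the correction $w(\cdot,y;l) \in H^1(\oms)$ as the unique weak solution of
\begin{equation*}
\begin{cases}
\dive_x\left(\CC^{D_1}\nabla_x w(x,y;l)\right)=\dive_x\left((\CC_y-\CC^{D_1})\nabla_x \Gamma^1(x,y)l\right), & x\in\oms,\\
w(x,y;l)=-\Gamma^1(x,y)l, & x\in\partial\oms.
\end{cases}
\end{equation*}
Existence and uniqueness follow from Lax--Milgram, exactly as in Proposition \ref{prop:PGDzero}: the source term $f(\cdot,y;l)=(\CC_y-\CC^{D_1})\nabla_x\Gamma^1(\cdot,y)l$ is supported in $\oms\setminus B_{\widetilde r}(y)$ and thus lies in $L^2(\oms)$, while $\Gamma^1(\cdot,y)l\big|_{\partial\oms}\in H^{1/2}(\partial\oms)$ because $y\notin \partial\oms$. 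Setting $G^\sharp_1(x,y)l=\Gamma^1(x,y)l+w(x,y;l)$, the equation \eqref{eq:PG4G.4} follows by subtracting \eqref{eq:PG4G.3}. The symmetry \eqref{eq:PG4G.5} is obtained by the standard argument: test the equation for $G^\sharp_1(\cdot,y)l$ against $G^\sharp_1(\cdot,z)m$ suitably regularized near the poles, use the major symmetry $C_{ijkl}=C_{klij}$ of $\CC^{D_1}$ recorded in \eqref{Notaz-simmetrie-C}, and exchange the roles of the two poles.

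For the quantitative bounds, under the hypothesis $\dist(y,\partial D_1\cup\partial\oms)\ge \gamma r_\sharp$ we have $\widetilde r \ge r_\sharp \min\{1/4,\gamma\}$, so outside $B_{\widetilde r}(y)$ the gradient bound \eqref{eq:stima-gradiente-Kelvin} gives
\begin{equation*}
\|f(\cdot,y;l)\|_{L^2(\oms)} = \|f(\cdot,y;l)\|_{L^2(\oms\setminus B_{\widetilde r}(y))} \le C\|\nabla\Gamma^1(\cdot,y)\|_{L^2(\oms\setminus B_{\widetilde r}(y))}\le \frac{C}{r_0^2},
\end{equation*}
and by trace inequalities combined with \eqref{eq:stima-Kelvin} we obtain $\|\Gamma^1(\cdot,y)\|_{H^{1/2}(\partial\oms)}\le C/r_0$, with $C$ depending only on the a priori data and $\gamma$. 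The standard a priori energy estimate for \eqref{eq:PG4C.2}-type problems then yields $\|w(\cdot,y;l)\|_{H^1(\oms)}\le C/r_0$, which is precisely \eqref{eq:PG4G.6}. For the weighted estimate \eqref{eq:PG4H.1} on $\oms\setminus B_r(y)$ with $0<r\le r_0$, I split $G^\sharp_1=\Gamma^1+w$ and control $\|\Gamma^1(\cdot,y)\|_{H^1(\oms\setminus B_r(y))}$ directly using \eqref{eq:stima-Kelvin}--\eqref{eq:stima-gradiente-Kelvin} (which gives the $(r_0 r)^{-1/2}$ rate by integrating $|x-y|^{-4}$ on the annulus), while $\|w\|_{H^1(\oms)}$ is handled as above without the hypothesis on $\dist(y,\partial D_1\cup\partial\oms)$, using only that $\widetilde r$ is bounded from below by a constant times $r_\sharp$ in a weaker sense; more precisely, to get \eqref{eq:PG4H.1} with a constant depending only on the a priori data (and not on $\gamma$), one works with a cut-off supported away from $B_r(y)$ and uses Caccioppoli-type inequalities for $G^\sharp_1$ exactly as in the scalar case of \cite{AspBerFraVes22}.

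The only mildly delicate point is \eqref{eq:PG4H.1} in the general case $y\in\oms\setminus\partial D_1$ (without the $\gamma r_\sharp$ separation from $\partial\oms$), since then $\Gamma^1$ is no longer uniformly controlled up to $\partial\oms$. This is handled by the same cut-off/Caccioppoli argument as in the proof of Proposition \ref{prop:PGDzero}, which I expect to carry over verbatim because $\Gamma^1$ is simply a Kelvin kernel; the absence of a transmission interface inside $B_{\widetilde r}(y)$ makes this case strictly easier than the one treated in Proposition \ref{prop:PGDzero}. Hence the entire proof amounts to the sentence \emph{the proof is identical to that of Proposition \ref{prop:PGDzero}, replacing $D_0$ by $D_1$, $\Gamma^0$ by $\Gamma^1$, and $\mathcal{D}$ by $\partial D_1$}, with the caveat that only the Kelvin (not the Rongved) branch of the definition of $\CC_y$ occurs here.
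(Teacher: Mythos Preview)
Your proposal is correct and follows exactly the paper's approach: the paper's own proof is the single sentence ``By the same arguments used in the proof of Proposition \ref{prop:PGDzero}, the following proposition holds,'' and you have spelled out precisely that argument, correctly noting the simplification that only the Kelvin branch of $\CC_y$ arises. One small remark: your worry about \eqref{eq:PG4H.1} ``in the general case'' without the $\gamma$-separation is unnecessary, since in the statement both \eqref{eq:PG4G.6} and \eqref{eq:PG4H.1} are asserted under the hypothesis $\mathrm{dist}(y,\partial D_1\cup\partial\oms)\ge \gamma r_\sharp$; the paper's proof of \eqref{eq:PG4B.5} (and hence of \eqref{eq:PG4H.1}) simply bounds $\|w\|_{H^1(\oms)}$ by $C/r_0\le C/(r_0 r)^{1/2}$ using the estimate already obtained for \eqref{eq:PG4B.4}, so no additional Caccioppoli argument is needed.
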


\subsection{Logarithmic stability result}

\begin{theorem}\label{Teo:stima_log}
	Let $\Omega$ be a bounded domain satisfying \ref{ass:domain} and let $D_0$, $D_1\in\mathcal{P}$. Let the elastic tensors $\mathbb{C}^{D_0}$, $\mathbb{C}^{D_1}$ satisfy assumptions \ref{ass:elastic_material}. Let $\Sigma$ be an open portion of $\partial\Omega$ satisfying \eqref{ass:sigma}.
	There exist $c >0$ and $C>0$, only depending on the a priori data, such that, if 
	\begin{eqnarray}
		\| \Lambda_{D_0}^{\Sigma} - \Lambda_{D_1}^{\Sigma}\|_* \le \frac{\epsilon}{r_0},
		\quad
		0<\epsilon < e^{-c},
	\end{eqnarray}
then
	\begin{eqnarray}
		\label{eq:stima-grezza-dH}
		d_H(\partial D_0, \partial D_1)\le r_0\widetilde{\omega}(\epsilon),
	\end{eqnarray}
where $\widetilde{\omega}$ is the increasing function defined as 
	\begin{eqnarray}
		\label{eq:stima grezza 2 log}
		\widetilde{\omega}(t) = C \left ( \log\left(\frac{|\log t|}{c}\right)\right)^{-1/9},\ \ \ \hbox{for every } \ \ 0<t<e^{-c}.
	\end{eqnarray}
\end{theorem}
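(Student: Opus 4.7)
The proof follows the Alessandrini–Di Cristo strategy for $C^{1,\alpha}$ elastic inclusions in \cite{ADCMR}, adapted to the polyhedral setting via the ad hoc Green's functions of Propositions \ref{prop:PGDzero}--\ref{prop:PGDuno}, whose poles are allowed anywhere off the singular edge skeleton $\mathcal{D}$ and $\partial D_1$. For $y,z\in B_{r_\sharp}(P_0)\subset\Omega_0$ and unit vectors $l,m\in\RR^3$, I first observe that the traces $\psi_{y,l}:=G_0^\sharp(\cdot,y)l\big|_{\partial\Omega}$ and $\psi_{z,m}:=G_1^\sharp(\cdot,z)m\big|_{\partial\Omega}$ are supported in $\Sigma_0\subset\Sigma$, because $G_i^\sharp$ vanishes on $\partial\oms\supset\partial\Omega\setminus\Sigma_0$. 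Using $G_0^\sharp(\cdot,y)l$ and $G_1^\sharp(\cdot,z)m$ as test functions in the weak formulations of \eqref{eq:elastic_pb} with data $\psi_{y,l}$ and $\psi_{z,m}$, and exploiting the symmetries \eqref{eq:PG4B.3}, \eqref{eq:PG4G.5}, I obtain the elastic Alessandrini identity
\begin{equation*}
S(y,z;l,m):=\int_\Omega (\CC^{D_0}-\CC^{D_1})\nabla_x G_0^\sharp(x,y)l\cdot\nabla_x G_1^\sharp(x,z)m\,dx = \bigl\langle(\Lambda^\Sigma_{D_0}-\Lambda^\Sigma_{D_1})\psi_{y,l},\psi_{z,m}\bigr\rangle.
\end{equation*}
Combined with \eqref{eq:PG4B.4}, \eqref{eq:PG4G.6} and trace inequalities, this gives the initial smallness $|S(y,z;l,m)|\leq C\epsilon/r_0^3$ uniformly for $y,z\in B_{r_\sharp}(P_0)$ and all unit $l,m$.

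Next I propagate this smallness along the geometric path supplied by Proposition \ref{th:geometric_lemma}. Thanks to \eqref{eq:PG4B.3}, \eqref{eq:PG4G.5}, $y\mapsto S(y,z;l,m)$ is a weak solution of the Lamé system with coefficients $\CC^{D_0}$ in $\oms\setminus(\overline{\Omega_\mathcal{G}}\cup\{z\})$, and symmetrically $z\mapsto S(y,z;l,m)$ with $\CC^{D_1}$. I would apply iteratively the three-spheres inequality for the Lamé system with piecewise-constant isotropic coefficients across a single flat Lipschitz interface (in the form proved in \cite[Sections 5--6]{ADCMR}) along the curve $\mathfrak{c}$ and the cylinder $C_{PQ}^{dr_0}\subset\oms\setminus\Omega_\mathcal{G}$, first in $y$ and then in $z$, to transfer the smallness from $B_{r_\sharp}(P_0)$ to the point $Q=P+dr_0 n$. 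The non-degeneracy condition \eqref{infaccia}, $\mathrm{dist}(P,\mathcal{D})\geq \chi r_0$, guarantees that each ball of radius comparable to $dr_0$ centered on $C_{PQ}^{dr_0}$ meets at most one face of $\partial D_0$, so that the transmission three-spheres inequality applies uniformly. A number of iterations bounded in terms of the a priori data then produces
\begin{equation*}
|S(Q,Q;l,m)|\leq \frac{C}{r_0^3}\,\omega_1(\epsilon),
\end{equation*}
where $\omega_1(\epsilon)\to 0$ as $\epsilon\to 0$ has the log-iterated form typical of quantitative unique continuation across Lipschitz interfaces.

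Finally, at $y=z=Q$, the Green's function $G_0^\sharp(\cdot,Q)$ is locally comparable near $P$ to the Rongved fundamental solution for the half-space bounded by the face of $\partial D_0$ through $P$, while $G_1^\sharp(\cdot,Q)$ behaves like a Kelvin solution there, since $\mathrm{dist}(Q,\partial D_1)\geq \mathrm{dist}(P,D_1)-dr_0$. A careful asymptotic analysis of $(\CC^{D_0}-\CC^{D_1})\nabla G_0^\sharp\cdot\nabla G_1^\sharp$ close to $P$, based on the explicit Rongved expression \cite{rongved1955force} and the visibility condition \eqref{eq:visibility_cond}, furnishes, for a suitable choice of $l,m$, a lower bound of the form
\begin{equation*}
|S(Q,Q;l,m)|\geq \omega_2\bigl(\mathrm{dist}(P,D_1)\bigr),
\end{equation*}
where $\omega_2$ depends only on the a priori data and blows up as $\mathrm{dist}(P,D_1)\to 0$. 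Matching upper and lower bounds, solving for $\mathrm{dist}(P,D_1)$, and invoking \eqref{1-19} together with Propositions \ref{prop1} and \ref{prop2}, I arrive at \eqref{eq:stima-grezza-dH}--\eqref{eq:stima grezza 2 log}; the double logarithm comes from the log-iterated structure of $\omega_1$, and the exponent $1/9$ from composing that loss with the cubic geometric relation $C_3 d_\mu^3\leq r_0^2\,\mathrm{dist}(P,D_1)$.

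\textbf{Main obstacle.} The technical heart of the argument is the quantitative unique continuation in the second step: the Lamé coefficients $\CC^{D_0}$ jump across the polyhedral face at $P$, so one needs a transmission three-spheres inequality valid uniformly along the cylinder $C_{PQ}^{dr_0}$. The radii control provided by \eqref{infaccia}---each ball meeting at most one face of $\partial D_0$---is what makes the inequality applicable uniformly, and is precisely the reason for constructing $G_0^\sharp$ with pole avoiding the edge skeleton $\mathcal{D}$, since a bona fide Green's function is not known to exist in this setting. A secondary delicate point is the lower-bound step, where one must isolate the dominant Rongved gradient singularity and show that, for an appropriate choice of $l,m$, it is non-degenerate in the direction of the normal jump, a fact that hinges on \eqref{eq:visibility_cond}.
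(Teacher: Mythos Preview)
Your proposal misses the central mechanism that links the smallness of $\epsilon$ to the size of $\mathrm{dist}(P,D_1)$. In the paper (Theorems \ref{stimadallalto} and \ref{stimadalbasso}), the evaluation points are not the fixed $Q=P+dr_0n$ but a one-parameter family $y_h=P-he_3$, $w_h=P-\lambda_w he_3$ approaching $P\in\partial D_0$. As $h\downarrow 0$, the lower bound $|S(y_h,w_h;e_i,e_i)|\geq K_3/h$ blows up because the Green-function poles approach the interface, while the upper bound degrades to $(K_1/h)\,\epsilon^{\tau^{K_2(r_0/h)^3}}$ since the three-spheres iteration must be performed $\sim(r_0/h)^3$ times on balls of radius $\sim h$. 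Comparing the two and then setting $h=\bar h\rho$ (the largest admissible $h$, governed by $\rho\sim\mathrm{dist}(P,D_1)$ in Theorem \ref{stimadalbasso}) is what yields the double-log bound on $\rho$; the exponent $1/9$ then comes from composing with the cubic relation \eqref{1-19}. At your fixed $Q$ there is no such parameter to tune, and your claimed lower bound $|S(Q,Q;l,m)|\geq\omega_2(\mathrm{dist}(P,D_1))$ with $\omega_2$ blowing up as $\mathrm{dist}(P,D_1)\to 0$ has the wrong monotonicity and cannot be established: at distance $dr_0$ from $\partial D_0$ neither Green's function is singular, so $S_{D_0}$ and $S_{D_1}$ are both merely bounded and nothing separates them.

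Your ``main obstacle'' is also misidentified. The propagation of smallness runs along chains of balls contained in $\{x\in\mathcal V:\mathrm{dist}(x,\partial D_0)\geq h/6\}\subset\oms\setminus\Omega_{\mathcal G}$, where both $\CC^{D_0}$ and $\CC^{D_1}$ equal the \emph{constant} tensor $\CC^e$; see \eqref{divf}. Hence the paper uses only the constant-coefficient three-spheres inequality of Lemma \ref{lem:tre sfere}, and no transmission three-spheres inequality is needed. The condition \eqref{infaccia} is not used for the propagation step at all; its role is to guarantee that, in a neighborhood of $P$, the ad hoc fundamental matrix $\Gamma^0$ of \eqref{eq:PG4B.1} coincides with the biphase Rongved solution $\Gamma^R$, which is what drives the blow-up in the lower bound of Theorem \ref{stimadalbasso}.
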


\begin{remark}
	\label{rem:1 log}
We emphasize that, in order to obtain the above result, it is sufficient to assume the weaker visibility condition \eqref{eq:visibility_cond} instead of \eqref{eq:monotony-Lamè-moduli-Ce-Ci}--\eqref{eq:monotony-Lamè-moduli-Ci-Ce}.

Let us also notice that estimate \eqref{eq:stima grezza 2 log} could be improved to a single $log$ rate of convergence by refining the geometrical construction underlying the use of the three spheres inequality, as shown in \cite{AspBerFraVes22}. Nevertheless, the above estimate already ensures that hypothesis \eqref{eq:dH piccola} of Proposition \ref{distvert} holds, thereby activating the arguments that yield the Lipschitz stability estimate.

\end{remark}

Let us recall the following lemma, which is a special case of the fundamental regularity result by Li and Nirenberg \cite[Proposition $1.6$]{LiNir03}.
\begin{lemma}
\label{lem:Li-Nirenberg}
Let $B_r$ be a ball of radius $r>0$ centered at the origin, and let $B^{\pm}_{r}$ be the upper and the lower half ball and let $\CC^1$, $\CC^2$ be two constant elastic tensors satisfying \eqref{Notaz-compon-cart-C} and \eqref{Notaz-Lamé-strong-convex}. Let $v\in H^1(B_r)$ be a solution to
\begin{equation}
	\label{eq:equation_lemma_Li_Nir}
	\dive\left( ( \CC^1+(\CC^2-\CC^1)\chi_{B^+_r})\nabla v\right)=0,\qquad \textrm{in}\ B_r.
\end{equation}
Then $v\in C^{\infty}(\overline{B}^{\pm}_r)$ and for all $\delta$, $0< \delta <1$,  there exists a constant $C>0$ depending only on $\lambda_1$, $\mu_1$, $\lambda_2$, $\mu_2$ and $\delta$, such that
\begin{equation}
	\label{eq:estimate_Li_Nir}
	\|\nabla v\|_{L^{\infty}(B_{(1-\delta)r})}\leq \dfrac{C}{r} \|v\|_{L^2(B_r)}.
\end{equation}
\end{lemma}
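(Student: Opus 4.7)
The plan is to treat this as a classical transmission regularity result for a constant-coefficient strongly elliptic system across a flat interface. First I would rescale: setting $w(y)=v(ry)$, one obtains an equation of the same form on $B_1$, so it suffices to prove the estimate with $r=1$ and the factor $1/r$ in \eqref{eq:estimate_Li_Nir} is recovered by scaling back.

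On each open half-ball $\mathrm{Int}(B_1^{\pm})$ the tensor is constant and strongly convex, so $v$ solves a constant-coefficient strongly elliptic Lamé system, and standard interior elliptic regularity yields $v\in C^{\infty}(\mathrm{Int}(B_1^{\pm}))$. The main point is regularity up to the interface $\{x_3=0\}$. For this I would use the tangential difference-quotient method: for $k\in\{1,2\}$ and small $h$, the coefficients of \eqref{eq:equation_lemma_Li_Nir} are invariant under translations in $e_k$, so testing the weak formulation against $\eta^2 D^{-h}_k D^h_k v$, with $\eta$ a smooth cutoff between $B_{1-\delta}$ and $B_1$, produces a Caccioppoli-type inequality giving uniform bounds on $\|D^h_k\nabla v\|_{L^2}$. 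Passing to the limit, all mixed second derivatives $\partial_k\partial_l v$, $k\in\{1,2\}$, $l\in\{1,2,3\}$, belong to $L^2_{loc}$. Next, inside each half-ball the system $\mathrm{div}(\CC^j\widehat{\nabla} v)=0$ has principal $\partial_3^2$ part that is invertible by strong ellipticity \eqref{Notaz-Lamé-strong-convex}, so one can solve algebraically for $\partial_3^2 v$ in terms of the tangential second derivatives. The weak-form transmission conditions — continuity of $v$ and of the conormal trace $(\CC^j\widehat{\nabla} v)e_3$ across $\{x_3=0\}$ — are automatically preserved by the approximants. Bootstrapping this procedure on a nested sequence of half-balls yields $v\in H^s_{loc}(\overline{B}_{(1-\delta/2)}^{\pm})$ for every $s$, hence $v\in C^{\infty}(\overline{B}_{(1-\delta)}^{\pm})$.

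For the quantitative bound \eqref{eq:estimate_Li_Nir}, I would track the constants through the chain of Caccioppoli estimates on a finite sequence of nested half-balls between $B_{1-\delta}$ and $B_1$, obtaining
\begin{equation*}
\|v\|_{H^s(B^{\pm}_{1-\delta/2})}\leq C_s\|v\|_{L^2(B_1)}
\end{equation*}
for every $s\in\mathbb{N}$, with $C_s$ depending only on the Lamé moduli and $\delta$. Choosing $s>5/2$ so that the Sobolev embedding $H^s\hookrightarrow C^1$ holds in dimension three and applying it on each closed half-ball gives $\|\nabla v\|_{L^{\infty}(B_{1-\delta}^{\pm})}\leq C\|v\|_{L^2(B_1)}$. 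Rescaling produces \eqref{eq:estimate_Li_Nir}.

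The main obstacle is the passage across the interface. The difference-quotient trick provides control only on tangential derivatives; the normal derivative must be recovered by inverting the principal $\partial_3^2$ part of the Lamé operator on each side, and this algebraic inversion must be compatible with the conormal jump condition so that the bootstrap actually closes. Verifying that the matrix multiplying $\partial_3^2 v$ is non-singular — i.e., that the ellipticity constant transfers quantitatively to this algebraic step — is the technical heart of the argument and the reason the result would fail for merely measurable coefficient jumps along a non-flat interface.
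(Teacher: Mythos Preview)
The paper does not prove this lemma: it is stated as a special case of Proposition~1.6 in Li--Nirenberg \cite{LiNir03} and simply quoted. Your sketch is a correct outline of the standard flat-interface transmission regularity argument (rescale, tangential difference quotients plus algebraic recovery of $\partial_3^2 v$ from ellipticity, bootstrap, Sobolev embedding), which is in fact the method underlying the cited result in the special situation of a single flat interface and constant coefficients on each side. One minor remark: your worry about the bootstrap ``closing'' across the interface is slightly misplaced --- once tangential regularity is established in $H^1$ across the interface, the algebraic inversion of the $\partial_3^2$ block is performed separately on each open half-ball, where the coefficients are constant; no compatibility condition beyond the weak formulation is needed at that stage, and the transmission conditions are preserved automatically because tangential differentiation commutes with the equation. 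The invertibility of the $\partial_3^2$ block for isotropic Lam\'e is immediate: the relevant matrix is $\mu I_3+(\lambda+\mu)e_3\otimes e_3$, positive definite under \eqref{Notaz-Lamé-strong-convex}.
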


An additional essential tool, employed both in this section and throughout the remainder of the paper, is the following three‑spheres inequality.

\begin{lemma}[$L^\infty$--Three Spheres Inequality]
	\label{lem:tre sfere}
Let $u \in H^1(B_s(Q))$ be a solution to 
	\begin{equation}
		\label{eq:LBD60.1}
		\begin{aligned}{}
			&
			\dive ( \CC^e \nabla u  ) =0, \quad \hbox{in } B_s(Q),
		\end{aligned}
	\end{equation}
	where the elastic tensor $\CC^e$ satisfies the hypothesis \ref{ass:elastic_material}. For every $s_1$, $s_2$, $s_3$, $0<s_1 <s_2<s_3 \leq s$, 
	\begin{equation}
		\label{eq:LBD60.2}
		\begin{aligned}{}
			&
			\|u\|_{L^\infty (B_{s_2}(Q))  } \leq C \|u\|_{L^\infty (B_{s_1}(Q))  }^\tau 
			\|u\|_{L^\infty (B_{s_3}(Q))  }^{1-\tau},
		\end{aligned}
	\end{equation}
	where $C>0$ and $\tau$, $0<\tau<1$, only depend on $\lambda^e$, $\mu^e$, $ \dfrac{s_2}{s_3}$, $ \dfrac{s_1}{s_3}$.
\end{lemma}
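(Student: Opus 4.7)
The plan is to establish the $L^\infty$ three spheres inequality in two stages: first derive its $L^2$ analogue through a Carleman estimate tailored to the constant-coefficient Lamé operator, and then upgrade to the $L^\infty$ norm via standard interior regularity.

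For the $L^2$ version, I would rewrite the equation \eqref{eq:LBD60.1} as $\mu^e \Delta u + (\lambda^e + \mu^e) \nabla \dive (u) = 0$, a strongly elliptic constant-coefficient system (by \eqref{Notaz-Lamé-strong-convex}). A Carleman estimate of the form
\begin{equation*}
\int e^{2\tau \phi}\bigl(\tau |u|^2 + \tau^{-1} |\nabla u|^2\bigr)\, dx \leq C \int e^{2\tau \phi} |\dive(\CC^e \nabla u)|^2\, dx,
\end{equation*}
with a suitable radial weight $\phi$ centered at $Q$ (for instance $\phi(x) = -\log|x-Q|$, or a pseudo-convex weight adapted to $Q$), combined with cutoffs supported respectively in $B_{s_1}(Q)$, $B_{s_2}(Q) \setminus B_{s_1}(Q)$ and $B_s(Q) \setminus B_{s_3}(Q)$, and optimization in the large parameter $\tau$, yields
\begin{equation*}
\|u\|_{L^2(B_{s_2}(Q))} \leq C \|u\|_{L^2(B_{s_1}(Q))}^{\tau} \|u\|_{L^2(B_{s_3}(Q))}^{1-\tau},
\end{equation*}
with constants $C>0$ and $\tau \in (0,1)$ depending only on $\lambda^e, \mu^e, s_1/s_3, s_2/s_3$.

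To pass from this $L^2$ bound to the $L^\infty$ one, I would invoke interior regularity: since the equation has constant coefficients and is strongly elliptic, $u$ is real analytic in the interior and satisfies a Caccioppoli-type estimate $\|u\|_{L^\infty(B_\rho(x_0))} \leq C \rho^{-3/2} \|u\|_{L^2(B_{2\rho}(x_0))}$ whenever $B_{2\rho}(x_0) \subset B_s(Q)$; this follows either from the no-jump case of Lemma \ref{lem:Li-Nirenberg} (take $\CC^1 = \CC^2 = \CC^e$) combined with Sobolev embedding, or directly from Moser iteration. Choosing $s_2' = (s_2 + s_3)/2$ and $\rho = (s_2' - s_2)/2$, and covering $B_{s_2}(Q)$ by balls $B_\rho(x_0)$ with $x_0 \in B_{s_2}(Q)$ (so that $B_{2\rho}(x_0) \subset B_{s_2'}(Q)$), we obtain $\|u\|_{L^\infty(B_{s_2}(Q))} \leq C \|u\|_{L^2(B_{s_2'}(Q))}$. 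Applying the $L^2$ three-spheres inequality with the radii $(s_1, s_2', s_3)$ and bounding each remaining $L^2$ norm by the $L^\infty$ norm times the appropriate power of the radius, we recover \eqref{eq:LBD60.2} with $C$ and $\tau$ depending only on the declared parameters.

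The main technical step is the Carleman estimate of the first stage. This is however classical: since the operator is strongly elliptic with constant coefficients, weights of the above form are admissible and the corresponding estimates follow from standard unique-continuation theory for elliptic systems, as already invoked in \cite{ADCMR, BFV-IPI-2014, ARRV}. Thus no substantially new tool beyond what is available in the literature cited in the paper is needed; the role of this lemma is simply to provide a quantitative propagation of smallness tailored to the $L^\infty$ norm, which is the quantity controlled in subsequent estimates.
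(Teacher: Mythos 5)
Your two-stage structure (an $L^2$ three-spheres inequality upgraded to $L^\infty$ by interior local boundedness and a covering argument) is exactly the structure the paper relies on: its proof consists of citing \cite[Theorem 5.1]{AM01} for the quantitative propagation of smallness for the Lam\'e system and the arguments of \cite[Theorem 1.10]{ARRV} for the passage to the $L^\infty$ norm. Your second stage is correct and is essentially the ARRV argument; the scaling bookkeeping (constants depending only on $\lambda^e,\mu^e$ and the ratios $s_1/s_3$, $s_2/s_3$) also works out since the coefficients are constant.

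The weak point is the first stage, which is where all the content of the lemma lies and which you dispatch by asserting that a Carleman estimate with radial weight for $\dive(\CC^e\nabla u)$ ``follows from standard unique-continuation theory for elliptic systems.'' That is not accurate as stated: Carleman estimates and quantitative unique continuation are not available for general strongly elliptic systems (there are even smooth counterexamples to unique continuation in the variable-coefficient case), and the classical scalar machinery does not apply directly to the coupled operator $\mu^e\Delta u+(\lambda^e+\mu^e)\nabla\dive u$. For the homogeneous isotropic Lam\'e system one must exploit its special structure — e.g.\ that $\dive u$ and $\mathrm{curl}\,u$ are harmonic, hence $u$ is biharmonic, and then run three-spheres/doubling arguments for these scalar equations, which is in substance what \cite{AM01} does — or simply invoke \cite[Theorem 5.1]{AM01} as the paper does. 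Also, purely cosmetically, the displayed Carleman inequality has the wrong parameter powers (one expects $\tau^3|u|^2+\tau|\nabla u|^2$ on the left with the usual weights), which signals that this step was not actually carried out. So the proposal is salvageable, but as written the crucial $L^2$ three-spheres inequality is asserted rather than proved; either cite \cite{AM01} for it or supply the decoupling argument explicitly.
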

\begin{proof}
The result follows {}from \cite[Theorem 5.1]{AM01} and {}from the arguments developed in \cite[Theorem 1.10]{ARRV}. 
\end{proof}

The strategy adopted for the proof of the logarithmic stability estimate is based on the singular solutions method. Such solutions are introduced below. 
    
For any $y,w \in \oms \setminus \Omega_{\mathcal{G}}$ and for any $l,m\in \mathbb{R}^3$ such that $|l|=|m|=1$, we define
\begin{equation}
	\label{definiz-S}
	\begin{aligned}{}
S(y,w;l,m)&= \int_{D_0}(\CC^i-\CC^e)\nabla_x (G_0^{\sharp}(x,y) l)\cdot \nabla_x (G_1^{\sharp}(x,w)m) - \\
 &
 -\int_{D_1}(\CC^i-\CC^e)\nabla_x (G_0^{\sharp}(x,y) l)\cdot \nabla_x (G_1^{\sharp}(x,w)    m).
 \end{aligned}
\end{equation}
Let us fix $y=\overline{y}\in \Omega^{\sharp}\setminus \Omega_{\mathcal{G}}$ and $l\in \mathbb{R}^3, |l|=1$. We define 
\begin{eqnarray}\label{vettoref}
&&\widehat{S}_k(\overline{y}, w;l)=S(\overline{y},w; l ,e_k), \ \ \ k=1,2,3, \nonumber\\
&& \widehat{S}=(\widehat{S}_1,\widehat{S}_2,\widehat{S}_3).
\end{eqnarray}
Similarly, let us fix $w=\overline{w}$ in $\Omega^{\sharp}\setminus \Omega_{\mathcal{G}}$ and $m\in \mathbb{R}^3, |m|=1$. We define 
\begin{eqnarray*}
&&\widetilde{S}_j(y,\overline{w};m)=S(y,\bar{w};e_j,m), \ \ \ j=1,2,3,\nonumber\\
&& \widetilde{S}=(\widetilde{S}_1,\widetilde{S}_2,\widetilde{S}_3).
\end{eqnarray*}

Arguing as in Proposition $4.4$ in \cite{BFV-IPI-2014} we find the following. Given $y=\overline{y}\in \Omega^{\sharp}\setminus \Omega_{\mathcal{G}}$ and $l\in \mathbb{R}^3, |l|=1$,
the vector-valued function $\widehat{S}=\widehat{S}(\overline{y},w;l)$ satisfies the Lam\'e system 
\begin{eqnarray}\label{divf}
\mbox{div}_w (\CC^e\nabla_{w}\widehat{S})= 0, \ \ \ \mbox{for every }w \in \Omega^{\sharp}\setminus \Omega_{\mathcal{G}}.
\end{eqnarray}

Given $w=\overline{w}\in \Omega^{\sharp}\setminus \Omega_{\mathcal{G}}$ and $m\in \mathbb{R}^3, |m|=1$,
the vector-valued function $\widetilde{S}=\widetilde{S}(y,\overline{w};m)$ satisfies the Lam\'e system 
\begin{eqnarray*}
\mbox{div}_y (\CC^e\nabla_{y}\widetilde{S})= 0, \ \ \  \mbox{for every }y \in \Omega^{\sharp}\setminus \Omega_{\mathcal{G}}.
\end{eqnarray*}

By the linearity of $S$ with respect to $l$ and $m$, we have trivially that,
for every $ l,m \in \RR^3, \ |l|=|m|=1$, 
\begin{equation}
	\label{eq:doppia disug per S}
	\begin{aligned}{}
		&
		| S (y,\overline{w};l,m)|
		\leq 
		|\widetilde{S}(y, \overline{w};m)|\leq \sqrt{3} 
		\max_
		{\overset{\scriptstyle l \in \RR^3}{\scriptstyle
				|l|=1}} | S (y,\overline{w};l,m)|,
		\ \ \hbox{for every } y, \, \overline{w} \in \oms \setminus \Omega_{\mathcal{G}},
		\\
		&
		| S (\overline{y},w;l,m)|
		\leq 
		|\widehat{S}(\overline{y},w;l)|\leq \sqrt{3} 
		\max_
		{\overset{\scriptstyle m \in \RR^3}{\scriptstyle
				|m|=1}} | {S} (\overline{y},w;l,m)|, 
		\ \ \hbox{for every } \overline{y},w \in \oms \setminus \Omega_{\mathcal{G}}.
	\end{aligned}
\end{equation}

For sake of simplicity, in the following two theorems, we consider a Cartesian coordinate system in which the origin coincides with the point $P$ introduced in Proposition \ref{th:geometric_lemma} and $e_3= -n$, with $n  $ the outer unit normal to $D_0$ at $P\equiv O$.

\begin{theorem}[Upper bound of S]\label{stimadallalto}
For $\epsilon >0$, let us assume 
\begin{eqnarray}
\| \Lambda_{D_0}^{\Sigma} - \Lambda_{D_1}^{\Sigma}\|_* \le \frac{\epsilon}{r_0}. 
\end{eqnarray}
Under the notation of Proposition \ref{th:geometric_lemma}, let 
\begin{eqnarray*}
&&y_h=P-he_3,\\
&&w_h=P-\lambda_w he_3, \ \frac{2}{3}\leq\lambda_w<1,
\end{eqnarray*}
where $0<h\leq dr_0$, with $d$ introduced in Proposition \ref{th:geometric_lemma}.
There exist positive constants $\tau$, $K_1$, $K_2$, $\tau  \in (0,1)$ only depending on $\lambda^e$ and $\mu^e$,  $K_1$ only depending on the a priori data, and $K_2$ only depending on $M_1$, such that, for every $l ,m\in \RR^3$ with $|l|=|m|=1$, 
\begin{equation}
	\label{eq:alto}
	\begin{aligned}{}
		&
		| S(y_h,w_h;l, m)| \leq 
		\frac{K_1}{h} 
		\epsilon^{\tau^{K_2\left(\frac{r_0}{h}\right)^{3}}}.
	\end{aligned}
\end{equation}
\end{theorem}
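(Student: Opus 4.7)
The plan is to combine an initial small estimate on $S$ in a region far from the unknown inclusions—obtained from the hypothesis on the Dirichlet-to-Neumann maps via an Alessandrini-type identity—with a double propagation of smallness driven by the three spheres inequality (Lemma \ref{lem:tre sfere}), exploiting the fact, already recorded in the excerpt, that $\widetilde{S}(\cdot,\overline{w};m)$ and $\widehat{S}(\overline{y},\cdot;l)$ satisfy the homogeneous Lam\'e system in $\oms\setminus\Omega_{\mathcal{G}}$. For the initial estimate, take $y,w\in B_{r_\sharp/2}(P_0)\subset\Omega_0$: the singular solutions $u_0=G_0^\sharp(\cdot,y)l$ and $u_1=G_1^\sharp(\cdot,w)m$ have no singularities in $\Omega$, and because they vanish on $\partial\oms$ their traces on $\partial\Omega$ are supported in $\Sigma_0\subset\Sigma$, hence belong to $H^{1/2}_{co}(\Sigma)$. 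The standard Alessandrini identity (using the symmetry of each $\Lambda^\Sigma_{D_i}$) then gives
\begin{equation*}
S(y,w;l,m)=\int_{\Omega}(\CC^{D_0}-\CC^{D_1})\nabla u_0\cdot\nabla u_1=\langle(\Lambda^\Sigma_{D_0}-\Lambda^\Sigma_{D_1})\,u_0|_{\partial\Omega},\,u_1|_{\partial\Omega}\rangle,
\end{equation*}
and combining \eqref{eq:PG4B.4}, \eqref{eq:PG4G.6} with the trace theorem yields $\|u_i|_{\partial\Omega}\|_{H^{1/2}_{co}(\Sigma)}\leq C/r_0$, whence $|S(y,w;l,m)|\leq C\epsilon/r_0^3$ uniformly for $y,w\in B_{r_\sharp/2}(P_0)$ and all unit $l,m$.

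A coarse a priori bound is also needed away from the safe region. For any $y,w\in\oms\setminus\Omega_{\mathcal{G}}$ with $\mathrm{dist}(y,D_0\cup D_1),\,\mathrm{dist}(w,D_0\cup D_1)\geq h$, Cauchy-Schwarz together with \eqref{eq:PG4B.5} and \eqref{eq:PG4H.1} gives
\begin{equation*}
|S(y,w;l,m)|\leq C\,\|\nabla G_0^\sharp(\cdot,y)\|_{L^2(D_0\cup D_1)}\,\|\nabla G_1^\sharp(\cdot,w)\|_{L^2(D_0\cup D_1)}\leq\frac{C}{h\,r_0}.
\end{equation*}
Now fix $\overline{w}\in B_{r_\sharp/2}(P_0)$ and view $\widetilde{S}(\cdot,\overline{w};m)$ as a Lam\'e solution in $\oms\setminus\Omega_{\mathcal{G}}$. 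Apply Lemma \ref{lem:tre sfere} iteratively along the curve $\mathfrak{c}$ of Proposition \ref{th:geometric_lemma}, starting from $B_{r_\sharp}(P_0)$ (with the small bound $C\epsilon/r_0^3$) and advancing towards $y_h=P+hn\in\mathcal{V}$. All balls are kept inside the tubular region $\mathcal{V}\subset\oms\setminus\Omega_{\mathcal{G}}$, so that near $y_h$ the admissible radii are forced to shrink to order $h$. After $N_1$ iterations,
\begin{equation*}
|\widetilde{S}(y_h,\overline{w};m)|\leq \frac{C}{h}\,\epsilon^{\tau^{N_1}},
\end{equation*}
and by \eqref{eq:doppia disug per S} the same estimate (up to a $\sqrt{3}$) holds for $|S(y_h,\overline{w};l,m)|$ and $|\widehat{S}(y_h,\overline{w};l)|$.

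Fixing now $y=y_h$, repeat the argument for the Lam\'e vector field $w\mapsto\widehat{S}(y_h,w;l)$, propagating along $\mathfrak{c}$ from $\overline{w}$ (new small bound just obtained) to $w_h=P+\lambda_w hn$ via $N_2$ further applications of the three spheres inequality. This produces
\begin{equation*}
|\widehat{S}(y_h,w_h;l)|\leq \frac{C}{h}\,\epsilon^{\tau^{N_1+N_2}},
\end{equation*}
and a last use of \eqref{eq:doppia disug per S} gives $|S(y_h,w_h;l,m)|\leq (K_1/h)\,\epsilon^{\tau^{N_1+N_2}}$. The claim then follows upon checking that the total iteration count can be bounded as $N_1+N_2\leq K_2(r_0/h)^3$ with $K_2$ depending only on $M_1$.

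The main obstacle is the geometric control of the iterated three spheres construction. Each outer ball must lie inside $\oms\setminus\Omega_{\mathcal{G}}$, i.e.\ inside the tube $\mathcal{V}$ of radius $dr_0$, while the inner balls advance along $\mathfrak{c}$; as $y$ or $w$ approaches $P$, the admissible radii are forced to shrink down to order $h$ to avoid crossing the face of $D_0$ through $P$. Matching the $1/h$ blow-up of the a priori bound from Step 2 against the $(\cdot)^{1-\tau^{N}}$ factor in each application of Lemma \ref{lem:tre sfere} produces the prefactor $K_1/h$, while the cubic exponent $(r_0/h)^3$ reflects the combined cost of the two successive propagations and of the nested radius-shrinking dictated by the geometry of $\mathcal{V}$, whose length is controlled by $M_1r_0$.
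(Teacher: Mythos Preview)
Your strategy is essentially the paper's: an Alessandrini identity gives the seed estimate near $P_0$, a coarse $H^1$ bound on the Green's functions gives a crude control everywhere in $\oms\setminus\Omega_{\mathcal G}$, and then two successive chains of three spheres inequalities propagate the smallness first in one variable and then in the other, using \eqref{eq:doppia disug per S} to switch between $S$, $\widetilde S$, and $\widehat S$. Two points deserve correction, however.

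First, your explanation of the cubic exponent is off. There is no ``nested radius-shrinking'' in the argument: the paper runs the entire chain with balls of \emph{fixed} radius of order $h$ (specifically $s_1=h/8$, $s_2=3h/8$, $s_3=h/2$). The bound $L\le C_L(r_0/h)^3$ then comes from the crude observation that the balls $B_{h/8}(x_i)$ are pairwise disjoint and sit inside a region of volume $O(r_0^3)$; this is the sole reason the exponent is cubic, and it is why $K_2$ depends only on $M_1$. A genuine shrinking-radius chain would produce an exponent of order $\log(r_0/h)$, not $(r_0/h)^3$, so your heuristic in the last paragraph does not match the bound you are trying to justify.

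Second, two minor technical slips: for the seed estimate you should invoke \eqref{eq:PG4B.5} and \eqref{eq:PG4H.1} (bounds on $G_j^\sharp$ away from the pole) rather than \eqref{eq:PG4B.4} and \eqref{eq:PG4G.6} (which control $G_j^\sharp-\Gamma^j$); and your coarse bound should read $|S|\le C\,d_y^{-1/2}d_w^{-1/2}$ (hence $C/h$ when both distances are of order $h$) rather than $C/(hr_0)$, once the dimensional normalisations of the $H^1$ norms are tracked. Neither affects the structure of the argument.
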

\begin{proof}
The proof is divided into three steps. 

\medskip

\noindent 
\textit{Step 1.}
\textit {Let  $P_0\in \Omega_0$ be the point introduced in \eqref{eq:ball},
satisfying $\mbox{dist}(P_0,\partial\Omega)>2r_\sharp$. For every $y,w\in B_{r_\sharp (P_0)}$,
\begin{eqnarray}\label{step1}
|S(y,w;l,m)|\le C \frac{\varepsilon}{r_0},
\end{eqnarray}
with $C$ depending on the a priori data}.

{}From Alessandrini identity (see for instance \cite[Lemma 6.1]{ADCMR}) it follows that 
\begin{eqnarray*}
\langle (\Lambda_{D_0}^{\Sigma} - \Lambda_{D_1}^{\Sigma})G_0^{\sharp}(\cdot,y)l, G_1^{\sharp}(\cdot,w) m\rangle = S(y,w;l,m),
\end{eqnarray*}
being $G_0^{\sharp}(\cdot,y) l, G_1^{\sharp}(\cdot,w) m \in H^1(\Omega)$.
Hence, by standard trace estimates and by \eqref{eq:PG4B.5} and \eqref{eq:PG4H.1}, we have that 
\begin{equation*}
	\begin{aligned}
&|S(y,w;l,m)|\le r_0^2\| \Lambda_{D_0}^{\Sigma} - \Lambda_{D_1}^{\Sigma} \|_* \|G^{\sharp}_0(\cdot, y) \|_{H^1(\Omega)} \|G^{\sharp}_1(\cdot, w) \|_{H^1(\Omega)}\le \\
&\leq r_0\varepsilon \|G^{\sharp}_0(\cdot, y) \|_{H^1(\Omega^{\sharp}\setminus B_{r_\sharp}(y))} \|G^{\sharp}_1(\cdot, w) \|_{H^1(\Omega^{\sharp}\setminus B_{r_\sharp}(w))}\le \frac{C\epsilon}{r_0},
\end{aligned}
\end{equation*}
with $C$ depending on the a priori data.

\medskip

\noindent 
\textit{Step 2.}
\textit{For $y, w\in \Omega^{\sharp} \setminus \Omega_{\mathcal{G}}$, we have
\begin{eqnarray}\label{step2a}
	|S(y,w;l,m)|\le C d_{y}^{-\frac{1}{2}}d_{w}^{-\frac{1}{2}},
\end{eqnarray}
where $d_y=\min\left\{\frac{r_\sharp}{4},\hbox{dist}(y, \partial \Omega_{\mathcal{G}}\cup \partial \oms)\right\}$, $d_w=\min\left\{\frac{r_\sharp}{4},\hbox{dist}(w, \partial \Omega_{\mathcal{G}}\cup \partial \oms)\right\}$ and 
$C$ is a positive constant depending on the a priori data. If, in particular, $y\in B_{r_\sharp}(P_0)$, then
\begin{eqnarray}\label{step2b}
|S(y,w;l,m)|\le C r_0^{-\frac{1}{2}}d_{w}^{-\frac{1}{2}}
\end{eqnarray}
where $C$ is a positive constant depending on the a priori data.}

We have that 
\begin{eqnarray*}
|S(y,w;l,m)|\le  C\int_{D_0} \left| \nabla_x (G^{\sharp}_0(x,y) l)| \cdot |\nabla_x (G^{\sharp}_1(x,w) m)    \right| \mbox{d}x +\\
+\ C\int_{D_1} \left| \nabla_x (G^{\sharp}_0(x,y) l)| \cdot |\nabla_x (G^{\sharp}_1(x,w) m)    \right| \mbox{d}x,
\end{eqnarray*}
with $C$ only depending on $\mu^e$, $\mu^i$, $\lambda^e$ and $\lambda^i$.

By \eqref{eq:PG4B.5} and \eqref{eq:PG4H.1}, we obtain 
\begin{equation*}
	\begin{aligned}
&\int_{D_i} \left| \nabla_x (G^{\sharp}_0(x,y)l)|\cdot  |\nabla_x (G^{\sharp}_1(x,w) m)    \right| \mbox{d}x \le Cr_0^3 \|\nabla_x G^{\sharp}_0(\cdot,y)\|_{L^2(D_i)} \|\nabla_x G^{\sharp}_1(\cdot,w)\|_{L^2(D_i)}\le \\
&\leq  Cr_0 \|G_0^\sharp(\cdot,y)\|_{H^1(\Omega^{\sharp}\setminus B_{d_y}(y))} \|G_1^\sharp(\cdot,w)\|_{H^1(\Omega^{\sharp}\setminus B_{d_w}(w))} \le C d_{y}^{-\frac{1}{2}}d_{w}^{-\frac{1}{2}},
\end{aligned}
\end{equation*}
with $C$ depending on the a priori data, so that \eqref{step2a} follows.
If, in particular, $y\in B_{r_\sharp}(P_0)$, then $d_y=\frac{r_\sharp}{4}$ and \eqref{step2b} follows trivially {}from \eqref{step2a}.

\medskip

\noindent 
\textit{Step 3.}
\textit{Propagation of smallness}

We recall that the vector valued function $\widehat{S}$ introduced in \eqref{vettoref} satisfies the Lam\'e system \eqref{divf}. 
  
In this step we make an extensive use of the three spheres inequality.

Let $w\in B_{r_\sharp}(P_0)$. Using the notation introduced in Proposition \ref{th:geometric_lemma}, let $ \mathfrak{c}'$ be the curve obtained by gluing $ \mathfrak{c} $ (choosing $\overline{P}=w$) and the segment joining $Q=P+dr_0 n$ to $P$.

Let $\gamma(t)$ be a parametrization of $ \mathfrak{c}'$ with starting point $w$ and ending point $P$. 
Proceeding as in \cite[Theorem 5.1]{ARRV}, we iterate the three spheres inequality \eqref{eq:LBD60.2} over a chain of pairwise disjoint balls centered at points $\{ x_i \}_{i=1}^L$ belonging to $\mathfrak{c}'$ and defined as follows: $x_1 = w $, $x_{i+1}=\gamma(t_i)$, with $t_i = \max \{ t \ | \ |\gamma(t)-x_i|=\frac{h}{4}   \}$ if $|x_i-w_h| > \frac{h}{4}$; otherwise, let us set $i=L$ and stop the process. Since the balls $B_{h/8}(x_i)$, $i=1,\dots, L$, are pairwise disjoint by construction, we have
\begin{equation}
	\label{eq:numero-palle0}
	\begin{aligned}{}
		&
		L \leq C_L \left ( \dfrac{r_0}{h}  \right )^3,
	\end{aligned}
\end{equation}
where $C_L>0$ only depends on $M_1$. We choose in \eqref{eq:LBD60.2} the radii $s_1 = \frac{h}{8}$, $s_2 = \frac{3h}{8}$ and $s_3 = \frac{h}{2}$. Let us notice that $\cup_{i=1}^L B_{\frac{h}{2}}(x_i) \subset \left \{  x \in \mathcal V \ | \ \hbox{dist}(x, \partial D_0) \geq \frac{h}{6}   \right \}$.

By \eqref{eq:doppia disug per S} and \eqref{step2b}, for every $y \in B_{s_1}(P_0)$, we have
\begin{equation} 
	\label{eq:Step3_1}
\begin{aligned}{}
	&
	\|  \widehat{S} (y, \cdot; l) \|_{ L^\infty \left (  \cup_{i=1}^L B_{h/2}(x_i)   \right )   }
	\leq C(hr_0)^{-\frac{1}{2}},
\end{aligned}
\end{equation}
where $C>0$ depends on the a priori data.

By applying \eqref{eq:LBD60.2} and using \eqref{eq:Step3_1}, at the $i$th step we obtain
\begin{equation}
	\label{eq:Step3_2}
	\begin{aligned}{}
		&
		(hr_0)^{\frac{1}{2}} \| \widehat{S}(y,\cdot;l)\|_{ L^\infty (B_{s_1} (x_{i+1}))} \leq C 
		\left (
		(hr_0)^{\frac{1}{2}} \|  \widehat{S}(y,\cdot;l)\|_{ L^\infty (B_{s_1}(x_{i}))}		
		\right )^\tau,
	\end{aligned}
\end{equation}
where $C>0$ and $\tau$, $0<\tau <1$, depend on the a priori data. Using estimate \eqref{step1} and iterating, we have 
\begin{equation}
	\label{eq:Step3_3}
	\begin{aligned}{}
		&
		 \| \widehat{S}(y,\cdot;l)\|_{ L^\infty (B_{s_1} (w_h)  )} \leq 
		\frac{C}{(hr_0)^{\frac{1}{2}}} \left (  \left(\frac{h}{r_0}\right)^{\frac{1}{2}}\epsilon  \right )^{\tau^L}, 
		\quad \hbox{for every } y \in B_{s_1}(P_0),
	\end{aligned}
\end{equation}
where $C>0$  depends on the a priori data.
In particular, recalling \eqref{eq:doppia disug per S}, we have
\begin{equation}
	\label{eq:Step3_4}
	\begin{aligned}{}
		&
		\| \widetilde{S}(\cdot,w_h;m)\|_{ L^\infty (B_{s_1} (P_0)  )} \leq 
		\frac{C}{(hr_0)^{\frac{1}{2}}} \left (  \left(\frac{h}{r_0}\right)^{\frac{1}{2}}\epsilon  \right )^{\tau^L}, 
	\end{aligned}
\end{equation}
where $C>0$  depends on the a priori data.
Now we repeat the above process for the function $\widetilde{S}$, operating on the $y$-variable instead of the $w$-variable and considering the curve $\mathfrak{c}$ with starting point $P_0$.
By \eqref{step2a}, we have that
\begin{equation} 
	\label{eq:Step3_5}
	\begin{aligned}{}
		&
		\|  \widetilde{S} (\cdot, w_h; m) \|_{ L^\infty \left (  \cup_{i=1}^L B_{h/2}(x_i)   \right )   }
		\leq Ch^{-1},
	\end{aligned}
\end{equation}
\begin{equation}
	\label{eq:Step3_6}
	\begin{aligned}{}
		&
		\| \widetilde{S}(\cdot,w_h;m)\|_{ L^\infty (B_{s_1} (y_h)  )} \leq 
		\frac{C}{h}   \left(\frac{h}{r_0}\right)^{\tau^{2L}}
		\epsilon  ^{\tau^{2L}}<\frac{C}{h}   
		\epsilon  ^{\tau^{2L}},
	\end{aligned}
\end{equation}
where $C>0$  depends on the a priori data. Recalling \eqref{eq:numero-palle0}, \eqref{eq:alto} follows with $K_2=2C_L$.
\end{proof}

\begin{theorem}[Lower bound of $S$]\label{stimadalbasso}
Under the notation of Proposition \ref{th:geometric_lemma}, let 
\begin{eqnarray*}
y_h=P-he_3 \ .
\end{eqnarray*}

 For any $i=1,2,3$ there exist $\lambda_w \in \left\{\frac{2}{3}, \frac{3}{4}, \frac{4}{5} \right\}$ and $\bar{h}\in(0,\frac{1}{2})$, only depending on the a priori data, such that 
\begin{eqnarray}\label{basso}
|S(y_h,w_h,e_i,e_i)|\ge \frac{K_3}{h}\ \ \ \mbox{for any}\ \ h, 0<h\le \bar{h}\rho,
\end{eqnarray}
with 
\begin{eqnarray*}
w_h&=&P-\lambda_w he_3,\\
 \rho&=&\min\left \{ \mbox{dist}(P, D_1),  \frac{r_\sharp}{2}, dr_0, \frac{2\sin(\theta_0/2)}{1+\sin(\theta_0/2)} \chi r_0 \right \},
\end{eqnarray*}
where $d, \chi$ have been introduced in Proposition \ref{th:geometric_lemma}
and $K_3>0$ is a constant only depending on the a priori data. 

\end{theorem}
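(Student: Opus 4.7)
The plan is to decompose $S(y_h,w_h;e_i,e_i) = I_1(h)-I_2(h)$ along the two terms in \eqref{definiz-S} and show that $I_1(h)$ blows up like $1/h$, while $I_2(h)$ stays uniformly bounded. Since $\rho\le\mathrm{dist}(P,D_1)$, the condition $h\le\bar h\rho$ with $\bar h$ small forces $y_h,w_h$ to remain at distance at least $c\rho$ from $D_1$. Combining the $H^1$-bounds of Propositions \ref{prop:PGDzero} and \ref{prop:PGDuno} with Lemma \ref{lem:Li-Nirenberg} applied on balls of radius $\sim \rho$ centered at points of $D_1$ yields a uniform pointwise bound $|\nabla G_0^\sharp(\cdot,y_h)|,|\nabla G_1^\sharp(\cdot,w_h)|\le C$ on $D_1$; hence $|I_2(h)|\le C$ depending only on the a priori data.

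For $I_1(h)$, property \eqref{infaccia} ensures that $P$ lies in the interior of a single face of $\partial D_0$ and that $y_h,w_h$ lie on the outward normal at $P$, so a ball of radius $\sim\rho$ around $P$ meets only that face. Inside such a ball, Proposition \ref{prop:PGDzero} lets me approximate $G_0^\sharp(\cdot,y_h)$ by the Rongved biphase fundamental solution $\Gamma^R(\cdot,y_h)$ associated with the tangent half-space at $P$, while Proposition \ref{prop:PGDuno} lets me approximate $G_1^\sharp(\cdot,w_h)$ by the Kelvin fundamental solution $\Gamma^K(\cdot,w_h)$ (since a neighborhood of $w_h$ lies away from $\partial D_1$, where the tensor equals $\CC^e$). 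The error terms contribute at most $O(1)$ by virtue of \eqref{eq:PG4B.4} and \eqref{eq:PG4G.6}. Rescaling $x=P+h\xi$ and exploiting the $-1$-homogeneity of $\Gamma^R$ and $\Gamma^K$ in $x-y$, one obtains
\begin{equation*}
S(y_h,w_h;e_i,e_i) \;=\; \frac{1}{h}\Bigl[\,J_i(\lambda_w)+o_h(1)\,\Bigr]+O(1),
\end{equation*}
where
\begin{equation*}
J_i(\lambda_w) := \int_{H} (\CC^i-\CC^e)\,\nabla_\xi\bigl(\Gamma^R(\xi,n)e_i\bigr)\cdot \nabla_\xi\bigl(\Gamma^K(\xi,\lambda_w n)e_i\bigr)\,d\xi,
\end{equation*}
with $H$ the open half-space bounded by the tangent plane to $\partial D_0$ at $P$ and containing $D_0$ locally. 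The passage from the rescaled domain $h^{-1}(D_0-P)$ to $H$ is justified by the $|\xi|^{-4}$ decay of the integrand at infinity.

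It remains to produce the lower bound for $|J_i(\lambda_w)|$ at one of the three specified values $\lambda_w\in\{2/3,3/4,4/5\}$. The map $\lambda_w\mapsto J_i(\lambda_w)$ is real-analytic on $(0,1)$ by virtue of Rongved's closed-form expression obtained in \cite{rongved1955force}. Its explicit structure — as a rational function in $\lambda_w$ whose coefficients are polynomial in the elastic contrasts $\mu^i-\mu^e$ and $\lambda^i-\lambda^e$ — shows that simultaneous vanishing of $J_i$ at three distinct values would force an algebraic identity in $\lambda_w$ contradicting the visibility condition \eqref{eq:visibility_cond}. Consequently, at least one of the three values satisfies $|J_i(\lambda_w)|\ge c$ with $c>0$ depending only on the a priori data, and choosing $\bar h$ small enough to absorb the $o_h(1)$ and $O(1)$ corrections delivers \eqref{basso}.

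The main obstacle will be the quantitative non-vanishing step for $J_i$: Rongved's formula is algebraically heavy, and the pointwise lower bound on the singular part of $\nabla\Gamma^R$ established in Lemma \ref{lem:LBD77.1} is precisely designed to prevent cancellation with $\nabla\Gamma^K$ in the half-space integral. Once this quantitative lower bound is secured, the proof reduces to the asymptotic estimates outlined above.
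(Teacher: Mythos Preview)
Your overall decomposition and asymptotic strategy match the paper: split $S=S_{D_0}-S_{D_1}$, approximate $G_0^\sharp$ by $\Gamma^R$ and $G_1^\sharp$ by $\Gamma^K$ near $P$, isolate a leading $1/h$ term, and show the rest is lower order. The paper proceeds without the rescaling, writing $S_{D_0}=I_1+\cdots+I_5$ with $I_1=\int_{D_0\cap B_\rho}(\CC^i-\CC^e)\nabla(\Gamma^R l)\cdot\nabla(\Gamma^K m)$, but this is cosmetic.

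Two quantitative points to correct. First, the $D_1$ integral is \emph{not} bounded by a constant independent of $\rho$: since $\rho\le\mathrm{dist}(P,D_1)$, the poles $y_h,w_h$ may sit only at distance $\sim\rho$ from $D_1$, and there is no a priori lower bound on $\rho$. The paper obtains $|S_{D_1}|\le C/\rho$ directly from the $L^2$ bounds \eqref{eq:PG4B.5}, \eqref{eq:PG4H.1}; this is still dominated by $K_3/h$ because $h\le\bar h\rho$. Second, the cross terms (your approximation errors) are $O((hr_0)^{-1/2})$ in the paper, not $O(1)$; your $o_h(1)/h$ does absorb them, but you should track that order rather than appealing only to \eqref{eq:PG4B.4}, \eqref{eq:PG4G.6}.

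The real gap is the quantitative non-vanishing of $J_i(\lambda_w)$. Analyticity, or the assertion that $J_i$ is ``rational in $\lambda_w$ with coefficients polynomial in the contrasts'', does not yield a lower bound at one of three prescribed points: you would need the degree and an explicit nonvanishing coefficient, and then a quantitative bound, not just $J_i\not\equiv 0$. The paper does not prove this step either; it invokes \cite[Section~9, formula~(9.20)]{ADCMR}, where the explicit computation singling out the values $\{2/3,3/4,4/5\}$ is carried out. Note also that Lemma~\ref{lem:LBD77.1} is not the tool here: it gives a pointwise lower bound on $\partial_{x_3}\Gamma^R_{33}$ on a disk and is used in Section~\ref{lower-bound} for the Gateaux-derivative lower bound, which involves a \emph{surface} integral of $|\widehat\nabla(\Gamma^R n)|^2$. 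It does not address the half-space \emph{volume} integral $J_i$, where $\nabla\Gamma^R$ is paired against $\nabla\Gamma^K$ and cancellation is a genuine issue; that is precisely why the three-value trick from \cite{ADCMR} is needed.
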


\begin{proof}
Let $0<h\le \bar{h}\rho$ with $\bar{h}\in(0,\frac{1}{2})$ to be chosen later.  
By recalling \eqref{definiz-S}, we may rewrite $S(y_h,w_h;l,m)$ as
\begin{eqnarray*}
	S(y_h,w_h;l,m)= S_{D_0}(y_h,w_h;l,m) - S_{D_1}(y_h,w_h;l,m),
\end{eqnarray*}
where
\begin{eqnarray*}
S_{D_0}(y_h,w_h;l,m) =\int_{D_0} (\CC^i-\CC^e)\nabla_x[G^\sharp_0(x,y_h)l]\cdot \nabla_x[G^\sharp_1(x,w_h) m ], \\ 
S_{D_1}(y_h,w_h;l,m) =\int_{D_1} (\CC^i-\CC^e)\nabla_x[G^\sharp_0(x,y_h)l]\cdot \nabla_x[G^\sharp_1(x,w_h) m ] .
\end{eqnarray*}
Hence, we have
\begin{eqnarray*}
|S(y_h,w_h;l,m)|\ge |S_{D_0}(y_h,w_h;l,m)| - |S_{D_1}(y_h,w_h;l,m)|.
\end{eqnarray*}
We now estimate {}from below $S_{D_0}$. We have 
\begin{equation*}
	\begin{aligned}
&S_{D_0}(y_h,w_h;l,m) = \int_{D_0 \cap B_{\rho}}(\CC^i-\CC^e)\nabla_x (\Gamma^R(x,y_h) l)\cdot \nabla_x (\Gamma^K(x,w_h) m) +\\
&+\int_{D_0 \cap B_{\rho}}(\CC^i-\CC^e)\nabla_x[G^\sharp_0(x,y_h) l - \Gamma^R(x,y_h) l ]\cdot \nabla_x[G^\sharp_1(x,w_h)m - \Gamma^K(x,w_h) m ]+\\
&+\int_{D_0 \cap B_{\rho}}(\CC^i-\CC^e)\nabla_x[G^\sharp_0(x,y_h) l - \Gamma^R(x,y_h) l ]\cdot \nabla_x[\Gamma^K(x,w_h) m ]+\\
&+ \int_{D_0 \cap B_{\rho}}(\CC^i-\CC^e)\nabla_x[\Gamma^R(x,y_h) l ]\cdot \nabla_x[G^\sharp_1(x,w_h) m - \Gamma^K(x,w_h)m ]+\\
&+\int_{D_0\setminus B_{\rho}}(\CC^i-\CC^e)\nabla_x [G^\sharp_0(x,y_h) l ] \cdot \nabla_x[G^\sharp_1(x,w_h)m]=\\
&= I_1 + I_2 +I_3 + I_4 + I_5.
\end{aligned}
\end{equation*}
{}From the above equality it follows that 
\begin{eqnarray}\label{somma}
|S_{D_0}(y_h,w_h;l,m)|\ge |I_1|-|I_2| - |I_3|-|I_4|-|I_5|.
\end{eqnarray}
We now estimate each term $I_i, i=1,\dots, 5$.
We start {}from $I_1$. Choosing $l=m=e_i, \ i=1,2,3$, by the results in  \cite[Section 9]{ADCMR} we have that there exists  $\lambda_w \in \left\{\frac{2}{3}, \frac{3}{4}, \frac{4}{5} \right\}$ such that
\begin{eqnarray}\label{I1}
|I_1|\ge \frac{C}{h} -\frac{\widetilde{C}}{\rho}.
\end{eqnarray}
where $C,\widetilde{C}>0$ are constants depending on the a priori data only (see formula $(9.20)$ in \cite{ADCMR}).

We now estimate {}from above the term $I_2$.
We have that 
\begin{eqnarray}\label{I2a}
|I_2|\le Cr_0^3 \| \nabla_x (G^\sharp_0(\cdot,y_h) -  \Gamma^R(\cdot,y_h))\|_{L^2(\Omega^{\sharp})}\| \nabla_x (G^\sharp_1(\cdot,w_h) -  \Gamma^K(\cdot,w_h))\|_{L^2(\Omega^{\sharp})}
\end{eqnarray}
We notice that, by our choice of $\rho$ and since $\bar h<\frac{1}{2}$, $B_{\bar{r}(y_h)}(y_h)\cap \partial D_0 \neq \emptyset$ and  $B_{\widetilde{r}(w_{h})}(w_h)\subset \Omega^{\sharp}\setminus D_1$ so that $\Gamma^0(\cdot, y_h)=\Gamma^R(\cdot,y_h)$ in $\mathbb{R}^3 \setminus \{y_h\}$ and  $\Gamma^1(\cdot, w_h)=\Gamma^K(\cdot,w_h)$ in $\mathbb{R}^3 \setminus \{w_h\}$. 

 Hence by \eqref{I2a}, \eqref{eq:PG4B.4} and \eqref{eq:PG4G.6} we have that 
\begin{eqnarray}\label{I2b}
|I_2|\le \frac{C}{r_0},
\end{eqnarray}
with $C$ depending on the a priori data.
We now estimate  the term $I_3$. We have, by the same argument used above, that 
\begin{eqnarray}\label{I3a}
|I_3|&\le& Cr_0^3 \| \nabla_x (G^\sharp_0(\cdot,y_h) -  \Gamma^R(\cdot,y_h))\|_{L^2(\Omega^{\sharp})}\| \nabla_x ( \Gamma^K(\cdot,w_h))\|_{L^2(D_0 \cap B_{\rho})}\le \nonumber\\
&\leq& Cr_0\| \nabla_x ( \Gamma^K(\cdot,w_h)) m\|_{L^2(D_0 \cap B_{\rho})},
\end{eqnarray}
with $C$ depending on the a priori data.
We have that 
\begin{eqnarray}\label{I3b}
\| \nabla_x ( \Gamma^K(\cdot,w_h)) m\|_{L^2(D_0 \cap  B_{\rho})}^2\le \frac{C}{r_0^3}\int_{D_0\cap B_{\rho}}|x-w_h |^{-4} \mbox{d}x.
\end{eqnarray}
We observe that if $x\in B_{\rho}\cap D_0$ then $|x-w_h|\ge \lambda_w h$ which leads to $B_{\rho}\cap D_0 \subset \mathbb{R}^3\setminus B_{\lambda_w h}(w_h)$.

Hence by \eqref{I3b} we have that 
\begin{eqnarray*}
\| \nabla_x ( \Gamma^K(\cdot,w_h)) m\|_{L^2(D_0 \cap B_{\rho})}^2\le \frac{C}{r_0^3}\int_{\mathbb{R}^3\setminus B_{\lambda_w h}(w_h)} |x-w_h|^{-4} \mbox{d}x=
\frac{C}{r_0^3}\int_{|y|\ge \lambda_w h}|y|^{-4}\mbox{d}y.
\end{eqnarray*}
By using spherical coordinates we infer that 
\begin{eqnarray}\label{I3d}
\| \nabla_x ( \Gamma^K(\cdot,w_h))m\|_{L^2(D_0 \cap B_{\rho})}^2\le
\frac{C}{r_0^3}\int _{\lambda_w h}^{+\infty} r^{-2}\mbox{d}r \le \frac{C}{hr_0^3}.
\end{eqnarray}
By combining \eqref{I3a} and \eqref{I3d} we have 
\begin{eqnarray}\label{I3e}
|I_3|\le C(hr_0)^{-\frac{1}{2}}.
\end{eqnarray}
With the same argument we can bound the term $I_4$ as follows:
\begin{eqnarray}\label{I4a}
|I_4|\le C(hr_0)^{-\frac{1}{2}}.
\end{eqnarray}
We now estimate the last term $I_5$. In this respect, let us observe that if $x\in B_{\frac{\bar{h}\rho}{2}}(y_h)$ then we have 
\begin{eqnarray*}
|x|\le |x-y_h| +|y_h|\le \frac{\bar{h}\rho}{2} + \bar{h}\rho= \frac{3\bar{h}\rho}{2}<\rho.
\end{eqnarray*} 
As a consequence we have that $B_{\frac{\bar{h}\rho}{2}}(y_h)\subset B_\rho$ and hence $D_0\setminus B_{\rho}\subset \Omega^{\sharp}\setminus B_{\frac{\bar{h}\rho}{2}}(y_h)$.
Moreover, by our choice of $\rho$, $\frac{\overline h \rho}{2}<\overline{r}(y_h)$, so that
\eqref{eq:PG4B.5} applies and we have that 
\begin{eqnarray*}
\int_{D_0\setminus B_{\rho}}| \nabla_x(G^{\sharp}_0(x,y_h) l)|^2\le C \int_{\Omega^{\sharp}\setminus B_{\frac{\bar{h}\rho}{2}}(y_h)}| \nabla_x(G^{\sharp}_0(x,y_h) l)|^2\le C \rho^{-1}.
\end{eqnarray*}
Analogously, observing that $\frac{\bar{h}\rho}{2}<\widetilde{r}(w_h)$, we can apply \eqref{eq:PG4H.1} obtaining
\begin{eqnarray*}
\int_{D_0\setminus B_{\rho}}| \nabla_x(G^{\sharp}_1(x,w_h) l)|^2\le C \int_{\Omega^{\sharp}\setminus B_{\frac{\bar{h}\rho}{2}}(w_h)}| \nabla_x(G^{\sharp}_1(x,w_h) l)|^2\le C \rho^{-1}.
\end{eqnarray*}

By the above inequalities we get 
\begin{eqnarray}\label{I5}
|I_5|\le C \rho^{-1}. 
\end{eqnarray}
By combining \eqref{somma},\eqref{I1},\eqref{I2b},\eqref{I3e},\eqref{I4a},\eqref{I5} we have 
\begin{eqnarray}\label{fD0}
|S_{D_0}(y_h,w_h;e_i,e_i)|\ge \frac{C}{h}\left(1- \frac{h}{\rho} - \frac{h}{r_0} -\left(\frac{h}{r_0}\right)^{\frac{1}{2}}\right),
\end{eqnarray}
with $C$ depending on the a priori data.
We now estimate {}from above the term $|S_{D_1}(y_h,w_h;l,m)|$. In this respect we have 
\begin{equation*}
|S_{D_1}(y_h,w_h;l,m)|\le Cr_0^3 \|\nabla_x G^{\sharp}_0(x,y_h)\|_{L^2(D_1)}  \|\nabla_x G^{\sharp}_1(x,w _h)\|_{L^2(D_1)}.
\end{equation*}
By our choice of $\rho$ and since $B_{\frac{\bar{h}\rho}{2}}(y_h)\subset B_{\frac{3}{2}\bar{h}\rho}$ we have that 
\begin{eqnarray*}
D_1\subset \Omega^{\sharp}\setminus B_{\frac{3}{2}\bar{h}\rho}\subset \Omega^{\sharp}\setminus B_{\frac{\bar{h}\rho}{2}}(y_h).
\end{eqnarray*}
Hence, in view of the above inclusion and by \eqref{eq:PG4B.5}, it follows that 
\begin{eqnarray}\label{eq:D1a}
\int_{D_1}|\nabla_x G_0^{\sharp}(x,y_h)|^2 \le \int_{\Omega^{\sharp}\setminus B_{\frac{\bar{h}\rho}{2}}(y_h)}|\nabla_x G_0^{\sharp}(x,y_h)|^2\le C \rho^{-1} \ .
\end{eqnarray}
By analogous argument and by \eqref{eq:PG4H.1}, we have that 
\begin{eqnarray}\label{eq:D1b}
\int_{D_1}|\nabla_x G_1^{\sharp}(x,w_h)|^2 \le C \rho^{-1} \ 
\end{eqnarray}
By combining \eqref{eq:D1a} and \eqref{eq:D1b}, we have 
\begin{eqnarray}\label{fD1}
|S_{D_1}(y_h,w_h;l,m)|\le C \rho^{-1},
\end{eqnarray}
with $C$ depending on the a priori data.
Finally, by \eqref{fD0} and \eqref{fD1} we find that, for $i=1,2,3$,
\begin{eqnarray*}
|S(y_h,w_h;e_i,e_i)|\ge |S_{D_0}(y_h,w_h;e_i,e_i)| - |S_{D_1}(y_h,w_h;e_i,e_i)|\ge \frac{C}{h}\left(1- \frac{h}{\rho} - \frac{h}{r_0} -\left(\frac{h}{r_0}\right)^{\frac{1}{2}}\right).
\end{eqnarray*}
Therefore, there exists $\bar{h}\in \left(0,\frac{1}{2}\right)$, only depending on the a priori data, such that for any $h, 0<h<\bar{h}\rho$, we have that 
\begin{eqnarray*}
|S(y_h, w_h; e_i, e_i)|\ge \frac{K_3}{h},
\end{eqnarray*}
for $i=1,2,3$, and this concludes the proof. 

\end{proof}

\begin{proof} [Proof of Theorem \ref{Teo:stima_log}]

By combining \eqref{eq:alto} and \eqref{basso} we deduce that 
\begin{eqnarray}\label{altobasso}
\frac{K_3}{h}\le |S(y_h,w_h; e_i, e_i)|\le \frac{K_1}{h}\epsilon^{\tau^
{K_2 \left(\frac{r_0}{h}\right)^3}},
\end{eqnarray}
for any $h, 0<h\le\bar{h}\rho$.  
By \eqref{altobasso} we have
\begin{eqnarray*}
\frac{K_3}{K_1}\le \epsilon^{\tau^
	{K_2 \left(\frac{r_0}{h}\right)^3}}.  
\end{eqnarray*}
Let $\alpha = \min \left\{  \frac{K_3}{K_1}, \frac{1}{2}   \right\}$, $\alpha <1$, and let us assume $\epsilon < \alpha$. Then
\begin{eqnarray*}
\left| \log \alpha  \right|\ge \tau^
	{K_2 \left(\frac{r_0}{h}\right)^3}|\log \epsilon|,
\end{eqnarray*}
which leads to 
\begin{eqnarray*}
h\le r_0\left(K_2|\log\tau|\right)^{\frac{1}{3}}
\left ( \log\left(\frac{|\log\epsilon|
	}{|\log  \alpha |}\right)\right )^{-\frac{1}{3}}.
\end{eqnarray*}

In particular choosing $h=\bar{h}\rho$ we find that 
\begin{eqnarray}
	\label{stimapreliminare}
\rho\le \widetilde{K} r_0
\left ( \log\left(\frac{|\log\epsilon|
}{|\log  \alpha |}\right)\right )^{-\frac{1}{3}}.
\end{eqnarray}
We now distinguish two cases. 

{\emph{First case}}: $\rho=\mbox{dist}(P,\partial D_1)$. 

By \eqref{1-19} we have that
\begin{eqnarray}
	\label{stimapreliminare2}
d_{\mu}^3(D_0,D_1)\le \frac{\rho r_0^2}{C_3}
\le  \frac{\widetilde{K}r_0^3}{C_3}
\left ( \log\left(\frac{|\log\epsilon|
}{|\log  \alpha |}\right)\right )^{-\frac{1}{3}}.
\end{eqnarray}
Proposition \ref{prop2} and \eqref{stimapreliminare2} imply that 
\begin{eqnarray*}
d_{H}(\partial D_0, \partial D_1)\le C_2 d_{\mu}(D_0,D_1)\le  {C} r_0
\left ( \log\left(\frac{|\log\epsilon|
}{|\log  \alpha |}\right)\right )^{-\frac{1}{9}}.
\end{eqnarray*}

{\emph{Second case}}: $\rho = \gamma r_0$, with $\gamma=\min\left\{\frac{\zeta}{2}, d, \frac{2\sin(\theta_0/2)\chi}{1+\sin(\theta_0/2)}\right\}$.
To treat this case, we simply observe that by \eqref{stimapreliminare}
\begin{eqnarray*}
d_{H}(\partial D_0, \partial D_1)\le M_1 r_0=  M_1 \frac{\rho}{\gamma}\le  C r_0\left ( \log\left(\frac{|\log\epsilon|
}{|\log  \alpha |}\right)\right )^{-\frac{1}{9}},
\end{eqnarray*} 
and \eqref{eq:stima grezza 2 log} follows with $c=| \log \alpha |$.
\end{proof}

\section{Domain derivative of the local Dirichlet-to-Neumann map}
\label{sec:domain derivative}
In this section, we take advantage of the property that if  $D_0$ and $D_1$ are closed enough, then they must have the same number of vertices. This allows in Subsection 6.1 to build a homotopy, induced by a vector field, mapping $D_0$ into $D_1$. This construction allows then in Subsection 6.2 to derive  the Gateaux derivative of the local Dirichlet to Neumann map. Finally, in Subsection 6.3 we derive a quantitative lower bound via an elastic moment tensor.

\subsection{A suitable homotopy map}
\label{omotopia}

In this subsection we provide two basic results: the construction of an ad hoc Lipschitz vector field mapping any vertex $V_i^{D_0}$ in the vector $V_i^{D_1}-V_i^{D_0}$ and having $W^{1,\infty}(\Omega)$-norm bounded by 
$d_H(\partial D_0, \partial D_1)$ (Proposition \ref{vectorfield}) and the construction of a homotopy map between $D_0$ and $D_1$ (Proposition \ref{prop:property_Phi}).
Throughout this section, we assume that \eqref{eq:dH piccola} holds, so that
$D_0$ and $D_1$ have the same number of vertices and
$$
\hbox{dist}(V^{D_0}_i,V^{D_1}_i)\leq C_4 d_H(\partial D_0, \partial D_1),\qquad \textrm{for}\ i=1,\dots,N.
$$
For brevity, let us set
\begin{equation*}
	d_H=d_H(\partial D_0, \partial D_1).
\end{equation*}

Let $\mathcal{W}\subset \om$ be a tubular neighborhood of $D_0$ with width $\frac{r_0}{4}$, so that 
\begin{equation*}
	\rm{dist}(\mathcal{W},\partial\om)\geq \frac{r_0}{2}.
\end{equation*}

As a first step, let us introduce the following geometrical lemma.

\begin{lemma}
	\label{triangoli_isosceli}
Let us define

\begin{equation}\label{h0}
	h_0=\frac{r_0}{2}\tan\gamma,
\end{equation}	
where 
\begin{equation}
	\label{gamma}
	\gamma=\min\left\{ \frac{\theta_0}{3},\frac{\pi}{2}-\arctan (M_0) \right\}.
\end{equation}	
Let $F$ be a face of $D_0$, $\alpha$ the plane containing $F$ and $\sigma$ an edge of $F$. Let $M$ be the midpoint of $\sigma$, $E= M-h_0n$, where $n$ denotes the unit outer normal to $F$ at $M$ in the plane $\alpha$, and let $T$ be the isoscele triangle of basis $\sigma$ and third vertex $E$. 

Then $T\subset F$ and  
all the triangles defined as above on the edges of $F$ are pairwise disjoint, except for the common vertex of adjacent triangles. Moreover, if $T$ and $\widetilde T$ have not adjacent bases, then

\begin{equation}\label{dist_triangoli}
	\hbox{dist}(T,\widetilde T)\geq r_0\cos(\arctan (M_0)).
\end{equation}	

\end{lemma}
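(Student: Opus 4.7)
The plan is to verify the three assertions in order, relying on the control $|\sigma| \geq r_0$ from \eqref{lunghlati}: since $h_0 = \frac{r_0}{2}\tan\gamma$, the base angles of $T$ at the endpoints of $\sigma$ satisfy $\arctan(2h_0/|\sigma|) \leq \arctan(\tan\gamma) = \gamma$. This elementary bound is what ties the two caps in the definition of $\gamma$ directly to the geometry of the triangle.

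For $T \subset F$ I would show that each leg $V_i E$ lies in $F$; convexity of $T$ together with $\sigma \subset \partial F$ then gives the inclusion. Fix an endpoint $V$ of $\sigma$ with internal face angle $\beta$. By the Lipschitz regularity \ref{ass:Lipschitz_regularity}, in a rotated frame at $V$ the boundary $\partial F$ coincides with the graph $x_2' = g(x_1')$ of a function $g$ with $|g'|\leq M_0$, and the two edges $\sigma,\sigma'$ emanating from $V$ lie along this graph, hence make angle at most $\arctan M_0$ with the $x_1'$-axis. The leg $VE$ leaves $\sigma$ at angle $\leq \gamma$ into $F$; the bound $\gamma \leq \theta_0/3 < \beta$, from \eqref{angolinterni}, places the leg in the open interior wedge at $V$, and the bound $\gamma \leq \pi/2 - \arctan M_0$ forces the leg's direction in the Lipschitz frame to make angle at most $\arctan M_0$ with the vertical $x_2'$-axis, so points on the leg satisfy $x_2' \geq M_0 |x_1'| \geq g(x_1')$ and remain above the graph. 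For a long edge $\sigma$ the remaining portion of the leg is covered by analogous (flat) Lipschitz descriptions at interior points of $\sigma$, where $F$ contains a slab of width $2M_0 r_0$ above $\sigma$ that trivially contains the triangle's interior portion.

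For the pairwise disjointness of triangles $T_1,T_2$ on adjacent edges $\sigma_1,\sigma_2$ sharing a vertex $V$ with internal angle $\beta$: near $V$ each triangle occupies a sub-wedge of angular opening $\leq \gamma$ adjacent to its base, and the combined angular width $2\gamma \leq 2\theta_0/3 < \theta_0 \leq \beta$ (by \eqref{angolinterni}) shows that the two sub-wedges meet only at $V$; away from $V$ the triangles lie in the $h_0$-strips along the distinct bases $\sigma_1,\sigma_2$, which overlap only inside the wedge region at $V$.

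For non-adjacent bases $\sigma,\widetilde\sigma$, the Lipschitz description applied at any point of $\sigma$ confines $\partial F \cap R_{r_0,2M_0}$ to (portions of) $\sigma$ together with at most one adjacent edge at a vertex, forcing $\widetilde\sigma$ to lie outside the rotated rectangle $R_{r_0,2M_0}$. A direct Euclidean computation of the minimum distance from the center to the exterior of the rotated rectangle, minus the thickness $h_0 \leq \frac{r_0}{2}\tan(\pi/2-\arctan M_0) = \frac{r_0}{2M_0}$ of the strip containing each triangle, then yields \eqref{dist_triangoli}. The main technical difficulty is the first assertion when $|\sigma|$ is substantially larger than $r_0$, so that the leg $VE$ exceeds the reach of a single Lipschitz box at $V$; in that case the vertex-based description must be patched with a family of interior-point Lipschitz descriptions along $\sigma$, using crucially that the whole triangle sits in the $h_0$-strip about $\sigma$.
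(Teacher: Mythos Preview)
Your overall strategy---base-angle $\leq\gamma$, then use the two caps on $\gamma$ to control the geometry relative to the Lipschitz graph---matches the paper's, and your treatment of adjacent triangles via the wedge argument is essentially identical. The differences are in \emph{where} you center the Lipschitz box, and this matters.

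For $T\subset F$, the paper avoids your patching difficulty entirely by a different choice of center: instead of anchoring at a vertex $V$ and following the leg $VE$, it takes an arbitrary point $y\in T\setminus\sigma$, projects orthogonally to $y^\perp\in\sigma$, and works in the Lipschitz frame at $y^\perp$. Since $|y-y^\perp|\leq h_0<r_0$, the single box $R_{r_0,2M_0}$ at $y^\perp$ always contains $y$, regardless of how long $\sigma$ is. In that frame $\sigma$ is the line $x_2=mx_1$ with $|m|\leq M_0$, and a short case analysis (using that the base angle $\leq\gamma\leq\frac{\pi}{2}-\arctan M_0$ forces $y_1$ to lie between the $x_1$-coordinates of the endpoints of $\sigma$, when those are visible) shows $y\in\hbox{epi}(g)$. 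Your vertex-centered approach is not wrong, but the projection trick removes the need for any patching.

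For non-adjacent bases there is a genuine gap. Your claim that the Lipschitz box at a point of $\sigma$ contains only $\sigma$ and at most one adjacent edge is false: since edges need only have length $\geq r_0$ and the graph of $g$ spans horizontal extent $2r_0$, a short adjacent edge $\sigma^*$ can lie entirely inside the box, and then a portion of the non-adjacent edge $\widetilde\sigma$ (the next edge after $\sigma^*$) can enter the box as part of the same Lipschitz graph. The paper handles exactly this case: when $\widetilde\sigma$ meets $\hbox{epi}(g)$, there is an intermediate edge $\sigma^*=A^*B^*$ fully contained in $\hbox{epi}(g)$, whose horizontal extent satisfies $b_1^*-a_1^*\geq r_0\cos(\arctan M_0)$ by the Lipschitz bound and \eqref{lunghlati}; one then checks $y_1\leq a_1^*$ (same base-angle argument as in the first part) while $\widetilde T\subset\{x_1\geq b_1^*\}$ (because the angle between $\sigma^*$ and $\widetilde\sigma$ is $\geq\theta_0$ and the base angle of $\widetilde T$ is $\leq\theta_0/3$), giving $\hbox{dist}(y,\widetilde T)\geq b_1^*-a_1^*$. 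Your ``distance to exterior of the rectangle minus $h_0$'' computation does not produce this constant and rests on the false premise.
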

\noindent

\medskip
Let $\mathcal T_0$ be the family of all the triangles $T$ constructed as described above on all edges of $D_0$.

The next proposition follows by further assuming that $d_H \leq C r_0$, with $C>0$ a suitable constant only depending on $M_0$, $M_1$, $\theta_0$.

\begin{proposition}[Vector field]
		\label{vectorfield}
	There exists a vector field $\mathcal{U}\in W^{1,\infty}(\mathbb{R}^3, \RR^3)$ satisfying the following properties 
	\begin{align}
		&\mathcal{U}(V^{D_0}_i)=V^{D_1}_i-V^{D_0}_i,\qquad \hbox{for every }\ i=1,\ldots,N,\label{ass1:U}\\
		&\rm{supp} \, (\mathcal{U}) \subset \overline{\mathcal{W}}, \label{ass2:U}\\
		& \mathcal{U}\ \text{is affine on each }  T \in \mathcal{T}_0 ,\label{ass3:U}\\
		& \|\mathcal{U}\|_\infty +r_0 \| D\mathcal{U}\|_\infty \leq \widetilde{C} d_H,\label{ass4:U}
	\end{align}
	where $D\mathcal{U}$ denotes the Jacobian matrix of $\mathcal{U}$ and $\widetilde{C}$ is a constant depending only on $\theta_0$ and $M_1$.  
\end{proposition}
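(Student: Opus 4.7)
The plan is to prescribe $\mathcal{U}$ at a finite collection of seed points (the $D_0$-vertices and the triangle apexes of $\mathcal{T}_0$), extend affinely on each $T\in\mathcal T_0$, then extend to all of $\RR^3$ by a componentwise Kirszbraun (or McShane) extension, and finally multiply by a smooth cutoff supported in $\overline{\mathcal W}$. The smallness hypothesis $d_H\leq Cr_0$ activates Proposition~\ref{distvert}, which provides the vertex correspondence $|V^{D_0}_i-V^{D_1}_i|\leq C_4 d_H$ that drives all the subsequent $L^\infty$ and Lipschitz bounds.

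Concretely, set $\mathcal{U}(V^{D_0}_i):=V^{D_1}_i-V^{D_0}_i$ for $i=1,\dots,N$ and $\mathcal{U}(E):=0$ at every apex $E$ of a triangle of $\mathcal T_0$; define $\mathcal{U}|_T$ as the affine interpolant of these three values, so that \eqref{ass3:U} holds by construction. Compatibility on the overlaps is automatic: two triangles sharing a base $\sigma$ (on adjacent faces) assign on $\sigma$ the linear interpolant of the same two vertex values, and two triangles in the same face meeting only at a $D_0$-vertex agree there. Since all prescribed values have modulus $\leq C_4 d_H$ and each triangle has apex height $h_0=(r_0/2)\tan\gamma$ by \eqref{h0}, the Lipschitz constant of $\mathcal{U}|_T$ is at most $C d_H/r_0$.

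To bound the Lipschitz constant of $\mathcal{U}$ on $\bigcup_{T\in\mathcal T_0}T$, distinguish three cases for $x\in T$ and $y\in T'$ in distinct triangles: (i) $T,T'$ share the base edge, and the dihedral angle bound \ref{ass:dihedral_angle} yields a compatibility factor $C(\theta_0)$ by projecting onto the shared edge; (ii) $T,T'$ lie in the same face and share a $D_0$-vertex $V$, in which case the face-angle non-degeneracy \ref{ass:face_angle} provides the elementary planar estimate $|x-V|+|y-V|\leq C(\theta_0)|x-y|$, and the affine Lipschitz bounds inside each triangle combine via the common value at $V$; (iii) $T,T'$ have non-adjacent bases, so \eqref{dist_triangoli} forces $|x-y|\geq r_0\cos(\arctan M_0)$ while $|\mathcal{U}(x)-\mathcal{U}(y)|\leq 2C_4 d_H$. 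In every case the Lipschitz constant is $\leq C d_H/r_0$. Extending componentwise to $\RR^3$ via Kirszbraun preserves this constant. Finally, multiply by a smooth cutoff $\eta\in C^{0,1}(\RR^3)$ with $\eta\equiv 1$ on an $(r_0/8)$-neighborhood of $\partial D_0$ (containing every $T\in\mathcal T_0$ in its interior, so that \eqref{ass3:U} is preserved), $\eta\equiv 0$ outside $\overline{\mathcal W}$, and $\|\nabla\eta\|_\infty\leq C/r_0$; the Leibniz rule combined with $|\mathcal{U}|\leq C d_H$ throughout yields \eqref{ass4:U}.

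The most delicate point is the Lipschitz bookkeeping in case (ii): controlling $|x-V|+|y-V|$ by $|x-y|$ requires the interior angle of the face at $V$ to be bounded away from both $0$ and $2\pi$, which is exactly the content of \eqref{angolinterni}. Once this elementary planar lemma is recorded, the other two cases follow from the already available geometric information (Lemma~\ref{triangoli_isosceli} together with \ref{ass:dihedral_angle}), and the extension and cutoff are entirely routine; the a priori bound $N\leq N_0$ from Remark~\ref{rem1} then ensures that the resulting $\widetilde C$ depends only on the parameters listed in the statement.
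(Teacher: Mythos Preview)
Your Lipschitz case analysis on $\bigcup_{T\in\mathcal T_0}T$ is incomplete. The separation estimate \eqref{dist_triangoli} in Lemma~\ref{triangoli_isosceli} is proved only for triangles lying in a \emph{single} face $F$, so your case~(iii) does not cover pairs $T\subset F$, $T'\subset F'$ with $F\neq F'$. Two configurations are missing: triangles on different faces meeting only at a common $D_0$-vertex $V$ (here neither \eqref{angolinterni} nor \eqref{angolifacce} directly controls the three-dimensional angle $\angle xVy$; the paper uses the Lipschitz regularity \ref{ass:Lipschitz_regularity} of $\Omega\setminus D_0$ for this), and triangles on distinct faces that are disjoint (again requiring a separation bound coming from \ref{ass:Lipschitz_regularity}, not from \eqref{dist_triangoli}). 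The paper avoids this bookkeeping by first extending $\mathcal U$ to \emph{all} of $\partial D_0$, face by face, and only then checking Lipschitz continuity across faces via the boundary regularity before applying Kirszbraun.

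More significantly, your choice $\mathcal U(E)=0$ at the triangle apexes, while it does yield \eqref{ass1:U}--\eqref{ass4:U}, destroys the property $\Phi_1(D_0)=D_1$ needed in Proposition~\ref{prop:property_Phi}: with your $\mathcal U$ one has $\Phi_1(E)=E$, a point in the interior of the face $F$ of $D_0$, which in general does not lie on the corresponding face $F'$ of $D_1$. The paper deliberately sets $\mathcal U(E)=E'-E$, where $E'$ is the apex of the corresponding isosceles triangle on $F'$; establishing $|E'-E|\leq Cd_H$ requires the three-case analysis (coplanar, parallel, and intersecting face planes), after which $\Phi_1^F=Id+\overline{\mathcal U}^F$ is shown to be a biLipschitz bijection of $F$ onto $F'$. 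This is precisely what feeds into the proof of \eqref{eq:D0 va in D1}, on which the entire domain-derivative argument of Section~\ref{sec:domain derivative} rests. Your shortcut proves the proposition as literally stated but produces a vector field that cannot be used downstream.
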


\begin{proposition}[Homotopy]
	\label{prop:property_Phi}
	Let us further assume that $ d_H \leq \dfrac{1}{3\widetilde{C}} r_0$. Let
		\begin{equation*}
		\Phi_t=Id+t\,\mathcal{U}, \qquad t\in[0,1].
	\end{equation*}
	Then we have that
	\begin{align}
		& \|D \mathcal{U}\|_\infty \leq \dfrac{1}{3},
		\label{eq:DU-infty-bound}
		\\
		& \Phi_t\in W^{1,\infty}(\Omega,\Omega),\label{eq:Phi regularity}\\
		& \Phi_1(D_0)=D_1,\label{eq:D0 va in D1}\\
		& \Phi_t \  \textrm{is invertible},\label{eq:property_phi_2}\\
		&  D\Phi_t=I_3+tD \mathcal{U}, 
		\label{eq:property_phi_1bis}\\
		& (D \Phi_t)^{-1} =I_3-tD\mathcal{U}(D \Phi_t)^{-1}, 
		\label{eq:property_phi_2bis} \\
		& \|D \Phi_t \|_\infty \leq \sqrt{3}+ \dfrac{1}{3},  \quad \|(D \Phi_t)^{-1} \|_\infty \leq \frac{3}{2}, \label{eq:property_phi_3} \\
		& |\det(D\Phi_t)-1-t\,\dive\,\mathcal{U}|\leq 6t^2|D\mathcal{U}|^2+6t^3|D\mathcal{U}|^3, 
		\label{eq:property_phi_1ter}\\
		&  \dfrac{1}{2} \leq \det (D\Phi_t) \leq \dfrac{3}{2}.
		 \label{eq:property_phi_4}
	\end{align}
\end{proposition}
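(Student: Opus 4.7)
My plan is to verify the ten listed properties in an order that respects their logical dependencies. Items \eqref{eq:DU-infty-bound}, \eqref{eq:property_phi_1bis}--\eqref{eq:property_phi_4} will be algebraic consequences of the bound \eqref{ass4:U} from Proposition \ref{vectorfield} and the smallness hypothesis $d_H \leq \frac{r_0}{3\widetilde C}$, whereas the genuinely geometric claim \eqref{eq:D0 va in D1} will be the main obstacle and is in fact proved in detail in the Appendix.

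First I would combine the smallness assumption with \eqref{ass4:U} to get \eqref{eq:DU-infty-bound}, since $r_0\|D\mathcal U\|_\infty \leq \widetilde C d_H \leq r_0/3$. Formula \eqref{eq:property_phi_1bis} is then immediate from the definition of $\Phi_t$, and \eqref{eq:property_phi_2bis} is the algebraic rearrangement of $I_3 = (I_3+tD\mathcal U)(D\Phi_t)^{-1}$. Invertibility \eqref{eq:property_phi_2} and the bound on the inverse in \eqref{eq:property_phi_3} follow from the Neumann series $(I_3 + tD\mathcal U)^{-1} = \sum_{k\geq 0}(-tD\mathcal U)^k$, which converges in any submultiplicative matrix norm because $t\|D\mathcal U\|_\infty \leq 1/3 < 1$. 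The direct bound in \eqref{eq:property_phi_3} is just $|I_3| + t|D\mathcal U| \leq \sqrt{3} + 1/3$ by the triangle inequality. For \eqref{eq:Phi regularity}, Lipschitz regularity of $\Phi_t$ is inherited from $\mathcal U$, and $\Phi_t(\Omega) \subset \Omega$ uses the support condition \eqref{ass2:U} together with $\|\mathcal U\|_\infty \leq \widetilde C d_H \leq r_0/3 < r_0/2 \leq \operatorname{dist}(\mathcal W,\partial\Omega)$.

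The genuine obstacle is \eqref{eq:D0 va in D1}. The strategy is to show that each face of $D_0$ is mapped bijectively onto the corresponding face of $D_1$ and that these boundary matchings fit together consistently across shared edges; combined with the fact that $\Phi_t$ is the identity outside the tubular neighborhood $\mathcal W$, this will give $\Phi_1(D_0) = D_1$. The key ingredients are the affine behavior of $\mathcal U$ on each triangle $T \in \mathcal T_0$ from \eqref{ass3:U}, the prescribed vertex values $\mathcal U(V_i^{D_0}) = V_i^{D_1} - V_i^{D_0}$ from \eqref{ass1:U}, and a detailed propagation of these values along edges and across faces in a way that respects adjacency; this is precisely the construction carried out in the Appendix, and it is the step that cannot be reduced to matrix analysis.

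Finally, for \eqref{eq:property_phi_1ter} and \eqref{eq:property_phi_4}, I would Taylor-expand the $3 \times 3$ determinant,
\[
\det(I_3 + tA) = 1 + t\,\operatorname{tr}(A) + \tfrac{t^2}{2}\bigl((\operatorname{tr}A)^2 - \operatorname{tr}(A^2)\bigr) + t^3\det(A),
\]
with $A = D\mathcal U$, noting $\operatorname{tr}(D\mathcal U) = \operatorname{div}\mathcal U$, and then bound the quadratic and cubic remainders via the elementary inequalities $(\operatorname{tr}A)^2 \leq 3|A|^2$, $|\operatorname{tr}(A^2)| \leq |A|^2$ and $|\det(A)| \leq |A|^3$; this yields \eqref{eq:property_phi_1ter} with the stated constants to spare. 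The two-sided bound \eqref{eq:property_phi_4} then follows from \eqref{eq:property_phi_1ter}, \eqref{eq:DU-infty-bound} and $|\operatorname{div}\mathcal U| \leq \sqrt{3}|D\mathcal U|$, using if needed a continuity-in-$t$ argument combined with $\det D\Phi_0 = 1$ and the already established invertibility \eqref{eq:property_phi_2} to ensure the determinant remains positive throughout $[0,1]$ and stays within the claimed interval.
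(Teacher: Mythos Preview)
Your approach is essentially the same as the paper's: the authors also say that \eqref{eq:DU-infty-bound} is immediate from \eqref{ass4:U}, that \eqref{eq:D0 va in D1} is the substantive geometric statement established in the Appendix construction, and that ``all the remaining properties follow by straightforward computations.'' Your Taylor expansion of the $3\times 3$ determinant is exactly the kind of computation the paper has in mind for \eqref{eq:property_phi_1ter}--\eqref{eq:property_phi_4}.

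There is one gap worth flagging. For \eqref{eq:property_phi_2} you write that ``invertibility \dots follows from the Neumann series $(I_3+tD\mathcal U)^{-1}=\sum_{k\geq 0}(-tD\mathcal U)^k$.'' But the Neumann series only gives pointwise invertibility of the Jacobian matrix $D\Phi_t(x)$; the claim \eqref{eq:property_phi_2} is that the \emph{map} $\Phi_t:\Omega\to\Omega$ is globally invertible. Local invertibility of the differential does not by itself imply this. The paper handles the map invertibility by referring back to the Appendix argument for $\Phi_1$: since $t\mathcal U$ is Lipschitz with constant $\leq t\widetilde C d_H/r_0\leq 1/3<1$, the estimate $|\Phi_t(x)-\Phi_t(y)|\geq (1-1/3)|x-y|$ gives injectivity, and surjectivity then follows from the Brouwer fixed-point argument (for any $y$, the map $x\mapsto y-t\mathcal U(x)$ sends a ball into itself). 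Once you have this, $\Phi_t(\Omega)=\Omega$ follows since $\Phi_t\equiv Id$ outside $\mathcal W\subset\subset\Omega$, which also upgrades your inclusion $\Phi_t(\Omega)\subset\Omega$ to equality. Add this one sentence and the proof is complete.
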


Note that similar results have been derived in \cite{AspBerFraVes22} except for property \ref{eq:D0 va in D1}. For this reason, and to provide a more accurate estimation of the constants involved, we decided to provide more detailed proofs.
The proofs of Lemma \ref{triangoli_isosceli}, Proposition \ref{vectorfield} and Proposition \ref{prop:property_Phi} are postponed to Section \ref{Appendix}.

\subsection{Gateaux derivative of the local Dirichlet-to-Neumann map}
\label{Gateaux}

For $t \in [0,1]$, let  $D_t = \Phi_t(D_0)$ and 
\begin{equation}
	\label{eq:GAT1.1}
	\CC^{D_t}(x) = \CC^{D_0}( \Phi^{-1}_t (x)), \quad x \in \Omega.
\end{equation}
Given $f \in H^{1/2}(\partial \om)$, let us consider the weak solution $u_t \in H^1(\om)$ to
\begin{equation}
	\label{eq:GAT1.2}
	\left\{ \begin{array}{ll}
		\dive \left ( \CC^{D_t} \nabla u_t \right ) =0,
		&  \hbox{in } \om,\\
		&  \\
		u_t =f,      
		&    \hbox{on }\partial \om.\\
	\end{array}\right.
\end{equation}
In particular, for $t=0$, $u_0 \in H^1(\om)$ satisfies
\begin{equation}
	\label{eq:GAT1.3}
	\left\{ \begin{array}{ll}
		\dive \left ( \CC^{D_0} \nabla u_0 \right ) =0,
		&  \hbox{in } \om,\\
		&  \\
		u_0 =f,      
		&    \hbox{on } \partial \om.\\
	\end{array}\right.
\end{equation}

\begin{definition}
	\label{def:material-derivative}
	The material derivative $\dot{u}_0 \in H^1_0(\om)$ of $u_t$ in $t=0$, when it exists, is defined as
	\begin{equation}
		\label{eq:GAT2.2}
		\dot{u}_0 = \dfrac{d}{dt} \left ( u_t \circ \Phi_t \right )|_{t=0}
		=
		\lim_{ t \rightarrow 0} \dfrac{u_t   \circ \Phi_t - u_0 }{t} ,
	\end{equation}
	where the limit is meant in the  $H^1(\om)$-topology.
\end{definition}

\begin{proposition}
	\label{prop:GAT2.1}
	There exists the material derivative $\dot{u}_0 \in H^1_0(\om)$ and, for every $\psi \in H^1_0(\om)$, it satisfies 
	\begin{equation}
		\label{eq:GAT2.1}
		\int_\om \CC^{D_0} \nabla \dot{u}_0 \cdot \nabla \psi = - \int_\om 
		\left \{
		\CC^{D_0} \nabla u_0 \left (  (\dive \mathcal U) I_3 - ( D \mathcal U  )^T  \right )
		- 
		\CC^{D_0} (   \nabla u_0  D \mathcal U  )
		\right \} \cdot \nabla \psi. 
	\end{equation}
\end{proposition}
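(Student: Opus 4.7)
The plan is to transport the problem to the fixed reference domain $\Omega$. Define $v_t := u_t \circ \Phi_t$, which belongs to $H^1(\Omega)$ with $v_t = f$ on $\partial \Omega$ because, by \eqref{ass2:U}, $\Phi_t$ restricts to the identity on $\partial\Omega$. Applying the change of variables $x = \Phi_t(y)$ in the weak form of \eqref{eq:GAT1.2}, together with the chain rule $(\nabla u_t)\circ \Phi_t = \nabla v_t \, (D\Phi_t)^{-1}$ and $\nabla(\psi \circ \Phi_t^{-1}) \circ \Phi_t = \nabla \psi \, (D\Phi_t)^{-1}$, one sees that $v_t$ satisfies
\begin{equation*}
a_t(v_t,\psi) := \int_\Omega \mathbb{C}^{D_0}\bigl(\nabla v_t (D\Phi_t)^{-1}\bigr) \cdot \bigl(\nabla \psi (D\Phi_t)^{-1}\bigr) \det(D\Phi_t) = 0, \qquad \forall \psi \in H^1_0(\Omega).
\end{equation*}
Using \eqref{eq:property_phi_3}--\eqref{eq:property_phi_4} together with the strong convexity of $\mathbb{C}^{D_0}$ and a Korn-plus-perturbation argument (exploiting $\|D\mathcal U\|_\infty \leq 1/3$), one checks that the family $\{a_t\}_{t \in [0,1]}$ is continuous and coercive on $H^1_0(\Omega)$ uniformly in $t$.

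Next I would formally differentiate the identity $a_t(v_t,\psi)=0$ at $t=0$, exploiting
\begin{equation*}
\left.\frac{d}{dt}(D\Phi_t)^{-1}\right|_{t=0} = -D\mathcal U, \qquad \left.\frac{d}{dt}\det(D\Phi_t)\right|_{t=0} = \dive \mathcal U,
\end{equation*}
which follow from \eqref{eq:property_phi_1bis}--\eqref{eq:property_phi_1ter}. Collecting the three resulting contributions and rewriting them as a bilinear pairing against $\nabla \psi$ (using only the elementary identities $A \cdot (BC) = (AC^T) \cdot B$ and $\alpha (A \cdot B) = (\alpha I_3 A) \cdot B$) gives
\begin{equation*}
\left.\frac{d}{dt}a_t(u_0,\psi)\right|_{t=0} = \int_\Omega \Bigl\{ \mathbb{C}^{D_0}\nabla u_0 \bigl[(\dive \mathcal U) I_3 - (D\mathcal U)^T\bigr] - \mathbb{C}^{D_0}(\nabla u_0 \, D\mathcal U)\Bigr\} \cdot \nabla \psi.
\end{equation*}
I then \emph{define} $\dot u_0 \in H^1_0(\Omega)$ as the unique Lax--Milgram solution of $a_0(\dot u_0, \psi) = -\partial_t a_t(u_0,\psi)|_{t=0}$ for every $\psi\in H^1_0(\Omega)$; this is exactly the identity \eqref{eq:GAT2.1}.

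It remains to check that this $\dot u_0$ equals the material derivative in the sense of Definition \ref{def:material-derivative}. Setting $w_t := (v_t-v_0)/t \in H^1_0(\Omega)$ (which is legitimate since $v_t = f = v_0$ on $\partial\Omega$) and subtracting the equations for $v_t$ and $v_0$, one obtains
\begin{equation*}
a_t(w_t - \dot u_0, \psi) = -\Bigl[\frac{(a_t-a_0)(u_0,\psi)}{t} - \partial_t a_t(u_0,\psi)|_{t=0}\Bigr] - (a_t-a_0)(\dot u_0, \psi).
\end{equation*}
The key technical point, which I expect to be the main obstacle, is to establish the Taylor expansions $(D\Phi_t)^{-1} = I_3 - tD\mathcal U + O(t^2)$ and $\det(D\Phi_t) = 1 + t\dive\mathcal U + O(t^2)$ in $L^\infty(\Omega)$ with remainders controlled uniformly by a constant times $t^2\|D\mathcal U\|_\infty^2$; the first follows from a Neumann-series argument based on \eqref{eq:property_phi_2bis} and \eqref{eq:DU-infty-bound}, the second from \eqref{eq:property_phi_1ter}. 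Once these expansions are secured, both bracketed terms on the right-hand side are $o(1)$ uniformly over $\psi\in H^1_0(\Omega)$ with $\|\psi\|_{H^1(\Omega)}\leq 1$; testing with $\psi = w_t - \dot u_0$ and invoking the uniform coercivity of $a_t$ yields $\|w_t - \dot u_0\|_{H^1(\Omega)} \to 0$ as $t\to 0^+$, which completes the proof.
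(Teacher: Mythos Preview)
Your proposal is correct and follows essentially the same route as the paper: transport to the reference domain via $\Phi_t$, differentiate the resulting bilinear form using the expansions of $(D\Phi_t)^{-1}$ and $\det(D\Phi_t)$ from Proposition~\ref{prop:property_Phi}, and control the difference quotient by a coercivity argument resting on Korn's inequality plus a perturbation estimate (exactly what the paper does around \eqref{eq:GAT11.1}). Two minor organizational differences are worth recording: you work directly with $v_t=u_t\circ\Phi_t$ and avoid the auxiliary extension $\widetilde f$ (the paper introduces it so that the inhomogeneous right-hand side simplifies, but since $\Phi_t=\mathrm{Id}$ near $\partial\Omega$ your shortcut is legitimate); and you define $\dot u_0$ first via Lax--Milgram and then prove $w_t\to\dot u_0$ strongly in one step by testing with $\psi=w_t-\dot u_0$, whereas the paper extracts a weak limit, identifies it, and upgrades to strong convergence by showing convergence of the energies. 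Both orderings are standard and yield the same conclusion.
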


\begin{proof}
Let $\widetilde{f} \in H^1(\om)$ be an extension of $f$ such that $\hbox{supp} (\widetilde{f}) \cap \hbox{supp} (\mathcal{U}) = \emptyset$, $\|\widetilde{f}\|_{H^1(\om)} \leq C \|f\|_{H^{1/2}(\partial \om)}$, with $C$ only depending on $M_0$. 

The functions defined as
\begin{equation*}
	w_t= u_t - \widetilde f, \quad w_0= u_0 - \widetilde f
\end{equation*}
belong to $H^1_0(\om)$ and satisfy, respectively,
\begin{equation*}
	\left\{ \begin{array}{ll}
		\dive ( \CC^{D_t} \nabla w_t ) = 
		- \dive ( \CC^{D_t} \nabla \widetilde f ),
		&  \hbox{in } \om,\\
		&  \\
		w_t =0,      
		&    \hbox{on } \partial \om,\\
	\end{array}\right.
\end{equation*}
and
\begin{equation*}
	\left\{ \begin{array}{ll}
		\dive ( \CC^{D_0} \nabla w_0 ) = 
		- \dive ( \CC^{D_0} \nabla \widetilde f ),
		&  \hbox{in }\om,\\
		&  \\
		w_0 =0,      
		&    \hbox{on }\partial \om.\\
	\end{array}\right.
\end{equation*}
The weak formulations of the above problems are
\begin{equation}
	\label{eq:GAT3.4}
	\int_\om \CC^{D_t}(x) \nabla_x w_t(x) \cdot \nabla_x \psi(x) dx =
	-
	\int_\om \CC^{D_t}(x) \nabla_x \widetilde f(x) \cdot \nabla_x \psi(x) dx, \quad \hbox{for every } \psi \in H^1_0(\om),
\end{equation}
\begin{equation}
	\label{eq:GAT3.5}
	\int_\om \CC^{D_0}(y) \nabla_y w_0(y) \cdot \nabla_y \psi(y) dy =
	-
	\int_\om \CC^{D_0}(y) \nabla_y \widetilde f(y) \cdot \nabla_y \psi(y) dy, \quad \hbox{for every } \psi \in H^1_0(\om).
\end{equation}
Let us apply the change of variables $x=\Phi_t(y)$ to \eqref{eq:GAT3.4} and let us define $\widetilde w_t (y)=  w_t(\Phi_t(y))$, $\widetilde \psi (y)=  \psi (\Phi_t(y))$. By noticing that $\mathcal{U}=0$, $\Phi_t=Id$, $(\Phi_t)^{-1}=Id$ in $\hbox{supp} (\widetilde f)$ and $\Phi_t(\om)=\om$, we have that, for every $\psi \in H^1_0(\om)$, 
\begin{equation}
	\label{eq:GAT6.2}
	\begin{aligned}{}
		&	\int_\om \CC^{D_0}(y) [ \nabla_y \widetilde w_t (y) (D \Phi_t)^{-1}(y)    ]
		\cdot
		[ \nabla_y \widetilde \psi (y) (D \Phi_t)^{-1}(y)    ]
		\det ( D \Phi_t) (y) dy =
		\\
		&	
		=
		-
		\int_\om 
		\CC^{D_0}(y)
		\nabla_y \widetilde f (y)
		\cdot\nabla_y \widetilde \psi (y) dy,
		\quad
		\hbox{for every } t \in [0,1]. 
	\end{aligned}
\end{equation}
To simplify the notation, we omit the dependence on the variable $y$ and we relabel $\widetilde \psi$ with $\psi$, since $\widetilde \psi$ cover $H^1_0(\om)$ as $\psi$ varies in $H^1_0(\om)$. 

By comparing \eqref{eq:GAT3.5} and \eqref{eq:GAT6.2}, we obtain, for every $\psi \in H^1_0(\om)$,
\begin{equation}
	\label{eq:GAT6.3}
	\begin{aligned}{}
		&	\int_\om \CC^{D_0} [ \nabla \widetilde w_t  (D \Phi_t)^{-1}    ]
		\cdot
		[ \nabla \psi  (D \Phi_t)^{-1}    ]
		\det ( D \Phi_t) 
		=
		\int_\om 
		\CC^{D_0}
		\nabla w_0
		\cdot
		\nabla \psi ,
		\quad
		\hbox{for every } t \in [0,1].
	\end{aligned}
\end{equation}
	By subtracting to both sides of \eqref{eq:GAT6.3} the quantity
\begin{equation*}
	\begin{aligned}{}
		&	\int_\om \CC^{D_0} [ \nabla w_0 (D \Phi_t)^{-1}  ]
		\cdot
		[ \nabla \psi  (D \Phi_t)^{-1}    ]
		\det ( D \Phi_t),
	\end{aligned}
\end{equation*}
by dividing by $t$ and using the identity
\begin{equation}
	\label{eq:identity-3-matrices}
		%
		A \cdot (CB)= (A B^T) \cdot C, \quad
		\hbox{for every } 3 \times 3 \hbox{ matrices},
	%
\end{equation}
we have
\begin{equation}
	\label{eq:GAT7.1bis-rinom}
	\begin{aligned}{}
		&	\int_\om \CC^{D_0} \left [ \nabla
		\left (
		\dfrac{\widetilde w_t -w_0}{t}
		\right )
		 (D \Phi_t)^{-1}    \right ]
		\cdot
		[ \nabla \psi  (D \Phi_t)^{-1}    ]
		\det ( D \Phi_t) 
		=
		\\
		&
		=
		\int_\om 
		\dfrac{1}{t}
		\left \{
		\CC^{D_0}
		\nabla w_0
		-
		\det (D \Phi_t)
		\left (
		\CC^{D_0}
		\left [
		\nabla w_0
		(D \Phi_t)^{-1}
		\right ]
		\right )
		(D \Phi_t)^{-T}
		\right \}
		\cdot
		\nabla \psi.
	\end{aligned}
\end{equation}
By choosing $\psi = \dfrac{\widetilde w_t - w_0}{t}$, we obtain
\begin{equation}
	\label{eq:GAT7.1bis}
	\begin{aligned}{}
		&	\int_\om \CC^{D_0} \left [ \nabla
		\left (
		\dfrac{\widetilde w_t -w_0}{t}
		\right )
		(D \Phi_t)^{-1}    \right ]
		\cdot
		\left [ \nabla \left (  \dfrac{\widetilde w_t - w_0}{t}  \right )  (D \Phi_t)^{-1}    \right ]
		\det ( D \Phi_t) 
		=
		\\
		&
		=
		\int_\om 
		\dfrac{1}{t}
		\left \{
		\CC^{D_0}
		\nabla w_0
		-
		\det (D \Phi_t)
		\left (
		\CC^{D_0}
		\left [
		\nabla w_0
		(D \Phi_t)^{-1}
		\right ]
		\right )
		(D \Phi_t)^{-T}
		\right \}
		\cdot
		\nabla \left (  \dfrac{\widetilde w_t - w_0}{t}  \right ).
	\end{aligned}
\end{equation}
By using \eqref{eq:property_phi_2bis}, we may rewrite the left hand side of \eqref{eq:GAT7.1bis} as
\begin{equation*}
		%
		\int_\om \CC^{D_0} \left [ \nabla
		\left (
		\dfrac{\widetilde w_t -w_0}{t}
		\right )
		(D \Phi_t)^{-1}    \right ]
		\cdot
		\left [ \nabla \left (  \dfrac{\widetilde w_t - w_0}{t}  \right )  (D \Phi_t)^{-1}    \right ]
		\det ( D \Phi_t) 
		=I_1+I_2+I_3+I_4,
\end{equation*}
where
\begin{equation*}
	%
	I_1=\int_\om \CC^{D_0} \nabla
	\left (
	\dfrac{\widetilde w_t -w_0}{t}
	\right )   
	\cdot
	 \nabla \left (  \dfrac{\widetilde w_t - w_0}{t}  \right )   
	\det ( D \Phi_t),
\end{equation*}
\begin{equation*}
	%
	I_2=-t\int_\om\left(\CC^{D_0}
	\left[ \nabla
	\left (
	\dfrac{\widetilde w_t -w_0}{t}
	\right ) 
	D\mathcal{U}(D\Phi_t)^{-1}  
	\right]
	\right)
	\cdot
	\nabla \left (  \dfrac{\widetilde w_t - w_0}{t}  \right )   
	\det ( D \Phi_t),
\end{equation*}

\begin{equation*}
	%
	I_3=-t\int_\om\CC^{D_0}
	\nabla
	\left (
	\dfrac{\widetilde w_t -w_0}{t}
	\right ) 
	\cdot
	\left[
	\nabla
	\left (
	\dfrac{\widetilde w_t -w_0}{t}
	\right ) 
	D\mathcal{U}(D\Phi_t)^{-1}  
	\right]
	\det ( D \Phi_t),
\end{equation*}
\begin{equation*}
	%
	I_4=t^2\int_\om\left(\CC^{D_0}
	\left[ \nabla
	\left (
	\dfrac{\widetilde w_t -w_0}{t}
	\right ) 
	D\mathcal{U}(D\Phi_t)^{-1}  
	\right]
	\right)
	\cdot
	\left[
	\nabla
	\left (
	\dfrac{\widetilde w_t -w_0}{t}
	\right ) 
	D\mathcal{U}(D\Phi_t)^{-1}  
	\right]
	\det ( D \Phi_t).
\end{equation*}
By \eqref{eq:property_phi_4}, by \eqref{Notaz-forte-convex-C} and by the first Korn's inequality,
we have
\begin{equation*}
	%
	I_1\geq 
	C_1\int_\om\left|
	\nabla
	\left(	
	\dfrac{\widetilde w_t -w_0}{t}
	\right)
	\right|^2=
	C_1r_0^3
		\left\|\nabla
\left(	
\dfrac{\widetilde w_t -w_0}{t}
\right)\right\|_{L^2(\om)}^2,
\end{equation*}
with $C_1$ only depending on $\lambda^i$, $\mu^i$, $\lambda^e$, $\mu^e$. By using \eqref{ass4:U}, \eqref{eq:DU-infty-bound}, \eqref{eq:property_phi_3} and \eqref{eq:property_phi_4}, we have
\begin{equation*}
	%
	|I_2|+ |I_3| + |I_4|\leq 
	C_2t\frac{d_H}{r_0}
	\int_\om\left|
	\nabla
	\left(	
	\dfrac{\widetilde w_t -w_0}{t}
	\right)
	\right|^2=
	C_2d_Hr_0^2
	\left\|\nabla
	\left(	
	\dfrac{\widetilde w_t -w_0}{t}
	\right)\right\|_{L^2(\om)}^2,
\end{equation*}
where $C_2$ only depends on $\lambda^i$, $\mu^i$, $\lambda^e$, $\mu^e$, $\theta_0$, $M_1$.
Therefore, we can bound {}from below the left hand side of \eqref{eq:GAT7.1bis} with
\begin{equation}
	\label{eq:GAT11.1}
	I_1-(|I_2|+ |I_3| + |I_4|)\geq 
	\left(C_1-C_2\frac{d_H}{r_0}\right)r_0^3
	\left\|\nabla
	\left(	
	\dfrac{\widetilde w_t -w_0}{t}
	\right)\right\|_{L^2(\om)}^2\geq \frac{C_1}{2}r_0^3
	\left\|\nabla
	\left(	
	\dfrac{\widetilde w_t -w_0}{t}
	\right)\right\|_{L^2(\om)}^2,
\end{equation}
assuming $\frac{d_H}{r_0}\leq \frac{C_1}{2C_2}$.

By using 
\eqref{eq:property_phi_2bis}, we may rewrite \eqref{eq:GAT7.1bis} as 
\begin{equation}
	\label{eq:GAT13.1}
	\begin{aligned}
	&\int_\om \CC^{D_0} \left [ \nabla
	\left (
	\dfrac{\widetilde w_t -w_0}{t}
	\right )
	(D \Phi_t)^{-1}    \right ]
	\cdot
	\left [ \nabla \left (  \dfrac{\widetilde w_t - w_0}{t}  \right )  (D \Phi_t)^{-1}    \right ]
	\det ( D \Phi_t)= 
	\\
	&
	=
	\int_\om \frac{1-\det(D\Phi_t)}{t}\CC^{D_0}\nabla w_0\cdot\nabla \left(	
	\dfrac{\widetilde w_t -w_0}{t}
	\right)
	+
	\\
	&
	\int_\om 
	\left [
	\left\{
	(\CC^{D_0}\nabla w_0)(D\Phi_t)^{-T}D\mathcal{U}^T+
	\CC^{D_0}(\nabla w_0D\mathcal{U}(D\Phi_t)^{-1})-
	t
	\left [	
	\CC^{D_0}(\nabla w_0D\mathcal{U}(D\Phi_t)^{-1})
	\right ]
	((D\Phi_t)^{-T}D\mathcal{U}^T)
	\right\}
	\cdot
	\right.
	\\
	&
	\left.
	\cdot
	\nabla \left(	
	\dfrac{\widetilde w_t -w_0}{t}
	\right)  \det(D\Phi_t)
	\right  ].
	\end{aligned}
\end{equation}
By \eqref{ass4:U}, \eqref{eq:property_phi_1ter}, \eqref{eq:DU-infty-bound}, \eqref{eq:property_phi_3}, \eqref{eq:property_phi_4}
we can bound {}from above the right hand side of \eqref{eq:GAT7.1bis} with
\begin{equation}
	\label{eq:GAT14.1}
	Cr_0^3\|D\mathcal{U}\|_\infty \|\nabla w_0\|_{L^2(\om)}
\left\|\nabla \left(	
	\dfrac{\widetilde w_t -w_0}{t}
	\right)\right\|_{L^2(\om)}
	\leq Cd_Hr_0^2\|\nabla w_0\|_{L^2(\om)}
	\left\|\nabla \left(	
	\dfrac{\widetilde w_t -w_0}{t}
	\right)\right\|_{L^2(\om)},
\end{equation}
with $C$ only depending on $\lambda^i$, $\mu^i$, $\lambda^e$, $\mu^e$, $\theta_0$, $M_1$.
By comparing \eqref{eq:GAT11.1} and \eqref{eq:GAT14.1}, we obtain
\begin{equation}
	\label{eq:GAT15.0}
	\left\|\nabla \left(	
	\dfrac{\widetilde w_t -w_0}{t}
	\right)\right\|_{L^2(\om)}\leq
	C\frac{d_H}{r_0}
\|\nabla w_0\|_{L^2(\om)},
\end{equation}
where $C$ only depends on $\lambda^i$, $\mu^i$, $\lambda^e$, $\mu^e$, $\theta_0$, $M_1$.

By \eqref{eq:GAT15.0} and by Poincar\'e inequality, $\dfrac{\widetilde w_t -w_0}{t}$ is bounded in $H^1_0(\om)$ for $t\in (0,1]$ so that there exist a sequence $t_n\rightarrow 0$ and a function $\dot{w}_0\in H^1_0(\om)$ such that  $\dfrac{\widetilde w_{t_n} -w_0}{t_n}$ converges to $\dot{w}_0$ weakly in $H^1_0(\om)$.
Let us rewrite the right hand side of \eqref{eq:GAT7.1bis-rinom} operating as above in deriving \eqref{eq:GAT13.1}. We have 

\begin{equation*}
	%
	\begin{aligned}{}
		&	\int_\om \CC^{D_0} \left [ \nabla
		\left (
		\dfrac{\widetilde w_t -w_0}{t}
		\right )
		(D \Phi_t)^{-1}    \right ]
		\cdot
		[ \nabla \psi  (D \Phi_t)^{-1}    ]
		\det ( D \Phi_t) 
		=\int_\om \frac{1-\det(D\Phi_t)}{t}\CC^{D_0}\nabla w_0\cdot\nabla \psi+\\
		&\int_\om \left\{
		(\CC^{D_0}\nabla w_0)(D\Phi_t)^{-T}D\mathcal{U}^T+
		\CC^{D_0}(\nabla w_0D\mathcal{U}(D\Phi_t)^{-1})-t
		\left [
		\CC^{D_0}(\nabla w_0D\mathcal{U}(D\Phi_t)^{-1})
		\right ]
		((D\Phi_t)^{-T}D\mathcal{U}^T)
		\right\}
		\cdot
		\\
		&
		\cdot 
		\nabla \psi
		\det(D\Phi_t). 
	\end{aligned}
\end{equation*}
Passing to the limit in the above equation for $t=t_n\rightarrow 0$, and using \eqref{eq:property_phi_1ter}, \eqref{eq:property_phi_2bis}, it follows that, for every $\psi\in H^1_0(\om)$,
\begin{equation}
	\label{eq:GAT17.1}
	\begin{aligned}{}
		&	\int_\om \CC^{D_0} \nabla \dot{w}_0
		\cdot
		\nabla \psi 
		=-\int_\om \{\CC^{D_0}\nabla w_0((\dive \mathcal{U})I_3-D\mathcal{U}^T)
		- \CC^{D_0}(\nabla w_0D\mathcal{U})\}
		\cdot
		\nabla \psi. 
	\end{aligned}
\end{equation}
	
By applying Lax-Milgram theorem to the above identity, $\dot{w}_0$ is uniquely determined in $H^1_0(\om)$, so that
\begin{equation}
	\label{eq:GAT22.1}
	\dot{w}_0= \lim_{t\rightarrow 0} \frac{\widetilde w_t-w_0}{t} \quad \hbox{weakly in } H^1_0(\om).
\end{equation}

Passing to the limit as $t\rightarrow 0$ in \eqref{eq:GAT13.1} and by applying the identity  \eqref{eq:GAT17.1}

\begin{equation}
	\label{eq:GAT18.1}
	\begin{aligned}{}
		&\lim_{t\rightarrow 0}	\int_\om \CC^{D_0} \left [ \nabla
		\left (
		\dfrac{\widetilde w_t -w_0}{t}
		\right )
		(D \Phi_t)^{-1}    \right ]
		\cdot
		 \left [ \nabla
		\left (
		\dfrac{\widetilde w_t -w_0}{t}
		\right )
		(D \Phi_t)^{-1}    \right ]
		\det ( D \Phi_t)=\\
		&=-\int_\om \{(\CC^{D_0}\nabla w_0)((\dive \mathcal{U})I_3-D\mathcal{U}^T)
		- \CC^{D_0}(\nabla w_0D\mathcal{U})\}
		\cdot
		\nabla \dot{w}_0=
		\int_\om \CC^{D_0}\nabla\dot{w}_0\cdot \nabla\dot{w}_0
	\end{aligned}
\end{equation}

	In order to prove the strong convergence of $\dfrac{\widetilde w_t -w_0}{t}$ to $\dot{w}_0$ in $H^1_0(\om)$, let us express the integral
$\displaystyle{
\int_\om \CC^{D_0} \nabla
\left (
\dfrac{\widetilde w_t -w_0}{t}
\right )
\cdot
\nabla
\left (
\dfrac{\widetilde w_t -w_0}{t}
\right )}$ in terms of the integral appearing in the left hand side of \eqref{eq:GAT18.1}. 

A standard computation using \eqref{eq:property_phi_1ter} gives
\begin{equation*}
	%
	\begin{aligned}{}
		&\int_\om \CC^{D_0}  \nabla
		\left (
		\dfrac{\widetilde w_t -w_0}{t}
		\right )
		\cdot
		\nabla
		\left (
		\dfrac{\widetilde w_t -w_0}{t}
		\right )=
		B_1+tB_2-t^2B_3-B_4,
	\end{aligned}
\end{equation*}
where
\begin{equation*}
	B_1=\int_\om \CC^{D_0} \left [ \nabla
	\left (
	\dfrac{\widetilde w_t -w_0}{t}
	\right )
	(D \Phi_t)^{-1}    \right ]
	\cdot
	\left [ \nabla
	\left (
	\dfrac{\widetilde w_t -w_0}{t}
	\right )
	(D \Phi_t)^{-1}    \right ]
	\det ( D \Phi_t),
\end{equation*}
\begin{equation*}
	%
	\begin{aligned}{}
		&
	B_2=\int_\om \CC^{D_0}  \nabla
	\left (
	\dfrac{\widetilde w_t -w_0}{t}
	\right ) 
	\cdot
	\left [ \nabla
	\left (
	\dfrac{\widetilde w_t -w_0}{t}
	\right )
	D\mathcal{U}(D \Phi_t)^{-1}    \right ]
	\det ( D \Phi_t)
	+
	\\
	&
	+
	\int_\om \CC^{D_0}
	\CC^{D_0} 
	\left [ \nabla
	\left (
	\dfrac{\widetilde w_t -w_0}{t}
	\right )
	D\mathcal{U}(D \Phi_t)^{-1}    \right ]
	\cdot
	 \nabla
	\left (
	\dfrac{\widetilde w_t -w_0}{t}
	\right ) 
	\det ( D \Phi_t),
	\end{aligned}{}
\end{equation*}
\begin{equation*}
	B_3=\int_\om 
	\CC^{D_0} 
	\left [ \nabla
	\left (
	\dfrac{\widetilde w_t -w_0}{t}
	\right )
	D\mathcal{U}(D \Phi_t)^{-1}    \right ]
	\cdot
		\left [ \nabla
	\left (
	\dfrac{\widetilde w_t -w_0}{t}
	\right )
	D\mathcal{U}(D \Phi_t)^{-1}    \right ]
	\det ( D \Phi_t),
\end{equation*}
\begin{equation*}
	B_4=\int_\om 
	\CC^{D_0} 
	\left [ \nabla
	\left (
	\dfrac{\widetilde w_t -w_0}{t}
	\right )
	(D \Phi_t)^{-1}    \right ]
	\cdot
	\left [ \nabla
	\left (
	\dfrac{\widetilde w_t -w_0}{t}
	\right )
	(D \Phi_t)^{-1}    \right ]
	(\det(D\Phi_t)-1).
\end{equation*}
By \eqref{ass4:U}, \eqref{eq:property_phi_1ter}, \eqref{eq:property_phi_3} and \eqref{eq:GAT15.0}, we can estimate
\begin{equation*}
	|B_2|\leq C d_H^3 \|\nabla w_0\|^2_{L^2(\om)},
\end{equation*}
\begin{equation*}
	|B_3|\leq C d_H^3 \|\nabla w_0\|^2_{L^2(\om)},
\end{equation*}
\begin{equation*}
	|B_4|\leq Ct d_H^3\|\nabla w_0\|^2_{L^2(\om)},
\end{equation*}
with $C$ only depending on $\lambda^i$, $\mu^i$, $\lambda^e$, $\mu^e$, $\theta_0$, $M_1$.

Therefore, by \eqref{eq:GAT18.1}, 
\begin{equation}
	\label{eq:10.1Edi}
	\lim_{t\rightarrow 0}\int_\om \CC^{D_0}  \nabla
		\left (
		\dfrac{\widetilde w_t -w_0}{t}
		\right )
		\cdot
		\nabla
		\left (
		\dfrac{\widetilde w_t -w_0}{t}
		\right )=
		\int_\om \CC^{D_0} \nabla \dot{w}_0\cdot \nabla \dot{w}_0.
\end{equation}
By the major symmetry \eqref{Notaz-simmetrie-C} and the strong convexity \eqref{Notaz-forte-convex-C} of the tensor $\CC^{D_0}$, by first Korn's inequality, by \eqref{eq:10.1Edi} and by the weak convergence \eqref{eq:GAT22.1}, we derive the strong convergence in $L^2(\om)$ of $\nabla
\left (
\dfrac{\widetilde w_t -w_0}{t}
\right )$ to $\dot{w}_0$:
\begin{equation*}
	\begin{aligned}{}
	&C\int_\om\left|\nabla  	\left (
	\dfrac{\widetilde w_t -w_0}{t} -\dot{w}_0
	\right )\right|^2\leq
	\int_\om \CC^{D_0} \nabla  	\left (
	\dfrac{\widetilde w_t -w_0}{t} -\dot{w}_0
	\right )\cdot  \nabla  	\left (
	\dfrac{\widetilde w_t -w_0}{t} -\dot{w}_0
	\right )=\\
	&=\int_\om \CC^{D_0}  \nabla
	\left (
	\dfrac{\widetilde w_t -w_0}{t}
	\right )
	\cdot
	\nabla
	\left (
	\dfrac{\widetilde w_t -w_0}{t}
	\right )-2
	\int_\om \CC^{D_0}  \nabla
	\dot{w}_0
	\cdot
	\nabla
	\left (
	\dfrac{\widetilde w_t -w_0}{t}
	\right )+
	\int_\om \CC^{D_0} \nabla \dot{w}_0\cdot \nabla \dot{w}_0 	\overset{t\rightarrow 0^+}{\longrightarrow}0.
	\end{aligned}{}
\end{equation*}
%
%
Moreover, by Poincar\'e inequality, $
\dfrac{\widetilde w_t -w_0}{t}
$ strongly converges to ${w}_0$ in $H^1(\om)$.

Next, since $\Phi_t=Id$ in $\hbox{supp}(\widetilde{f})$, we have that $u_t\circ \Phi_t = w_t\circ \Phi_t+ \widetilde{f}\circ \Phi_t=\widetilde{w}_t+\widetilde{f}$. On the other hand, $u_0=w_0+\widetilde{f}$ and therefore
 \begin{equation}
 	\label{eq:GAT22.2}
 	\frac{u_t\circ \Phi_t-u_0}{t}=\frac{\widetilde{w}_t-w_0}{t},
 \end{equation} 
so that the material derivative $\dot u_0$ exists and coincides with $\dot w_0$. By \eqref{eq:GAT17.1}, recalling that $\mathcal{U}\equiv 0$ in 
$\hbox{supp}(\widetilde{f})$, it follows that $\dot u_0$ satisfies \eqref{eq:GAT2.1}.
\end{proof}
In order to generalize the definition of the material derivative of $u_t$ for $t_0 \in (0,1]$, we define the following map:
\begin{equation}
	\label{eq:GAT25.1}
	\Phi_{t_0,t}: \om \rightarrow \om, \quad 
	\Phi_{t_0,t}=Id+t\mathcal{U} \circ \Phi_{t_0}^{-1}, \quad t_0 \in (0,1], \ t \in [-t_0,1-t_0].
\end{equation}
\begin{definition}
	\label{def:material-derivative-t0}
	The material derivative $\dot{u}_{t_0} \in H^1_0(\om)$ of $u_t$ in $t=t_0 \in (0,1]$, when it exists, is defined as
	\begin{equation}
		\label{eq:GAT2.2-t0}
		\dot{u}_{t_0} = 
		\lim_{ t \rightarrow 0} \dfrac{u_{t_0+t}   \circ \Phi_{t_0,t} - u_{t_0} }{t} ,
	\end{equation}
	where the limit is meant in the  $H^1(\om)$-topology.
\end{definition}
	
\begin{proposition}
	\label{prop:GAT28.1}
	For every $t_0 \in (0,1]$ there exists the material derivative $\dot{u}_{t_0} \in H^1_0(\om)$ and it satisfies 
	\begin{equation}
			\begin{aligned}{}
		\label{eq:GAT28.2}
		&\int_\om \CC^{D_{t_0}} \nabla \dot{u}_{t_0} \cdot \nabla \psi =\\
		& = - \int_\om 
		\left \{
		(\CC^{D_{t_0}} \nabla u_{t_0} ) \left (  \dive (\mathcal U  \circ \Phi_{t_0}^{-1}   ) I_3 - 
		( D   (   \mathcal U \circ \Phi_{t_0}^{-1} ) )^T  \right )
		- 
		\CC^{D_{t_0}} (   \nabla u_{{t_0}}  D (   \mathcal U \circ \Phi_{t_0}^{-1} )  )
		\right \} \cdot \nabla \psi, 
		\end{aligned}
	\end{equation}
	for every $\psi \in H^1_0(\om)$.
\end{proposition}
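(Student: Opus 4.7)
The strategy is to reduce Proposition \ref{prop:GAT28.1} to the already established Proposition \ref{prop:GAT2.1} by a shift of the base point of the homotopy. Introduce $\Psi_s := \Phi_{t_0+s} \circ \Phi_{t_0}^{-1}$ for $s \in [-t_0, 1-t_0]$. Using the explicit form $\Phi_{t_0+s} = Id + (t_0+s)\mathcal{U}$ together with $\Phi_{t_0}^{-1}(x) = x - t_0 \mathcal{U}(\Phi_{t_0}^{-1}(x))$, a direct computation gives $\Psi_s = Id + s\, \mathcal{V}$ with $\mathcal{V} := \mathcal{U} \circ \Phi_{t_0}^{-1}$, so that $\Psi_s \equiv \Phi_{t_0,s}$ in the notation of \eqref{eq:GAT25.1}. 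Moreover $\Psi_s(D_{t_0}) = \Phi_{t_0+s}(D_0) = D_{t_0+s}$, which means that the family $\{D_{t_0+s}\}_s$ is generated from $D_{t_0}$ by exactly the same type of first-order flow that generates $\{D_s\}_s$ from $D_0$.

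Next, verify that $\mathcal{V}$ inherits the structural properties of $\mathcal{U}$ needed to run the argument of Proposition \ref{prop:GAT2.1}: since by \eqref{eq:property_phi_3}--\eqref{eq:property_phi_4} the map $\Phi_{t_0}$ is bi-Lipschitz with constants depending only on the a priori data, $\mathrm{supp}(\mathcal{V}) \subset \Phi_{t_0}(\overline{\mathcal{W}})$ remains compactly contained in $\Omega$ and disjoint from the support of the extension $\widetilde{f}$ of the boundary datum, while the chain rule together with \eqref{ass4:U} yields
\begin{equation*}
\|\mathcal{V}\|_\infty + r_0 \|D\mathcal{V}\|_\infty \leq \widehat{C} d_H,
\end{equation*}
with $\widehat{C}$ depending only on the a priori data and independent of $t_0 \in (0,1]$. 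Under the same smallness assumption $d_H/r_0 \leq C_1/(2C_2)$ used in Proposition \ref{prop:GAT2.1}, $\Psi_s$ then satisfies the analogs of \eqref{eq:DU-infty-bound}--\eqref{eq:property_phi_4}.

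Now repeat the proof of Proposition \ref{prop:GAT2.1} verbatim with $(D_0, u_0, \mathcal{U}, \Phi_t)$ replaced by $(D_{t_0}, u_{t_0}, \mathcal{V}, \Psi_s)$. Write $u_{t_0+s} = w_{t_0+s} + \widetilde{f}$, apply the change of variables $x = \Psi_s(y)$ to the weak formulation of the equation for $w_{t_0+s}$, and exploit $\mathbb{C}^{D_{t_0+s}} \circ \Psi_s = \mathbb{C}^{D_{t_0}}$ (this follows from \eqref{eq:GAT1.1} and $\Psi_s(D_{t_0}) = D_{t_0+s}$) to obtain the analog of \eqref{eq:GAT6.3}. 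Subtracting the weak formulation satisfied by $w_{t_0}$, dividing by $s$ and testing with $(\widetilde{w}_{t_0+s} - w_{t_0})/s$, the strong convexity \eqref{Notaz-forte-convex-C}, first Korn's inequality, and the bounds on $D\Psi_s$ and $\det D\Psi_s$ yield a uniform $H^1_0(\Omega)$-estimate on the difference quotient. Extracting a weak limit $\dot{w}_{t_0}$, passing to the limit in the weak formulation gives \eqref{eq:GAT28.2} (with $\dot{u}_{t_0}$ replaced by $\dot{w}_{t_0}$); uniqueness via Lax--Milgram identifies the full limit, and the energy identity of type \eqref{eq:10.1Edi} upgrades weak to strong convergence in $H^1(\Omega)$. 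Finally, since $\Psi_s = Id$ on $\mathrm{supp}(\widetilde{f})$, the identity \eqref{eq:GAT22.2} gives $(u_{t_0+s} \circ \Psi_s - u_{t_0})/s = (\widetilde{w}_{t_0+s} - w_{t_0})/s$, so $\dot{u}_{t_0} = \dot{w}_{t_0}$ exists in the sense of \eqref{eq:GAT2.2-t0} and satisfies \eqref{eq:GAT28.2}.

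The main obstacle is the uniform control, with respect to $t_0 \in (0,1]$, of the Lipschitz norm and support of $\mathcal{V} = \mathcal{U}\circ \Phi_{t_0}^{-1}$ and of the deformation constants of $\Psi_s$. This is purely bookkeeping once \eqref{eq:DU-infty-bound}--\eqref{eq:property_phi_4} are applied to $\Phi_{t_0}$: the bi-Lipschitz constants and Jacobian bounds depend only on $d_H/r_0$ and on quantities encoded in the a priori data, hence the coercivity threshold and the final estimate for $\dot{u}_{t_0}$ are uniform in $t_0$, which is what makes the transplanted argument work.
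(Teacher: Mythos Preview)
Your proposal is correct and follows essentially the same approach as the paper: the paper's proof simply notes that $\CC^{D_{t_0+t}}(x)=\CC^{D_{t_0}}((\Phi_{t_0,t})^{-1}(x))$ and says to repeat the argument of Proposition \ref{prop:GAT2.1} with the change of variables $x=\Phi_{t_0,t}(y)$. You have spelled out these steps in greater detail---including the explicit identification $\Psi_s=\Phi_{t_0,s}$ and the verification that $\mathcal V=\mathcal U\circ\Phi_{t_0}^{-1}$ retains the needed support and Lipschitz bounds---but the underlying scheme is identical.
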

\begin{proof}
The proof follows the lines of Proposition \ref{prop:GAT2.1}, by noticing that  $\CC^{D_{t_0+t}}(x)=\CC^{D_{t_0}}((\Phi_{t_0,t})^{-1}(x))$ and operating the change of variables  $x=\Phi_{t_0,t}(y)$ inside the integrals. 
\end{proof}

Given $f, g \in H^{1/2}_{co}(\Sigma)$ and for $t \in [0,1]$, let $u_t$, $v_t$ be the solutions to
\begin{equation}
	\label{eq:GAT29.1}
	\left\{ \begin{array}{ll}
		\dive \left ( \CC^{D_t} \nabla u_t \right ) =0,
		&  \hbox{in } \om,\\
		&  \\
		u_t =f,      
		&    \hbox{on } \partial \om,\\
	\end{array}\right.
\end{equation}
\begin{equation}
	\label{eq:GAT29.2}
	\left\{ \begin{array}{ll}
		\dive \left ( \CC^{D_t} \nabla v_t \right ) =0,
		&  \hbox{in }\om,\\
		&  \\
		v_t =g,    
		&    \hbox{on }\partial \om.\\
	\end{array}\right.
\end{equation}
Let us notice that, since $\mathcal U=0$ on $\partial \Omega$, $\CC^{D_t}= \CC^e$  on $\partial \Omega$, for $t \in [0,1]$.
\begin{definition}
	\label{def:Fgrande}
	Given $f, g \in H^{1/2}_{co}(\Sigma)$ and $t \in [0,1]$, let us define
	\begin{equation}
		\label{eq:GAT29.3}
		F(t,f,g)= \int_\Omega \CC^{D_t} \nabla u_t \cdot \nabla v_t = \int_{\partial \Omega} (\CC^{e} \nabla u_t ) n \cdot g = \langle \Lambda_{D_t}^\Sigma f, g \rangle.
	\end{equation}
\end{definition}
\begin{proposition}
	\label{prop:Derivata-di-F}
For every $t_0 \in [0,1]$ there exists the derivative of $F(t,f,g)$ in $t_0$:
\begin{multline}
	\label{eq:GAT33.2}
	F'(t_0,f,g) = \\
	=\int_\Omega 
	\left \{
	\left (\CC^{D_{t_0}} \nabla u_{t_0} \right )
	\left [
	\dive 
	\left (
	\mathcal U \circ \Phi_{t_0}^{-1}
	\right )
	I_3 -
	\left (
	D
	(
	\mathcal U \circ  \Phi_{t_0}^{-1}
	)
	\right )^T	
	\right ]
	-
	\CC^{D_{t_0}} 
	\left (
	\nabla u_{t_0} 
	D
	(
	\mathcal U \circ  \Phi_{t_0}^{-1}
	)	
	\right )
	\right \}
	\cdot 
	\nabla v_{t_0}.
\end{multline}
\end{proposition}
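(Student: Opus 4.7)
The strategy is to reduce the computation of $F'(t_0)$ to an application of Proposition \ref{prop:GAT2.1} (for $t_0=0$) and Proposition \ref{prop:GAT28.1} (for $t_0 \in (0,1]$) by pulling back the integral defining $F(t_0+t)$ to the fixed domain $\Omega$ via the change of variables $x = \Phi_{t_0,t}(y)$. Since $\mathbb{C}^{D_{t_0+t}}(\Phi_{t_0,t}(y)) = \mathbb{C}^{D_{t_0}}(y)$ by \eqref{eq:GAT1.1} and $\mathrm{det}(D\Phi_{t_0,t})$ is bounded away from zero for $t$ small, one obtains
\begin{equation*}
F(t_0+t,f,g) = \int_\Omega \mathbb{C}^{D_{t_0}}\!\bigl[\nabla (u_{t_0+t}\!\circ\! \Phi_{t_0,t})(D\Phi_{t_0,t})^{-1}\bigr]\cdot \bigl[\nabla (v_{t_0+t}\!\circ\! \Phi_{t_0,t})(D\Phi_{t_0,t})^{-1}\bigr] \det(D\Phi_{t_0,t}).
\end{equation*}

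Since $\Phi_{t_0,t} = Id$ on $\partial\Omega$, the compositions $u_{t_0+t}\!\circ\! \Phi_{t_0,t}$ and $v_{t_0+t}\!\circ\! \Phi_{t_0,t}$ still have boundary values $f$ and $g$, so their material derivatives $\dot u_{t_0}, \dot v_{t_0} \in H^1_0(\Omega)$ exist and satisfy the analog of \eqref{eq:GAT28.2}. Differentiating the above expression at $t=0$ and using $\frac{d}{dt}(D\Phi_{t_0,t})^{-1}|_{t=0} = -D(\mathcal U\!\circ\!\Phi_{t_0}^{-1})$ together with Jacobi's formula $\frac{d}{dt}\det(D\Phi_{t_0,t})|_{t=0} = \dive(\mathcal U\!\circ\!\Phi_{t_0}^{-1})$, one finds
\begin{align*}
F'(t_0,f,g) &= \int_\Omega \mathbb{C}^{D_{t_0}}\nabla\dot u_{t_0}\cdot \nabla v_{t_0} + \int_\Omega \mathbb{C}^{D_{t_0}}\nabla u_{t_0}\cdot \nabla\dot v_{t_0} \\
&\quad - \int_\Omega \mathbb{C}^{D_{t_0}}[\nabla u_{t_0} D(\mathcal U\!\circ\!\Phi_{t_0}^{-1})]\cdot \nabla v_{t_0} - \int_\Omega \mathbb{C}^{D_{t_0}}\nabla u_{t_0}\cdot [\nabla v_{t_0} D(\mathcal U\!\circ\!\Phi_{t_0}^{-1})] \\
&\quad + \int_\Omega (\mathbb{C}^{D_{t_0}}\nabla u_{t_0})\cdot \nabla v_{t_0}\, \dive(\mathcal U\!\circ\!\Phi_{t_0}^{-1}).
\end{align*}
The justification of this passage to the limit is the standard point: it uses the strong $H^1$-convergence of $(u_{t_0+t}\!\circ\!\Phi_{t_0,t} - u_{t_0})/t$ to $\dot u_{t_0}$ established in the proofs of Propositions \ref{prop:GAT2.1} and \ref{prop:GAT28.1}, together with the continuity of $t\mapsto v_{t_0+t}$ in $H^1$ and the boundedness of $(D\Phi_{t_0,t})^{-1}$ and $\det(D\Phi_{t_0,t})$ from \eqref{eq:property_phi_3}--\eqref{eq:property_phi_4}.

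Finally, the two material-derivative integrals vanish: testing the weak formulation of $\dive(\mathbb{C}^{D_{t_0}}\nabla u_{t_0})=0$ against $\dot v_{t_0}\in H^1_0(\Omega)$ kills the second one, while testing $\dive(\mathbb{C}^{D_{t_0}}\nabla v_{t_0})=0$ against $\dot u_{t_0}\in H^1_0(\Omega)$ together with the major symmetry \eqref{Notaz-simmetrie-C}, which gives $\mathbb{C}\nabla\dot u_{t_0}\cdot \nabla v_{t_0}=\mathbb{C}\nabla v_{t_0}\cdot \nabla\dot u_{t_0}$, kills the first. Applying the identity \eqref{eq:identity-3-matrices} with $A=\mathbb{C}^{D_{t_0}}\nabla u_{t_0}$, $C = \nabla v_{t_0}$, $B=D(\mathcal U\!\circ\!\Phi_{t_0}^{-1})$ transforms $\mathbb{C}^{D_{t_0}}\nabla u_{t_0}\cdot[\nabla v_{t_0}D(\mathcal U\!\circ\!\Phi_{t_0}^{-1})]$ into $[(\mathbb{C}^{D_{t_0}}\nabla u_{t_0})\,D(\mathcal U\!\circ\!\Phi_{t_0}^{-1})^T]\cdot\nabla v_{t_0}$, and collecting the remaining three integrands against $\nabla v_{t_0}$ yields exactly the formula \eqref{eq:GAT33.2}. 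The only delicate step is the convergence justification in the middle paragraph; the rest is algebraic manipulation and invocation of already-proved statements.
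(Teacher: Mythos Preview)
Your proof is correct, but the route differs from the paper's. You pull back the \emph{volume} representation
$F(t_0+t)=\int_\Omega \CC^{D_{t_0+t}}\nabla u_{t_0+t}\cdot\nabla v_{t_0+t}$ via $x=\Phi_{t_0,t}(y)$ and differentiate the resulting fixed-domain integral by the product rule, which produces the five terms you list and requires invoking \emph{both} material derivatives $\dot u_{t_0}$ and $\dot v_{t_0}$ (then killing the two corresponding integrals by testing the weak equations for $u_{t_0}$ and $v_{t_0}$). The paper instead works with the \emph{boundary} representation $F(t)=\langle(\CC^e\nabla u_t)n,g\rangle$: since $\Phi_t=Id$ on $\partial\Omega$, the difference quotient equals $\langle(\CC^e\nabla\frac{u_t-u_0}{t})n,g\rangle$, whose limit is $\langle(\CC^e\nabla\dot u_0)n,g\rangle$; this boundary integral is then converted to the volume formula \eqref{eq:GAT33.2} by reading the weak formulation \eqref{eq:GAT2.1} as a PDE in $\Omega$, multiplying by $v_0$, integrating by parts, and observing $\int_\Omega\CC^{D_0}\nabla\dot u_0\cdot\nabla v_0=0$. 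The paper's argument thus needs only $\dot u_0$ and never introduces $\dot v_0$, which is slightly more economical; your approach is the more symmetric shape-derivative computation one expects from the transported-integral viewpoint. Both are valid, and both ultimately rely on the strong $H^1$-convergence of the material-derivative quotient established in Propositions \ref{prop:GAT2.1}--\ref{prop:GAT28.1}.
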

\begin{proof}
Let us see the details in the case $t_0=0$, since the general case can be handled similarly. Since $\Phi_t=Id$ on $\partial \Omega$, we have $u_t \circ \Phi_t = u_t$ on $\partial \Omega$ and $ \left ( \CC^e \nabla \left (   \dfrac{u_t - u_0}{t}     \right ) \right ) n $ converges to $ (\CC^e \nabla  \dot{u}_0) n $ weakly in $H^{-1/2}(\partial \Omega)$ as $t \rightarrow 0$. Therefore
\begin{equation}
	\label{eq:GAT30.4}
	\dfrac{F(t,f,g)-F(0,f,g)}{t} =
	\left \langle
	\left (
	\CC^e 
	\nabla \left (   \dfrac{u_t - u_0}{t}     \right ) 
	\right )
	n,g
	\right \rangle
	\rightarrow
	\left \langle
	\left (
	\CC^e 
	\nabla \dot{u}_0
	\right )
	n,g
	\right \rangle,
	 \quad
	 \hbox{as } t \rightarrow 0.
\end{equation}
By the weak formulation \eqref{eq:GAT2.1} for $\dot{u}_0$, we have
\begin{equation}
	\label{eq:GAT31.1}
	\dive 
	\left (
	\CC^{D_0} \nabla \dot{u}_0 + 
	\left ( \CC^{D_0} \nabla u_0 \right ) \left (  (\dive \, \mathcal U) I_3 - ( D \mathcal U  )^T  \right )
	- 
	\CC^{D_0} (   \nabla u_0  D \mathcal U  )
	\right )=0, \quad \hbox{in } \Omega.
\end{equation}
By multiplying \eqref{eq:GAT31.1} by $v_0$, integrating by parts in $\om$ and recalling that $D\mathcal{U}=0$, $v_0=g$, $\CC^{D_0}=\CC^e$ on $\partial \Omega$, we obtain 
\begin{equation}
	\label{eq:GAT32.1}
	\int_{\partial \Omega} ( \CC^e  \nabla \dot{u}_0  )n \cdot g =
	\int_{\om} 
	\left 
	\{
	\CC^{D_0} \nabla \dot{u}_0 + 
	\left ( \CC^{D_0} \nabla u_0  \right ) \left (  (\dive \, \mathcal U) I_3 - ( D \mathcal U  )^T  \right )
	- 
	\CC^{D_0} (   \nabla u_0  D \mathcal U  )
	\right \}
	\cdot
	\nabla v_0.
\end{equation}
Let us notice that
\begin{equation}
	\label{eq:GAT32.2}
	\int_\om \CC^{D_0} \nabla \dot{u}_0 \cdot \nabla v_0=0.
\end{equation}
In fact, by using the major symmetry \eqref{Notaz-simmetrie-C} of $\CC^{D_0}$, integrating by parts and recalling that $\dot{u}_0 \in H^1_0(\Omega)$, we have
\begin{equation*}
	\int_\om \CC^{D_0} \nabla \dot{u}_0 \cdot \nabla v_0=
	\int_\om \CC^{D_0} \nabla {v}_0 \cdot \nabla \dot{u}_0=
	\int_{\partial \Omega} \left ( \CC^{D_0} \nabla {v}_0 \right )n \cdot \dot{u}_0 - 
	\int_\om   \left (  \dive \left (   \CC^{D_0} \nabla {v}_0 \right )   \right ) \cdot \dot{u}_0=0.
\end{equation*}
By \eqref{eq:GAT30.4}, \eqref{eq:GAT32.1}, \eqref{eq:GAT32.2} we have
\begin{equation}
	\label{eq:GAT32.3}
	F'(0,f,g) = \int_\Omega 
	\left \{
	\left (\CC^{D_{0}} \nabla u_{0} \right )
	\left (
	(\dive \,
	\mathcal U 
	)
	I_3 -
	\left (
	D
	\mathcal U 
	\right )^T	
	\right )
	-
	\CC^{D_{0}} 
	\left (
	\nabla u_{0} 
	D
	\mathcal U 
	\right )
	\right \}
	\cdot 
	\nabla v_{0}.
\end{equation}
\end{proof}
\begin{proposition}
	\label{prop:Continuità-derivata-di-F}
	There exists a constant $C>0$ only depending on the a priori data such that, for every $t \in [0,1]$ and for every $f,g \in H^{1/2}_{co}(\Sigma)$,
	\begin{equation}
		\label{eq:GAT33.2bis}
		|F'(t,f,g)- F'(0,f,g)| 
		\leq C t \frac{d_H^2}{r_0} \|f\|_{H^{1/2}_{co}(\Sigma)}
		\|g\|_{H^{1/2}_{co}(\Sigma)}.
	\end{equation}
\end{proposition}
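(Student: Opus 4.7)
My strategy is to pull back the formula \eqref{eq:GAT33.2} for $F'(t)$ to $\Omega$ via the change of variables $x = \Phi_t(y)$, thereby reducing the difference $F'(t) - F'(0)$ to an integral against the fixed coefficients $\CC^{D_0}$ and the fixed vector field $\mathcal{U}$. Writing $\widetilde{u}_t := u_t\circ\Phi_t$ and $\widetilde{v}_t := v_t\circ\Phi_t$, and using the chain-rule identities $\nabla_x u_t = (\nabla_y\widetilde{u}_t)(D\Phi_t)^{-1}$ and $D(\mathcal{U}\circ\Phi_t^{-1})(x) = D\mathcal{U}(y)(D\Phi_t)^{-1}(y)$ together with the area element $dx = \det(D\Phi_t)(y)\,dy$, the expression for $F'(t)$ rewrites as an integral over $\Omega$ of a polynomial expression in $\nabla\widetilde{u}_t$, $\nabla\widetilde{v}_t$, $D\mathcal{U}$, $(D\Phi_t)^{-1}$ and $\det(D\Phi_t)$. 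At $t = 0$ one has $\widetilde{u}_0 = u_0$, $\widetilde{v}_0 = v_0$, $(D\Phi_0)^{-1} = I_3$ and $\det(D\Phi_0) = 1$, so the same pulled-back expression recovers $F'(0)$.

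The next step is to expand $F'(t) - F'(0)$ as a finite sum of integrals, each containing at least one of three types of smallness factors. The first type is a gradient difference $\nabla(\widetilde{u}_t - u_0)$ or $\nabla(\widetilde{v}_t - v_0)$, which by the estimate \eqref{eq:GAT15.0} derived in the course of proving Proposition \ref{prop:GAT2.1} (applied with the identification $\widetilde{u}_t - u_0 = \widetilde{w}_t - w_0$ from \eqref{eq:GAT22.2}) is controlled in $L^2(\Omega)$ by $Ct(d_H/r_0)\|\nabla u_0\|_{L^2(\Omega)}$, respectively by $Ct(d_H/r_0)\|\nabla v_0\|_{L^2(\Omega)}$. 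The second type is the matrix remainder $(D\Phi_t)^{-1} - I_3 = -tD\mathcal{U}(D\Phi_t)^{-1}$, whose $L^\infty(\Omega)$-norm is bounded by $Ct\,d_H/r_0$ via \eqref{eq:property_phi_2bis}, \eqref{eq:property_phi_3} and \eqref{ass4:U}. The third type is the scalar remainder $\det(D\Phi_t) - 1$, bounded similarly by $Ct\,d_H/r_0$ via \eqref{eq:property_phi_1ter}. Each remaining factor in any given term comes either from $D\mathcal{U}$, contributing a further $L^\infty$-factor $Cd_H/r_0$ by \eqref{ass4:U}, or from the gradients of $u_0, \widetilde{u}_t, v_0, \widetilde{v}_t$, whose $L^2(\Omega)$-norms are controlled by standard energy estimates for \eqref{eq:GAT29.1}--\eqref{eq:GAT29.2}.

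Applying Cauchy--Schwarz to each of the finitely many terms, the contribution of every one of them is dominated by $Ct(d_H/r_0)^2\|\nabla u_0\|_{L^2(\Omega)}\|\nabla v_0\|_{L^2(\Omega)}$; invoking the a priori bound $\|\nabla u_0\|_{L^2(\Omega)}\|\nabla v_0\|_{L^2(\Omega)} \leq Cr_0\|f\|_{H^{1/2}_{co}(\Sigma)}\|g\|_{H^{1/2}_{co}(\Sigma)}$ then yields exactly the claimed estimate \eqref{eq:GAT33.2bis}. The main obstacle will be the careful bookkeeping in the expansion: after the change of variables the integrand is polynomial of moderate degree in the matrix-valued quantities $(D\Phi_t)^{-1}$ and $D\mathcal{U}$, and one needs to verify that each term of the difference carries both a factor of size $Ct\,d_H/r_0$ from one of the three small quantities above and a further factor of size $Cd_H/r_0$ from a factor of $D\mathcal{U}$, so that the scaling accumulates to exactly $t\,d_H^2/r_0$ rather than losing a power of $t$ or of $d_H/r_0$.
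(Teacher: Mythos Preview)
Your proposal is correct and follows essentially the same approach as the paper's proof: pull back $F'(t)$ via $x=\Phi_t(y)$, expand the difference $F'(t)-F'(0)$ into finitely many terms each carrying one ``small'' factor (either $\nabla(\widetilde u_t-u_0)$, $\nabla(\widetilde v_t-v_0)$, $(D\Phi_t)^{-1}-I_3$, or $\det(D\Phi_t)-1$, all of size $Ct\,d_H/r_0$) together with an explicit $D\mathcal U$ factor of size $Cd_H/r_0$, and conclude by Cauchy--Schwarz and energy estimates. The paper organizes the bookkeeping by writing $F'(t)-F'(0)=I_1+I_2+I_3$ and, for the representative term $I_2$, splitting off a piece $I_2'$ with factor $(1-\det(D\Phi_t))$ and expanding the remainder $I_2''$ via \eqref{eq:property_phi_2bis} into pieces $B_1,B_2,tA_1,t^2A_2,t^3A_3$; your plan is the same decomposition described at a slightly higher level.
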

\begin{proof}
By \eqref{eq:GAT33.2} and \eqref{eq:GAT32.3} we have
\begin{equation*}
	\begin{aligned}{}
		&
	F'(t,f,g) -F'(0,f,g)= 
	\\
	&
	=\int_\Omega 
	\left \{
	\left (\CC^{D_{t}} \nabla u_{t} \right )
	\left [
	\dive 
	\left (
	\mathcal U \circ \Phi_{t}^{-1}
	\right )
	I_3 -
	\left (
	D
	(
	\mathcal U \circ  \Phi_{t}^{-1}
	)
	\right )^T	
	\right ]
	-
	\CC^{D_{t}} 
	\left (
	\nabla u_{t} 
	D
	(
	\mathcal U \circ  \Phi_{t}^{-1}
	)	
	\right )
	\right \}
	\cdot 
	\nabla v_{t}
	-
	\\
	&
	-
	\int_\Omega 
	\left \{
	\left (\CC^{D_{0}} \nabla u_{0} \right )
	\left (
	(\dive \,
	\mathcal U 
	)
	I_3 -
	\left (
	D
	\mathcal U 
	\right )^T	
	\right )
	-
	\CC^{D_{0}} 
	\left (
	\nabla u_{0} 
	D
	\mathcal U 
	\right )
	\right \}
	\cdot 
	\nabla v_{0}.
	\end{aligned}
\end{equation*}
To simplify the notation, let us set $H= (D \Phi_t)^{-1}$. Let us apply the change of variables $x=\Phi_t(y)$ in the first integral, obtaining 
\begin{equation*}
	\begin{aligned}{}
		&
		F'(t,f,g) -F'(0,f,g)= I_1 + I_2 + I_3,
	\end{aligned}
\end{equation*}
where
\begin{equation*}
	\begin{aligned}{}
		&
		I_1 = 
		\int_\om
		\det (D\Phi_t) \hbox{tr}(D\mathcal UH)
		\CC^{D_0} ( \nabla \widetilde u_t H   ) \cdot ( \nabla \widetilde v_t H   )- (\dive \, {\mathcal U}) \, \CC^{D_0} \nabla u_0 \cdot \nabla v_0,
	\end{aligned}
\end{equation*}
\begin{equation*}
	\begin{aligned}{}
		&
		I_2 =
		- 
		\int_\om
		\det (D\Phi_t) 
		\left (
		\CC^{D_0}
		( \nabla \widetilde u_t H  )
		\right )
		(H^T D\mathcal U^T)
		\cdot 
		( \nabla \widetilde v_t H   )
		-
		\left ( \CC^{D_0} \nabla u_0  \right )
		D \mathcal U^T
		\cdot 
		\nabla v_0,
	\end{aligned}
\end{equation*}
\begin{equation*}
	\begin{aligned}{}
		&
		I_3 =
		- 
		\int_\om
		\det (D\Phi_t) 
		\left (
		\CC^{D_0}
		(
		\nabla \widetilde u_t H \,
		D\mathcal U \, H
		)
		\right )
		\cdot
		(
		\nabla
		\widetilde v_t H
		)
		-
		\CC^{D_0}
		(
		\nabla u_0
		\,
		\nabla \mathcal U
		)
		\cdot
		\nabla 
		v_0,
	\end{aligned}
\end{equation*}
where $\widetilde u_t=u_t\circ\Phi_t$, $\widetilde v_t=v_t\circ\Phi_t$.

Let us estimate $I_2$, the bounds of $I_1$ and $I_3$ being analogous.

Let us rewrite $I_2$ as
\begin{equation*}
	\label{eq:GAT36.I2'+I2''}
	I_2 = I_2' + I_2'',
\end{equation*}
where
\begin{equation*}
	\begin{aligned}{}
		&
		I_2' =
		\int_\om
		(1-\det (D\Phi_t)) 
		\left (
		\CC^{D_0}
		( \nabla \widetilde u_t H  )
		\right )
		(H^T D\mathcal U^T)
		\cdot 
		( \nabla \widetilde v_t H   ),
	\end{aligned}
\end{equation*}
\begin{equation*}
	\begin{aligned}{}
		&
		I_2'' =
		- 
		\int_\om
		\left (
		\CC^{D_0}
		( \nabla \widetilde u_t H  )
		\right )
		(H^T D\mathcal U^T)
		\cdot 
		( \nabla \widetilde v_t H   )
		-
		\left ( \CC^{D_0} \nabla u_0  \right )
		D \mathcal U^T
		\cdot 
		\nabla v_0,
	\end{aligned}
\end{equation*}
By \eqref{ass4:U},  \eqref{eq:property_phi_1ter}, \eqref{eq:DU-infty-bound}, \eqref{eq:property_phi_3}, we have that

\begin{equation*}
	|I'_2|\leq Ct\left(\frac{d_H}{r_0}\right)^2\int_\om |\nabla \widetilde u_t||\nabla \widetilde v_t|.
\end{equation*}

	By operating the change of variables $x=\Phi_t(y)$, applying Schwarz inequality and recalling that $u_t$, $v_t$ are solutions to the direct problems \eqref{eq:GAT29.1}, \eqref{eq:GAT29.2} respectively, 
	
	\begin{equation*}
		|I'_2|\leq Ctr_0d_H^2\|\nabla u_t\|_{L^2(\om)}
		\|\nabla v_t\|_{L^2(\om)}\leq Ct\frac{d_H^2}{r_0}\|f\|_{H^{1/2}_{co}(\Sigma)}
		\|g\|_{H^{1/2}_{co}(\Sigma)},
	\end{equation*}
	with $C$ only depending in $\lambda^i$, $\mu^i$, $\lambda^e$, $\mu^e$, $\theta_0$, $M_0$, $M_1$.
By using \eqref{eq:property_phi_2bis}, we may rewrite $I''_2$ as
	\begin{equation*}
	I''_2=B_1+B_2+tA_1+t^2A_2+t^3A_3,
\end{equation*}
	where
	\begin{equation*}
	B_1=-\int_\om[\left(\CC^{D_0}\nabla(\widetilde u_t-u_0)\right)D\mathcal{U}^T]\cdot\nabla \widetilde v_t,
	\end{equation*}
		\begin{equation*}
		B_2=-\int_\om[\left(\CC^{D_0}\nabla u_0\right)D\mathcal{U}^T]\cdot\nabla (\widetilde v_t-v_0),
	\end{equation*}
			\begin{equation*}
		A_1=\int_\om[\left(\CC^{D_0}\nabla \widetilde u_t\right)D\mathcal{U}^T]\cdot
		(\nabla \widetilde v_t D\mathcal{U}H)+
		[\left(\CC^{D_0}\nabla \widetilde u_t\right)(H^TD\mathcal{U}^T D\mathcal{U}^T)]\cdot
		\nabla \widetilde v_t+
			[\CC^{D_0}(\nabla \widetilde u_tD\mathcal{U}H)D\mathcal{U}^T]\cdot
		\nabla \widetilde v_t
		,
	\end{equation*}
	\begin{equation*}
		\begin{aligned}{}
			&
		A_2=-\int_\om	[\left(\CC^{D_0}\nabla \widetilde u_t\right)(H^TD\mathcal{U}^TD\mathcal{U}^T)]\cdot
		(\nabla \widetilde v_t D\mathcal{U}H)+
			[(\CC^{D_0}(\nabla \widetilde u_tD\mathcal{U}H))D\mathcal{U}^T]\cdot
		(\nabla \widetilde v_t D\mathcal{U}H)+\\
		&+
			[\left(\CC^{D_0}(\nabla \widetilde u_t D\mathcal{U}H)\right)(H^TD\mathcal{U}^TD\mathcal{U}^T)]\cdot
		\nabla \widetilde v_t
		,
		\end{aligned}
	\end{equation*}
	\begin{equation*}
			A_3=\int_\om
			[\CC^{D_0}(\nabla \widetilde u_t D\mathcal{U}H)](H^TD\mathcal{U}^TD\mathcal{U}^T)
			\cdot
			(\nabla \widetilde v_t D\mathcal{U}H).
	\end{equation*}

By the same arguments used in the estimate of $I'_2$, we have that
\begin{equation*}
	\left|tA_1+t^2A_2+t^3A_3\right|\leq Ct\frac{d_H^2}{r_0}\|f\|_{H^{1/2}_{co}(\Sigma)}
	\|g\|_{H^{1/2}_{co}(\Sigma)},
\end{equation*}
where the constant $C$ depends on the a priori data.

Under the notation of Proposition \ref{prop:GAT2.1}, 
recalling \eqref{eq:GAT22.2}, we have that $\widetilde u_t-u_0=\widetilde w_t-w_0$. By \eqref{eq:GAT15.0}, we obtain
\begin{equation*}
	%
	\left\|\nabla \left(	
	\widetilde w_t -w_0	\right)\right\|_{L^2(\om)}\leq
	Ct\frac{d_H}{r_0}
	\|\nabla w_0\|_{L^2(\om)},
\end{equation*}
with $C$ only depending on $\alpha_0$, $\gamma_0$, $\lambda^i$, $\mu^i$, $\lambda^e$, $\mu^e$, $\theta_0$, $M_1$. Since $w_0=u_0-\widetilde f$, we have
\begin{equation*}
	%
	\|\nabla w_0\|_{L^2(\om)}
	\leq\|\nabla u_0\|_{L^2(\om)}+\|\nabla \widetilde f\|_{L^2(\om)}
	\leq \frac{C}{r_0}\|f\|_{H^{1/2}_{co}(\Sigma)}
	,
\end{equation*}
with $C$ only depending on $M_0$, $\lambda^i$, $\mu^i$, $\lambda^e$, $\mu^e$. Therefore
\begin{equation*}
	%
	\left\|\nabla \left(	
	\widetilde u_t -u_0	\right)\right\|_{L^2(\om)}\leq
	Ct\frac{d_H}{r_0^2}
	\|f\|_{H^{1/2}_{co}(\Sigma)}
	,
\end{equation*}
so that
\begin{equation*}
	%
	|B_1|\leq
	Ct\frac{d_H^2}{r_0}\|f\|_{H^{1/2}_{co}(\Sigma)}
	\|g\|_{H^{1/2}_{co}(\Sigma)},
\end{equation*}
with $C$ only depending on the a priori data. The term $B_2$ can be estimated analogously.
\end{proof}
\begin{remark}
Note that in the above arguments, we never used the fact that $\Phi_t$ was constructed so that \eqref{eq:D0 va in D1} holds. This means that Propositions \ref{prop:Derivata-di-F}  and \ref{prop:Continuità-derivata-di-F} hold true for the derivative in any arbitrary direction $\mathcal{U}$ as long as \eqref{ass2:U} and \eqref{ass4:U} hold.
\end{remark}

\subsection{Lower bound of the Gateaux derivative}
\label{lower-bound}

\begin{proposition}
\label{prop:LBD1.1}
There exists a constant $m_0 >0$, only depending on the a priori data, such that
\begin{equation}
	\label{eq:LBD1.1}
	\| F'(0)\|_{*} \geq m_0 d_H,	
\end{equation}
where
\begin{equation*}
	\| F'(0)\|_{*} =  \sup 
	\left \{
	\dfrac
	{|F'(0,f,g)|}
	{\|f\|_{H^{1/2}_{co}(\Sigma)}
		\|g\|_{H^{1/2}_{co}(\Sigma)}	}
		,
		\ 
		f,g \not\equiv 0
	\right \}.
\end{equation*}
\end{proposition}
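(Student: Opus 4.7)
My strategy combines a boundary representation of the Gateaux derivative with a localized choice of singular Dirichlet data, exploiting the positivity of the elastic moment tensor $\mathbb{M}$ and the constructive gradient lower bound for the Rongved biphase fundamental solution. First, I would convert the distributed derivative formula \eqref{eq:GAT32.3} to a boundary integral. By Lemma \ref{lem:LBD10.1}, the integrand has the form $\dive b(\mathcal{U},u_0,v_0)$ for an explicit vector field $b$; Proposition \ref{prop:LBD15.1} computes the jump $[b\cdot n]$ across $\partial D_0$ in terms of $\mathbb{M}$. Integrating by parts on $\Omega\setminus(\overline{D_0}\cup\mathcal{V}_\varepsilon)$ around an $\varepsilon$-tubular neighborhood of the edges $\mathcal{D}$ and letting $\varepsilon\to 0^+$, I obtain
\[
F'(0,f,g)=\int_{\partial D_0\setminus\mathcal{D}}(\mathcal{U}\cdot n)\,\mathbb{M}[\widehat{\nabla}u_0^e,\widehat{\nabla}v_0^e]\,d\sigma,
\]
where under assumption \eqref{eq:main} the tensor $\mathbb{M}$ is coercive with constant $c_0>0$ depending only on the a priori data (Lemma \ref{lemma:elast_mom_tens}).

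To extract a lower bound that scales like $d_H$, I would localize this integral near the distinguished point $P\in\partial D_0\cap\partial\Omega_{\mathcal{G}}$ of Proposition \ref{th:geometric_lemma}, which lies at distance at least $\chi r_0$ from all edges of $D_0$. Let $n$ be the outer unit normal to $D_0$ at $P$, fix $h=c_\star r_0$ with $c_\star\in(0,1)$ small and only depending on the a priori data, and set $y_h=P+hn$. I would take as Dirichlet data $f=g=G_0^\sharp(\cdot,y_h)e_i|_{\partial\Omega}$ for a suitable $i\in\{1,2,3\}$, multiplied if needed by a cut-off supported in $\Sigma$. By \eqref{eq:PG4B.5} and standard trace estimates, $\|f\|_{H^{1/2}_{co}(\Sigma)}\leq C/r_0$, while the corresponding solutions $u_0,v_0$ of \eqref{eq:GAT1.3} agree with $G_0^\sharp(\cdot,y_h)e_i$ away from the pole, so their exterior symmetric gradients on $\partial D_0\cap B_{h/2}(P)$ are comparable to those of the Rongved fundamental solution $\Gamma^R(\cdot,y_h)e_i$. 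Lemma \ref{lem:LBD77.1} then yields, for a suitable $i$, an open subset $\Gamma_P\subset\partial D_0\cap B_{h/2}(P)$ of surface area $\geq c_2 h^2$ on which $|\widehat{\nabla}u_0^e|\geq c\,h^{-2}$; combined with the coercivity of $\mathbb{M}$ this gives $\mathbb{M}[\widehat{\nabla}u_0^e,\widehat{\nabla}v_0^e]\geq c'\,h^{-4}$ on $\Gamma_P$.

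It remains to bound $|\mathcal{U}\cdot n|$ from below by a multiple of $d_H$ on $\Gamma_P$. By Proposition \ref{vectorfield}, $\mathcal{U}$ is affine on each triangle of $\mathcal{T}_0$ meeting $\Gamma_P$ and is determined there by the vertex displacements $V_i^{D_1}-V_i^{D_0}$, whose size is comparable to $d_H$ via Proposition \ref{distvert}. A direct geometric inspection based on $P\in\partial\Omega_{\mathcal{G}}$ and the dihedral-angle condition \ref{ass:dihedral_angle} shows that $\mathcal{U}\cdot n$ keeps a definite sign on $\Gamma_P$ and satisfies $|\mathcal{U}\cdot n|\geq c_3\,d_H$ there, provided $c_\star$ is small enough. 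Assembling the three ingredients and normalizing,
\[
\|F'(0)\|_*\geq \frac{|F'(0,f,g)|}{\|f\|_{H^{1/2}_{co}(\Sigma)}\|g\|_{H^{1/2}_{co}(\Sigma)}}\geq m_0\,d_H,
\]
with the choice $h=c_\star r_0$ absorbing the remaining $r_0$-powers. The hardest step is the last one: the quantitative sign estimate on $\mathcal{U}\cdot n$ over a surface portion of area $\gtrsim h^2$, which has no counterpart in the scalar setting of \cite{AspBerFraVes22} and relies on the fine geometry of the homotopy constructed in Appendix \ref{Appendix} together with the angle conditions \ref{ass:dihedral_angle}--\ref{ass:face_angle} of the class $\mathcal{P}$.
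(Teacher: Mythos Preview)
Your proposal has two genuine gaps.

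First, placing the pole $y_h=P+hn$ inside $\Omega$ does not produce admissible test solutions. With $y_h\in\Omega$ the Green's function $G_0^\sharp(\cdot,y_h)e_i$ is not in $H^1(\Omega)$, so it is \emph{not} the solution $u_0$ of the Dirichlet problem \eqref{eq:GAT1.3} with boundary data $f=G_0^\sharp(\cdot,y_h)e_i|_{\partial\Omega}$; the actual $u_0$ is regular throughout $\Omega$ and there is no reason its exterior gradient on $\partial D_0\cap B_{h/2}(P)$ should be comparable to $\nabla\Gamma^R(\cdot,y_h)$. The paper avoids this by first placing the poles in $B_{r_\sharp}(P_0)\subset\Omega_0$, where $G_0^\sharp(\cdot,y)l\in H^1(\Omega)$ does solve the Dirichlet problem, obtaining $|\Theta(y,z;l,m)|\le Cm_1/r_0^2$. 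It then extends $\Theta$ to poles in $\Omega^\sharp\setminus(D_0\cup\mathcal B)$ via the integral representation \eqref{eq:LBD40.1}, shows (Lemma~\ref{lem:LBD41.1}) that $\Theta$ solves a Lam\'e system in each pole variable, and propagates the smallness to $y_r=P+rn$ by iterated three-spheres inequalities.

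Second, and more fundamentally, the assertion $|\mathcal U\cdot n|\ge c_3 d_H$ on a patch $\Gamma_P$ near the point $P$ of Proposition~\ref{th:geometric_lemma} cannot be obtained by ``direct geometric inspection''. That proposition only guarantees $\mathrm{dist}(P,D_1)\ge C_3 d_\mu^3/r_0^2$, which yields at best $|\mathcal U(P)|\gtrsim d_H^3/r_0^2$, and says nothing about the normal component: a face may undergo a purely tangential displacement, making $\mathcal U\cdot n\equiv 0$ there while $d_H$ is realized on other faces. The paper's argument is opposite in spirit. Combining the propagated upper bound on $\Theta(y_r,y_r;n,n)$ with the lower bound \eqref{eq:LBD87.1} (obtained from Lemma~\ref{lem:LBD77.1}) and optimizing in $r$ yields $|(\widetilde{\mathcal U}\cdot n)(P)|\le C\omega_0(m_1)$ for the incenter $P$ of \emph{every} triangle $T\in\mathcal T_0$; affinity of $\widetilde{\mathcal U}$ on each $T$ transfers this to every vertex and every adjacent face normal. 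Proposition~\ref{prop:geometrica} then recombines three face normals at a vertex to bound the full displacement $|V^{D_1}-V^{D_0}|/|W|$ by $C\omega_0(m_1)$, and since the maximum-displacement vertex satisfies $|\overline V^{D_1}-\overline V^{D_0}|/|W|\ge 1/\sqrt N$, one concludes $m_1\ge\omega_0^{-1}(1/\sqrt N)$. The pointwise lower bound you flag as ``the hardest step'' is in fact not provable in the form you state; it is replaced by this global upper-bound/combinatorial argument over all vertices and faces.
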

The proof of the above proposition is rather involved and therefore we find it convenient to premise some preliminary results.
First of all, we rewrite the expression of $F'(0,f,g)$ given in \eqref{eq:GAT32.3} in an equivalent form suitable for next developments.
\begin{lemma}
	\label{lem:funz-integranda-di-F'}
	\begin{equation}
		\label{eq:LBD2.1}
		F'(0,f,g) = 
		\int_\Omega 
		(
		\dive \,
		\mathcal U
		)
		\CC^{D_{0}} \nabla u_{0}
		\cdot
		\nabla v_{0}
		-
		\CC^{D_{0}} \nabla u_{0}
		\cdot 
		(
		\nabla v_0 D\mathcal U
		)
		-
		\CC^{D_{0}} \nabla v_{0}
		\cdot 
		(
		\nabla u_0 D\mathcal U
		)
		.
	\end{equation}
\end{lemma}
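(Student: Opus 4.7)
The target identity is a purely algebraic rewriting of the expression for $F'(0,f,g)$ already established in Proposition \ref{prop:Derivata-di-F}:
\[
F'(0,f,g) = \int_\Omega \Big\{ (\CC^{D_0} \nabla u_0)\big((\dive \mathcal U) I_3 - (D\mathcal U)^T\big) - \CC^{D_0}(\nabla u_0\, D\mathcal U) \Big\} \cdot \nabla v_0.
\]
My plan is simply to transform each of the three terms inside the integrand to expose the symmetric roles of $u_0$ and $v_0$, using only the identity \eqref{eq:identity-3-matrices}, namely $A\cdot(CB)=(AB^T)\cdot C$, together with the major symmetry \eqref{Notaz-simmetrie-C} of $\CC^{D_0}$.

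First, the $(\dive \mathcal U)I_3$ contribution gives directly
\[
(\CC^{D_0}\nabla u_0)(\dive \mathcal U)I_3 \cdot \nabla v_0 = (\dive \mathcal U)\,\CC^{D_0}\nabla u_0\cdot \nabla v_0,
\]
which matches the first term in \eqref{eq:LBD2.1}. Next, for the $-(D\mathcal U)^T$ contribution I would apply \eqref{eq:identity-3-matrices} with $A=\CC^{D_0}\nabla u_0$, $B=D\mathcal U$, $C=\nabla v_0$, yielding
\[
(\CC^{D_0}\nabla u_0)(D\mathcal U)^T \cdot \nabla v_0 \;=\; \CC^{D_0}\nabla u_0 \cdot (\nabla v_0\, D\mathcal U),
\]
which accounts for the second term in \eqref{eq:LBD2.1} with the correct sign.

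Finally, for the third piece $-\CC^{D_0}(\nabla u_0\, D\mathcal U)\cdot \nabla v_0$ I would invoke the major symmetry $C_{ijkl}=C_{klij}$: for any $3\times 3$ matrices $P,Q$ one has $\CC^{D_0} P\cdot Q = P\cdot \CC^{D_0} Q$, since $(\CC^{D_0} P)_{ij}Q_{ij}=C_{ijkl}P_{kl}Q_{ij}=C_{klij}P_{kl}Q_{ij}=P_{kl}(\CC^{D_0}Q)_{kl}$. Applying this with $P=\nabla u_0\, D\mathcal U$ and $Q=\nabla v_0$ gives
\[
\CC^{D_0}(\nabla u_0\, D\mathcal U)\cdot \nabla v_0 \;=\; \CC^{D_0}\nabla v_0\cdot (\nabla u_0\, D\mathcal U),
\]
matching the third term of \eqref{eq:LBD2.1}. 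Adding the three contributions yields \eqref{eq:LBD2.1}.

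There is no real obstacle here: the lemma is a bookkeeping step, whose purpose is presumably to display $F'(0,f,g)$ in a form that is manifestly symmetric in the two solutions $u_0$ and $v_0$, a property that will be exploited in the subsequent construction of the vector field $b$ and in the boundary representation formula (Lemma \ref{lem:LBD10.1}, Proposition \ref{prop:LBD15.1}).
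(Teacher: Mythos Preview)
Your proof is correct and follows exactly the approach indicated in the paper: the paper's own proof simply states that the identity follows by elaborating the integrand of \eqref{eq:GAT32.3} using the major symmetry \eqref{Notaz-simmetrie-C} and the identity \eqref{eq:identity-3-matrices}, which is precisely what you have spelled out term by term.
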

\begin{proof}
The proof follows by elaborating the integrand function appearing in \eqref{eq:GAT32.3} using the major symmetry \eqref{Notaz-simmetrie-C} of the elastic tensor $\CC^{D_0}$ and the identity \eqref{eq:identity-3-matrices}.
\end{proof}
Let us define the vector $W$ whose components are the displacements of the vertices induced by the deformation of the polyhedron $D_0$ into $D_1$:
\begin{equation*}
	W = (V_1^{D_1}-V_1^{D_0}, \dots, V_N^{D_1}-V_N^{D_0}  ).
\end{equation*}
By \eqref{eq:distvert}, we have
\begin{equation}
	\label{eq:LBD3.2}
	\sqrt{N}C^{-1} d_H \leq |W| \leq \sqrt{N}C d_H,
\end{equation}
where the constant $C>1$ only depends on $M_0$, $M_1$ and $\theta_0$.

It is not restrictive to assume $|W|>0$. Let us normalize the vector field $\mathcal U$ as follows
\begin{equation}
	\label{eq:LBD3.3}
	\widetilde{\mathcal U} = \dfrac{\mathcal U}{|W|}
\end{equation}
and let us define 
\begin{equation}
	\label{eq:LBD3.4}
	H(f,g) = \dfrac{F'(0,f,g)}{|W|},
\end{equation}
so that
\begin{equation*}
	H(f,g) = 
	\int_\Omega 
	(
	\dive \, 
	\widetilde{\mathcal U}
	)
	\CC^{D_{0}} \nabla u_{0}
	\cdot
	\nabla v_{0}
	-
	\CC^{D_{0}} \nabla u_{0}
	\cdot 
	(
	\nabla v_0 D\widetilde{\mathcal U}
	)
	-
	\CC^{D_{0}} \nabla v_{0}
	\cdot 
	(
	\nabla u_0 D\widetilde{\mathcal U}
	)
	.
\end{equation*}
\begin{lemma}
	\label{lem:LBD10.1}
In $\om \setminus D_0$ and in $D_0$ separately, we have
\begin{equation}
	\label{eq:LBD10.1}
	(
	\dive \,
	\widetilde{\mathcal U}
	)
	\CC^{D_{0}} \nabla u_{0}
	\cdot
	\nabla v_{0}
	-
	\CC^{D_{0}} \nabla u_{0}
	\cdot 
	(
	\nabla v_0 D\widetilde{\mathcal U}
	)
	-
	\CC^{D_{0}} \nabla v_{0}
	\cdot 
	(
	\nabla u_0 D\widetilde{\mathcal U}
	)
	=
	\dive \, b,
\end{equation}
where
\begin{equation}
	\label{eq:LBD10.2}
	b=
	(\CC^{D_{0}} \nabla u_0 \cdot \nabla v_0)\widetilde{\mathcal U}
	-
	(\CC^{D_{0}} \nabla u_0)  ( \nabla v_0 \, \widetilde{\mathcal U} )
	-
	(\CC^{D_{0}} \nabla v_0)  ( \nabla u_0 \, \widetilde{\mathcal U} )
	.
\end{equation}
\end{lemma}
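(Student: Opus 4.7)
The identity is purely local: in each of the two regions $\Omega\setminus D_0$ and $D_0$ the tensor $\CC^{D_0}$ is constant, and $u_0, v_0$ solve $\dive(\CC^{D_0}\nabla u_0)=\dive(\CC^{D_0}\nabla v_0)=0$. My plan is to verify \eqref{eq:LBD10.1} pointwise inside each region by expanding $\dive b$ via the product rule and reducing it to the right-hand side using the minor and major symmetries of $\CC^{D_0}$; no jump across $\partial D_0$ enters, since only the interior identity is claimed.

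I would first set $\sigma^u:=\CC^{D_0}\nabla u_0$ and $\sigma^v:=\CC^{D_0}\nabla v_0$. By the minor symmetries $\CC^{D_0}_{ijkl}=\CC^{D_0}_{jikl}=\CC^{D_0}_{ijlk}$ both stress matrices are symmetric; by the PDEs their rows are divergence-free, $\sigma^u_{ij,j}=\sigma^v_{ij,j}=0$. Writing $\dive b=\partial_i b_i$ in index notation, and using that $\CC^{D_0}$ is constant in the region, the product rule produces three groups of terms: (a) those in which $\partial_i$ lands on an $i$-index of a stress component, e.g.\ $\sigma^u_{ij,i}$; (b) those in which $\partial_i$ lands on $\widetilde{\mathcal U}$, producing factors $\dive\widetilde{\mathcal U}$ or $D\widetilde{\mathcal U}$; and (c) those involving second derivatives of $u_0$ or $v_0$.

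Group (a) vanishes, since by the symmetry of the stress tensors $\sigma^u_{ij,i}=\sigma^u_{ji,i}=(\dive\sigma^u)_j=0$, and analogously for $\sigma^v$. The main step is the cancellation of group (c): differentiation of the scalar $\sigma^u_{mn}v_{0,m,n}$ appearing in the first summand of $b$ yields, via the major symmetry $\CC^{D_0}_{mnkl}=\CC^{D_0}_{klmn}$, the contributions $\sigma^v_{kl}u_{0,k,li}\widetilde{\mathcal U}_i$ and $\sigma^u_{mn}v_{0,m,ni}\widetilde{\mathcal U}_i$; after commuting second partials and applying the symmetry of $\sigma^u,\sigma^v$, these cancel exactly the second-derivative contributions $\sigma^u_{ij}v_{0,j,ki}\widetilde{\mathcal U}_k$ and $\sigma^v_{ij}u_{0,j,ki}\widetilde{\mathcal U}_k$ arising from $-\dive[\sigma^u(\nabla v_0\,\widetilde{\mathcal U})]$ and $-\dive[\sigma^v(\nabla u_0\,\widetilde{\mathcal U})]$.

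The surviving group (b) is exactly the right-hand side of \eqref{eq:LBD10.1}: the piece containing $\dive\widetilde{\mathcal U}$ is manifestly $(\dive\widetilde{\mathcal U})\CC^{D_0}\nabla u_0\cdot\nabla v_0$, and a final use of the symmetry of $\sigma^u,\sigma^v$ gives $\sigma^u_{ij}v_{0,j,k}\widetilde{\mathcal U}_{k,i}=\sigma^u\cdot(\nabla v_0\,D\widetilde{\mathcal U})$ and its $v$-counterpart. I do not foresee a serious obstacle: the argument is bookkeeping in index notation, the only non-routine moves being the invocation of the major symmetry to trigger the group-(c) cancellation and the repeated use of the minor symmetries to bring the stress tensors into the desired contraction form.
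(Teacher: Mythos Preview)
Your proposal is correct and follows essentially the same approach as the paper: an index-notation expansion of $\dive b$ in each region where $\CC^{D_0}$ is constant, using the minor symmetries to make the stress tensors symmetric, the PDEs to kill the $\dive\sigma$ terms, and the major symmetry of $\CC^{D_0}$ to trigger the cancellation of the second-derivative contributions. The only difference is organizational---the paper expands the divergence of each of the three summands of $b$ separately and then combines, whereas you group the terms of $\dive b$ by which factor the derivative hits---but the ingredients and the key cancellations are identical.
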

\begin{proof}
To simplify the notation, let us set $u=u_0$, $v=v_0$, $\CC=\CC^{D_0}$, $\mathcal U =\widetilde{\mathcal U}$, $T= \CC \nabla u$. Let us notice that $T=T^T$ by the minor symmetry of $\CC$.

We have
\begin{equation}
	\label{eq:LBD11.1}
	\begin{aligned}{}
		&
		\dive ((T  \cdot \nabla v)\mathcal U)=
		( T_ {ij} v_{i,j}\mathcal U_k)_{,k}
		=	T_ {ij,k} v_{i,j}\mathcal U_k +
		T_ {ij} v_{i,jk}\mathcal U_k +
		T_ {ij} v_{i,j}\mathcal U_{k,k}=
		\\
		&
		=
		(  \CC \nabla u   )_{ij,k} v_{i,j} \mathcal U_k
		+
		\CC \nabla u \cdot (( \nabla^2 v ) \mathcal U)
		+
		(\dive \mathcal U) \CC \nabla u \cdot \nabla v,
	\end{aligned}
\end{equation}
where we have defined $(( \nabla^2 v )\mathcal U)_{ij}= v_{i,jk}\mathcal U_k$.

Moreover, by the symmetry of $T$ and of the Hessian matrix, we have
\begin{equation*}
	\begin{aligned}{}
		&
		\dive ((T  \nabla v) \mathcal U)=
		( T_ {ij} v_{j,k}\mathcal U_k)_{,i}
		=
		T_{ij,i}v_{j,k}\mathcal U_k + T_{ij}v_{j,ki}\mathcal U_k
		+
		T_{ij}v_{j,k}\mathcal U_{k,i}=
		\\
		&
		=(\dive ( \CC \nabla u) ) \cdot (  \nabla v \, \mathcal U)
		+
		\CC \nabla u \cdot (\nabla^2 v \, \mathcal U )
		+
		\CC \nabla u \cdot (  \nabla v D \mathcal U  ).
	\end{aligned}
\end{equation*}
Similarly, inverting the role of $u$ and $v$, 
\begin{equation*}
	\begin{aligned}{}
		&
		\dive ((T  \nabla u) \mathcal U)=
		(\dive ( \CC \nabla v) ) \cdot (  \nabla u \, \mathcal U)
		+
		\CC \nabla v \cdot (\nabla^2 u \, \mathcal U )
		+
		\CC \nabla v \cdot (  \nabla u D \mathcal U  ).
	\end{aligned}
\end{equation*}
By using the major symmetry \eqref{Notaz-simmetrie-C} of $\CC$ and the fact that $\CC$ is constant in $\om \setminus D_0$ and in $D_0$, the first term in \eqref{eq:LBD11.1} can be rewritten as
\begin{equation*}
	\begin{aligned}{}
		&
		(  \CC \nabla u   )_{ij,k} v_{i,j} \mathcal U_k
		=
		( C_{ijrs}u_{r,s}  )_{,k} v_{i,j} \mathcal U_k
		=
		u_{r,sk} C_{rsij} v_{i,j} \mathcal U_k
		=
		((  \nabla^2 u  ) \mathcal U) \cdot \CC \nabla v.
	\end{aligned}
\end{equation*}
Therefore, recalling that $u$ and $v$ are solutions to the homogeneous Lam\'e system in $\Omega \setminus D_0$ and in $D_0$ separately, the thesis follows.
\end{proof}
We preliminary observe the following result
\begin{lemma}\label{lemma:elast_mom_tens}
    In $\partial D_0 \setminus \bigcup_{i \neq j} \sigma_{ij}^{D_0}$, there exists a fourth-order tensor $\mathbb{M}$ such that
    \begin{equation}
    (\mathbb{C}^e-\mathbb{C}^i)\nabla u^i_0=\mathbb{M}\nabla u^e_0
\end{equation}
and satisfying 
\begin{equation}
	\label{eq:LBD-M-simm}
	\begin{aligned}{}
		&
		\mathbb M A \cdot B = A \cdot \mathbb M B, \quad \hbox{for every } \ 3\times 3 \ \hbox{symmetric matrices } A, B.
	\end{aligned}
\end{equation}
Moreover, if assumption \eqref{eq:monotony-Lamè-moduli-Ci-Ce} holds, then
\begin{equation}
	\label{eq:LBD-M-convex+}
	\begin{aligned}{}
		&
		{-}\mathbb M A \cdot A \geq  \sigma |A|^2, \quad \hbox{for every } \ 3\times 3 \ \hbox{symmetric matrix } A;
	\end{aligned}
\end{equation}
if, otherwise, assumption \eqref{eq:monotony-Lamè-moduli-Ce-Ci} holds, then 
\begin{equation}
	\label{eq:LBD-M-convex-}
	\begin{aligned}{}
		&
		\mathbb M A \cdot A \geq  \sigma |A|^2, \quad \hbox{for every } \ 3\times 3 \ \hbox{symmetric matrix } A,
	\end{aligned}
\end{equation}
where ${\sigma=\min\{2|\mu^e-\mu^i|,|2(\mu^e-\mu^i)+3(\lambda^e-\lambda^i)|\}}$.
\end{lemma}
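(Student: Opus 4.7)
The plan is to extract $\mathbb{M}$ from the pointwise transmission conditions across the smooth part of $\partial D_0$. On $\partial D_0 \setminus \bigcup_{i\neq j}\sigma^{D_0}_{ij}$, the solution $u_0$ of \eqref{eq:elastic_pb} satisfies continuity of the displacement, $u_0^i = u_0^e$, and continuity of the traction, $\mathbb{C}^i\nabla u_0^i\,n = \mathbb{C}^e\nabla u_0^e\,n$. Continuity of the trace forces the tangential derivatives of the one-sided gradients to coincide, whence $\nabla u_0^i - \nabla u_0^e = a\otimes n$ for a uniquely determined $a\in\mathbb{R}^3$. Substituting into the traction condition yields
\begin{equation*}
\mathbb{C}^i(a\otimes n)\,n = (\mathbb{C}^e-\mathbb{C}^i)\nabla u_0^e\,n.
\end{equation*}
By \eqref{Notaz-Lamé} one has $\mathbb{C}^i(a\otimes n)n = \mu^i a + (\mu^i+\lambda^i)(a\cdot n)n$, which is a linear bijection in $a$ since $\mu^i>0$ and $2\mu^i+\lambda^i>0$. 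Splitting $a$ into its tangential and normal components solves the system explicitly, giving $a = L(\nabla u_0^e)$ for a linear operator $L$ that extends naturally to an arbitrary $3\times 3$ matrix argument. I then define the fourth-order tensor
\begin{equation*}
\mathbb{M}A := (\mathbb{C}^e-\mathbb{C}^i)\bigl(A + L(A)\otimes n\bigr),
\end{equation*}
so that by construction $\mathbb{M}\nabla u_0^e = (\mathbb{C}^e-\mathbb{C}^i)\nabla u_0^i$.

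For the symmetry \eqref{eq:LBD-M-simm}, for symmetric $A,B$ I split $\mathbb{M}A\cdot B = (\mathbb{C}^e-\mathbb{C}^i)A\cdot B + (\mathbb{C}^e-\mathbb{C}^i)(L(A)\otimes n)\cdot B$. The first term is $(A,B)$-symmetric by the major symmetry of $\mathbb{C}^e-\mathbb{C}^i$; for the second, minor symmetry together with $B=B^T$ reduces it to $L(A)\cdot(\mathbb{C}^e-\mathbb{C}^i)Bn$, and the transmission identity applied to $B$ converts this into $\mathbb{C}^i(L(A)\otimes n)\cdot(L(B)\otimes n)$, symmetric in $(A,B)$ thanks to the major symmetry of $\mathbb{C}^i$. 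Choosing $B=A$ in the same computation produces the key identity
\begin{equation*}
\mathbb{M}A\cdot A = (\mathbb{C}^e-\mathbb{C}^i)A\cdot A + \mathbb{C}^i(L(A)\otimes n)\cdot(L(A)\otimes n).
\end{equation*}
Under hypothesis \eqref{eq:monotony-Lamè-moduli-Ce-Ci} the first summand is bounded below by $\sigma|A|^2$ via the strong convexity of $\mathbb{C}^e-\mathbb{C}^i$, and the second is nonnegative by the strong convexity of $\mathbb{C}^i$, yielding \eqref{eq:LBD-M-convex-}.

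For the opposite case \eqref{eq:monotony-Lamè-moduli-Ci-Ce}, setting $I = A + L(A)\otimes n$ and exploiting the companion transmission relation $(\mathbb{C}^e-\mathbb{C}^i)In = \mathbb{C}^e(L(A)\otimes n)n$ (obtained by rewriting $\mathbb{C}^e E n = \mathbb{C}^i I n$ as $\mathbb{C}^e I n - \mathbb{C}^e (L(A)\otimes n)n = \mathbb{C}^i I n$) leads to the dual identity
\begin{equation*}
-\mathbb{M}A\cdot A = (\mathbb{C}^i-\mathbb{C}^e)I\cdot I + \mathbb{C}^e(L(A)\otimes n)\cdot(L(A)\otimes n),
\end{equation*}
in which both summands are nonnegative under \eqref{eq:monotony-Lamè-moduli-Ci-Ce}. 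The main obstacle I anticipate is the sharp quantitative step: converting this $\geq 0$ information into the coercivity \eqref{eq:LBD-M-convex+} in terms of $|A|^2$ rather than $|\widehat I|^2$. To handle it, I would use the explicit tangential-normal decomposition of $L$ from the first paragraph together with the strong convexity lower bounds on $\mathbb{C}^e$ and on $\mathbb{C}^i-\mathbb{C}^e$, absorbing the cross term $A\cdot(L(A)\otimes n)$ into the positive remainder $\mathbb{C}^e(L(A)\otimes n)\cdot(L(A)\otimes n)$ to reach a lower bound of the form $\sigma|A|^2$ controlled purely by the a priori data.
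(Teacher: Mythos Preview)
Your construction of $\mathbb{M}$, the proof of the major symmetry \eqref{eq:LBD-M-simm}, and the coercivity under \eqref{eq:monotony-Lamè-moduli-Ce-Ci} follow exactly the paper's route. Your map $a\mapsto \mathbb{C}^i(a\otimes n)n$ is precisely the paper's $Q^{-1}$, so that your $L(A)=Q\bigl((\mathbb{C}^e-\mathbb{C}^i)An\bigr)$, and your identity
\[
\mathbb{M}A\cdot A=(\mathbb{C}^e-\mathbb{C}^i)A\cdot A+\mathbb{C}^i(L(A)\otimes n)\cdot(L(A)\otimes n)
\]
is the paper's $\mathbb{M}A\cdot A=(\mathbb{C}^e-\mathbb{C}^i)A\cdot A+Q\gamma\cdot\gamma$ verbatim.

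For the case \eqref{eq:monotony-Lamè-moduli-Ci-Ce} the paper offers only the phrase ``proceeding similarly'', whereas you go further and derive the correct dual identity
\[
-\mathbb{M}A\cdot A=(\mathbb{C}^i-\mathbb{C}^e)I\cdot I+\mathbb{C}^e(L(A)\otimes n)\cdot(L(A)\otimes n),\qquad I=A+L(A)\otimes n,
\]
and honestly flag the remaining obstacle: this controls $|\widehat I|^2$, not $|A|^2$. Your proposed fix---absorb the cross term to reach the \emph{sharp} constant $\sigma$---will not succeed, and in fact the constant $\sigma$ as stated is too large in this case. For example, with $n=e_3$, $A=n\otimes n$, $\mu^i=\lambda^i=10$, $\mu^e=\lambda^e=1$ one computes
\[
-\mathbb{M}A\cdot A=-(2\Delta\mu+\Delta\lambda)\,\frac{2\mu^e+\lambda^e}{2\mu^i+\lambda^i}=27\cdot\tfrac{3}{30}=2.7,
\]
while $\sigma=\min\{18,45\}=18$. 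What the downstream argument actually uses (see \eqref{eq:LBD76.1}, \eqref{eq:LBD86.0bis}) is only that $\pm\mathbb{M}$ is coercive with \emph{some} positive constant depending on the a~priori data. Your dual identity already delivers this: both summands are nonnegative, and if they vanish then $\widehat{L(A)\otimes n}=0$ forces $L(A)=0$, whence $\widehat I=A=0$. Thus $A\mapsto -\mathbb{M}A\cdot A$ is a positive definite quadratic form on symmetric matrices, with lower bound computable from $\lambda^{e},\mu^{e},\lambda^{i},\mu^{i}$ alone. So your approach is sound; simply drop the attempt to recover the specific value $\sigma$ in this case and record the data-dependent constant your identity yields.
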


\begin{proof}
We begin by deriving a general expression for the elastic tensor \( \mathbb{M} \), following the approach introduced in~\cite{FraMur86} and further developed in~\cite{beretta2012small}.
Specifically, we consider the transmission conditions on a face $F$ of the polyhedron \( D_0 \):
\begin{align}
\nabla u^e {\tau_j} &= \nabla u^i{\tau_j}, \qquad j=1,2 \label{eq:transm_cond1}\\
\mathbb{C}^e \widehat{\nabla} u^e n &= \mathbb{C}^i \widehat{\nabla} u^i n,\label{eq:transm_cond2}
\end{align}
where $\{\tau_1,\tau_2,n\}$ is an orthonormal system.
Using \eqref{eq:transm_cond1}, we can write:
\begin{equation}\label{eq:def_delta}
\nabla u^e = \nabla u^i + \delta \otimes n, 
\end{equation} 
where $\delta=(\nabla u^e - \nabla u^i)n$. 
We define the second-order tensor 
\( Q \in \mathbb{R}^{3 \times 3} \) by
\[ 
(Q^{-1} {\zeta}) \cdot {\eta} = \mathbb{C}^i({\zeta} \otimes n) \cdot ({\eta} \otimes n), \quad \forall {\zeta}, {\eta} \in \mathbb{R}^3. 
\]
Since \( \mathbb{C}^i \) is positive definite, so is \( Q^{-1} \), and thus \( Q \) is well-defined.\\
Using \eqref{eq:transm_cond2}, for all \( {\xi} \in \mathbb{R}^3 \):
\[ 
(Q^{-1} \delta) \cdot {\xi} = (\mathbb{C}^i - \mathbb{C}^e)\widehat{\nabla} u^e n \cdot \xi, 
\]
thus,
\[ 
Q^{-1} \delta = (\mathbb{C}^i - \mathbb{C}^e) \widehat{\nabla} u^e n,  
\]
which implies that
\[
\delta = Q \left((\mathbb{C}^i - \mathbb{C}^e) \widehat{\nabla} u^e n\right).
\]
Therefore, using the last equality in \eqref{eq:def_delta} we get 
\[ 
\nabla u^e = \nabla u^i + Q\left((\mathbb{C}^i - \mathbb{C}^e) \widehat{\nabla} u^e n\right) \otimes n. 
\]
Applying \( \mathbb{C}^e - \mathbb{C}^i \) to the last equality, we deduce that
\[
(\mathbb{C}^e - \mathbb{C}^i) \widehat{\nabla} u^i = (\mathbb{C}^e - \mathbb{C}^i) \widehat{\nabla} u^e - (\mathbb{C}^e - \mathbb{C}^i) \left[ Q \left((\mathbb{C}^i - \mathbb{C}^e) \widehat{\nabla} u^e n\right) \otimes n \right],
\]
hence, for every $3\times3$ symmetric matrix $A$, we define the elastic moment tensor \( \mathbb{M} \) by
\[
\mathbb{M}A = (\mathbb{C}^e - \mathbb{C}^i)A + (\mathbb{C}^e - \mathbb{C}^i) \left[ Q\left( (\mathbb{C}^e - \mathbb{C}^i)A n\right) \otimes n \right].
\]
The tensor \( \mathbb{M} \) inherits both minor and major symmetries. The minor symmetries follow directly {}from those of \( \mathbb{C}^e - \mathbb{C}^i \). The major symmetry follows {}from the symmetries of $\mathbb{C}^e - \mathbb{C}^i$, of $Q$, and {}from the fact that
\[
(\mathbb{C}^e - \mathbb{C}^i)[(Qv) \otimes n] \cdot B = (\mathbb{C}^e - \mathbb{C}^i)[(Qw) \otimes n] \cdot A,
\]
where 
\( 
v = (\mathbb{C}^e - \mathbb{C}^i)An, \; w = (\mathbb{C}^e - \mathbb{C}^i)Bn. 
\) \\
Assuming \eqref{eq:monotony-Lamè-moduli-Ce-Ci}, 
we infer the positivity of $\mathbb{M}$. In fact, for every $3\times3$ symmetric matrix $A$, 
\[
\mathbb{M}A \cdot A = (\mathbb{C}^e - \mathbb{C}^i)A \cdot A + Q\gamma \cdot \gamma,
\]
where $\gamma= (\mathbb{C}^e - \mathbb{C}^i)An$, hence, since $Q\gamma \cdot \gamma\geq 0$, for all $\gamma\in\mathbb{R}^3$, we get
\[
\mathbb{M}A \cdot A\geq (\mathbb{C}^e - \mathbb{C}^i)A \cdot A\geq \sigma |A|^2>0.
\]
By proceeding similarly if \eqref{eq:monotony-Lamè-moduli-Ci-Ce} holds, we have
$-\mathbb{M}A  \cdot A \geq \sigma |A|^2>0$ for every $3 \times 3$ symmetric matrix $A$.
\end{proof}

For the sake of clarity, let us explicitly denote by $b^e$, $b^i$ the restriction of the vector field $b$ as in \eqref{eq:LBD10.2} in $\Omega \setminus D_0$, $D_0$, respectively:
\begin{equation}
	\label{eq:LBD12.3}
	b= \left \{
	\begin{aligned}{}
		&
		b^e \quad \hbox{in } \Omega \setminus D_0,
		\\
		&
		b^i \quad \hbox{in } D_0.
	\end{aligned}
	\right .
\end{equation}
\begin{proposition}
	\label{prop:LBD15.1}
	In $\partial D_0 \setminus \bigcup_{i \neq j} \sigma_{ij}^{D_0}$
	the following identity holds
\begin{equation}
	\label{eq:LBD15.1}
	\begin{aligned}{}
		&
		(b^i-b^e) \cdot n ={-} ( \widetilde {\mathcal{U}} \cdot n  )
		\mathbb M \widehat{\nabla} u_0^e \cdot  \widehat{\nabla} v_0^e,
	\end{aligned}
\end{equation}
where $n$ is the unit outer normal to $D_0$, $u_0^e=u_0|_{\Omega \setminus D_0}$, $v_0^e=v_0|_{\Omega \setminus D_0}$, {and $\mathbb M$ is the fourth order tensor appearing in Lemma \ref{lemma:elast_mom_tens}}. 
\end{proposition}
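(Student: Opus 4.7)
My plan is to compute $(b^i-b^e)\cdot n$ directly from \eqref{eq:LBD10.2} and then invoke the transmission conditions together with the definition of the elastic moment tensor $\mathbb{M}$ supplied by Lemma \ref{lemma:elast_mom_tens}. As a preliminary step, using the minor symmetry of $\CC^{D_0}$ (so that $\CC^{D_0}\nabla u_0$ is symmetric and equals $\CC^{D_0}\widehat{\nabla} u_0$) and the elementary identity $A\cdot(a\otimes b)=Aa\cdot b$ valid for symmetric $A$, I would rewrite $b\cdot n$ on each side of $\partial D_0$ in the form
\begin{equation*}
b\cdot n \;=\; (\widetilde{\mathcal{U}}\cdot n)\,\CC^{D_0}\widehat{\nabla} u_0\cdot\widehat{\nabla} v_0 \;-\; t_u\cdot(\nabla v_0\,\widetilde{\mathcal{U}}) \;-\; t_v\cdot(\nabla u_0\,\widetilde{\mathcal{U}}),
\end{equation*}
where $t_u:=\CC^{D_0}\widehat{\nabla} u_0\,n$ and $t_v:=\CC^{D_0}\widehat{\nabla} v_0\,n$.

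Next I would exploit the transmission conditions that hold on $\partial D_0\setminus\bigcup_{i\neq j}\sigma^{D_0}_{ij}$, where Lemma \ref{lem:Li-Nirenberg} guarantees the necessary piecewise smoothness of $u_0$ and $v_0$: continuity of traction yields $t_u^i=t_u^e=:t_u$ and $t_v^i=t_v^e=:t_v$, while continuity of displacement along each face forces the gradient jumps to be purely normal, i.e.\ $\nabla u_0^e-\nabla u_0^i=\delta_u\otimes n$ and $\nabla v_0^e-\nabla v_0^i=\delta_v\otimes n$ for some vectors $\delta_u$ and $\delta_v$. Substituting and noting that $(\nabla v_0^i-\nabla v_0^e)\widetilde{\mathcal{U}}=-(\widetilde{\mathcal{U}}\cdot n)\delta_v$, and similarly for $u_0$, the difference $(b^i-b^e)\cdot n$ reduces to
\begin{equation*}
(b^i-b^e)\cdot n \;=\; (\widetilde{\mathcal{U}}\cdot n)\Bigl\{\CC^i\widehat{\nabla} u_0^i\cdot\widehat{\nabla} v_0^i \,-\, \CC^e\widehat{\nabla} u_0^e\cdot\widehat{\nabla} v_0^e \,+\, t_u\cdot\delta_v \,+\, t_v\cdot\delta_u\Bigr\}.
\end{equation*}

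The final step is to collapse the bracket to $-\mathbb{M}\widehat{\nabla} u_0^e\cdot\widehat{\nabla} v_0^e$. First I would use $\widehat{\nabla} v_0^i=\widehat{\nabla} v_0^e-\widehat{\delta_v\otimes n}$ together with the identity $\CC^i\widehat{\nabla} u_0^i\cdot\widehat{\delta_v\otimes n}=t_u\cdot\delta_v$ (a direct consequence of the symmetry of $\CC^i\widehat{\nabla} u_0^i$) to absorb the summand $t_u\cdot\delta_v$, turning the bracket into $(\CC^i\widehat{\nabla} u_0^i-\CC^e\widehat{\nabla} u_0^e)\cdot\widehat{\nabla} v_0^e+t_v\cdot\delta_u$. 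Then I would split
\begin{equation*}
\CC^i\widehat{\nabla} u_0^i-\CC^e\widehat{\nabla} u_0^e \;=\; -(\CC^e-\CC^i)\widehat{\nabla} u_0^i \,-\, \CC^e\widehat{\delta_u\otimes n}
\end{equation*}
and invoke Lemma \ref{lemma:elast_mom_tens}, namely $(\CC^e-\CC^i)\widehat{\nabla} u_0^i=\mathbb{M}\widehat{\nabla} u_0^e$, so that pairing the first summand with $\widehat{\nabla} v_0^e$ produces exactly the target $-\mathbb{M}\widehat{\nabla} u_0^e\cdot\widehat{\nabla} v_0^e$, while the major symmetry of $\CC^e$ rewrites the second summand as $-t_v\cdot\delta_u$, which cancels the residual $t_v\cdot\delta_u$. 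The identity \eqref{eq:LBD15.1} follows. I do not foresee any genuine analytic obstacle: the difficulty is entirely combinatorial, requiring meticulous bookkeeping of matrix--vector contractions and of the repeated use of the minor and major symmetries of $\CC^{D_0}$.
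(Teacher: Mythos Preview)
Your proof is correct and follows essentially the same route as the paper: both expand $(b^i-b^e)\cdot n$ directly from \eqref{eq:LBD10.2}, apply the transmission conditions (continuity of traction and the purely normal gradient jump $\nabla u_0^e-\nabla u_0^i=\delta_u\otimes n$), and then invoke Lemma \ref{lemma:elast_mom_tens} to produce $\mathbb{M}\widehat{\nabla} u_0^e$. The only cosmetic difference is that you organize the bookkeeping around the traction vectors $t_u,t_v$, whereas the paper keeps the full stress expressions and splits $(b^i-b^e)\cdot n$ into three blocks $T_1-T_2-T_3$ before simplifying; the ingredients and the cancellations are identical.
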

\begin{proof}
To simplify the notation let us set $u=u_0$, $v=v_0$, $\mathbb{C}=\mathbb{C}^{D_0}$ and $\mathcal{U}=\widetilde{\mathcal{U}}$. Using \eqref{eq:LBD10.2}, we find that
\begin{equation}\label{eq:bi-be}
\begin{aligned}
    (b^i-b^e)\cdot n&=\left(\mathbb{C}^i\nabla u^i \cdot \nabla v^i -\mathbb{C}^e\nabla u^e \cdot \nabla v^e\right) (\mathcal{U}\cdot n)\\
    &-\left(\mathbb{C}^i\nabla u^i n \otimes \mathcal{U} \cdot \nabla v^i-\mathbb{C}^e\nabla u^e n \otimes \mathcal{U} \cdot \nabla v^e \right)\\
    &-\left(\mathbb{C}^i\nabla v^i n \otimes \mathcal{U} \cdot \nabla u^i-\mathbb{C}^e\nabla v^e n \otimes \mathcal{U} \cdot \nabla u^e \right):=T_1-T_2-T_3
\end{aligned}
\end{equation}
Now, using \eqref{eq:def_delta} and a similar expression for $v$, that is $\nabla v^e=\nabla v^i+\eta\otimes n$, where $\eta\in\mathbb{R}^3$ is such that $\eta=(\nabla v^e-\nabla v^i)n$, we get that
\begin{equation}\label{eq:T1}
\begin{aligned}
    T_1&=\left(\mathbb{C}^i\nabla u^i \cdot (\nabla v^e-\eta\otimes n)-\mathbb{C}^e(\nabla u^i+\delta\otimes n) \cdot \nabla v^e\right)(\mathcal{U}\cdot n)\\
    &=\left( (\mathbb{C}^i-\mathbb{C}^e)\nabla u^i \cdot \nabla v^e-\mathbb{C}^i\nabla u^i \cdot (\eta\otimes n)-\mathbb{C}^e\delta\otimes n \cdot \nabla v^e\right)(\mathcal{U}\cdot n)\\
    &=\left( -\mathbb{M}\nabla u^e \cdot \nabla v^e-\mathbb{C}^i\nabla u^i \cdot (\eta\otimes n)-\mathbb{C}^e\delta\otimes n \cdot \nabla v^e\right)(\mathcal{U}\cdot n).
\end{aligned}
\end{equation}
Moreover,
\begin{equation}\label{eq:T2}
    \begin{aligned}
        T_2&=\mathbb{C}^i\nabla u^i n \otimes \mathcal{U} \cdot \nabla v^i-\mathbb{C}^i\nabla u^i n \otimes \mathcal{U} \cdot (\nabla v^i+\eta\otimes n)\\
        &=-(\mathbb{C}^i\nabla u^i n \otimes \mathcal{U}) \cdot (\eta\otimes n)=-[(\mathbb{C}^i\nabla u^i \cdot (\eta\otimes n)](\mathcal{U}\cdot n).
    \end{aligned}
\end{equation}
Analogously,
\begin{equation}\label{eq:T3}
        T_3=-[(\mathbb{C}^e\nabla v^e) \cdot (\delta\otimes n)](\mathcal{U}\cdot n).
\end{equation}
Inserting \eqref{eq:T3}, \eqref{eq:T2} and \eqref{eq:T1} into \eqref{eq:bi-be}, we get the assertion \eqref{eq:LBD15.1}. 
\end{proof}

\begin{proof}[Proof of Proposition \ref{prop:LBD1.1}]
Let us define 
\begin{equation}
	\label{eq:LBD3.6}
	\begin{aligned}{}
		&
		m_1 = \| H\|_*,
	\end{aligned}
\end{equation}
so that
\begin{equation*}
	\begin{aligned}{}
		&
		|H(f,g)| \leq m_1 \|f\|_{H^{1/2}_{co}(\Sigma)} \|g\|_{H^{1/2}_{co}(\Sigma)}, \quad \hbox{for every } f,g \in 
				H^{1/2}_{co}(\Sigma),
	\end{aligned}
\end{equation*}
Recall that, by \eqref{eq:LBD3.4},
\begin{equation}
	\label{eq:LBD3.8}
	\begin{aligned}{}
		&
		\|F'(0)\|_* = |W| \| H\|_*.
			\end{aligned}
\end{equation}
\medskip
\textit{Step 1}. 

For $y, z \in B_{r_\sharp}(P_0)$, where $P_0$ satisfies \eqref{eq:ball}, and for $l$, $m \in \RR^3$, $|l|=|m|=1$, let us consider the functions 
\begin{equation}
	\label{eq:LBD5.2}
	\begin{aligned}{}
		&
		u_0(x,y;l)=  G_0^\sharp (x,y) l,
		\quad
		v_0(x,z;m)=  G_0^\sharp (x,z) m,
	\end{aligned}
\end{equation}
where the matrix $G_0^\sharp$ has been defined in Proposition \ref{prop:PGDzero}. Since $y, z \in \Omega_0$, the functions $u_0, v_0 \in H^1(\Omega)$.

We consider the special Dirichlet boundary data
\begin{equation*}
	\begin{aligned}{}
		&
		f^\sharp (x) = G_0^\sharp (x,y) l |_{x \in \partial \Omega} \in H^{1/2}_{co}(\Sigma),
		\quad 
		g^\sharp (x) = G_0^\sharp (x,z) m |_{x \in \partial \Omega} \in H^{1/2}_{co}(\Sigma).		
	\end{aligned}
\end{equation*}
Let us define 
\begin{equation}
	\label{eq:LBD6.5}
	\begin{aligned}{}
		\Theta(y,z;l,m) &= H(f^\sharp, g^\sharp)=\\
		&= 
			\int_\Omega 
			(
			\dive \,
			\widetilde{\mathcal U}
			)
			\CC^{D_{0}} \nabla u_{0}
			\cdot
			\nabla v_{0}
			-
			\CC^{D_{0}} \nabla u_{0}
			\cdot 
			(
			\nabla v_0 D\widetilde{\mathcal U}
			)
			-
			\CC^{D_{0}} \nabla v_{0}
			\cdot 
			(
			\nabla u_0 D\widetilde{\mathcal U}
			)
			.
	\end{aligned}
\end{equation}
By trace inequalities and by \eqref{eq:PG4B.5}, 
\begin{equation*}
	\begin{aligned}{}
		&
		\|f^\sharp\|_{H_{co}^{1/2}(\Sigma)} \leq C \| G_0^\sharp (\cdot,y)\|_{H^1(\Omega)}\leq C 		 \| G_0^\sharp (\cdot,y)\|_{H^1(\Omega^\sharp \setminus B_{r_\sharp}(y))}
		\leq \dfrac{C}{r_0},
	\end{aligned}
\end{equation*}
where $C>0$ only depends on the a priori data. A similar estimate holds for $g^\sharp$ and, therefore, for every $y,z \in B_{r_\sharp}(P_0)$ and for every $l,m \in \RR^3$, $|l|=|m|=1$, we have
\begin{equation}
	\label{eq:LBD7.4}
	\begin{aligned}{}
		&
		|\Theta (y,z;l,m)|  \leq \dfrac{Cm_1}{r_0^2},
	\end{aligned}
\end{equation}
where $C>0$ only depends on the a priori data.

\medskip

\noindent \textit{Step 2.}

Given $\sigma$, $0<\sigma\leq \frac{r_0}{2}$, let us consider the tubular neighborhood
of the edges $\left \{ \sigma_{ij}^{D_0}  \right  \}$, with $ i \neq j$, of width $\sigma$ (to be chosen later):
\begin{equation}
	\label{eq:LBD8.1}
	\begin{aligned}{}
		&
		\mathcal B 
		=
		\bigcup_{\overset{\scriptstyle i,j}{\scriptstyle
				i\neq j}}
		\left \{
		x \in \mathbb{R}^3 \ | \ \hbox{dist}(x,\sigma_{ij}^{D_0})
		\leq \sigma
				\right \},
	\end{aligned}
\end{equation}
which satisfies
\begin{equation*}
	\begin{aligned}{}
		&
		\hbox{dist} ( \mathcal B, \partial \Omega) \geq \dfrac{r_0}{2}.
	\end{aligned}
\end{equation*}
It is useful to rewrite $\Theta(y,z;l,m)$ in \eqref{eq:LBD6.5} as 
\begin{equation}
	\label{eq:LBD9.2}
	\begin{aligned}{}
		&
		\Theta(y,z;l,m) = 
		\int_{\Omega \setminus \mathcal B} 
		(
		\dive \,
		\widetilde{\mathcal U}
		)
		\CC^{D_{0}} \nabla u_{0}
		\cdot
		\nabla v_{0}
		-
		\CC^{D_{0}} \nabla u_{0}
		\cdot 
		(
		\nabla v_0 D\widetilde{\mathcal U}
		)
		-
		\CC^{D_{0}} \nabla v_{0}
		\cdot 
		(
		\nabla u_0 D\widetilde{\mathcal U}
		)
		+
		\\
		&
		+
		\int_{\mathcal B} 
		(
		\dive \,
		\widetilde{\mathcal U}
		)
		\CC^{D_{0}} \nabla u_{0}
		\cdot
		\nabla v_{0}
		-
		\CC^{D_{0}} \nabla u_{0}
		\cdot 
		(
		\nabla v_0 D\widetilde{\mathcal U}
		)
		-
		\CC^{D_{0}} \nabla v_{0}
		\cdot 
		(
		\nabla u_0 D\widetilde{\mathcal U}
		).
	\end{aligned}
\end{equation}
By applying Lemma \ref{lem:LBD10.1}, we may rewrite the first integral on the right hand side of \eqref{eq:LBD9.2} as 
\begin{equation}
	\label{eq:LBD12.2}
	\begin{aligned}{}
		&
		\int_{\Omega \setminus \mathcal B} 
		(
		\dive \,
		\widetilde{\mathcal U}
		)
		\CC^{D_{0}} \nabla u_{0}
		\cdot
		\nabla v_{0}
		-
		\CC^{D_{0}} \nabla u_{0}
		\cdot 
		(
		\nabla v_0 D\widetilde{\mathcal U}
		)
		-
		\CC^{D_{0}} \nabla v_{0}
		\cdot 
		(
		\nabla u_0 D\widetilde{\mathcal U}
		)
		=
		\\
		&
		= 
		\int_{\Omega \setminus ( D_0 \cup \mathcal B)}
		\dive \ b^e 
		+
		\int_{D_0 \setminus \mathcal B}
		\dive \ b^i.				 
	\end{aligned}
\end{equation}
By denoting with $n$ the unit outer normal either to $\mathcal B$ or to $D_0$ according to the case, integrating by parts in the first term on the right hand side of \eqref{eq:LBD12.2} we have
\begin{equation}
	\label{eq:LBD13.3}
	\begin{aligned}{}
		&
		\int_{\Omega \setminus ( D_0 \cup \mathcal B)}
		\dive \ b^e 
		=
		- \int_{\partial D_0 \setminus \mathcal B} b^e \cdot n - 	
		\int_{\partial \mathcal B \cap (\Omega \setminus D_0)} b^e \cdot n.
	\end{aligned}
\end{equation}
By integrating by parts in the second term, we have
\begin{equation}
	\label{eq:LBD14.1}
	\begin{aligned}{}
		&
		\int_{D_0 \setminus \mathcal B}
		\dive \ b^i 
		=
		\int_{\partial D_0 \setminus \mathcal B} b^i \cdot n - 	
		\int_{\partial \mathcal B \cap D_0} b^i \cdot n.
	\end{aligned}
\end{equation}
By \eqref{eq:LBD13.3} and \eqref{eq:LBD14.1}, we have
\begin{equation}
	\label{eq:LBD14.2}
	\begin{aligned}{}
		&
		\int_{\Omega \setminus \mathcal B} 
		(
		\dive \,
		\widetilde{\mathcal U}
		)
		\CC^{D_{0}} \nabla u_{0}
		\cdot
		\nabla v_{0}
		-
		\CC^{D_{0}} \nabla u_{0}
		\cdot 
		(
		\nabla v_0 D\widetilde{\mathcal U}
		)
		-
		\CC^{D_{0}} \nabla v_{0}
		\cdot 
		(
		\nabla u_0 D\widetilde{\mathcal U}
		)
		=
		\\
		&
		=
		\int_{\partial D_0 \setminus \mathcal B} (b^i-b^e) \cdot n - 	
		\int_{\partial \mathcal B \cap (\Omega \setminus D_0)} b^e \cdot n
		-
		\int_{\partial \mathcal B \cap D_0} b^i \cdot n.
	\end{aligned}
\end{equation}
By inserting \eqref{eq:LBD14.2} in \eqref{eq:LBD9.2}, and applying Proposition \ref{prop:LBD15.1}, we have
\begin{equation}
	\label{eq:LBD40.1}
	\begin{aligned}{}
		&
		\Theta(y,z;l,m) = 
		\int_{\mathcal B} 
		(
		\dive \,
		\widetilde{\mathcal U}
		)
		\CC^{D_{0}} \nabla u_{0}
		\cdot
		\nabla v_{0}
		-
		\CC^{D_{0}} \nabla u_{0}
		\cdot 
		(
		\nabla v_0 D\widetilde{\mathcal U}
		)
		-
		\CC^{D_{0}} \nabla v_{0}
		\cdot 
		(
		\nabla u_0 D\widetilde{\mathcal U}
		)
		-
		\\
		&		
		-
		\left (
		\int_{\partial \mathcal B \cap ( \Omega \setminus D_0  )}
		b^e \cdot n + 
		\int_{\partial \mathcal B \cap D_0 }
		b^i \cdot n
		\right )
		-
		\int_{\partial D_0 \setminus \mathcal B}
		( \widetilde {\mathcal U} \cdot n  )
		\mathbb M \widehat \nabla u_0^e \cdot  \widehat \nabla v_0^e
		=I_1^\Theta - I_2^\Theta - I_3^\Theta.		
	\end{aligned}
\end{equation}
\begin{remark}
Let us notice that the definition of $\Theta$ given in \eqref{eq:LBD40.1} is well-defined for every  ${y}, {z} \in \oms \setminus ( D_0 \cup \mathcal B  )$, since the domains of integration do not contain the variables $y$, $z$, which are the poles of the Green's functions $G_0^\sharp(\cdot,y)$, $G_0^\sharp(\cdot,z)$ entering in the definition \eqref{eq:LBD5.2} of $u_0$ and $v_0$.
\end{remark}

\textit{Step 3.} 

In this step we propagate the estimate \eqref{eq:LBD7.4} up to points which are close to the boundary of $D_0$, but far {}from its edges. 

Let $\overline{z}, \overline{y} \in \oms \setminus ( D_0 \cup \mathcal B  )$ and let us consider the vector fields
\begin{equation}
	\label{eq:LBD41.3}
	\begin{aligned}{}
		&
		f(y, \overline{z};m) = \sum_{r=1}^3 \Theta ( y,\overline{z};e_r,m )e_r,
	\end{aligned}
\end{equation}
\begin{equation}
	\label{eq:LBD41.4}
	\begin{aligned}{}
		&
		\widetilde{f}(\overline{y},z;l) = \sum_{r=1}^3 {\Theta} ( \overline{y},z;l,e_r )e_r.
	\end{aligned}
\end{equation}
As stated in the following lemma, the functions $f$ and $\widetilde{f}$ are solutions to the homogeneous Lam\'e system with constant elastic tensor $\mathbb C^{D_0}=\mathbb C^e$ in $\oms \setminus ( D_0 \cup \mathcal B  )$.
\begin{lemma}
	\label{lem:LBD41.1}
	For given $\overline{z} \in \oms \setminus ( D_0 \cup \mathcal B  )$ and $m \in \RR^3$, $|m|=1$, the function $f=f(y, \overline{z};m)$ is a solution to 
\begin{equation}
	\label{eq:LBD41.5}
	\dive_y ( \CC^{D_0}(y) \nabla_y f )=0, \quad \hbox{for every }
	y \in \oms \setminus ( D_0 \cup \mathcal B  ).
\end{equation}
For given $\overline{y} \in \oms \setminus ( D_0 \cup \mathcal B  )$ and $l \in \RR^3$, $|l|=1$, the function $\widetilde{f}=\widetilde{f}(\overline{y},z;l)$ is a solution to 
\begin{equation}
	\label{eq:LBD41.6}
	\dive_z ( \CC^{D_0}(z) \nabla_z \widetilde{f} )=0, \quad \hbox{for every }
	z \in \oms \setminus ( D_0 \cup \mathcal B  ).
\end{equation}
\end{lemma}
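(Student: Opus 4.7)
My plan is to exploit the integral representation \eqref{eq:LBD40.1} of $\Theta$, which is meaningful for every pair $y, z \in \oms \setminus (D_0 \cup \mathcal{B})$, combined with the symmetry \eqref{eq:PG4B.3} of the Green's function and differentiation under the integral sign.

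First, I would fix $\overline z \in \oms \setminus (D_0 \cup \mathcal B)$ and any $y_0 \in \oms \setminus (D_0 \cup \mathcal B)$. Since $D_0$ and $\mathcal B$ are closed, $\delta_0 := \hbox{dist}(y_0, D_0 \cup \mathcal B) > 0$; throughout the ball $B_{\delta_0/2}(y_0)$ every integrand in \eqref{eq:LBD40.1} involves $G_0^\sharp(x, y)$ and its $x$-derivatives only for $x \in \mathcal B \cup \partial D_0$, so $|x - y| \geq \delta_0/2$. Interior regularity for the Lam\'e system then makes the integrands smooth in $y$ on $B_{\delta_0/2}(y_0)$ and legitimizes differentiation under the integral sign to any order.

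Next, noting that $(G_0^\sharp(x, y) e_r)_i = G_0^\sharp(x, y)_{ir}$, the $y$-dependence of $f_r(y) = \Theta(y, \overline z; e_r, m)$ enters solely through the scalar entries $G_0^\sharp(x, y)_{ir}$ (and their $x$-derivatives) for $i = 1, 2, 3$. Applying the operator $\dive_y(\CC^{D_0}(y) \nabla_y \cdot)$ (which coincides with $\dive_y(\CC^e \nabla_y \cdot)$ on $\oms \setminus D_0$) and commuting partial derivatives with the integrals, the whole claim reduces to the pointwise scalar identity
\[
	C^e_{rjkl}\,\partial_{y_j} \partial_{y_l} G_0^\sharp(x, y)_{ik} = 0 \qquad \textrm{for every } r, i,
\]
valid whenever $y \in \oms \setminus (D_0 \cup \{x\})$.

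To verify it, I would invoke the symmetry \eqref{eq:PG4B.3} to rewrite $G_0^\sharp(x, y)_{ik} = G_0^\sharp(y, x)_{ki}$: this is the $k$-th component of the vector-valued function $y \mapsto G_0^\sharp(y, x) e_i$, which by Proposition \ref{prop:PGDzero} together with the same symmetry satisfies the homogeneous Lam\'e system $\dive_y(\CC^e \nabla_y (G_0^\sharp(y, x) e_i)) = 0$ in $\oms \setminus (D_0 \cup \{x\})$; componentwise this reads exactly $C^e_{rjkl}\,\partial_{y_j} \partial_{y_l} G_0^\sharp(y, x)_{ki} = 0$, which is the required identity. Hence \eqref{eq:LBD41.5} holds pointwise on $\oms \setminus (D_0 \cup \mathcal B)$, and a fortiori in the weak sense. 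The statement \eqref{eq:LBD41.6} for $\widetilde f$ is then obtained by repeating the argument with the roles of $(y, l)$ and $(z, m)$ exchanged, or equivalently by invoking the symmetry $\Theta(y, z; l, m) = \Theta(z, y; m, l)$, which holds because the integrand in \eqref{eq:LBD2.1} is symmetric in $(u_0, v_0)$ thanks to the major symmetry \eqref{Notaz-simmetrie-C} of $\CC^{D_0}$. The only delicate point is the index bookkeeping matching the contraction $C^e_{rjkl}\,\partial^2_{yy} G_0^\sharp(x, y)_{ik}$ with the Lam\'e equation in the $y$-variable satisfied by $G_0^\sharp(y, x) e_i$; everything else amounts to differentiation under the integral sign.
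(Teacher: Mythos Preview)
Your proposal is correct and follows essentially the same approach as the paper: both arguments pass the Lam\'e operator in $y$ under the integral sign, use the symmetry \eqref{eq:PG4B.3} to rewrite $G_0^\sharp(x,y)_{ik}=G_0^\sharp(y,x)_{ki}$, and then recognize the resulting contraction as the $r$-th component of $\dive_y(\CC^{D_0}\nabla_y(G_0^\sharp(y,x)e_i))$, which vanishes for $y\neq x$ by Proposition~\ref{prop:PGDzero}. Your write-up is in fact slightly more careful than the paper's, since you explicitly justify differentiation under the integral via the positive distance between $y$ and the integration domains, and you note the shortcut $\Theta(y,z;l,m)=\Theta(z,y;m,l)$ for deducing \eqref{eq:LBD41.6} from \eqref{eq:LBD41.5}.
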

\begin{proof}
It is sufficient to prove \eqref{eq:LBD41.5} and, by the linearity of the expression in $\nabla u_0$, it is not restrictive to focus on the first term of the first integral:
\begin{equation*}
	\begin{aligned}{}
		&
		I_r(y, \overline{z}; m)=
		\int_{\mathcal B} 
		(
		\dive_x \,
		\widetilde{\mathcal U}(x)
		)
		\CC^{D_{0}}(x) \nabla_x u_{0}^{(r)}(x,y)
		\cdot
		\nabla_x v_{0}(x, \overline{z};m)
		,
	\end{aligned}
\end{equation*}
where
\begin{equation}
	\label{eq:LBD42.2}
	\begin{aligned}{}
		&
		u_{0}^{(r)}(x,y)
		=
		u_0(x,y;l=e_r)
		=
		G_0^\sharp (x,y)e_r
		,
		\quad 
		r=1,2,3.
	\end{aligned}
\end{equation}
In Cartesian components, we want to prove that
\begin{equation}
	\label{eq:LBD42.3}
	\begin{aligned}{}
		&
		\sum_{j=1}^3
		\dfrac{\partial }{\partial y_j} 
		\left (
		\sum_{r,k=1}^3
		C^{D_0}_{ijrk}(y)
		\dfrac{\partial I_r(y, \overline{z}; m)}{\partial y_k}	
		\right )
		=
		0, \quad i=1,2,3,
	\end{aligned}
\end{equation}
with
\begin{equation}
	\label{eq:LBD42.4}
	\begin{aligned}{}
		&
		I_r(y, \overline{z}; m) = \int_{\mathcal B}
		\left (
		\dive_x \widetilde{\mathcal U}(x)
		\right )
		\sum_{m,n,p,q=1}^3
		C^{D_0}_{mnpq}(x)
		\dfrac{\partial u_{0p}^{(r)}(x,y)}{\partial x_q}
		\dfrac{\partial v_{0m}(x, \overline{z};m)}{\partial x_n}.		
	\end{aligned}
\end{equation}
For a given $i$, $i=1,2,3$, by inserting the expression \eqref{eq:LBD42.4} of $f_r(y)$ in \eqref{eq:LBD42.3}, we obtain
\begin{equation}
	\label{eq:LBD43.1}
	\begin{aligned}{}
		&
		\sum_{j=1}^3
		\dfrac{\partial }{\partial y_j} 
		\left (
		\sum_{r,k=1}^3
		C^{D_0}_{ijrk}(y)
		\dfrac{\partial I_r(y, \overline{z}; m)}{\partial y_k}		
		\right )
		=	
		\\
		&
		=
		\int_{\mathcal B}
		\left (
		\dive_x \widetilde{\mathcal U}(x)
		\right )
		\sum_{m,n,p,q=1}^3
		C^{D_0}_{mnpq}(x)
		\dfrac{\partial }{\partial x_q}
		\left [
		\sum_{j=1}^3
		\dfrac{\partial }{\partial y_j}
		\left (
		\sum_{r,k=1}^3
		C^{D_0}_{ijrk}(y)
		\dfrac{\partial u_{0p}^{(r)}(x,y)}{\partial y_k}
		\right )
		\right ]
		\dfrac{\partial v_{0m}(x, \overline{z};m)}{\partial x_n}.
	\end{aligned}
\end{equation}
By the symmetry property \eqref{eq:PG4B.3} of the Green's function $G_0^\sharp$ and by the definition \eqref{eq:LBD42.2} of $u_0^{(r)}$, we have
\begin{equation*}
	\begin{aligned}{}
		&
	u_{0p}^{(r)}(x,y)= 	G_0^\sharp (x,y) e_r \cdot e_p = e_r \cdot G_0^\sharp (y,x)e_p = u_{0r}^{(p)}(y,x),
	\end{aligned}
\end{equation*}
the square bracket of \eqref{eq:LBD43.1} vanishes and equation \eqref{eq:LBD42.3} is satisfied.
\end{proof}
The smallness propagation argument is based on the following geometrical construction. Given a face $F_j$ of $D_0$ and a triangle $T \in \mathcal{T}_0$, $ T\subset F_j$ with base $\sigma_{ij}$, let $P$ the incenter of $T$. 
By elementary calculations, we have that 
\begin{equation}
	\label{distPbordoT}
\hbox{dist}(P, \partial T)\geq c_0r_0, 
\end{equation}
with $c_0=\frac{\tan \gamma}{2M_1+2\sqrt{M_1^2+\tan^2\gamma}}$.
Moreover, $\hbox{dist}(P,F_i)\geq( c_0\sin\theta_0)r_0$, for any face $F_i$ adjacent to $F_j$ and, by the estimate \eqref{dist-facce-non-adia} obtained in the proof of Proposition \ref{vectorfield} in Section \ref{Appendix},  $\hbox{dist}(P,F_i)\geq \cos(\arctan(M_0))r_0$, for any face $F_i$ not adjacent to $F_j$. Therefore, denoting $c_1=\min\{c_0\sin\theta_0,\cos(\arctan(M_0))  \}$, we have that $\hbox{dist}(P,\mathcal D)\geq c_1r_0$.

 Furthermore, there exists a positive constant $c_2$, $0<c_2<\frac{1}{2}$, $c_2$ only depending on $M_0$ and $\theta_0$, such that the set $F_r=\{x\in \oms \ |\ \hbox{dist}(x,D_0)\geq r)\}$ is connected for every $r\leq c_2r_0$, see \cite[Proposition 5.5]{ARRV} for details.

 Let $c_*=\frac{1}{2}\min \{
 c_1, c_2, \zeta\}$, only depending on $\theta_0$, $M_0$ and $M_1$, where $r_\sharp=\zeta r_0$ has been introduced in \eqref{eq:def-r-sharp}.

 Let $r^*=c_*r_0$ and let us choose $\sigma=\frac{r^*}{4}$ as width of the tubular neighborhood $\mathcal B$ introduced in \eqref{eq:LBD8.1}.
 	
 Let $n_P$ be the outer unit normal to $\partial D_0$ at $P$ and let us consider the point $Q=P+r^*n_P$. We have that
 $\hbox{dist}(Q, F_j)=r^*$,  $\hbox{dist}(Q, \partial D_0\setminus F_j)\geq \hbox{dist}(P, \partial D_0\setminus F_j)-r^*\geq 2r^*-r^*=r^*$, so that  $\hbox{dist}(Q, D_0)= r^* $.
 
 Since both the points $P_0$ and $Q$ belongs to $F_{r^*}$, there exists a curve
$\widetilde{\mathfrak{c}}$, $\widetilde{\mathfrak{c}}\subset F_{r^*}$ joining 
$P_0$ and $Q$. Let $S=\left\{P+t n_P,\ t\in\left[0,r^*\right]\right\}$ be the segment joining $P$ and $Q$. Let 
\begin{equation*}
	\begin{aligned}
		\mathcal{K}&=\left\{x\in \RR^3:\ dist(x,\widetilde{\mathfrak{c}})< \frac{r^*}{2}\right\}\cup C^{r^*/2}_{PQ},
	\end{aligned}
\end{equation*}
where $C^{r^*/2}_{PQ}$ denotes the closed cylinder with axis the segment $S$ and radius $\frac{r^*}{2}$. 
We have that $\mathcal{K}\subset \overline{\oms\setminus D_0}$. Moreover, $\hbox{dist}(P,\mathcal{B}) \geq c_1 r_0 -\frac{r^*}{4} \geq 2r^* -\frac{r^*}{4} = \frac{7r^*}{4}$, so that $\hbox{dist}(S,\mathcal{B}) \geq \hbox{dist}(P,\mathcal{B}) - r^* \geq \frac{7r^*}{4}-r^*= \frac{3r^*}{4}$. Since $\hbox{dist} (\widetilde{\mathfrak{c}}, \mathcal{B}) \geq \hbox{dist} (\widetilde{\mathfrak{c}}, D_0) - \frac{r^*}{4}=  \frac{3r^*}{4}$, we obtain $\hbox{dist}(  \mathcal{K}, \mathcal{B} ) \geq \frac{r^*}{4}$.

In order to control the solutions $f$, $\widetilde f$ to \eqref{eq:LBD41.5}, \eqref{eq:LBD41.6} respectively, we  estimate the function $\Theta(y,z;l,m)$ given in \eqref{eq:LBD40.1} for $y,z \in \mathcal{K}$ and for every $l,m \in \RR^3$, $|l|=|m|=1$. 

By \eqref{ass4:U} and \eqref{eq:LBD3.2}, we have
\begin{equation}
	\label{eq:LBD48.2}
	\begin{aligned}
		\| D \widetilde{\mathcal U}\|_{\infty} \leq \dfrac{C}{r_0},
	\end{aligned}
\end{equation}
where $C>0$ only depends on $M_0$, $M_1$, $\theta_0$.

By \eqref{eq:LBD48.2}, by H\"{o}lder's inequality and by \eqref{eq:PG4B.5} with $r=\frac{r^*}{4}$, we obtain
\begin{equation}
	\label{eq:LBD49.1}
	\begin{aligned}
		|I_1^\Theta| \leq \dfrac{C}{r_0^2},
	\end{aligned}
\end{equation}
where the constant $C>0$ depends on the a priori data.

In order to estimate $I_2^\Theta$, let us introduce the following neighborhoods of $\partial \mathcal B$:
\begin{equation*}
	\begin{aligned}
		\mathcal M = \left \{ x \in \RR^3  \ | \ \hbox{dist} (x, \partial \mathcal B   )  \leq \dfrac{r^*}{8}    \right \}\subset\Omega,
		\quad
		\mathcal M' = \left \{ x \in \RR^3  \ | \ \hbox{dist} (x, \partial \mathcal B   )  \leq \dfrac{r^*}{16}    \right \}\subset\Omega.
	\end{aligned}
\end{equation*}
By using Lemma \ref{lem:Li-Nirenberg} and inequality \eqref{eq:PG4B.5}, we have
\begin{eqnarray}
	\label{eq:LBD50.1}
		|I_2^\Theta| &\leq& C r_0^2 \| \nabla G_0^\sharp (\cdot,y) \|_{L^\infty (\mathcal M' )    }
		\| \nabla G_0^\sharp (\cdot,z) \|_{L^\infty (\mathcal M' )    }
		\leq \nonumber
		\\
		&
		\leq&
		C 
		 \| G_0^\sharp (\cdot,y) \|_{L^2 (\mathcal M )    }
		\| G_0^\sharp (\cdot,z) \|_{L^2 (\mathcal M)    }
		\leq 
		\\
		&
		\leq& C \| G_0^\sharp (\cdot,y) \|_{H^1 ( \oms \setminus B_{ \frac{r^*}{8}  }(y)  )    }
		\| G_0^\sharp (\cdot,z) \|_{H^1 ( \oms \setminus B_{ \frac{r^*}{8}  }(z)  )    }
		\leq
		\dfrac{C}{r_0^2}, \nonumber
\end{eqnarray}
where the constant $C>0$ depends on the a priori data.

It is useful to rewrite $I_3^\Theta$ as follows
\begin{equation}
	\label{eq:LBD51.1}
	\begin{aligned}{}
		&
		I_3^\Theta = 
		\int_{\partial D_0 \setminus   \left (  \mathcal B \cup
		B_{r^*}(P) \right )}
	\!\!\!\!\!\!\!\!\!\!\!\!\!\!\!\!\!\!( \widetilde {\mathcal U} \cdot n  )
		\mathbb M \nabla u_0^e \cdot  \nabla v_0^e
		+
		\int_{\partial D_0   \cap
			B_{r^*}(P) }
		\!\!\!\!\!\!\!\!\!\!\!\!\!\!\!\!\!\!( \widetilde {\mathcal U} \cdot n  )
		\mathbb M \nabla u_0^e \cdot  \nabla v_0^e
		=
		I_{FAR} + I_{CL}.
	\end{aligned}
\end{equation}
Let us first estimate $I_{FAR}$. For every $x_0 \in \partial D_0 \setminus   \left (  \mathcal B \cup
B_{r^*}(P) \right )$ and for every $y,z \in \mathcal K$, we have that $|x_0-y| \geq \dfrac{r^*}{2}$ and $|x_0-z| \geq \dfrac{r^*}{2}$. We use again Lemma \ref{lem:Li-Nirenberg} with $r=\dfrac{r^*}{4}$ and $\delta= \dfrac{1}{2}$, and estimate \eqref{eq:PG4B.5}, obtaining
\begin{eqnarray*}
		|\nabla_x u_0^e (x_0,y;l)| &\leq&\| \nabla_x u_0^e (\cdot,y;l)\|_{ L^\infty ( B_{r^*/8}(x_0)  ) }
		\leq  \dfrac{C}{r_0} \| u_0^e (\cdot,y;l)\|_{ L^2 ( B_{r^*/4}(x_0)  ) }
		\leq
		\\
		&
		\leq& 
		\dfrac{C}{r_0}
		\| G_0^\sharp (\cdot,y)\|_{ H^1 (\oms \setminus B_{r^*/4}(y)  ) }
		\leq
		\dfrac{C}{r_0^2},
\end{eqnarray*}
where $C>0$ depends on the a priori data. Therefore, proceeding similarly with $v_0$, we obtain
\begin{equation}
	\label{eq:LBD52.2}
	\begin{aligned}{}
		&
		|I_{FAR}| 
		\leq
		\dfrac{C}{r_0^2},
	\end{aligned}
\end{equation}
where $C>0$ depends on the a priori data. 

Let us consider now $I_{CL}$. By H\"{o}lder inequality, we have
\begin{equation}
	\label{eq:LBD53.1}
	\begin{aligned}{}
		&
		| I_{CL}| \leq 
		Cr_0^2 
		\| \nabla u_0^e \|_{L^2 ( \partial D_0 \cap  B_{r^*}(P)    )   }
		\| \nabla v_0^e \|_{L^2 ( \partial D_0 \cap  B_{r^*}(P)    )   },
	\end{aligned}
\end{equation}
with $C>0$ depending on the a priori data.

Let us handle the term with $u_0^e$, since the other term can be estimated similarly. We define 
\begin{equation*}
	\begin{aligned}{}
		&
		\overline{d}_y= \hbox{dist} (y, \partial D_0).
	\end{aligned}
\end{equation*}
If  $\overline{d}_y \geq \dfrac{r^*}{2}$ we can simply apply the Li-Nirenberg estimate  and \eqref{eq:PG4B.5}, obtaining \[ \| \nabla u_0^e \|_{L^2 ( \partial D_0 \cap  B_{r^*}(P)    )   } \leq \dfrac{C}{r_0^2},\] with $C>0$ depending on the a priori data.

Let us now restrict to the case $\overline{d}_y \leq \dfrac{r^*}{2}$ for which the proof is more involved and requires an approximation of $u_0^e$ in terms of the singular solution $\Gamma^0$ defined by \eqref{eq:PG4B.1}.
Let us write
\begin{equation}
	\label{eq:LBD53.5}
	\begin{aligned}{}
		&
		\| \nabla u_0^e \|_{L^2 ( \partial D_0 \cap  B_{r^*}(P)    )   }
		\leq
		\| \nabla  w      \|_{L^2 ( \partial D_0 \cap  B_{r^*}(P)    )   }
		+
		\| \nabla  \Gamma^0(\cdot,y)  \|_{L^2 ( \partial D_0 \cap  B_{r^*}(P)    )   },
	\end{aligned}
\end{equation}
where $w(\cdot,y) =G_0^\sharp (\cdot, y) - \Gamma^0(\cdot,y)$.

Let us introduce the tubular neighborhood $U_a=\{ x \in  \RR^3 \ | \  \hbox{dist}(x,  \partial D_0 \cap  B_{r^*}(P)   ) < a \}$, with $a > 0$. By applying Lemma \ref{lem:Li-Nirenberg} and Proposition \ref{prop:PGDzero} we obtain
\begin{equation*}
	\begin{aligned}{}
		\| \nabla w(\cdot,y) \|_{L^2 ( \partial D_0 \cap  B_{r^*}(P)    )   }
		&\leq
		C \| \nabla w(\cdot,y)       \|_{L^\infty ( U_{r^*/8}   )   }
		\leq
		\dfrac{C}{r_0} \| w(\cdot,y)       \|_{L^2 ( U_{r^*/4}   )   }
        \leq \\
		&\leq\dfrac{C}{r_0} \| w(\cdot,y)       \|_{H^1 (\oms  )   }
		\leq
		\dfrac{C}{r_0^2},		
	\end{aligned}
\end{equation*}
where the constant $C>0$ depends on the a priori data.

Let $y^\perp$ be the orthogonal projection of $y$ on the plane containing the face $F_j$ of $\partial D_0$, and let us choose a Cartesian coordinate system with origin at $y^\perp$ and with axis ${X}_3$ orthogonal to $F_j$. For brevity, let us denote $B'_{ \overline{d}_y   }( y^\perp )= B_{ \overline{d}_y   } ( y^\perp ) \cap \partial D_0$.

Let us split the second term on the right hand side of \eqref{eq:LBD53.5} as follows
\begin{equation*}
	\begin{aligned}{}
		&
		\int_{  \partial D_0 \cap  B_{r^*}(P) }
		| \nabla_x \Gamma^0(x,y)|^2
		\leq
		 \int_{  B'_{ \overline{d}_y  }  ( y^\perp )}
		 | \nabla_x \Gamma^0(x,y)|^2
		+
		 \int_{  \RR^2 \setminus B'_{ \overline{d}_y  }( y^\perp )  }
		| \nabla_x \Gamma^0(x,y)|^2
		=
		I_a + I_b.
	\end{aligned}
\end{equation*}
By using \eqref{eq:stima-gradiente-Kelvin}  and \eqref{eq:stima-gradiente-Rongved} and noticing that, for every $x \in \RR^2$, $|x-y| \geq \overline{d}_y$, we have
\begin{equation*}
	\begin{aligned}{}
		&
		I_a \leq \dfrac{C}{\overline{d}_y^2},
	\end{aligned}
\end{equation*}
where the constant $C>0$ only depends on $\lambda^i$, $\mu^i$, $\lambda^e$, $\mu^e$.

For every $x \in \{x_3=0\}$, $x=(x',0)$, we have $|x-y|^2=|(x',0)-(0,y_3)|^2=x_1^2+x_2^2+\overline{d}_y^2$. Therefore, by \eqref{eq:stima-gradiente-Rongved} we obtain
\begin{equation*}
	\begin{aligned}{}
		&
		I_b \leq C \int_{ \overline{d}_y  }^{+\infty} \left ( \dfrac{1}{\rho^2 +  \overline{d}_y^2}  \right )^2 \rho d\rho 
		\leq  \dfrac{C}{\overline{d}_y^2},
	\end{aligned}
\end{equation*}
where the constant $C>0$ only depends on $\lambda^i$, $\mu^i$, $\lambda^e$, $\mu^e$.

By recalling \eqref{eq:LBD53.1}, $\overline{d}_y \leq \dfrac{r^*}{2}$, and estimating similarly $\| \nabla v_0^e \|_{L^2 ( \partial D_0 \cap  B_{r^*}(P)    )   }$, we have
\begin{equation}
	\label{eq:LBD56.4}
	\begin{aligned}{}
		&
		|I_{CL}|  \leq r_0^2 \left ( \dfrac{C}{r_0^2} + \dfrac{C}{r_0  \overline{d}_y }    \right )
		\left ( \dfrac{C}{r_0^2} + \dfrac{C}{r_0  \overline{d}_z }    \right )
		\leq
		\dfrac{C}{\overline{d}_y \overline{d}_z},
	\end{aligned}
\end{equation}
where the constant $C>0$ depends on the a priori data.

By \eqref{eq:LBD49.1}, \eqref{eq:LBD50.1}, \eqref{eq:LBD51.1}, \eqref{eq:LBD52.2}, \eqref{eq:LBD56.4}, for every $y,z \in \mathcal{K}$ and for every $l,m \in \RR^3$, $|l|=|m|=1$, we have
\begin{equation}
	\label{eq:LBD57.1}
	\begin{aligned}{}
		&
		|\Theta(y,z;l,m)| \leq
		\dfrac{C}{\overline{d}_y \overline{d}_z},
	\end{aligned}
\end{equation}
where the constant $C>0$ depends on the a priori data.

By the definitions \eqref{eq:LBD41.3} and \eqref{eq:LBD41.4}, and by the linearity of $\Theta$ with respect to $l$ and $m$, we have that, for every 
$l, m \in \RR^3, |l|=|m|=1$, 
\begin{equation}
	\label{eq:LBD63.1}
	\begin{aligned}{}
		&
		| \Theta (y,\overline{z};l,m)|
		\leq 
		|f(y, \overline{z};m)|\leq \sqrt{3} 
		\max_
		{\overset{\scriptstyle l \in \RR^3}{\scriptstyle
				|l|=1}} | \Theta (y,\overline{z};l,m)|,
			\ \ \forall y, \overline{z} \in \oms \setminus ( D_0 \cup \mathcal B  ), 
			\\
		&
		| \Theta (\overline{y},z;l,m)|
		\leq 
		|\widetilde{f}(\overline{y},z;l)|\leq \sqrt{3} 
		\max_
		{\overset{\scriptstyle m \in \RR^3}{\scriptstyle
				|m|=1}} | {\Theta} (\overline{y},z;l,m)|, 
			\ \ \forall\overline{y},z \in \oms \setminus ( D_0 \cup \mathcal B  ).
	\end{aligned}
\end{equation}
We are interested in estimating the quantity $|\Theta(y_r,y_r;l,m)|$ for  
\begin{equation}
	\label{eq:def_y_r}
	\begin{aligned}{}
		&
		y_r = P + r  n_P, \quad 0<r< \dfrac{r^*}{2},
	\end{aligned}
\end{equation}
and for every $l,m \in \RR^3$, $|l|=|m|=1$.

Let $\gamma(t)$ be a parametrization of the curve $\widetilde{\mathfrak{c}}'$ obtained gluing $\widetilde{\mathfrak{c}}$ with the segment $S$, with starting point $P_0$ and ending point $P$.

Proceeding as in \cite[Theorem 5.1]{ARRV}, we iterate the three spheres inequality \eqref{eq:LBD60.2} over a chain of pairwise disjoint balls centered at points $\{ x_i \}_{i=1}^L$ belonging to $\widetilde{\mathfrak{c}}'$ and defined as follows: $x_1 = \overline z \in B_{ r/8}(P_0)$, $x_{i+1}=\gamma(t_i)$, with $t_i = \max \{ t \ | \ |\gamma(t)-x_i|=\frac{r}{4}   \}$ if $|x_i-y_r| > \frac{r}{4}$; otherwise, let us set $i=L$ and stop the process. Since the balls $B_{r/8}(x_i)$, $i=1,\dots, L$, are pairwise disjoint by construction, we have
\begin{equation}
	\label{eq:numero-palle}
	\begin{aligned}{}
		&
		L \leq C_L \left ( \dfrac{r_0}{r}  \right )^3,
	\end{aligned}
\end{equation}
where $C_L>0$ only depends on $M_1$. We choose in \eqref{eq:LBD60.2} the radii $s_1 = \frac{r}{8}$, $s_2 = \frac{3r}{8}$ and $s_3 = \frac{r}{2}$. Let us notice that $\cup_{i=1}^L B_{\frac{r}{2}}(x_i) \subset \left \{  x \in \mathcal K \ | \ \hbox{dist}(x, \partial D_0) \geq \frac{r}{2}   \right \}$, so that, by \eqref{eq:LBD57.1},
\begin{equation}
	\label{eq:END3-1.1}
	\begin{aligned}{}
		&
		\|  \Theta ( \cdot, \overline z; l,m) \|_{ L^\infty \left (  \cup_{i=1}^L B_{r/2}(x_i)   \right )   }
		\leq \dfrac{C}{r r_0},
	\end{aligned}
\end{equation}
where $C>0$ depends on the a priori data. By applying \eqref{eq:LBD60.2} and using \eqref{eq:END3-1.1}, at the $i$th step we obtain
\begin{equation*}
	\begin{aligned}{}
		&
		rr_0 \| f(\cdot, \overline z;m)\|_{ L^\infty (B_{s_1} (x_{i+1}))} \leq C 
		\left (
		rr_0 \| f(\cdot, \overline z;m)\|_{ L^\infty (B_{s_1}(x_{i}))}		
		\right )^\tau,
	\end{aligned}
\end{equation*}
where $C>0$ and $\tau$, $0<\tau <1$, depend on the a priori data. Using estimate \eqref{eq:LBD7.4} and iterating, we have 
\begin{equation*}
	\begin{aligned}{}
		&
		rr_0 \| f(\cdot, \overline z;m)\|_{ L^\infty (B_{s_1} (y_r)  )} \leq 
		C \left (  \dfrac{r}{r_0}m_1  \right )^{\tau^L}, 
		\quad \hbox{for every } \overline z \in B_{s_1}(P_0),
	\end{aligned}
\end{equation*}
where $C>0$  depends on the a priori data.

By using \eqref{eq:LBD63.1} in the above inequality, we obtain
\begin{equation}
	\label{eq:END3-2.2}
	\begin{aligned}{}
		&
		\| \widetilde f (y_r, \cdot; l) \|_{ L^\infty (B_{s_1} (P_0)  )}
		\leq
		\dfrac{C}{r r_0}
		 \left (  \dfrac{r}{r_0}m_1  \right )^{\tau^L}.
	\end{aligned}
\end{equation}
Next, by \eqref{eq:LBD57.1} we have
\begin{equation}
	\label{eq:END3-2.3}
	\begin{aligned}{}
		&
		\|  \widetilde f  ( y_r, \cdot ; l) \|_{ L^\infty \left (  \cup_{i=1}^L B_{\frac{r}{2}}(x_i)   \right )   }
		\leq \dfrac{C}{r^2},
	\end{aligned}
\end{equation}
where $C>0$ depends on the a priori data.

By repeating the above process for the function $\widetilde f$, operating on the $z$-variable instead of the $y$-variable and using \eqref{eq:END3-2.2}, \eqref{eq:END3-2.3}, we obtain 
\begin{equation*}
	\begin{aligned}{}
		&
		r^2 \| \widetilde f(y_r,\cdot;l)\|_{ L^\infty (B_{s_1} (y_r)  )} \leq 
		C \left (
		\dfrac{r}{r_0}
		\left (
		  \dfrac{r}{r_0}m_1  \right )^{\tau^L}
		  \right )^{\tau^L},
	\end{aligned}
\end{equation*}
and, therefore by \eqref{eq:LBD63.1}, for every $l, \ m \in \RR^3$, $|l|=|m|=1$, 
\begin{equation}
	\label{eq:END3-2.5}
	\begin{aligned}{}
		&
		|\Theta ( y_r, y_r;l, m)| \leq \dfrac{C}{r^2} m_1 ^{\tau^{2L}}\left (  \dfrac{r}{r_0} \right )^{2\tau^{2L}},
		\quad
		\hbox{for every } r < \dfrac{r^*}{2},
	\end{aligned}
\end{equation}
where $C>0$ depends on the a priori data.

\medskip

\textit{Step 4.}

In this step we estimate {}from below the quantity $|\Theta ( y_r, y_r;l, m)|$ for $l=m=n$, where $n$ is the unit outer normal to $\partial D_0$ at $P$, and $r < \dfrac{\sin  ( \theta_0/2  ) }{2(1+ \sin  ( \theta_0/2  ))} r^*$. This choice ensures that $r < \overline{r}(y_r)$, where $\overline{r}(y_r)$ has been defined in \eqref{eq:PG4A.2} and, therefore, the singular solution $\Gamma^0$ coincides with the Rongved fundamental solution $\Gamma^R$ defined in \eqref{eq:pbm-Rongved} and  having interface coincident with the plane through $F_j$.

{}From \eqref{eq:LBD40.1} we have
\begin{eqnarray*}
		\Theta(y_r,y_r;n,n)\!\! &=& \!\!
		\int_{\mathcal B} 
		(
		\dive \,
		\widetilde{\mathcal U}
		)
		\CC^{D_{0}} \nabla u_{0}
		\cdot
		\nabla v_{0}
		-
		\CC^{D_{0}} \nabla u_{0}
		\cdot 
		(
		\nabla v_0 D\widetilde{\mathcal U}
		)
		-
		\CC^{D_{0}} \nabla v_{0}
		\cdot 
		(
		\nabla u_0 D\widetilde{\mathcal U}
		)
		-
		\\
		&		
		-&
		\left (
		\int_{\partial \mathcal B \cap ( \Omega \setminus D_0  )}
		b^e \cdot n + 
		\int_{\partial \mathcal B \cap D_0 }
		b^i \cdot n
		\right )
		+
		\int_{\partial D_0 \setminus \left (  \mathcal B \cup B_{r^*}(P) \right )      }
		\!\!\!\!\!\!\!\!\!\!( \widetilde {\mathcal U} \cdot n  )
		\mathbb M  \widehat{\nabla} u_0^e \cdot   \widehat{\nabla} v_0^e+
		\\&+&
		\int_{\partial D_0 \cap B_{r^*}(P)   }
		\!\!\!\!\!\!\!\!\!\!( \widetilde {\mathcal U} \cdot n  )
		\mathbb M \widehat{\nabla} u_0^e \cdot   \widehat{\nabla} v_0^e
		= I_1^\Theta -I_2^\Theta + I_{FAR} + I_{CL},
\end{eqnarray*}
where, by definition \eqref{eq:LBD5.2}, 
\begin{equation*}
	\begin{aligned}{}
		&
		u_0=u_0(x,y_r;n)=  G_0^\sharp (x,y_r) n,
		\quad
		v_0=v_0(x,y_r;n)=  G_0^\sharp (x,y_r) n,
	\end{aligned}
\end{equation*}
and $b^e$, $b^i$ are defined by \eqref{eq:LBD10.2}, \eqref{eq:LBD12.3}.

We will obtain the desired bound by estimating {}from below $I_{CL}$ and {}from above the other terms:
\begin{equation}
	\label{eq:LBD68.3}
	\begin{aligned}{}
		&
		|\Theta(y_r,y_r;n,n)| \geq |I_{CL}| - |I_1^\Theta| -|I_2^\Theta| - |I_{FAR}|.
	\end{aligned}
\end{equation}
By \eqref{eq:LBD49.1},  \eqref{eq:LBD50.1}, \eqref{eq:LBD52.2} we have
\begin{equation}
	\label{eq:LBD69.1}
	\begin{aligned}{}
		&
		|I_1^\Theta| + |I_2^\Theta| + |I_{FAR}| \leq \dfrac{C}{r_0^2},
	\end{aligned}
\end{equation}
where $C>0$ depends on the a priori data.

In estimating $I_{CL}$ we approximate $G_0^\sharp (\cdot, y_r)$ with the Rongved fundamental solution $\Gamma^R (\cdot, y_r)$ and we write $(\widetilde {\mathcal U} \cdot n )(x)= (\widetilde {\mathcal U} \cdot n )(P) + (  (\widetilde {\mathcal U} \cdot n )(x) - (\widetilde {\mathcal U} \cdot n )(P)   )$, obtaining 
\begin{equation*}
	\begin{aligned}{}
		&
		I_{CL} = 
		\int_{\partial D_0 \cap B_{r^*}(P)   }
		( \widetilde {\mathcal U} \cdot n  )(P)
		\mathbb M  \widehat{\nabla} (\Gamma^R(x,y_r) n) \cdot \widehat{\nabla} (\Gamma^R(x,y_r) n)
		+
		\\
		&
		+
		\int_{\partial D_0 \cap B_{r^*}(P)   }
		 (  (\widetilde {\mathcal U} \cdot n )(x) - (\widetilde {\mathcal U} \cdot n )(P)   )
		\mathbb M  \widehat{\nabla} (\Gamma^R(x,y_r) n) \cdot \widehat{\nabla} (\Gamma^R(x,y_r) n)+
		\\
		&
		+
		\int_{\partial D_0 \cap B_{r^*}(P)   }
		(\widetilde {\mathcal U} \cdot n )(x) 
		\mathbb M  \widehat{\nabla} (( G_0^\sharp (x,y_r)- \Gamma^R(x,y_r)) n) \cdot \widehat{\nabla}(( G_0^\sharp (x,y_r)- \Gamma^R(x,y_r)) n) +
		\\
		&
		+
		2
		\int_{\partial D_0 \cap B_{r^*}(P)   }
		(\widetilde {\mathcal U} \cdot n )(P) 
		\mathbb M  \widehat{\nabla} (\Gamma^R(x,y_r)n )\cdot \widehat{\nabla}(( G_0^\sharp (x,y_r)- \Gamma^R(x,y_r)) n) +
		\\
		&
		+
		2
		\int_{\partial D_0 \cap B_{r^*}(P)   }
		 (  (\widetilde {\mathcal U} \cdot n )(x) - (\widetilde {\mathcal U} \cdot n )(P)   ) 
		\mathbb M  \widehat{\nabla} (\Gamma^R(x,y_r)n) \cdot \widehat{\nabla}(( G_0^\sharp (x,y_r)- \Gamma^R(x,y_r)) n)
		=
		\\
		&
		=
		J_1 + J_2 + J_3 + J_4 + J_5.
		\end{aligned}
\end{equation*}
We estimate $I_{CL}$ {}from below as follows:
\begin{equation}
	\label{eq:LBD71.2}
	\begin{aligned}{}
		&
		|I_{CL}| \geq |J_1| - |J_2| - |J_3| - |J_4| - |J_5|.
	\end{aligned}
\end{equation}
By \eqref{ass4:U} and \eqref{eq:LBD3.2} we have
\begin{equation}
	\label{eq:LBD71.3}
	\begin{aligned}{}
		&
		\left| (\widetilde {\mathcal U} \cdot n )(x) - (\widetilde {\mathcal U} \cdot n )(P) \right| \leq \dfrac{C}{r_0}|x-P|,
	\end{aligned}
\end{equation}
where $C>0$ only depends on $M_0$, $M_1$ and $\theta_0$.

Let us choose a Cartesian coordinate system with origin at $P$ and axis ${X}_3$ oriented as $n$.  By \eqref{eq:LBD71.3} and \eqref{eq:stima-gradiente-Rongved}
\begin{equation}
	\label{eq:LBD72.1}
	\begin{aligned}{}
		&
		|J_2| \leq \dfrac{C}{r_0} \int_{  \sqrt{  x_1^2+x_2^2} < r^*}
		\dfrac{ \sqrt{  x_1^2+x_2^2}}{( x_1^2+x_2^2+r^2)^2}
		\leq
		\dfrac{C}{r r_0},
	\end{aligned}
\end{equation}
where $C>0$  depends on the a priori data.

By \eqref{ass4:U}, \eqref{eq:LBD3.2} and  \eqref{eq:PG4B.4}, we have
\begin{equation}
	\label{eq:LBD74.2}
	\begin{aligned}{}
		&
		|J_3| \leq \dfrac{C}{r_0^2},
	\end{aligned}
\end{equation}
where $C>0$ only depends on the a priori data.

Using the standard inequality $ab \leq \frac{a^2\epsilon^2}{2}+ \frac{b^2}{2\epsilon^2}$ for every $\epsilon >0$, we have
\begin{equation}
	\label{eq:LBD75.1}
	\begin{aligned}{}
		&
		|J_4| \leq \\
		&\leq C \left |(\widetilde {\mathcal U} \cdot n )(P) \right |
		\left (
		\epsilon^2 \int_{\partial D_0 \cap B_{r^*}(P)   }
		|\widehat{\nabla} (\Gamma^R(x,y_r)n)|^2 +
		\frac{1}{\epsilon^2} \int_{\partial D_0 \cap B_{r^*}(P)   } | \widehat{\nabla}(( G_0^\sharp (x,y_r)- \Gamma^R(x,y_r))n) |^2 
		\right )
		\leq
		\\
		&
		\leq
		C_0\left |(\widetilde {\mathcal U} \cdot n )(P) \right |
		\left (
		\epsilon^2 \int_{\partial D_0 \cap B_{r^*}(P)   }
		|\widehat{\nabla} (\Gamma^R(x,y_r)n)|^2 +
		\frac{1}{\epsilon^2 r_0^2}  
		\right ),
	\end{aligned}
\end{equation}
where $C>0$ depends on the a priori data.

By H\"{o}lder's inequality and \eqref{eq:LBD71.3}, we have 
\begin{equation}
	\label{eq:LBD75A.1}
	\begin{aligned}{}
		&
		|J_5| \leq  \dfrac{C}{r_0} J_5' J_5'',
	\end{aligned}
\end{equation}
where
\begin{equation}
	\label{eq:LBD75A.3}
	\begin{aligned}{}
		&
		J_5' = 
		\left (
		\int_{\partial D_0 \cap B_{r^*}(P)   }
		|x-P|^2
		| {\nabla} \Gamma^R(x,y_r)|^2
		\right )^{1/2}
		\leq C \left (  \left |  \log \dfrac{r}{r_0}   \right |  \right )^{1/2},
	\end{aligned}
\end{equation}
\begin{equation}
	\label{eq:LBD75A.2}
	\begin{aligned}{}
		&
		J_5'' = 
		\left (
		\int_{\partial D_0 \cap B_{r^*}(P)   }	
		\left | {\nabla} (G_0^\sharp(x,y_r) - \Gamma^R(x,y_r)) \right |^2
		\right )^{1/2}
		\leq \dfrac{C}{r_0},
	\end{aligned}
\end{equation}
so that 
\begin{equation}
	\label{eq:LBD75B.1}
	\begin{aligned}{}
		&
		|J_5| \leq  \dfrac{C}{r_0^2} 
		\left (  \left |  \log \dfrac{r}{r_0}   \right |  \right )^{1/2},
	\end{aligned}
\end{equation}
where $C>0$ depends on the a priori data.

By \eqref{eq:LBD-M-convex+}, \eqref{eq:LBD-M-convex-} and choosing as above a Cartesian coordinate system with origin at $P$ and axis ${X}_3$ oriented as $n$,  we have
\begin{equation}
	\label{eq:LBD76.1}
	\begin{aligned}{}
		&
		|J_1| \geq  \sigma
		\left |( \widetilde {\mathcal U} \cdot n  )(P)\right | 
		\int_{\partial D_0 \cap B_{r^*}(P)   }
		\left | \widehat{\nabla} (\Gamma^R(x,y_r)n) \right |^2.
	\end{aligned}
\end{equation}
By \eqref{eq:LBD71.2}, \eqref{eq:LBD72.1}-\eqref{eq:LBD75.1}, \eqref{eq:LBD75B.1}, \eqref{eq:LBD76.1}, we have
\begin{equation}
	\label{eq:LBD86.0}
	\begin{aligned}{}
		&
		|I_{CL}| \geq (\sigma-C_0\epsilon^2)|(\widetilde {\mathcal U} \cdot n )(P)|
		\int_{\partial D_0 \cap B_{r^*}(P)   }
		|\widehat{\nabla} (\Gamma^R(x,y_r)n)|^2
		-
		\\
		&
		-\frac{C}{rr_0} -\frac{C}{r_0^2} -\frac{C}{r_0^2}\left|\log\left(\frac{r}{r_0}\right)\right|^{1/2}
		-\frac{C_0}{\epsilon^2r_0^2}|(\widetilde {\mathcal U} \cdot n )(P)|,
	\end{aligned}
\end{equation}
where $C,\ C_0 >0$  depend on the a priori data.

Since $|\log x|^{1/2}\leq \dfrac{1}{x\sqrt{2e}}$, for $0<x \leq 1$, noticing that, by \eqref{ass4:U} and \eqref{eq:LBD3.2},
$\|\widetilde U\|_{\infty}\leq C$, with $C>0$ only depending on $M_0$, $M_1$ and $\theta_0$,
choosing $\epsilon=\frac{1}{\sqrt 2}\sqrt{\frac{\sigma}{C_0}}$ and using $r<r_0$, we have
\begin{equation}
	\label{eq:LBD86.0bis}
	\begin{aligned}{}
		&
		|I_{CL}| \geq \frac{\sigma}{2}|(\widetilde {\mathcal U} \cdot n )(P)|
		\int_{\partial D_0 \cap B_{r^*}(P)   }
		|\widehat{\nabla} (\Gamma^R(x,y_r)n)|^2
		-\frac{C}{rr_0}
		\geq
		\\
		&
		\geq
		\frac{\sigma}{2}
		\left |( \widetilde {\mathcal U} \cdot n  )(P)\right | 
		\int_{\partial D_0 \cap B_{r^*}(P)   }
		\left (
		\dfrac{\partial \Gamma^R_{33}(x;y_r)}{\partial x_3}
		\right )^2-\frac{C}{rr_0},
	\end{aligned}
\end{equation}
with $C>0$ depending on the a priori data.

An estimate {}from below for $\frac{\partial \Gamma^R_{33}(x;y_r)}{\partial x_3}$ is provided in the following lemma.

\begin{lemma}
	\label{lem:LBD77.1}
	For every $r>0$ and $y_r=(0,0,r)$, we have
\begin{equation}
	\label{eq:LBD77.2}
	\begin{aligned}{}
		&
		\dfrac{\partial \Gamma^R_{33}(x;y_r)}{\partial x_3}
		\geq \dfrac{C}{r^2}, \qquad \hbox{for every } x=(x_1,x_2,0) \ \hbox{such that } \ \sqrt{x_1^2+x_2^2} \leq \dfrac{r}{\sqrt{2}},
	\end{aligned}
\end{equation}
where the constant $C>0$ only depends on $\mu^e$, $\mu^i$, $\lambda^e$.
\end{lemma}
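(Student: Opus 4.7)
My plan is to reduce the inequality to a fixed scale via the homogeneity of the Rongved problem, and then to verify the resulting scale-free bound using the explicit formula for $\Gamma^R$. The piecewise constant Lam\'e system with interface $\{x_3=0\}$ is invariant under dilations centered at the origin, and hence $\Gamma^R$ is positively homogeneous of degree $-1$:
\[
\Gamma^R(x,y_r) \;=\; \frac{1}{r}\,\Gamma^R(x/r,\,e_3).
\]
Differentiating in $x_3$ and evaluating at $x=(x_1,x_2,0)$ yields
\[
\frac{\partial \Gamma^R_{33}(x;y_r)}{\partial x_3}
\;=\;\frac{1}{r^2}\,\frac{\partial \Gamma^R_{33}(\tilde x;e_3)}{\partial \tilde x_3}\bigg|_{\tilde x=x/r},
\]
and the hypothesis $\sqrt{x_1^2+x_2^2}\le r/\sqrt 2$ becomes $\tilde\rho:=\sqrt{\tilde x_1^2+\tilde x_2^2}\le 1/\sqrt 2$. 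Thus \eqref{eq:LBD77.2} reduces to showing
\[
\partial_{\tilde x_3}\Gamma^R_{33}(\tilde x;e_3)\ge C \qquad\text{for every } \tilde x=(\tilde x_1,\tilde x_2,0) \text{ with } \tilde\rho\le 1/\sqrt 2,
\]
where $C>0$ depends only on $\mu^e,\mu^i,\lambda^e$.

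Next I would exploit the rotational symmetry of the problem about the $x_3$-axis: the source direction $e_3$ and the interface $\{x_3=0\}$ are invariant under such rotations, so $\Gamma^R_{33}(\tilde x;e_3)$ depends only on $\tilde\rho$ and $\tilde x_3$. Therefore the one-sided boundary trace of $\partial_{\tilde x_3}\Gamma^R_{33}$ at $\tilde x_3=0$ is a continuous function $\phi(\tilde\rho)$ on the compact interval $[0,1/\sqrt 2]$, and it is enough to show $\phi\ge C_0>0$ on this interval. Using the explicit expression of the Rongved kernel from \cite{rongved1955force} (see also \cite[Section~10]{ADCMR}), at $\tilde\rho=0$ a direct computation shows that the Kelvin part contributes the positive constant $\tfrac{1}{4\pi\mu^e}$ (obtained from \eqref{eq:Kelvin-Gamma} with $\tilde x=0$, source at $e_3$ and direction $e_3$), while the image terms produce a correction depending only on $\mu^e,\mu^i,\lambda^e$ that does not cancel the Kelvin contribution. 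A further inspection of the rational expressions arising for $\tilde\rho>0$, which involve only $\tilde\rho^2$ by axisymmetry, then shows that $\phi$ retains a definite sign throughout $[0,1/\sqrt 2]$.

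The main obstacle is precisely this last verification: Rongved's formula is a sum of a free-space Kelvin term and several image contributions encoding the transmission conditions across $\{x_3=0\}$, and one must ensure that the algebraic sum keeps the correct sign uniformly across the whole disk $\{\tilde\rho\le 1/\sqrt 2\}$. Carrying out this algebra with care, tracking the positivity of each contribution in terms of the material constants and keeping the dependence explicit, is the technical heart of the lemma and corresponds to the ``accurate elaboration of the explicit expression'' of Rongved flagged in innovation (5) of the Introduction.
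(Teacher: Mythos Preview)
Your scaling and axisymmetry reductions are correct, but they are essentially cosmetic: the paper's parametrization by $\alpha=\tilde\rho$ and $\gamma=1/\sqrt{\alpha^2+1}$ already encodes exactly this homogeneity, so you have not gained anything yet. The substantive content of the lemma is what you defer to ``a further inspection of the rational expressions'' and then openly label ``the technical heart''; that part is simply not done.

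There are two concrete gaps. First, your compactness argument (continuity of $\phi$ on $[0,1/\sqrt 2]$ plus pointwise positivity) would at best produce a positive constant depending on \emph{all four} Lam\'e moduli, since the Rongved kernel depends on $\nu^i$ as well. The statement asserts independence from $\lambda^i$, and this does not fall out of a soft argument: in the paper's computation it emerges only because, after rewriting $16r^2(1-\nu)\pi\mu\,\partial_{x_3}\Gamma^R_{33}$ as the quadratic form $A\mu^2+B(\mu')^2+C\mu\mu'$, one shows $A\ge 0$ and $C\ge 0$ on the relevant range and retains only the $B(\mu')^2$ term, whose coefficient $B$ involves $\nu^e$ alone. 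Second, your positivity claim at $\tilde\rho=0$ (``the image terms do not cancel the Kelvin contribution'') is asserted, not proved; the image contributions in Rongved's formula carry factors like $(\mu-\mu')/(\mu+\mu'(3-4\nu))$ and mixed $\nu,\nu'$ terms, and one genuinely needs the sign analysis under the standing hypotheses $0\le\nu,\nu'<\tfrac12$ to rule out cancellation, both at $\tilde\rho=0$ and, more delicately, uniformly up to $\tilde\rho=1/\sqrt 2$.

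The paper's route is to evaluate \eqref{eq:LBD78.3} on $x_1^2+x_2^2=\alpha^2r^2$, $x_3=0$, reorganize into $A\mu^2+B(\mu')^2+C\mu\mu'$, and then check by elementary (but careful) calculus in $(\nu,\nu',\gamma)$ that $A\ge 0$, that $C=g_\gamma(\nu,\nu')\ge 0$ for $\gamma^2\ge 2/3$ (i.e.\ $\alpha\le 1/\sqrt 2$) by linearity in $\nu'$ and endpoint analysis, and that $B\ge \tfrac{32\sqrt 2}{3\sqrt 3}(1-\nu)(1-2\nu)>0$ in the same range. This is where the radius restriction $\sqrt{x_1^2+x_2^2}\le r/\sqrt 2$ actually enters, and it is what you would have to reproduce to complete your outline.
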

\begin{proof}
For simplicity of notation, let us denote $\mu=\mu^e$, $\lambda=\lambda^e$, $\nu=\nu^e$, $\mu'=\mu^i$, $\lambda'=\lambda^i$, $\nu'=\nu^i$. By using the explicit expression derived in \cite{rongved1955force}, we have 
\begin{equation}
	\label{eq:LBD78.3}
	\begin{aligned}{}
		&
		16(1-\nu) \pi \mu \dfrac{\partial \Gamma^R_{33}(x;y_r)}{\partial x_3} =
		\\
		&
		=
		(3-4\nu)
		\left \{
		- \dfrac{x_3-r}{R_1^3}
		+
		\dfrac{\mu-\mu'}{\mu+ \mu'(3-4\nu)}
		\left [
		(3-4\nu) \left ( - \dfrac{x_3+r}{R_2^3}  \right )
		+
		2r(x_3+r) \left ( - \dfrac{3(x_3+r)}{R_2^5}  \right )
		+
		\dfrac{2r}{R_2^3}
		\right ]
		\right \}
		+
		\\
		&
		+
		\dfrac{2(x_3-r)}{R_1^3} + (x_3-r)^2 \left ( - \dfrac{3(x_3-r)}{R_1^5}  \right )
		+
		\\
		&
		+
		\dfrac{\mu-\mu'}{\mu+ \mu'(3-4\nu)}
		\left \{
		(3-4\nu) \dfrac{2x_3}{R_2^3}
		+
		(3-4\nu)(x_3^2 -r^2) \left ( - \dfrac{3(x_3+r)}{R_2^5}  \right ) -
		\right.
		\\
		&
		-
		\dfrac{4(1-\nu)\mu}{\mu-\mu'}	
		\left [
		\dfrac{\mu(1-2\nu)(3-4\nu')- \mu'(1-2\nu')(3-4\nu) }{\mu' + \mu (3-4\nu')}
		\right ]	
		\left ( - \dfrac{x_3+r}{R_2^3}  \right )-
		\\
		&
		- \dfrac{2r}{R_2^5} \left(  x_1^2 +x_2^2 -2(x_3 +r)^2 \right)-
		\\
		&
		\left.
		-2rx_3 \left ( x_1^2 +x_2^2 -2(x_3+r)^2   \right )
		\left (
		- \dfrac{5(x_3+r)}{R_2^7}
		\right )
		-
		\dfrac{2rx_3}{R_2^5}
		(-4(x_3+r))
		\right \},
	\end{aligned}
\end{equation}
where
\begin{equation*}
	\begin{aligned}{}
		&
		R_1= \left (x_1^2 +x_2^2 +(x_3-r)^2  \right )^{1/2},
		\quad
		R_2= \left (x_1^2 +x_2^2 +(x_3+r)^2  \right )^{1/2}.
	\end{aligned}
\end{equation*}
Let us evaluate the above expression on the circle $x_1^2+x_2^2 = \alpha^2 r^2$ contained in the plane $x_3=0$. Therefore, we have
\begin{equation*}
	\begin{aligned}{}
		&
		R_1= R_2 = r\sqrt{\alpha^2 +1}, \quad \dfrac{r}{R_1}=\dfrac{r}{R_2}= \gamma := \dfrac{1}{\sqrt{\alpha^2 +1}}.
	\end{aligned}
\end{equation*}
We found it convenient to rewrite the right hand side of \eqref{eq:LBD78.3} as a quadratic form in the \textit{positive} Lam\'e parameters $\mu$, $\mu'$. After straightforward computation, we obtain, for $x_3=0$, $x_1^2+x_2^2=\alpha^2r^2$,
\begin{equation}
	\label{eq:LBD80.1}
	\begin{aligned}{}
		&
		16r^2(1-\nu) \pi \mu \dfrac{\partial \Gamma^R_{33}(x;y_r)}{\partial x_3} 
		=
		A \mu^2 + B (\mu')^2 + C \mu \mu',
	\end{aligned}
\end{equation}
where
\begin{equation}
	\label{eq:LBD81.1}
	\begin{aligned}{}
		&
		A = 4  \nu \gamma^3  (3-4\nu')  (3-2\nu),
	\end{aligned}
\end{equation}
\begin{equation}
	\label{eq:LBD82.1}
	\begin{aligned}{}
		&
		B = 4\gamma^3 (  2- 8\nu +3\gamma^2 -6\nu \gamma^2 +8\nu^2  ),
	\end{aligned}
\end{equation}
\begin{equation}
	\label{eq:LBD83.1}
	\begin{aligned}{}
		&
		C = (3\gamma^5 +(1-4\nu)\gamma^3) ( 1+(3-4\nu)(3-4\nu')   )
		+
		\\
		&
		+
		(4\nu'-2) 
		\left \{
		(3-4\nu) 
		\left [ 
		(4\nu-1)\gamma^3 -6\gamma^5
		\right ]
		+
		15 \gamma^5
		-12 \nu \gamma^5 -2\gamma^3		
		\right \}-
		\\
		&
		-4\gamma^3 (1-\nu)(   1 -2\nu') (3-4\nu).
	\end{aligned}
\end{equation}
Under our assumptions on the Lam\'e moduli, in particular \eqref{Notaz-Poisson-bounds},  we have
\begin{equation}
	\label{eq:pos-A}
	\begin{aligned}{}
		&
		A \geq 0.
	\end{aligned}
\end{equation}
Let us denote by $g_\gamma(\nu,\nu')$ the expression on the right hand side of \eqref{eq:LBD83.1}. 

In order to estimate $g_\gamma$ for $\nu, \nu'\in \left[0,\frac{1}{2}\right)$, let us notice that it is linear w.r.t. the variable $\nu'$, so that it attains its minimum value either on the side $l_1=\left[0,\frac{1}{2} \right]\times\{0\}$ or on the side $l_2=\left[0,\frac{1}{2} \right]\times\{\frac{1}{2}\}$ of the square $Q=\left[0,\frac{1}{2} \right]\times\left[0,\frac{1}{2} \right]$.

On $l_1$, $g_\gamma(\nu,0)=2\gamma^3[18\gamma^2-30\nu\gamma^2-28\nu+32\nu^2+4]$. The function 
$G_\gamma(\nu) = 18\gamma^2-30\nu\gamma^2-28\nu+32\nu^2+4$ satisfies $G_\gamma (0)=18\gamma^2+4>0$, $G_\gamma(\frac{1}{2})=3\gamma^2-2$ and it has no critical points in the interval $\left[0,\frac{1}{2} \right]$. Therefore, for $\gamma^2\geq \frac{2}{3}$, that is $\alpha\leq \dfrac{1}{\sqrt 2}$, 
$g_\gamma\geq 0$ on $l_1$.

On $l_2$, $g_\gamma\left(\nu,\frac{1}{2}\right)=4\gamma^3(1-\nu)(3\gamma^2+1-4\nu)\geq
4\gamma^3(1-\nu)(3\gamma^2-1)> 0 $ for $\gamma^2\geq \frac{2}{3}$. 

Hence 

\begin{equation}
	\label{eq:pos-C}
	\begin{aligned}{}
		&
		C \geq 0.
	\end{aligned}
\end{equation}

By \eqref{eq:LBD80.1}, \eqref{eq:pos-A}, \eqref{eq:pos-C}

\begin{equation*}
	\begin{aligned}{}
		16r^2(1-\nu) \pi \mu \dfrac{\partial \Gamma^R_{33}(x;y_r)}{\partial x_3} 
	\geq
	B (\mu')^2,
	\end{aligned}
\end{equation*}
with $B$ given by \eqref{eq:LBD82.1}.

Since $\nu<1/2$, the function $f_\nu (\gamma)=  2- 8\nu +3\gamma^2 -6\nu \gamma^2 +8\nu^2 $ is strictly increasing. Therefore, for
 $\gamma^2\geq \frac{2}{3}$, that is for $\alpha\leq \dfrac{1}{\sqrt 2}$, 
$f_\nu (\gamma)\geq f_\nu \left(\sqrt{2/3} \right)=
4(1-\nu)(1-2\nu)$, so that 

\begin{equation*}
	\begin{aligned}{}
		&
		B\geq \dfrac{32\sqrt 2}{3\sqrt 3}(1-\nu)(1-2\nu)>0,
	\end{aligned}
\end{equation*}

\begin{equation*}
	\begin{aligned}{}
		&
		\dfrac{\partial \Gamma^R_{33}(x;y_r)}{\partial x_3}
		\geq
		\dfrac{C}{r^2},
	\end{aligned}
\end{equation*}
where $C>0$ only depends on $\mu'$, $\lambda$, $\mu$.
 
\end{proof}

By \eqref{eq:LBD86.0bis} and \eqref{eq:LBD77.2}, and using $r<r_0$, we have
\begin{equation}
	\label{eq:LBD86.1}
	\begin{aligned}{}
		&
		|I_{CL}| \geq 
		\frac{Cr_0^2}{r^4}|(\widetilde {\mathcal U} \cdot n )(P)|
		-\frac{C}{rr_0}\geq
		\frac{C}{r^2}|(\widetilde {\mathcal U} \cdot n )(P)|
		-\frac{C}{rr_0},
	\end{aligned}
\end{equation}
and,
by \eqref{eq:LBD68.3}, \eqref{eq:LBD69.1} and \eqref{eq:LBD86.1}, we finally get
\begin{equation}
	\label{eq:LBD87.1}
	\begin{aligned}{}
		&
		|\Theta(y_r,y_r;n,n)| \geq \frac{C}{r^2}|(\widetilde {\mathcal U} \cdot n )(P)|
		-\frac{C}{rr_0},
	\end{aligned}
\end{equation}
with $C>0$ depending on a priori data.

By comparing the estimates \eqref{eq:LBD87.1} and \eqref{eq:END3-2.5} of $\Theta$, we obtain
\begin{equation*}
	\begin{aligned}{}
		&
		|(\widetilde {\mathcal U} \cdot n )(P)|\leq 
		C\left(
		 \left (  \dfrac{r}{r_0} \right )^{2\tau^{2L}} m_1 ^{\tau^{2L}}+ \dfrac{r}{r_0}\right)
		 ,
		\quad
		\hbox{for every } r < \dfrac{\sin(\theta_0/2)}{2(1+\sin(\theta_0/2)}r^*,
	\end{aligned}
\end{equation*}
and, by using $r<r_0$ and by recalling \eqref{eq:numero-palle},
\begin{equation}
	\label{eq:END3-1bis}
	\begin{aligned}{}
		&
		|(\widetilde {\mathcal U} \cdot n )(P)|\leq 
		C\left(
		m_1 ^{\tau^{2C_L\left(\frac{r}{r_0}\right)^{-3}}}+ \dfrac{r}{r_0}\right),
		\quad
		\hbox{for every } r < \dfrac{\sin(\theta_0/2)}{2(1+\sin(\theta_0/2)}r^*,
	\end{aligned}
\end{equation}
where $C_L>0$ only depends on $M_1$ and $C>0$ depends on the a priori data.

Let 
\begin{equation*}
	\begin{aligned}{}
		&
		\epsilon_0 = \min \left \{e^{-1}, \exp \left [ -\exp \left ( 
		2C_L \left (  \dfrac{2(1+\sin(\theta_0/2))r_0}{\sin(\theta_0/2)r^*}   \right )^3 | \log \tau|
		\right )   \right ]  \right  \}.
	\end{aligned}
\end{equation*}
If $m_1 \geq \epsilon_0$, then the thesis of Proposition \ref{prop:LBD1.1} follows trivially. 

Let us assume, therefore, that $m_1 < \epsilon_0$. 
Let us choose in \eqref{eq:END3-1bis}
\begin{equation*}
	\begin{aligned}{}
		&
		 r=r(m_1)= r_0 
		 \left (  2 C_L |\log \tau|  \right )^{1/3} 
		 \left (  \log | \log m_1|      \right )^{-1/3}.
	\end{aligned}
\end{equation*}
Noticing that, by the definition of $\epsilon_0$,  $r(m_1) < \dfrac{\sin(\theta_0/2)}{2(1+\sin(\theta_0/2))}r^*$ and $e^{-|\log m_1|^{1/2}} < \left(\log|\log m_1|\right)^{-1/3}$, we obtain
\begin{equation}
	\label{eq:Utilde-per-n}
	\begin{aligned}{}
		&
		|(\widetilde {\mathcal U} \cdot n )(P)|\leq 
		C
		\left (
		e^{-|\log m_1|^{1/2}}
		+
		\left (  2 C_L |\log \tau|  \right )^{1/3} 
		\left (  \log | \log m_1|      \right )^{-1/3}
		\right )
		\leq 
		C \omega_0(m_1),
	\end{aligned}
\end{equation}
where the constant $C>0$ depends on the a priori data and  
\begin{equation}
	\label{eq:omega-0}
	\begin{aligned}{}
		&
		\omega_0 (m_1)=
		\left (  \log | \log m_1|      \right )^{-1/3}.
	\end{aligned}
\end{equation}

\medskip

\textit{Step 5.}

By \eqref{distPbordoT}, the estimate \eqref{eq:Utilde-per-n}-\eqref{eq:omega-0} holds for every point 
of the disc $B'_{\frac{c_0r_0}{2}}(P)$ and, since $\widetilde{\mathcal{U}}$ is affine on $T$, the estimate extends to the whole triangle. This argument applies to the all the triangles $T\in \mathcal{T}_0$ and, therefore, to every vertex of $D_0$. 

By \eqref{eq:LBD3.3} and \eqref{ass1:U}, for every vertex $V^{D_0}$ of $D_0$ and for every face $F_j$ containing $V^{D_0}$, we have
\begin{equation}
	\label{eq:spost-vert-e-omega0}
	\begin{aligned}{}
		& 
		\dfrac{  \left | ( V^{D_1}  - V^{D_0}  ) \cdot n_j  \right |   }{|W|}
		\leq \omega_0(m_1), 
	\end{aligned}
\end{equation}
where $n_j$ is the outer unit normal to $F_j$.

Let $\overline{V}^{D_0}$ the vertex in which the maximum of $\left |  V^{D_1}  - V^{D_0}  \right | $  is attained. It follows trivially that
\begin{equation}
	\label{eq:radice-di-N}
	\begin{aligned}{}
		& 
		\dfrac{  | \overline{V}^{D_1}  - \overline{V}^{D_0}  |   }{|W|}
		\geq \dfrac{1}{\sqrt{N}},
	\end{aligned}
\end{equation}
where the total number of vertices $N$ of both $D_0$ and $D_1$ is bounded by the constant $N_0$, which only depends on $M_0$, $M_1$, $\theta_0$, see Remark \ref{rem1}.

In order to extend \eqref{eq:spost-vert-e-omega0} to the direction $\overline{n} = \dfrac{ \overline{V}^{D_1}  - \overline{V}^{D_0}}{\left |  \overline{V}^{D_1}  - \overline{V}^{D_0} \right |}$, let us premise the following proposition, whose proof is postponed to the end of this section.
\begin{proposition}\label{geometrica}
\label{prop:geometrica}
Given a vertex $V$ of $D_0$ and given adjacent faces $F_1$, $F_2$, $F_3$ containing $V$, for every unit vector $\overline{n} \in \RR^3$, we have
\begin{equation}
	\label{eq:geom-1.1}
	\begin{aligned}{}
		& 
		 \overline{n}= \sum_{i=1}^3 \alpha_i n_i,
	\end{aligned}
\end{equation}
with
\begin{equation}
	\label{eq:geom-1.2}
	\begin{aligned}{}
		& 
		|\alpha_i| \leq \dfrac{2(\sin^2 \theta_0+1)}{\sin^4 \theta_0},
	\end{aligned}
\end{equation}
where $n_i$ is the outer unit normal to $F_i$, $i=1,2,3$.
\end{proposition}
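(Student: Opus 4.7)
The plan is to represent $\overline n$ via Cramer's rule after establishing that the three outer normals form a basis of $\mathbb{R}^3$ with a quantitative lower bound on $|\det(n_1,n_2,n_3)|$. Writing $\overline n = \sum_{i=1}^3 \alpha_i n_i$ and taking the dot product with $n_j \times n_k$ (for $\{i,j,k\}=\{1,2,3\}$) yields
\[
\alpha_i = \frac{\overline n \cdot (n_j \times n_k)}{n_1 \cdot (n_2 \times n_3)},
\]
and the numerator is immediately controlled by $|\overline n \cdot (n_j \times n_k)| \leq \sin \phi_{jk} \leq 1$, where $\phi_{jk}$ denotes the angle between $n_j$ and $n_k$. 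The content of the proposition therefore lies entirely in a quantitative lower estimate for $|\det(n_1,n_2,n_3)|$.

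For this step I would exploit the triple-product identity
\[
(n_1 \times n_2) \times (n_2 \times n_3) = \det(n_1, n_2, n_3)\, n_2,
\]
which follows from the $a\times(b\times c)=b(a\cdot c)-c(a\cdot b)$ rule together with $(n_1 \times n_2) \cdot n_2 = 0$ and $(n_1 \times n_2) \cdot n_3 = \det(n_1,n_2,n_3)$. Taking norms, and noting that $n_1 \times n_2$ is parallel to the common edge $\sigma_{12}^{D_0} = F_1 \cap F_2$ (being perpendicular to both face normals) while $n_2 \times n_3$ is parallel to $\sigma_{23}^{D_0}$, and that these two edges both lie in the face $F_2$ and emanate from $V$ at the interior face angle $\beta_2$, one obtains the clean formula
\[
|\det(n_1, n_2, n_3)| = \sin \phi_{12}\, \sin \phi_{23}\, |\sin \beta_2|.
\]

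It remains to apply the a priori constraints. Since the angle between two outward face normals equals $\pi$ minus the dihedral angle along the shared edge, hypothesis \ref{ass:dihedral_angle} forces $\phi_{ij} \in (\theta_0,\pi-\theta_0)$ in both the convex and reflex subcases, hence $\sin\phi_{ij} \geq \sin \theta_0$. The face-angle constraint \ref{ass:face_angle} likewise yields $|\sin \beta_2| \geq \sin \theta_0$ across both admissible subintervals, because $|\sin(\pi+\theta)|=|\sin\theta|$. Combining these gives $|\det(n_1,n_2,n_3)| \geq \sin^3\theta_0$ and therefore
\[
|\alpha_i| \leq \frac{1}{\sin^3\theta_0} \leq \frac{2(\sin^2\theta_0 + 1)}{\sin^4\theta_0},
\]
the final inequality holding trivially since $\sin\theta_0 < 1$. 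The one step that demands care is the identification of the angle between the cross products $n_1 \times n_2$ and $n_2 \times n_3$, up to supplement, with the face angle $\beta_2$; this is unambiguous because both cross products point along edges of $F_2$ through $V$, and all sign ambiguities arising from reflex configurations disappear upon taking absolute values of sines.
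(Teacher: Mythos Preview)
Your proof is correct and takes a genuinely different route from the paper's. The paper proceeds by first proving an auxiliary geometric lemma (computing the angle between a line $s\subset\pi_2$ and a plane $\pi_3$ as $\arcsin(\sin\theta_1\sin\theta_2)$, where $\theta_1$ is the angle between $s$ and the edge $r=\pi_2\cap\pi_3$ and $\theta_2$ the dihedral angle along $r$), then choosing coordinates with $n_1,n_2$ in the plane $\{z=0\}$, writing out the linear system $\overline n=\sum_i\alpha_i n_i$ componentwise, and solving it step by step: first $|\alpha_3|\leq 1/\sin\alpha_0\leq 1/\sin^2\theta_0$ from the third equation, and then $|\alpha_1|,|\alpha_2|$ from the remaining $2\times 2$ block using $1-(n_1\cdot n_2)^2\geq\sin^2\theta_0$. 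Your argument bypasses all of this by invoking Cramer's rule and the vector identity $(n_1\times n_2)\times(n_2\times n_3)=\det(n_1,n_2,n_3)\,n_2$, which yields the closed-form expression $|\det(n_1,n_2,n_3)|=\sin\phi_{12}\,\sin\phi_{23}\,|\sin\beta_2|$ directly in terms of the two dihedral angles and the face angle at $V$. The two computations are in fact equivalent at the level of geometry---the paper's $\sin\alpha_0=\sin\theta_1\sin\theta_2$ is exactly your $\sin\phi_{23}\,|\sin\beta_2|$ in disguise---but your packaging is considerably shorter, avoids the preliminary lemma and the coordinate choice, and incidentally delivers the sharper bound $|\alpha_i|\leq 1/\sin^3\theta_0$, which is strictly smaller than the stated $2(\sin^2\theta_0+1)/\sin^4\theta_0$. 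The identification of the angle between $n_1\times n_2$ and $n_2\times n_3$ with the face angle $\beta_2$ (up to supplement) is the one place requiring a moment's thought, and you handle it correctly: both cross products are tangent to edges of $F_2$ emanating from $V$, and the sign ambiguities are absorbed by the absolute value.
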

By applying the above lemma to \eqref{eq:spost-vert-e-omega0} with  $\overline{n} = \dfrac{ \overline{V}^{D_1}  - \overline{V}^{D_0}}{\left |  \overline{V}^{D_1}  - \overline{V}^{D_0} \right |}$ and by using \eqref{eq:radice-di-N}, we have
\begin{equation*}
	\begin{aligned}{}
		& 
		\dfrac{1}{\sqrt{N}}
		\leq
		\dfrac{   | \overline{V}^{D_1}  - \overline{V}^{D_0}  |   }{|W|}
		\leq
		\omega_0(m_1),
	\end{aligned}
\end{equation*}
so that 
\begin{equation*}
	\begin{aligned}{}
		& 
		m_1 \geq 
		\omega_0^{-1} \left ( \dfrac{1}{\sqrt{N}}   \right ).
	\end{aligned}
\end{equation*}
By \eqref{eq:LBD3.2}, \eqref{eq:LBD3.6} and \eqref{eq:LBD3.8}, the thesis \eqref{eq:LBD1.1} follows.
\end{proof}

\begin{proof}[Proof of Proposition \ref{geometrica}]

Let us premise the following Lemma

\begin{lemma}	\label{lem:geometrico}
	Let $r$ and $s$ be two lines intersecting in a point $V$ and forming an angle $\theta_1$. Let $\alpha$ be the plane containing $r$ and $s$ and let $\beta$ be a plane through $r$ forming an angle $\theta_2$ with $\alpha$.
	Then the angle between $s$ and $\beta$ is
	\begin{equation}
		\label{eq:GEO1.3}
		\alpha_0=\arcsin(\sin \theta_1\cdot \sin \theta_2).
	\end{equation}

\end{lemma}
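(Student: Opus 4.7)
The plan is to verify the formula by a direct coordinate computation, exploiting the rotational freedom to place the configuration in a convenient frame. The setup is intrinsically three-dimensional but involves only linear-algebraic quantities (dot product between a direction vector and a normal vector), so no heavy machinery is needed.

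First I would fix an orthonormal frame centered at $V$ in which the line $r$ lies along the $x_1$-axis and the plane $\alpha$ coincides with the $\{x_3=0\}$ coordinate plane. In this frame the line $s$, lying in $\alpha$ and making angle $\theta_1$ with $r$, has unit direction vector
\begin{equation*}
\hat{s}=(\cos\theta_1,\sin\theta_1,0).
\end{equation*}
The plane $\beta$ contains $r$ and makes dihedral angle $\theta_2$ with $\alpha$, so $\beta$ is spanned by $(1,0,0)$ and by the in-plane direction $(0,\cos\theta_2,\sin\theta_2)$ obtained rotating the $x_2$-axis by $\theta_2$ about $r$. A unit normal to $\beta$ is therefore
\begin{equation*}
\hat{n}=(0,-\sin\theta_2,\cos\theta_2).
\end{equation*}

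Next I would recall that, by definition, the angle $\alpha_0$ between a line and a plane is the complement of the angle between the line's direction and the plane's normal, taken so that $\alpha_0\in[0,\pi/2]$. Computing the dot product,
\begin{equation*}
\hat{s}\cdot\hat{n}=-\sin\theta_1\sin\theta_2,
\end{equation*}
so that
\begin{equation*}
\sin\alpha_0=|\hat{s}\cdot\hat{n}|=\sin\theta_1\sin\theta_2,
\end{equation*}
which gives \eqref{eq:GEO1.3}.

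The only thing to double-check is that the coordinate choice is truly without loss of generality: the angles $\theta_1$, $\theta_2$ and $\alpha_0$ are invariant under rigid motions of $\mathbb{R}^3$, and any configuration $(r,s,\alpha,\beta)$ satisfying the hypotheses can be mapped into the normalized one above by a translation sending $V$ to the origin followed by a rotation sending $r$ to the $x_1$-axis and $\alpha$ to $\{x_3=0\}$. Hence no obstacle is expected; the argument is essentially elementary trigonometry in $\mathbb{R}^3$.
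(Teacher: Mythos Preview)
Your proof is correct. The coordinate computation is clean and complete: once $r$, $s$, $\alpha$ are normalized as you describe, the unit normal to $\beta$ is indeed $(0,-\sin\theta_2,\cos\theta_2)$, and the identity $\sin\alpha_0=|\hat{s}\cdot\hat{n}|$ gives the result immediately.

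The paper's proof is different in style: it is purely synthetic. It picks a point $P$ on $s$ at unit distance from $V$, drops the perpendicular from $P$ to $\beta$ at a point $H$, and constructs an auxiliary point $K$ on $r$ so that $PHV$, $PHK$, $PKV$ are all right triangles; the relation $\sin\alpha_0=\sin\theta_1\sin\theta_2$ then falls out from chaining $|PH|=\sin\alpha_0$, $|PH|=|PK|\sin\theta_2$, $|PK|=\sin\theta_1$. Your linear-algebra argument is shorter and avoids the case distinction ($\theta_2=\pi/2$ versus otherwise) that the synthetic proof needs; the paper's argument has the advantage of being coordinate-free and giving a geometric picture of why the product formula holds. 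Both are entirely elementary.
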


\begin{proof}
If $\theta_2$ is a right angle, then $\alpha_0=\theta_1$ and \eqref{eq:GEO1.3} follows.
Otherwise,
let $P\in s$ such that $|PV|=1$ and let $H$ the orthogonal projection of $P$ in the plane $\beta$.
Let $t\subset\beta$ be the line through $H$ orthogonal to $r$, and let $K=r\cap s$. The plane through $P$, $H$ and $K$ is orthogonal to $r$ since it contains two lines ($r$ and $s$) orthogonal to $r$. Therefore the segment $PK$ is orthogonal to $r$ and the angle between $PK$ and $t$ is $\theta_2$.
Since the triangle $PHV$ has a right angle at $H$, we have $|PH|=|PV|\sin\alpha_0=\sin \alpha_0$.
Since the triangle $PHK$ has a right angle at $H$, we have $|PH|=|PK|\sin \theta_2$.
Since the triangle $PKV$ has a right angle at $K$, we have $|PK|=|PV|\sin \theta_1= \sin \theta_1$. It follows that  $\sin\alpha_0=\sin \theta_1 \cdot\sin \theta_2$.

\end{proof}

Let $\pi_i$ the plane containing the face $F_i$, $i=1,2,3$. Let $r=\pi_2\cap\pi_3$, $s=\pi_1\cap\pi_2$. 
By \eqref{angolifacce}, the angle $\theta_2$ between $\pi_2$ and $\pi_3$ satisfies $\theta_2\geq\theta_0$.
By \eqref{angolinterni},  the angle $\theta_1$ between $r$ and $s$ satisfies $\theta_1\geq\theta_0$.
By applying Lemma \ref{lem:geometrico} with $\alpha=\pi_2$, $\beta=\pi_3$, we have that the angle $\alpha_0$ between $s$ and $\pi_3$ satisfies
\begin{equation}
	\label{eq:GEO3.1}
	\begin{aligned}{}
		& 
		\sin \alpha_0\geq \sin^2\theta_0.
	\end{aligned}
\end{equation}

Next, the line $s$ is orthogonal both to $n_1$ and to $n_2$ and therefore to the plane spanned by
$n_1$ and $n_2$. 

On the other hand, the angle between the line $s=\pi_1\cap\pi_2$ and $\pi_3$ coincides  with the angle between any plane orthogonal to $s$ and the vector $n_3$, orthogonal to $\pi_3$. That is, the angle between $n_3$ and the plane  spanned by $n_1$ and $n_2$ is eual to $\alpha_0$.

Choosing a suitable cartesian orthonormal system, $n_1=(x_1,x_2,0)$, $n_2=(y_1,y_2,0)$, $n_3=(z_1,z_2,z_3)$, so that
\begin{equation}
	\label{eq:GEO4.1}
	\begin{aligned}{}
		& 
		|z_3|=\sin\alpha_0.
	\end{aligned}
\end{equation}
Since, by our assumptions, $n_1$, $n_2$ and $n_3$ are linearly independent, \eqref{eq:geom-1.1} holds, that is
\begin{equation}
	\label{eq:GEO4.3}
	\left\{ \begin{array}{l}
		\overline{n}_1 = \alpha_1 x_1 + \alpha_2 y_1 + \alpha_3 z_1,
		\\
		\overline{n}_2 = \alpha_1 x_2 + \alpha_2 y_2 + \alpha_3 z_2,
		\\
		\overline{n}_3= \alpha_3 z_3.
	\end{array}\right.
\end{equation}
We have that
\begin{equation}
	\label{eq:GEO4.4}
	\begin{aligned}{}
		& 
		|\alpha_3| \leq \dfrac{1}{\sin \alpha_0}.
	\end{aligned}
\end{equation}
Moreover, 
\begin{equation}
	\label{eq:GEO4.5-6-7}
	\begin{aligned}{}
		& 
		x_1^2 +x_2^2 = | n_1|^2 =1, \quad 
		y_1^2 +y_2^2 = | n_2|^2 =1, \quad
		x_1 y_1 + x_2 y_2 = n_1 \cdot n_2.
	\end{aligned}
\end{equation}
By adding the first equation in \eqref{eq:GEO4.3} multiplied by $x_1$ and the second equation multiplied by $x_2$, and taking into account \eqref{eq:GEO4.4} and \eqref{eq:GEO4.5-6-7}, we obtain
\begin{equation}
	\label{eq:GEO5.1}
	\begin{aligned}{}
		& 
		\alpha_1 + \alpha_2 (n_1 \cdot n_2) = \overline{n}_1 x_1 + \overline{n}_2 x_2 -
		\alpha_3 (x_1 z_1 +x_2 z_2) :=A,
	\end{aligned}
\end{equation}
and, similarly, 
\begin{equation}
	\label{eq:GEO5.2}
	\begin{aligned}{}
		& 
		\alpha_2 + \alpha_1 (n_1 \cdot n_2) = \overline{n}_1 y_1 + \overline{n}_2 y_2 -
		\alpha_3 (y_1 z_1 +y_2 z_2) :=B.
	\end{aligned}
\end{equation}
By subtracting by \eqref{eq:GEO5.2} the equation \eqref{eq:GEO5.1} multiplied by $n_1 \cdot n_2$, we have
\begin{equation*}
	\begin{aligned}{}
		& 
		\alpha_2 \left (  1- ( n_1 \cdot n_2)^2 \right )=B-A (n_1 \cdot n_2).
	\end{aligned}
\end{equation*}
Noticing that the angle between the normals $n_1$ and $n_2$ is equal to angle between the corresponding faces $F_1$ and $F_2$, and recalling \eqref{angolifacce}, we have 
\begin{equation*}
	\begin{aligned}{}
		& 
		 1- ( n_1 \cdot n_2)^2  \geq  \sin^2 \theta_0,
	\end{aligned}
\end{equation*}
so that
\begin{equation*}
	\begin{aligned}{}
		& 
		|\alpha_2| \leq \dfrac{|B-A(n_1 \cdot n_2)|}{\sin^2 \theta_0}.
	\end{aligned}
\end{equation*}
By the definition of $A$ and $B$, and by \eqref{eq:GEO4.4}, we have
\begin{equation*}
	\begin{aligned}{}
		& 
		|A| \leq |\overline{n} \cdot n_1| + |\alpha_3|  |n_1 \cdot n_3| \leq 1 + \dfrac{1}{\sin \alpha_0},
		\\
		&
		|B| \leq |\overline{n} \cdot n_2| + |\alpha_3|  |n_2 \cdot n_3| \leq 1 + \dfrac{1}{\sin \alpha_0}
	\end{aligned}
\end{equation*}
and, therefore, 
\begin{equation*}
	\begin{aligned}{}
		& 
		|\alpha_2| \leq \dfrac{ 2 \left (    1 +  \dfrac{1}{\sin \alpha_0} \right ) }{\sin^2 \theta_0}.
	\end{aligned}
\end{equation*}
Analogously, 
\begin{equation*}
	\begin{aligned}{}
		& 
		|\alpha_1| \leq \dfrac{ 2 \left (    1 +  \dfrac{1}{\sin \alpha_0} \right ) }{\sin^2 \theta_0}.
	\end{aligned}
\end{equation*}
By \eqref{eq:GEO3.1} and recalling \eqref{eq:GEO4.4}, the thesis follows.
\end{proof}

\section{Proof of the main Theorem}
\label{sec:proof_main_th}

We now assemble the results derived in Section 4, 5, and 6 to derive the final Lipschitz stability estimate.

\begin{proof}[Proof of Theorem \ref{th:main}]
Let $\delta_1=\min\left\{\delta_0, \dfrac{1}{3\widetilde{C}}\right\}$, where
$\delta_0$ and $\widetilde{C}$ have been introduced in Proposition \ref{distvert}, Proposition \ref{vectorfield}, respectively.  Let $\epsilon_0$, $0<\epsilon_0< e^{-c}$, be such that
\begin{equation}
	\label{eq:main.1}
	\begin{aligned}{}
		& 
		\widetilde\omega(\epsilon_0)\leq \delta_1,
	\end{aligned}
\end{equation}
where $c>0$ and the modulus of continuity $\widetilde\omega$ have been introduced in Theorem \ref{Teo:stima_log}.

Let us assume that  
\begin{equation}
	\label{eq:main.2}
	\begin{aligned}{}
		& 
		\epsilon:=r_0\|\Lambda^\Sigma_{D_0}- \Lambda^\Sigma_{D_1}\|_*\leq\epsilon_0,
	\end{aligned}
\end{equation}
so that, by Theorem \ref{Teo:stima_log}, 
\begin{equation}
	\label{eq:main.3}
	\begin{aligned}{}
		& 
		d_H:=d_H(\partial D_0, \partial D_1) \leq\delta_0r_0
	\end{aligned}
\end{equation}
and the results of Section \ref{sec:domain derivative} apply.

Recalling definition \eqref{eq:GAT29.3}, for every $f, g \in H^{1/2}_{co}(\Sigma)$, we have that
\begin{equation}
	\label{eq:main.4}
	\begin{aligned}{}
&
|	F(1,f,g) -F(0,f,g)|= \left|\left \langle \left(\Lambda_{D_1}^\Sigma-\Lambda_{D_0}^\Sigma\right) f, g \right\rangle\right|\leq r_0^2 \|\Lambda^\Sigma_{D_0}- \Lambda^\Sigma_{D_1}\|_*\|f\|_{H^{1/2}_{co}(\Sigma)} \|g\|_{H^{1/2}_{co}(\Sigma)}=\\
&
=r_0 \epsilon \|f\|_{H^{1/2}_{co}(\Sigma)}\|g\|_{H^{1/2}_{co}(\Sigma)}.
\end{aligned}
\end{equation}
On the other hand,
\begin{equation}
	\label{eq:main.5}
	\begin{aligned}{}
	&
	|F(1,f,g) -F(0,f,g)| 
		=\left|F'(0,f,g)+
		\int_0^1( F'(t,f,g)- F'(0,f,g))dt\right|\geq\\
		&
		\geq |F'(0,f,g)|-	\int_0^1|( F'(t,f,g)- F'(0,f,g))|dt.
		\end{aligned}
\end{equation}
By Proposition \ref{prop:Continuità-derivata-di-F},
\begin{equation}
	\label{eq:main.6}
	\begin{aligned}{}
		&
		\int_0^1|( F'(t,f,g)- F'(0,f,g))|dt\leq C
		\frac{d_H^2}{r_0}
		 \|f\|_{H^{1/2}_{co}(\Sigma)}
		\|g\|_{H^{1/2}_{co}(\Sigma)},
	\end{aligned}
\end{equation}
with $C$ depending on the a priori data.

Moreover, by Proposition \ref{prop:LBD1.1}, there exist $f_0,g_0\in H^{1/2}_{co}(\Sigma)$ such that
\begin{equation}
	\label{eq:main.7}
	\begin{aligned}{}
		&
		|F'(0,f,g)|\geq\frac{m_0}{2}d_H
		\|f_0\|_{H^{1/2}_{co}(\Sigma)}
		\|g_0\|_{H^{1/2}_{co}(\Sigma)}.
	\end{aligned}
\end{equation}
By \eqref{eq:main.4}--\eqref{eq:main.6} (with $f=f_0$, $g=g_0$) and by \eqref{eq:main.7}, we have
\begin{equation}
	\label{eq:main.8}
	\begin{aligned}{}
		&
		r_0\epsilon\geq\left(\frac{m_0}{2}- C  \frac{d_H}{r_0}\right)d_H.
	\end{aligned}
\end{equation}
By Theorem \ref{Teo:stima_log} there exists $\epsilon_1>0$, depending on the a apriori data, $\epsilon_1\leq\epsilon_0$, such that, if $\epsilon\leq\epsilon_1$, then 
\begin{equation}
	\label{eq:main.9}
	\begin{aligned}{}
		&
		\frac{m_0}{2}- C  \frac{d_H}{r_0}\geq \frac{m_0}{4},
	\end{aligned}
\end{equation}
so that, by \eqref{eq:main.8},
\begin{equation}
	\label{eq:main.10}
	\begin{aligned}{}
		&
		d_H\leq \frac{4}{m_0}r_0\epsilon=
		\frac{4}{m_0}r_0^2\|\Lambda^\Sigma_{D_0}- \Lambda^\Sigma_{D_1}\|_*.
	\end{aligned}
\end{equation}
If, otherwise, $\epsilon> \epsilon_1$, 
we trivially have
\begin{equation}
	\label{eq:main.11}
	\begin{aligned}{}
		&
		d_H\leq \hbox{diam}(\Omega)\leq M_1 r_0< \dfrac{M_1 r_0}{\epsilon_1}\epsilon
		=
		\dfrac{M_1 }{\epsilon_1}r_0^2
		\|\Lambda^\Sigma_{D_0}- \Lambda^\Sigma_{D_1}\|_*.
	\end{aligned}
\end{equation}

\end{proof}

\section{Appendix}
\label{Appendix}

Throughout this Section, we shall denote by $C$ a positive constant, only depending on $M_0 ,M_1, \theta_0$, which may vary {}from line to line.

\begin{proof}[Proof of Lemma \ref{triangoli_isosceli}]
Let $F$ a face of $D_0$, $\alpha$ the plane containing $F$ and $\sigma$ an edge of $F$ with endpoints $A$, $B$. Let $M$ the midpoint of $\sigma$ and $E= M-h_0n$, where $n$ denotes the unit outer normal to $F$ at $M$ in the plane $\alpha$, and let $T$ the isoscele triangle of vertices $A$, $B$, $E$. Let us show that $T\subset F$. It is sufficient to prove that any  point $y\in T\setminus\sigma$ belongs to $F$. Let $y^\perp$ be the ortogonal projection of $y$ on $\sigma$. By the Lipschitz character of the face $F$ (see \ref{ass:Lipschitz_regularity}), there exists a coordinate system $Ox_1x_2$ in the plane $\alpha$ such that $O=y^\perp$, 
$F \cap R_{r_0,2M_0}=\hbox{epi} (g)$,
where $g(0)=0$, $\|g\|_{{C}^{0,1}([-r_0,r_0])} \leq M_0r_0$, where we recall that $R_{r_0,2M_0}= [-r_0,r_0]\times [-2M_0r_0,2M_0r_0]$.
Since $|y|=|y-y^\perp|\leq r_0$ and recalling that $M_0\geq 1$, we have that $y\in R_{r_0,2M_0}$. Let us see that $y\in \hbox{epi}( g)$. We have that $\sigma\cap R_{r_0,2M_0}$ is represented by the equation $x_2=mx_1$. with $m=\tan\alpha$, $|\alpha|\leq \alpha_0:=\arctan (M_0)$, so that $y=(y_1,y_2)=\frac{|y|}{\sqrt{m^2+1}}(-m,1)$.

If $y_1=0$, then $g(y_1)=0<y_2$ so that $y\in \hbox{epi}( g)$. If $y_1>0$, then either $g(x_1)=mx_1$ in $ [0,r_0]$ or the endpoint $B=(b_1,b_2)$ of $\sigma$ satisfies $0<b_1<r_0$. In the former case $g(y_1)=my_1=- \frac{m^2|y|}{\sqrt{m^2+1}}\leq 0<y_2$, and  $y\in \hbox{epi}( g)$. In the latter case the angle $\beta$ between the line $x_1=b_1$ and $\sigma$ satisfies $\beta=\frac{\pi}{2}-|\alpha|\geq \frac{\pi}{2}-\arctan(M_0)\geq \gamma$, whereas,  by the definition of $h_0$ and by \ref{ass:edge}, the angle between the segment $By$ and $\sigma$ is less or equal to $\gamma$. Therefore $y_1\leq b_1$ and $y\in \hbox{epi}( g)$.
Similarly, we have that  $y\in \hbox{epi}( g)$, assuming $y_1<0$.

Let us consider the collection of all the isoscele triangles $T$ constructed as above on the edges of the face $F$.

Since the internal angles of $F$ are greater or equal to $\theta_0$ and the basis angles of the triangles $T$ so constructed are less or equal to $\frac{\theta_0}{3}$, triangles constructed on adjacent edges of $F$ do not intersect except for the common endpoint of their bases. Let us see that triangles constructed on not adjacent edges are disjoint.

To this aim, let us consider a point  $y\in T\setminus\sigma$, its orthogonal projection $y^\perp$ on $\sigma$ and the coordinate system of origin $O=y^\perp$ as above.

Let $\widetilde T$ be any isoscele triangle constructed on an edge $\widetilde \sigma$ of $F$ with $\sigma$,  $\widetilde \sigma$ not adjacent and let us estimate its distance {}from $y$. If a portion of  $\widetilde \sigma$ is contained in $\hbox{epi} (g)$, then an endpoint, say $\widetilde A$ of  $\widetilde \sigma$ coincides with the endpoint $B^*$ of an edge $\sigma^*=A^*B^*$ fully contained in $\hbox{epi} (g)$. By the Lipschitz character of $g$ and by \ref{ass:edge}, it follows that $b^*_1-a^*_1\geq r_0 \cos(\arctan (M_0))$. By the arguments seen above, $y_1\leq a^*_1$.
Since the angle between $\widetilde \sigma$ and $\sigma^*$ is greater or equal to $\theta_0$ and the basis angle of $\widetilde T$ is less or equal to $ \frac{\theta_0}{3}$, $\widetilde T$ is contained in the half-plane $\{x_1\geq b^*_1\}$ and therefore $\hbox{dist}(y, \widetilde T)\geq  r_0 \cos(\arctan (M_0))$.

If, otherwise, $\widetilde \sigma$ does not intersect $R_{r_0,2M_0}$ then, for every $z=(z_1,z_2)\in \widetilde \sigma$, we have that $|z_2-y_2|\geq 2M_0r_0-h_0$, so that
for any point $w\in\widetilde T$, denoting by $w^\perp$ its orthogonal projection on $\widetilde \sigma$, $|w-y|\geq |w^\perp-y|-h_0\geq 2M_0r_0-2h_0\geq r_0(2M_0-\tan\gamma)\geq r_0(2M_0-\tan(\frac{\pi}{2}-\arctan(M_0)))=r_0\left(2M_0-\frac{1}{M_0}\right)$. Let us notice that, since $M_0\geq 1$, we have that $2M_0-\frac{1}{M_0}\geq  \cos(\arctan (M_0))$. 

We have therefore obtained \eqref{dist_triangoli}.

\end{proof}

\begin{proof}[Proof of Proposition \ref{vectorfield}]

To simplify the notation, given a face $F$ of $D_0$ and an edge $\sigma$ of $F$ with endpoints $A, B$, we shall denote by $F',\sigma', A', B'$ be the corresponding elements of $D_1$. 
 
Let $T'$ the isoscele triangle contained in $F'$ with basis $\sigma'$ and height $h_0$ and let $E'$ be its third vertex.

\textit{Claim. For every face $F$ of $D_0$, there exists a map $\overline{\mathcal{U}}^F:F\rightarrow \mathbb{R}^3$ affine on the triangles $T\in \mathcal{T}_0$ contained in $F$, such that $\overline{\mathcal{U}}^{F}(V)=V'-V$ for any vertex $V$ of $F$, satisfying
\begin{equation}\label{W1_infinitoU_F}
	\|\overline{\mathcal{U}}^F\|_{L^{\infty}(F)} + r_0 \|D\overline{\mathcal{U}}^F\|_{L^{\infty}(F)}\le Cd_H 
\end{equation}
and such that $Id+\overline{\mathcal{U}}^F:F\rightarrow F'$ is a biLipschitz continuous map. }

\begin{proof}[Proof of the Claim]

Let us distinguish three cases, according to the mutual position of the planes containing the two faces $F$, $F'$. 

\medskip
\emph{First case: The faces $F, F'$ are contained in the same plane $\alpha$.} 

\medskip
By \eqref{eq:distvert}, we know that $|A-A'|, |B-B'|\le C_4 d_H$, where $C_4$ only depends on $M_0,M_1,\theta_0$.  Let us first prove that the same estimate holds for the third pair of vertices
\begin{equation}\label{distterzovertice}
	|E-E'|\le C d_H. 
\end{equation}
Let us choose a coordinate system in $\alpha$ such that 
$A=(a_1,0)$, $B=(b_1,0)$, $E=(e_1,e_2),$ with $b_1>a_1$. By elementary computation, 
\[E=\left( \dfrac{a_1+b_1}{2},h_0\right),\quad E'=\left( \dfrac{a'_1+b'_1}{2}- \dfrac{h_0(b_2'-a_2')}{|B'-A'|},\dfrac{a'_2+b'_2}{2}+ \dfrac{h_0(b_1'-a_1')}{|B'-A'|}\right).\]

We have that 
\begin{eqnarray*}
&&(E'-E)_1=\dfrac{a_1'-a_1}{2} + \dfrac{b'_1-b_1}{2}- \dfrac{h_0(b_2'-a_2')}{|B'-A'|}, \\ 
&&(E'-E)_2=\dfrac{a'_2+b'_2}{2}- h_0
\left(1-\dfrac{b_1'-a_1'}{|B'-A'|}\right).
\end{eqnarray*}
Since $|B'-A'|\ge r_0$, by \eqref{h0} and by
\begin{equation}
	\label{b2'-a2'}
|b_2'-a_2'|\leq |b_2'| + |a_2'|=|b_2'-b_2| + |a_2'-a_2|\le Cd_H,
\end{equation}
it follows that
\begin{equation}
| (E'-E)_1|\le Cd_H\left(1+ \tan\left(\frac{\theta_0}{3}\right) \right).
\end{equation}
 It is not restrictive to assume that $Cd_H\le \frac{r_0}{2}$ so that $b_1'-a_1'\geq 0$.  By \eqref{b2'-a2'} and using the inequality $\sqrt{a+b}\leq \sqrt a +\sqrt b$, for $a,b\geq 0$,
 \begin{eqnarray*}
 	&&|(E'-E)_2|\le \frac{|a_2'|}{2} + \frac{|b_2'|}{2} + h_0\frac{|B'-A'|-(b_1'-a_1')}{|B'-A'|}\le\nonumber \\
 	&&\le  Cd_H + \frac{h_0}{r_0}\left( \sqrt{(b_1'-a_1')^2 +(b_2'-a_2')^2 } - \sqrt{(b_1'-a_1')^2}  \right)\le Cd_H.
 \end{eqnarray*}
 
Hence \eqref{distterzovertice} follows.

Let us define $\bar{A}=A'-A , \bar{B}=B'-B , \bar{E}=E'-E$ and let us consider the affine map $\mathcal{U}^T: \mathbb{R}^2 \rightarrow \mathbb{R}^2$ such that $\mathcal{U}^T(A)=\bar{A},\  \mathcal{U}^T(B)=\bar{B},\ \mathcal{U}^T(E)=\bar{E}$ .

Since $\mathcal{U}^T$ is an affine map, namely $\mathcal{U}^T(x)=Mx + v$, with $M\in \mathbb{M}^2$ and $v\in \mathbb{R}^2$,  we have 
\begin{equation}
	\mathcal{U}^T\left(\sum_{i=1}^3 \lambda_i P_i\right)=\sum_{i=1}^3 \lambda_i\,\mathcal{U}^T(P_i), \ \ 0\le \lambda_i\le 1 \ , \ \sum_{i=1}^3 \lambda_i=1 \ ,\ \ P_i\in \mathbb{R}^2 , 
	\end{equation}
and therefore, since $T$ is the convex hull of its vertices, 
\begin{equation}\label{LinftyU}
\|	\mathcal{U}^T \|_{L^{\infty}(T)}\le Cd_H.
\end{equation}
 By further choosing $a_1=0$, that is $A=(0,0)$, and solving the linear system $MA + v =\bar{A}$, $MB + v =\bar{B}$, $ME + v =\bar{E}$, we find 
\[
\begin{array}{ll}
       m_{11}=\frac{\bar{b}_1 -\bar{a}_1}{b_1},   &  m_{12}=\frac{1}{e_2}\left(\bar{e}_1 - \bar{a}_1 + \frac{(\bar{a}_1 -\bar{b}_1)e_1}{b_1} \right),\\
m_{21}=\frac{\bar{b}_2 -\bar{a}_2}{b_1},   &  m_{22}=\frac{1}{e_2}\left(\bar{e}_2 - \bar{a}_2 + \frac{(\bar{a}_2 -\bar{b}_2)e_1}{b_1} \right),\\
v_1= \bar{a}_1, &  v_2= \bar{a}_2.
\end{array}
\]
 We compute 
 \begin{eqnarray}
 &&	\|M\|_2^2 =\sum_{i,j=1}^2 m_{ij}^2=\frac{1}{b_1^2}\left(|\bar{B}|^2 +  |\bar{A}|^2 - 2 \bar B\cdot \bar A  \right) + \nonumber \\
 &&	+ \frac{1}{{e_2}^2}\left(|\bar{E}|^2 +  |\bar{A}|^2 - 2 \bar E\cdot \bar A + \frac{e_1^2}{b_1^2}(|\bar{B}|^2 +  |\bar{A}|^2 - 2 \bar B\cdot \bar A  ) -2\frac{e_1}{b_1}(\bar E -\bar A)\cdot (\bar B -\bar A) \right)\le \\ 
 && \le 4 C^2 d_H^2\left(  \frac{1}{b_1^2} +\frac{1}{e_2^2} +\frac{e_1^2}{e_2^2b_1^2} + \frac{2|e_1|}{b_1e_2^2}  \right)\nonumber.
 	\end{eqnarray}
 Since $b_1=|A-B|\geq r_0$, $|e_1|\leq |A-E|\leq M_1 r_0$, $|e_2|=h_0$, we have 
 \begin{eqnarray}
 	\label{norma di M}
 	\|M\|_2\le C\frac{d_H}{r_0}, 
  \end{eqnarray}
with $C$ only depending on $M_0$, $M_1$, $\theta_0$.  
By  \eqref{LinftyU} and by the above inequality we have 
\begin{eqnarray}
	\|\mathcal{U}^T \|_{L^{\infty}(T)} + r_0	\|D\mathcal{U}^T \|_{L^{\infty}(T)} \le Cd_H \ . 
		\end{eqnarray}
 
Let us apply the above construction to every triangle $T\in \mathcal{T}_0$ which is contained in $F$.  Let $\mathcal{T}_F$ be the union of such triangles and let us define 
 \begin{eqnarray*}
 	\mathcal{U}^{\mathcal{T}_F}\! \! \! \! \! \!&&: \mathcal{T}_F \rightarrow \mathbb{R}^2\\
 	&&\ \ P \mapsto \mathcal{U}^T(P) \quad  \ \hbox{if}\  \ P\in T.
 \end{eqnarray*}
 Let us now prove that $	\mathcal{U}^{\mathcal{T}_F}$ is Lipschitz continuous in $\mathcal{T}_F$. 
 
 Let $x,y\in \mathcal{T}_F$.
 
If $x$ and $y$ belong to the same triangle, then the Lipschitz estimate is trivial. If $x$ and $y$ belong to adjacent triangles having a vertex $V$ in common, then let us notice that 
\begin{equation}
	\begin{aligned}{}
|\mathcal{U}^{\mathcal{T}_F}(x) - \mathcal{U}^{\mathcal{T}_F}(y)|\le |\mathcal{U}^{\mathcal{T}_F}(x) - \mathcal{U}^{\mathcal{T}_F}(V)|+|\mathcal{U}^{\mathcal{T}_F}(V) - \mathcal{U}^{\mathcal{T}_F}(y)|\leq\\
\leq C\frac{d_H}{r_0} \left(|x-V|+ |y-V|\right).
\end{aligned}
\end{equation}
Therefore, the thesis follows if we prove that 
\begin{equation}\label{Elisa}
	|x-V| + |y-V|\le C|x-y|
\end{equation}
 for some positive constant $C$ only depending on $\theta_0$.
 
Let us consider the triangle $xVy$ and let $\alpha$ be the angle in $V$. By our a priori assumption \eqref{angolinterni} and since, by the choice of $h_0$, the basis angles of the isoscele triangles $T \in \mathcal{T}_0$ are smaller than $\frac{\theta_0}{3}$ we have that $\alpha\geq \frac{\theta_0}{3}$.  
Now, if $\alpha\geq \frac{\pi}{2}$ we have $	|x-V| + |y-V|\le 2|x-y|$. 
Otherwise, $\frac{\theta_0}{3}\leq \alpha< \frac{\pi}{2}$, so that  $0<\cos(\alpha)\le 1- \delta_0$, with $\delta_0= 1- \cos\left(\frac{\theta_0}{3}\right)$. 

We compute 
\begin{eqnarray*}
&&	|x-y|^2= |x-V|^2 + |y-V|^2 - 2|x-V|\cdot|y-V|\cos\alpha\ge\\
&&	\ge |x-V|^2 + |y-V|^2 - 2(1-\delta_0)|x-V|\cdot|y-V|=\\
&&	= |x-V|^2 + |y-V|^2 - 2|x-V|\cdot|y-V| + 2\delta_0|x-V|\cdot|y-V|=\\
&&	=(|x-V| - |y-V|)^2 + 2\delta_0 |x-V|\cdot|y-V|,
	\end{eqnarray*}
{}from which it follows that 
\begin{eqnarray*}
(|x-V| - |y-V|)^2\le |x-y|^2,\\
|x-V|\cdot|y-V|\le \frac{1}{2\delta_0}|x-y|^2 \ .
\end{eqnarray*}
Now, by the above inequalities we have 
\begin{eqnarray*}
		&(|x-V| + |y-V|)^2 =(|x-V| - |y-V|)^2 + 4 |x-V|\cdot|y-V|\le\\ 
		&\leq \left(1+\frac{2}{\delta_0}\right)|x-y|^2
	\end{eqnarray*}
and \eqref{Elisa} follows.

By \eqref{dist_triangoli}, we know that $|x-y|\ge r_0\cos(\arctan (M_0)$ for every $x$ and $y$ belonging to not adjacent triangles $T_x$ and $T_y$, respectively, belonging to $\mathcal T_0$.  In this case, we can estimate
\begin{equation}
	\label{xylontani}
	|	\mathcal{U}^{\mathcal{T}_F}(x)-	\mathcal{U}^{\mathcal{T}_F}(y)|\le 	|	\mathcal{U}^{\mathcal{T}_F}(x)|+|\mathcal{U}^{\mathcal{T}_F}(y)|\le Cd_H \le C \frac{d_H}{r_0}|x-y|.
\end{equation}
By the Kirszbraun Theorem, there exists a Lipschitz continuous extension $\mathcal{U}^F$ of $\mathcal{U}^{\mathcal{T}_F}$, $\mathcal{U}^F: \mathbb{R}^2 \rightarrow \mathbb{R}^2$, having the same Lipschitz constant. 
It is straightforward to prove that 
\begin{equation}\label{normainfinito}
\|\mathcal{U}^F\|_{L^{\infty}(F^{r_0})}\le Cd_H,
\end{equation}
where 
\begin{equation}
F^{s}=\{x\in \mathbb{R}^2 \ |\ \mbox{dist}(x, F)< s\} \ .
\end{equation}
We now consider the cut-off function

\begin{equation}
		\label{cut-off}
\eta(t)=\left \{
\begin{array}{ll}

       1   & \ \ \ \  0\le t\le \frac{r_0}{8},\\
1- \frac{8}{r_0}\left(t- \frac{r_0}{8} \right)   &  \ \ \ \  \frac{r_0}{8} <t\le \frac{r_0}{4}, \\
0 &  \ \ \ \  t>\frac{r_0}{4}.
\end{array}
\right. 
\end{equation}

The function $\psi(x)= \eta (\mbox{dist}(x,F))$ satisfies the following properties: 
\begin{description}
\item [a)] $0\leq \psi\leq 1$ in $\mathbb{R}^2$,
\item [b)] $\psi \equiv 1$ in $F^{\frac{r_0}{8}} $,
\item [c)] $\psi \equiv 0$ in $\mathbb{R}^2\setminus F^{\frac{r_0}{4}}$,
\item [d)] $|\psi(x)-\psi(y)|\le \frac{8}{r_0}| x-y|$, for every $x,y\in\mathbb{R}^2$. 
\end{description}
Let us define $\overline{\mathcal{U}}^F(x) =\psi(x)\,\mathcal{U}^F (x)$ and let us prove that 
\begin{equation}\label{LipUbar}
|\overline{\mathcal{U}}^F(x)-\overline{\mathcal{U}}^F(y)|\le C \frac{d_H}{r_0}
 |x-y| , \quad \forall  x,y\in\mathbb{R}^2 \ . 
\end{equation}
If $x,y\in \mathbb{R}^2\setminus F^{\frac{r_0}{4}}$ then
\eqref{LipUbar} trivially follows. 
Otherwise, let for instance $x\in F^{\frac{r_0}{4}}$.  We have 
\begin{eqnarray*}
|\overline{\mathcal{U}}^F(x) -\overline{\mathcal{U}}^F(y) |\le |\psi(x)-\psi(y)|\cdot |\mathcal{U}^F (x)| + |\psi(y)|\cdot|\mathcal{U}^F (x)-\mathcal{U}^F (y) | .
\end{eqnarray*}
By combining the above inequality with \eqref{normainfinito}, the Lipschitz regularity of $\psi$ and the global Lipschitz regularity of $\mathcal{U}^F$, \eqref{LipUbar} follows. 

Let $\Phi_1^F= Id +\overline{\mathcal{U}}^F $. 
We observe that $\Phi_1^F$ is a perturbation of the identity map and hence it is injective; in fact, for any $x,y\in \mathbb{R}^2$ we have 
\begin{eqnarray}
		\label{iniettiva}
|\Phi_1^F(x)-\Phi_1^F(y)|\ge |x-y|-|\overline{\mathcal{U}}^F(x)-\overline{\mathcal{U}}^F(y)| \geq \left(1-\frac{Cd_H}{r_0}\right)|x-y|
\geq \frac{|x-y|}{2},
\end{eqnarray}
for $ \frac{d_H}{r_0}\leq \frac{1}{2C}$. 
Let us notice that 
\begin{equation}
		\label{limitbarU}
\hbox{supp}(\overline{\mathcal{U}}^F)\subset F^{\frac{r_0}{4}}, \quad \| \overline{\mathcal{U}}^F\|_{L^{\infty}(\mathbb{R}^2)}\le Cd_H \ .
\end{equation} 

Let us now prove that $\Phi_1^F$ is surjective. Given $y\in\mathbb{R}^2$, we consider the continuous map 
\begin{eqnarray*}
H_y :&& \!\!\!\!\mathbb{R}^2\rightarrow  \mathbb{R}^2\\
&&  \!\!\!\!x\mapsto y- \overline{\mathcal{U}}^F(x)
\end{eqnarray*}
For $z\in B_{Cd_H}(y)$, by\eqref{limitbarU} we have 
\begin{equation*}
|H_{y}(z)-y|=| \overline{\mathcal{U}}^F(z)|\le Cd_H \ .
\end{equation*}
{}From this it follows  that $H_{y}(B_{Cd_H}(y))\subset B_{Cd_H}(y)$ . By the Brouwer fixed point theorem there exists $x\in B_{Cd_H}(y)$ such that $H_{y}(x)=x$, that is 
$y=x+\overline{\mathcal{U}}^F(x)=\Phi_1^F(x)$.  Therefore $\Phi_1^F$ is bijective and, by \eqref{iniettiva}, it is biLipschitz continuous.

Now, $\Phi_1^F$ is an affine map in each triangle $T\in \mathcal{T}_0$ contained in $F$, and it maps the vertices of $F$ into the corresponding vertices of $F'$. Therefore, we have that 
\begin{equation}
		\label{bordo nel bordo}
\Phi_1^F(\partial F)=\partial F'.
\end{equation}
Let us see that  $\Phi_1^F(F)=F'$. 

Given $x\in Int(F)$, let $E$ be the vertex internal to $F$ of a triangle $T\in \mathcal{T}_0$ contained in $F$. Let $\gamma(t)$, $t\in[0,1]$, be an arc contained in $Int(F)$ such that $\gamma(0)=x$, $\gamma(1)=E$. The set $\Phi_1^F(\gamma([0,1])$ is a connected set containing  $\Phi_1^F(x)$ and  $\Phi_1^F(E)=E'\in Int(F')$.
 By \eqref{bordo nel bordo} and by the injectivity of the map $\Phi_1^F$,  $\Phi_1^F(x)\not \in \partial F'$. If $\Phi_1^F(x)\not \in F'$, then there exists $t\in(0,1)$ such that 
 $\Phi_1^F(\gamma(t))\in \partial F'$, obtaining a contradiction.
 We have therefore proved that $\Phi_1^F(Int(F))\subset Int(F')$.
 By repeating these arguments for the map  $(\Phi_1^F)^{-1}$, we obtain that
 $(\Phi_1^F)^{-1}(Int(F'))\subset Int(F)$. It follows that 
 $\Phi_1^F(Int(F))=Int(F')$ and, by recalling \eqref{bordo nel bordo}, 
 $\Phi_1^F(F)=F'$. 

\medskip
\emph{Second case: the faces $F, F'$ are contained in parallel planes $\alpha$, $\alpha'$.}

Let us choose a coordinate system in $\mathbb{R}^3$ in which the plane $\alpha$ is represented by $z=0$ and the plane $\alpha'$ is represented by $z=h$. By \eqref{eq:distvert} we have that $h\leq Cd_H$, with $C>0$ only depending on $M_0$, $M_1$, $\theta_0$.
Let $w=(0,0,-h)$ and let $T_w$ be the translation of vector $w$. We have that $F'':=T_w(F')\subset \{z=0\}$. Denoting $P'':=T_w(P')$ for any point $P'\in \alpha'$, we have that
$|V_i''-V_i|\leq |V_i''-V_i'|+|V_i'-V_i|\leq 2Cd_H$,
for any vertex $V_i'$ of $D_1$ corresponding to the vertex $V_i$ of $D_0$.
We can therefore apply the results established in the first case to the faces $F$, $F''$, obtaining that there exists a map
$\overline{\mathcal{U}}^{F''}: \{z=0\}\rightarrow \{z=0\}$, affine on each triangle $T\in \mathcal{T}_0$ contained in $F$, mapping its vertices $A$, $B$, $E$ in 
$A''-A$, $B''-B$, $E''-E$, respectively, and satisfiying
\begin{eqnarray}
		\label{suppF''}
	\hbox{supp}\,(\overline{\mathcal{U}}^{F''})\subset F^{r_0},
\end{eqnarray}
\begin{eqnarray}
	\label{normF''}
	\|\overline{\mathcal{U}}^{F''} \|_{L^{\infty}(\mathbb{R}^2)} + r_0	\|D\overline{\mathcal{U}}^{F''} \|_{L^{\infty}(\mathbb{R}^2)} \le Cd_H \ . 
\end{eqnarray}
Moreover the map $\Phi_1^{F''}:=Id+\overline{\mathcal{U}}^{F''}$ is an invertible, biLipschitz continuous map satisfying $\Phi_1^{F''}(F)=F''$, $\Phi_1^{F''}(\partial F)=\partial F''$.

Then, the maps $\overline{\mathcal{U}}^{F}=T_{-w}\circ\overline{\mathcal{U}}^{F''}$ and $\Phi_1^{F}=T_{-w}\circ \Phi_1^{F''}$ satisfy the properties of  $\mathcal{U}^{F''}$ and $ \Phi_1^{F''}$, respectively.

\medskip
\emph{Third case: the faces $F, F'$ are contained in planes $\alpha$, $\beta$ intersecting along the line r.}

Let $\varphi$ the angle between the planes $\alpha$ and $\beta$. By the a priori assumptions \eqref{lunghlati} and \eqref{angolinterni}, there exists a vertex $V'$ of $F'$ in the plane $\beta$ such that $\hbox{dist}(V',r)\geq \frac{r_0}{2}\sin \theta_0$. By \eqref{eq:distvert}, we have that

$$
C_4d_H\geq |V-V'|\geq \hbox{dist}(V',\alpha)= \hbox{dist}(V',r)\sin \varphi\geq 
\frac{r_0}{2}\sin \theta_0\sin \varphi.
$$
It follows that $\sin\varphi<1$ if $\frac{d_H}{r_0}<\frac{\sin \theta_0}{2C_4}$.
Therefore $0<\varphi<\frac{\pi}{2}$ and $1-\cos\varphi\leq \sin \varphi\leq C\frac{d_H}{r_0}$.

Let us choose a cartesian coordinate system in $\mathbb{R}^3$ in which $Oxy$ coincides with the plane $\alpha$, the $y$-axis coincides with the line $r$ and the line $\beta\cap Oxz$ is contained in $\{x>0,z>0\}\cup \{x<0,z<0\}$.

Let $R_{-\varphi}$ the rotation around the line $r$ represented by the matrix
\begin{equation}
	\label{R-varphi}
	R_{-\varphi}=\left(
	\begin{array}{ccc}
		
		\cos\varphi  & 0    & \sin\varphi\\
		0   &  1  & 0 \\
		-\sin\varphi &  0  & \cos\varphi
	\end{array}
	\right) 
\end{equation}
and mapping $\beta$ into $\alpha$. Let $F'':=R_{-\varphi}(F')$, $T''=R_{-\varphi}(T')$ and, in general, for any $P'\in\beta$ let us denote $P'':=R_{-\varphi}(P')$.

Given $P'=(x_{P'},y_{P'},z_{P'})\in \beta$, let $O'$ be the intersection of the plane orthogonal to the line $r$ through $P'$ with the line $r$. Let $P\in\alpha$ be such that $|P'-P|<Cd_H$ and let us estimate $|P''-P|$. 
To fix ideas, let $x_{P'}>0$. 

We have that $P=(x_P,y_P,0)$, $P''=(x_{P''},y_{P''},0)$, with $y_{P''}=y_{P'}$ and, trivially,  $|y_{P''}-y_P|=|y_{P'}-y_P|$, $|z_{P''}-z_P|=0$.

Since $x_{P''}=|O'-P''|=|O'-P'|$, $x_{P'}=|O'-P'|\cos\varphi=x_{P''}\cos\varphi$, we compute $x_{P''}\sin\varphi=z_{P'}=|z_{P'}-z_P|$ so that $|x_{P''}-x_{P'}|=x_{P''}(1-\cos\varphi)\leq x_{P''}\sin\varphi = z_{P'}=|z_{P'} -z_P| \leq Cd_H$.
Therefore $|x_{P''}-x_P|\leq |x_{P''}-x_{P'}|+|x_{P'}-x_P|\leq 2 C d_H$, so that $|P''-P|\leq \sqrt 5 Cd_H$. Therefore we obtain that $|A''-A|\leq Cd_H$,  $|B''-B|\leq Cd_H$ and, by the first step, also  $|E''-E|\leq Cd_H$.

We can therefore apply the results established in the first case to the faces $F$, $F''$, obtaining that there exists a map
$\overline{\mathcal{U}}^{F''}: \{z=0\}\rightarrow \{z=0\}$, affine on each triangle $T\in \mathcal{T}_0$ contained in $F$, mapping its vertices $A$, $B$, $E$ in 
$A''-A$, $B''-B$, $E''-E$ respectively and satisfiying \eqref{suppF''}, \eqref{normF''}.
Moreover the map $\Phi_1^{F''}:=Id+\overline{\mathcal{U}}^{F''}$ is an invertible, bilipschitz continuous map satisfying $\Phi_1^{F''}(F)=F''$, $\Phi_1^{F''}(\partial F)=\partial F''$.

Then the map $\Phi_1^{F}=R_{\varphi}\circ \Phi_1^{F''}$ satisfy the properties of $ \Phi_1^{F''}$. Let us define $\overline{\mathcal{U}}^F=\Phi_1^{F}-Id$. Let us notice that $\|R_{\varphi}\|\leq \sqrt 3$,  $\|R_{\varphi}-I_3\|\leq 2\sin\varphi\leq C\frac{d_H}{r_0}$.
The map $\overline{\mathcal{U}}^F$ is affine in the triangles $T\in \mathcal{T}_0$ contained in the face $F$, satisfies $\overline{\mathcal{U}}^F(V)=V'-V$, for any vertex $V$ of $F$. Moreover
\begin{equation}
	\label{LipUruotata}
	\begin{aligned}{}
		&
		|\overline{\mathcal{U}}^F(x)-\overline{\mathcal{U}}^F(y)|=|(R_{\varphi}\circ \Phi_1^{F''}-Id)(x)- (R_{\varphi}\circ \Phi_1^{F''}-Id)(y)|=\\
		&=|(R_{\varphi}\circ (Id+\overline{\mathcal{U}}^{F''})-Id)(x)- (R_{\varphi}\circ (Id+\overline{\mathcal{U}}^{F''})-Id)(y)|=\\
		&=|(R_{\varphi}-Id)(x-y)+R_{\varphi} (\overline{\mathcal{U}}^{F''}(x)- \overline{\mathcal{U}}^{F''}(y))|\leq\\
		&\leq \|R_{\varphi}-I_3\||x-y|+\|R_{\varphi}\||\overline{\mathcal{U}}^{F''}(x)- \overline{\mathcal{U}}^{F''}(y)|\leq C\frac{d_H}{r_0}|x-y|.
	\end{aligned}
\end{equation}
For any $x\in F$, let $V$ be a vertex of $F$. By the above inequality and since $|x-V|\leq M_1r_0$, we have
\begin{equation}
	\label{boundUruotata}
	\begin{aligned}{}
		&
		|\overline{\mathcal{U}}^{F}(x)|\leq |\overline{\mathcal{U}}^{F}(x)-\overline{\mathcal{U}}^{F}(y)|+ |\overline{\mathcal{U}}^{F}(V)|\leq C\frac{d_H}{r_0}|x-V|+|V'-V|\leq Cd_H.
	\end{aligned}
\end{equation}

\end{proof}
\medskip

Let us define
 \begin{eqnarray*}
	\mathcal{U}^{\partial D_0}\! \! \! \! \! \!&&: \partial D_0 \rightarrow \mathbb{R}^3\\
	&&\ \ P \mapsto \mathcal{U}^F(P) \ ,  \ \hbox{if}\  \ P\in F.
\end{eqnarray*}
By \eqref{W1_infinitoU_F}, $\|\mathcal{U}^{\partial D_0}\|_{L^{\infty}(\partial D_0)}\le Cd_H$.
Let us now prove that $	\mathcal{U}^{\partial D_0}$ is Lipschitz continuous in $\partial D_0$. 

Let $x,y\in \partial D_0$. 

If $x$ and $y$ belong to faces $F_x$, $F_y$ respectively, having a common edge $\sigma$, let $P\in \sigma$ be the point in $\sigma$ realizing the minimum distance {}from $x$. Let $\alpha$ be the angle between the segments $Px$ and $Py$,  
$\beta$ the angle between $Px$ and $\sigma$ and $\gamma$ the angle between $Px$ and the plane containing $F_y$.
If $P$ is not an endpoint of $\sigma$, then $Px$ is orthogonal to $\sigma$, so that $\beta=\frac{\pi}{2}$. If, otherwise, $P$ is an endpoint of $\sigma$, then, by the a priori information, $\beta\geq\theta_0$. 

By Lemma \ref{lem:geometrico} we have $\gamma \geq \arcsin(\sin^2\theta_0)$. Since the angle between a line $s$ and a plane minimizes the angle between $s$ and any line of that plane, it follows that $\alpha\geq \gamma\geq \arcsin(\sin^2\theta_0)$ and the wished estimate follows arguing as in Step 1.

Let us consider now the case in which the intersection of the faces containing $x$ and $y$ is a vertex $V$. Let us consider the triangle $xVy$ and let $\alpha$ its angle in $V$. If both $x$ and $y$ belong to $B_{r_0}(V)$, then by the Lipschitz character of $D_0$, $\alpha\geq 2(\frac{\pi}{2}-\arctan \theta_0)$ and again the estimate follows arguing as in Step 1.

If, otherwise, at least one among $x$ and $y$ does not belong to $B_{r_0}(V)$, then, again by the Lipschitz character of $D_0$, $|x-y|\geq 2r_0\cos(\arctan M_0)$ and in this case, we argue as in \eqref{xylontani}.

Next, let  $x$ and $y$ belonging to disjoint faces $F_x$ and $F_y$, respectively.
By the Lipschitz character of $D_0$, there exists a coordinate system $Ox_1x_2x_3$ such that $O=x$, 
$\partial D_0 \cap R_{r_0,2M_0}=\hbox{epi} (g)$,
where $g(0,0)=0$, $\|g\|_{{C}^{0,1}([-r_0,r_0]^2 )} \leq M_0r_0$, where we recall that $R_{r_0,2M_0}= [-r_0,r_0]^2 \times[-2M_0r_0,2M_0r_0]$.
Let us consider the plane containing the $x_3$-axis and the point $y$. By following the approach used to derive \eqref{dist_triangoli} in the proof of Lemma \ref{triangoli_isosceli}, it follows that 
\begin{equation}
	\label{dist-facce-non-adia}
|x-y|\ge r_0\cos(\arctan(M_0)),
\end{equation}
and again we argue as in \eqref{xylontani}.
Therefore we have proved that
\begin{eqnarray}
	\label{norm_bordo}
	\|\mathcal{U}^{\partial D_0}\|_{L^{\infty}(\partial D_0)} + r_0	\|D\mathcal{U}^{\partial D_0}\|_{L^{\infty}(\partial D_0)}\le Cd_H \ . 
\end{eqnarray}

Next, the thesis follows by arguing as in Step 1 above: the Kirszbraun Theorem provides a Lipschitz extension $\mathcal{U}^{\mathbb{R}^3}$ of $\mathcal{U}^{\partial D_0}$ to $\mathbb{R}^3$ with the same Lipschitz constant. The cut-off argument produces the map $\mathcal{U}$ having support in the neighborhhod $W$ satisfying the properties stated in Proposition \ref{vectorfield}. Moreover, let us notice that, arguing as above, we obtain that $\Phi_1=Id+\mathcal{U}$ is bijective, biLipschitz continuous and that $\Phi_1(D_0)=D_1$. 

\end{proof}

\begin{proof}[Proof of Proposition \ref{prop:property_Phi}] 
We just know, by the above proof, that $\Phi_1(D_0)=D_1$ and, by the same arguments seen for $\Phi_1$, it follows that $\Phi_t$ is invertible for every $t\in[0,1]$. Since $\Phi_t\equiv Id$ outside $W$, we have that $\Phi_t(\Omega)=\Omega$. 
The estimate \eqref{eq:DU-infty-bound} is an obvious consequence of \eqref{ass4:U} and all the remaining properties follow by straightforward computations.

\end{proof}
\vskip 2.5 cm
\noindent
{\bf{Acknowledgements}}
\vskip 0.3 cm
\noindent
The work of AA, EF,  ES and SV have been supported by Gruppo Nazionale per l’Analisi Matematica, la Probabilit\`a e le loro applicazioni (GNAMPA) by the grant ”Problemi inversi
per equazioni alle derivate parziali”. EB has been partially supported by NYUAD Science Program Project Fund AD364.
 EF, ES and SV are supported by the Italian MUR through the PRIN 2022 project ``Inverse problems in PDE: theoretical and numerical analysis'', project code 2022B32J5C, under the National Recovery and Resilience Plan (PNRR), Italy, funded by the European Union  - Next Generation EU, Mission 4 Component 1 CUP F53D23002710006. The work of AM has been supported by PRIN 2022 n. 2022JMSP2J ”Stability, multiaxial fatigue and fatigue life prediction in statics and
dynamics of innovative structural and material coupled systems” funded by MUR, Italy, and by the European Union – Next Generation EU. AM is a member of the INdAM Gruppo Nazionale di Fisica Matematica.

\bibliography{references}{}
\bibliographystyle{plain}

\end{document}